\numberwithin{equation}{section}
\theoremstyle{plain}
\newtheorem{theorem}[equation]{Theorem}
\newtheorem{proposition}[equation]{Proposition}
\newtheorem{corollary}[equation]{Corollary}
\newtheorem{lemma}[equation]{Lemma}
\theoremstyle{definition}
\newtheorem{definition}[equation]{Definition}
\newtheorem{defn}[equation]{Definition}
\newtheorem{example}[equation]{Example}
\theoremstyle{remark}
\newtheorem{remark}[equation]{Remark}
\newtheorem{rem}[equation]{Remark}
\newcommand{\Q}{\mathbb{Q}}
\newcommand{\Z}{\mathbb{Z}}
\newcommand{\N}{\mathbb{N}}
\newcommand{\kk}{{\Bbbk}}
\newcommand{\al}{\alpha}
\newcommand{\be}{\beta}
\newcommand{\im}{\mbox{\rm im\,}}
\newcommand{\car}{\mbox{\rm char\,}}
\newcommand{\id}{\mbox{\rm id}}
\newcommand{\DMO}{\DeclareMathOperator}
\newcommand{\diag}{\ensuremath{\mathrm{diag\,}}}
\newcommand{\Pder}{\ensuremath{\mathrm{P.der\,}}}
\newcommand{\diver}{\ensuremath{\mathrm{div}}}
\newcommand{\wteps}{\ensuremath{\widetilde{\varepsilon}}}
\DMO{\Spm}{Spm}
\DMO{\F}{Frac\,}
\DMO{\Ext}{Ext}
\DeclareMathOperator{\gr}{gr}
\newcommand{\ol}[1]{\overline{#1}}
\title{Rank two Artin-Schelter regular algebras from non commuting derivations}
\author{Vincent Beck and C\'esar Lecoutre}
\date{\today}
\begin{document}
\begin{abstract}
If $\Delta$ and $\Gamma$ are two derivations of a commutative algebra $A$ such that $\Delta\Gamma-\Gamma\Delta=\Delta$ is locally nilpotent, one can endow $A$ with a new product $\ast$ whose filtered semiclassical limit is the Poisson structure $\Delta\wedge\Gamma$.
In this article we first study theses (Poisson) algebras from an algebraic point of view, and when $A$ is a polynomial algebra, we investigate their homological properties.
In particular, when the derivations $\Delta$ and $\Gamma$ are linear, the algebras $(A,\ast)$ provide, in each dimension at least four, new examples of multiparameter families of Artin-Schelter regular algebras. These algebras are deformations of Poisson algebras $(A,\Delta\wedge\Gamma)$ of rank $2$, thus explaining the title of the article.

Assuming furthermore a technical condition on $\Gamma$, we show that the algebra $(A,\ast)$ is Calabi-Yau if and only if the trace of $\Gamma$ is equal to $1$ if and only if the Poisson algebra $(A,\Delta\wedge\Gamma)$ is unimodular.
Since the trace of $\Gamma$ is a linear function of the parameters, the algebras $(A,\ast)$ also provide, in each dimension at least four, new examples of multiparameter families of Calabi-Yau algebras.
\end{abstract}

\maketitle

\tableofcontents

\section*{Introduction}

The philosophy behind noncommutative projective algebraic geometry \cite{Rog} is to study noncommutative version of the projective spaces $\mathbb{P}^n$ by describing suitable noncommutative algebras that would be thought of as their homogeneous coordinates rings: the so-called Artin-Schlelter regular algebras (AS regular algebras for shorts) \cite{AS}.
Numerous examples of AS regular algebras are known but a complete classification is achieved only up to projective dimension $2$.
The work of Pym \cite{Pym} provides a partial answer in dimension $3$ by focusing on a classification of graded Calabi-Yau algebras \cite{Ginz} that are flat deformations of $\mathbb{C}[x_0,x_1,x_2,x_3]$.
In~\cite{LS}, Sierra and the second author introduced a family of AS regular algebras $R(n,a)$ of Gel'fand-Kirillov dimension $n+1$ indexed by a scalar $a$.
The authors proved that for any integer $n\geqslant0$ there exists a unique scalar $a$ such that the algebra $R(n,a)$ is a Calabi-Yau.
This is of particular interest since it explains a specific choice of parameter made in the exceptional component $E(3)$ of Pym's classification \cite{Pym}.

Back to the algebra $R(n,a)$, the parameter $n+1$ can be interpreted as the index of nilpotency of a maximal Jordan block seen as the restriction to degree $1$ polynomials of a locally nilpotent derivation $\Delta$ %, the so-called basic Weitzenb\"ock derivation,
of a polynomial algebra in $n+1$ variables.
The construction of $R(n,a)$ then relies on a second derivation $\Gamma$ such that $\Delta\Gamma-\Gamma\Delta=\Delta$ is nilpotent together with a deformation formula of Coll, Gerstenhaber, and Giaquinto~\cite{CGG}.
The aim of this article is to investigate the following natural generalization: what are the possible algebras arising from this construction by relaxing the conditions on $\Delta$?
The case of homogeneous locally nilpotent derivations $\Delta$ of a polynomial rings whose canonical Jordan normal form acting on the set of degree 1 polynomials is not a maximal block will be of particular interest.
In this situation we obtain a wide family of non isomorphic AS regular algebras in each dimension since the Jordan type of $\Delta$ must be preserved by isomorphism (Theorem \ref{thm-isom}).
%parametrized at least by the Jordan type of $\Delta$ and parameters depending on $\Gamma$.}
Among them, we describe a family of Calabi-Yau algebras indexed by $r-1$ scalar parameters, where $r$ is the number of blocks in the canonical Jordan normal form of $\Delta$ (the algebras studied in~\cite{LS} correspond to the case $r=1$).
%The loss of a parameter with respect to the number of block is due to the fact that among these algebras the Calabi-Yau ones are obtained for a certain value of the trace of $\Gamma$ corresponding to the case where the associated Poisson algebra is unimodular.

This article is divided into eight sections.
In Section~\ref{sec-present} we present the general construction of a Poisson algebra $A_{(\Delta,\Gamma)}$ and of an associative algebra $R_{(\Delta,\Gamma)}$ associated to a pair of derivations $(\Delta,\Gamma)$ of a commutative algebra $A$ such that $\Delta\Gamma-\Gamma\Delta=\Delta$ is nilpotent.
Such a pair of derivation will be called a \textit{solvable pair}.
Section~\ref{sec-solvable-pair} is devoted to the study of general algebraic properties of $A_{(\Delta,\Gamma)}$ and $R_{(\Delta,\Gamma)}$.
A lots of example of solvable pairs are given.
Moreover we compute the (Poisson) center in many cases (Lemma~\ref{lem-Pcenter}) and exhibit examples of important Poisson derivations of $A_{(\Delta,\Gamma)}$ and automorphisms of $R_{(\Delta,\Gamma)}$ (see~Lemma~\ref{lem-pder} and Theorem~\ref{deltaR}).
Furthermore we remark that the natural filtration associated to the locally nilpotent derivation $\Delta$ is compatible with both the Poisson structure of $A_{(\Delta,\Gamma)}$ and the associative structure of $R_{(\Delta,\Gamma)}$.
This filtration is presented and studied in Section~\ref{ssec-filtration}, it will be one of our main tool to prove a lot of results in this article.
In the general situation not much can be said about normal elements.
However, among them, we identify and study a significant subset consisting of the so-called strongly normal elements~(Definition~\ref{dfn-strongly-ne}) which will be of particular interest latter on in this article.
To end Section~\ref{sec-solvable-pair} we observe that up to localization the Poisson algebra $A_{(\Delta,\Gamma)}$ and the algebra $R_{(\Delta,\Gamma)}$ are particularly nice: they are isomorphic to (Poisson) Ore extensions over (a localization of) the kernel of $\Delta$ thanks to the local slice construction.

The rest of the article is devoted to the case where the solvable pair is defined on a polynomial algebra~$A$.
In Section~\ref{sec-pol-ring} the rank of a non abelian Poisson algebra $A_{(\Delta,\Gamma)}$ is shown to be equal to $2$.
Section~\ref{sec-linear} deals with the case of homogeneous derivations $\Delta$ and $\Gamma$.
In this situation both the Poisson center of $A_{(\Delta,\Gamma)}$ and the center of $R_{(\Delta,\Gamma)}$ are completely determined~(Corollary~\ref{cor-poissoncom}).
Moreover we construct a finer filtration than the previous one, having the benefit that the associated graded algebra is a polynomial algebra.
In Section~\ref{sec-trigo} we show that the algebra $R_{(\Delta,\Gamma)}$ is AS regular and we completely determine the (Poisson) normal elements: they are precisely the strongly normal elements introduced in Section~\ref{sec-solvable-pair}.
Moreover we prove that the Jordan type of $\Delta$ is an invariant of both $A_{(\Delta,\Gamma)}$ and $R_{(\Delta,\Gamma)}$.
In Sections~\ref{sec-diago} and~\ref{sec-diago-R} we focus on the case where $\Gamma$ is diagonalizable.
In this situation, we are able to determine the linear Poisson derivations when $\Gamma$ is generic.
We also provides a presentation of $R_{(\Delta,\Gamma)}$ by generators and relations.
Our last section is devoted to characterize when our algebras (resp. Poisson algebras) are Calabi-Yau (resp. unimodular),  see Corollary~\ref{cor-calabiyau}.
%{\color{red} a modifier selon ce que l'on garde en appendix. 
Finally the article ends with Appendices which recap some of the results frequently used in the article, in particular it explains the structure of the graded ring for the finer of the filtrations we study.
%}
In this article we always assume that $\kk$ is a field of characteristic zero.

\section{A deformation formula for non commuting derivations}\label{sec-present}

Let $A$ be a commutative $\kk$-algebra and $\Delta,\Gamma$ be two derivations of $A$ such that $[\Delta,\Gamma]=\Delta\Gamma-\Gamma\Delta=\Delta$.
It is easily verified that the biderivation $\{-,-\}=\Delta\wedge\Gamma$ satisfies the Jacobi identity, hence is a Poisson bracket on $A$.
More precisely we have
\begin{equation}
\label{pbdg}
\{f,g\}=\Delta(f)\Gamma(g)-\Delta(g)\Gamma(f)
\end{equation}
for any $f,g\in A$.
Moreover, if $\Delta$ is locally nilpotent, by setting for any $f,g\in A$
\begin{equation}
\label{ast}
f\ast g=\sum_{i\geqslant0}\Delta^i(f)\binom{\Gamma}{i}(g)=fg+\Delta(f)\Gamma(g)+\sum_{i\geqslant 2}\Delta^i(f)\binom{\Gamma}{i}(g),
\end{equation}
where $\dbinom{\Gamma}{i}=\dfrac{1}{i!}\Gamma\circ(\Gamma-\id)\circ\cdots\circ(\Gamma-(i-1)\id)$, we define an associative product on $A$, see \cite{CGG} for the formal version and \cite[Lemma 3.3]{Pym} for the algebraic version.
We set 
\begin{itemize}
\item[$\bullet$] $A=A_{(\Delta,\Gamma)}=(A,\{-,-\})$,
\item[$\bullet$] $R=R_{(\Delta,\Gamma)}=(A,\ast)$.
\end{itemize}
The noncommutative algebra $R$ is a deformation of the Poisson algebra $A$ in the following sense.
Consider $A$ as an algebra over $\kk[t]$ and define
\[f\ast_t g=\sum_{i\geqslant0}t^i\Delta^i(f)\binom{\Gamma}{i}(g).\]
Then $A_t=(A,\ast_t)$ is an associative and noncommutative algebra over $\kk[t]$ such that
\begin{itemize}
\item $A_t/(t-1)A_t\cong R$,
\item $A_t/tA_t$ is commutative and can be endowed with a Poisson structure given by
\[\{f+tA_t,g+tA_t\}=\frac{f\ast_t g-g\ast_t f}{t}+tA_t\]
\item $A_t/tA_t$ and $A$ are isomorphic as Poisson algebras.
\end{itemize}
Therefore the Poisson algebra $A_{(\Delta,\Gamma)}$ is the commutative fibre version of the semiclassical limit of $R_{(\Delta,\Gamma)}$ in the sense of \cite[Section 2.1]{Goo}.

\begin{example}
\label{LSnot}
For $A=\kk[X_0,\dots,X_n]$ and the derivations $\Delta(X_i)=X_{i-1}$ ($X_{-1}=0$) and $\Gamma(X_i)=(a+i)X_i$ ($a\in\kk$), 
the (Poisson) algebras obtained by the formulae \eqref{pbdg} and \eqref{ast} have been studied in~\cite{LS}.
The Poisson algebra is denoted by $A(n,a)$ and 
the noncommutative algebra $R(n,a)$.
% In particular the algebras $R(n,a)$ are new examples of Artin-Schelter regular algebras for $n\geq3$.
%\vspace{1em}
In this article, our aim is to generalize results of~\cite{LS} to other pairs of derivations. 
\end{example}

\section{Solvable pairs}\label{sec-solvable-pair}

\subsection{Generalities}

A pair $(\Delta,\Gamma)$ of derivations of $A$ such that $[\Delta,\Gamma]=\Delta$ is locally nilpotent 
will be called a {\em solvable pair}.
Since $[\Delta,\Gamma]=\Delta$ we observe thanks to equations (\ref{pbdg}) and (\ref{ast}) that
\[\Gamma=0\implies \Delta=0\implies \left\{\begin{array}{ll}
A_{(\Delta,\Gamma)}\textrm{ is Poisson commutative}, \\
R_{(\Delta,\Gamma)}\textrm{ is commutative}.
\end{array}\right.\]
%Therefore we will most of the time assume that $\Delta\neq0$ in the following.
The aim of this section is to study general facts about solvable pairs. 
The following result study the influence of a change of generators of $A$ on the algebras $A_{(\Delta,\Gamma)}$  and $R_{(\Delta,\Gamma)}$.

\begin{lemma}
\label{iso} Let $(\Delta,\Gamma)$ be a solvable pair on $A$ and $\sigma$ and automorphism of the algebra $A$. Then 
\begin{enumerate}[{\rm (i)}]
\item $(\widetilde{\Delta},\widetilde{\Gamma}):=(\sigma\Delta\sigma^{-1},\sigma\Gamma\sigma^{-1})$ is a %(linear if $\sigma$ is linear) 
solvable pair on $A$, 
\item the Poisson algebras $A_{(\Delta,\Gamma)}$ and $A_{(\widetilde{\Delta},\widetilde{\Gamma})}$ are isomorphic,
\item the algebras $R_{({\Delta},{\Gamma})}$ and $R_{(\widetilde{\Delta},\widetilde{\Gamma})}$ are isomorphic.
\end{enumerate}
\end{lemma}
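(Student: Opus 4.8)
The plan is to verify each part by straightforward conjugation computations, exploiting that $\sigma$ is an algebra automorphism of the commutative algebra $A$. First I would check that $(\widetilde{\Delta},\widetilde{\Gamma})$ is again a solvable pair. That $\widetilde{\Delta}$ and $\widetilde{\Gamma}$ are derivations of $A$ is immediate: for a derivation $D$, the conjugate $\sigma D\sigma^{-1}$ is again a derivation because $\sigma$ respects the product. For the bracket relation I compute $[\widetilde{\Delta},\widetilde{\Gamma}] = \sigma\Delta\sigma^{-1}\sigma\Gamma\sigma^{-1} - \sigma\Gamma\sigma^{-1}\sigma\Delta\sigma^{-1} = \sigma(\Delta\Gamma - \Gamma\Delta)\sigma^{-1} = \sigma\Delta\sigma^{-1} = \widetilde{\Delta}$, so the commutator identity is preserved by conjugation. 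Finally, local nilpotency transfers: since $\widetilde{\Delta}^{\,i} = \sigma\Delta^i\sigma^{-1}$, for any $f\in A$ we have $\widetilde{\Delta}^{\,i}(f) = \sigma(\Delta^i(\sigma^{-1}(f)))$, which vanishes once $\Delta^i$ kills $\sigma^{-1}(f)$; hence $\widetilde{\Delta}$ is locally nilpotent. This establishes (i).

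For (ii) and (iii) I claim that the map $\sigma$ itself, viewed as a $\kk$-linear bijection $A\to A$, is the desired isomorphism of (Poisson) algebras, once we decorate source and target with the appropriate structures. The key identities to establish are, for all $f,g\in A$,
\begin{equation}
\label{eq-plan-pois}
\sigma\big(\{f,g\}_{(\Delta,\Gamma)}\big) = \{\sigma(f),\sigma(g)\}_{(\widetilde\Delta,\widetilde\Gamma)}
\qquad\text{and}\qquad
\sigma(f\ast_{(\Delta,\Gamma)} g) = \sigma(f)\ast_{(\widetilde\Delta,\widetilde\Gamma)}\sigma(g).
\end{equation}
Both reduce to the same underlying computation. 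For the Poisson bracket, using formula \eqref{pbdg} and the fact that $\sigma$ is multiplicative, I expand the right-hand side as $\widetilde\Delta(\sigma f)\widetilde\Gamma(\sigma g) - \widetilde\Delta(\sigma g)\widetilde\Gamma(\sigma f)$, then substitute $\widetilde\Delta(\sigma f) = \sigma\Delta\sigma^{-1}\sigma(f) = \sigma(\Delta f)$ and likewise $\widetilde\Gamma(\sigma f) = \sigma(\Gamma f)$. This turns the expression into $\sigma(\Delta f)\sigma(\Gamma g) - \sigma(\Delta g)\sigma(\Gamma f) = \sigma\big(\Delta(f)\Gamma(g)-\Delta(g)\Gamma(f)\big) = \sigma(\{f,g\})$, as required.

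For the associative product the argument is identical in spirit but needs the observation that $\sigma$ intertwines the operators $\binom{\Gamma}{i}$ with $\binom{\widetilde\Gamma}{i}$. Because conjugation by $\sigma$ is a ring homomorphism on the operator algebra, one has $\sigma\binom{\Gamma}{i}\sigma^{-1} = \binom{\sigma\Gamma\sigma^{-1}}{i} = \binom{\widetilde\Gamma}{i}$ and similarly $\sigma\Delta^i\sigma^{-1} = \widetilde\Delta^{\,i}$; the only point to check is that $\sigma$ commutes with scalar multiplication and with the identity operator so that the polynomial expression defining $\binom{\Gamma}{i}$ is preserved, which is clear. Applying $\sigma$ to the defining sum \eqref{ast} and using multiplicativity of $\sigma$ on each term $\Delta^i(f)\binom{\Gamma}{i}(g)$ then yields $\sigma(f\ast g) = \sum_{i\geqslant 0}\widetilde\Delta^{\,i}(\sigma f)\binom{\widetilde\Gamma}{i}(\sigma g) = \sigma(f)\ast\sigma(g)$. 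I do not anticipate a genuine obstacle here; the only mild subtlety is bookkeeping with the infinite sum in \eqref{ast}, but local nilpotency of $\Delta$ (and hence of $\widetilde\Delta$) guarantees that only finitely many terms are nonzero for each pair $(f,g)$, so the manipulation is legitimate term by term. Since $\sigma$ is a bijection, \eqref{eq-plan-pois} shows it is an isomorphism of Poisson algebras in (ii) and of associative algebras in (iii).
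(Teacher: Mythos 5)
Your proposal is correct and follows essentially the same route as the paper: show that $\sigma$ itself intertwines the two structures, using $\sigma\Delta^i\sigma^{-1}=\widetilde\Delta^{\,i}$ and $\sigma\binom{\Gamma}{i}\sigma^{-1}=\binom{\widetilde\Gamma}{i}$ (the paper obtains these by induction, you by observing that conjugation is a ring homomorphism on operators — the same computation). No gaps.
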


\begin{proof}
(i) is a straightforward computation.

(ii) and (iii). In both case we show that $\sigma$ is an isomorphism between the appropriate algebras. 
Since $\sigma\Delta=\widetilde{\Delta}\sigma$ and $\sigma\Gamma=\widetilde{\Gamma}\sigma$ we have for all $f,g\in A$
\begin{align*}
\sigma(\{f,g\}_{(\Delta,\Gamma)})&=\sigma\Delta(f)\sigma\Gamma(g)-\sigma\Delta(g)\sigma\Gamma(f)\\
&=\widetilde{\Delta}\sigma(f)\widetilde{\Gamma}\sigma(g)-\widetilde{\Delta}\sigma(g)\widetilde{\Gamma}\sigma(f)=\{\sigma(f),\sigma(g)\}_{(\widetilde{\Delta},\widetilde{\Gamma})}.
\end{align*}
By induction we obtain easily that $\sigma\Delta^\ell=\widetilde{\Delta}^\ell\sigma$ and $\sigma\binom{\Gamma}{\ell}=\binom{\widetilde{\Gamma}}{\ell}\sigma$ for all $\ell\in\N$. 
The equality $\sigma(f\ast_{(\Delta,\Gamma)} g)=\sigma(f)\ast_{(\widetilde{\Delta},\widetilde{\Gamma})}\sigma(g)$ follows from the definition of the product $\ast$ given by (\ref{ast}).
\end{proof}

\begin{remark}
When $\Delta=0$ the Poisson algebra $A$ is Poisson commutative and the algebra $R$ is commutative for every choice of derivation $\Gamma$.
Therefore there is no hope in general that a Poisson isomorphism between $A_{(\Delta,\Gamma)}$ and 
$A_{(\Delta',\Gamma')}$ implies that there exists $\sigma\in\mbox{Aut}(A)$ such that $(\Delta',\Gamma') = (\sigma\Delta\sigma^{-1},\sigma\Gamma\sigma^{-1})$ (the same remark also applies to $R_{(\Delta,\Gamma)}$).
In other words, the Poisson isomorphism class of $A_{(\Delta,\Gamma)}$ or the $\kk$-algebra isomorphism class of 
$R_{(\Delta,\Gamma)}$ does not determine the isomorphism class of the representation of the 2-dimensional solvable Lie algebra.

For a less trivial example showing that even the conjugacy class of $\Delta$ is not determined, 
see~Remark~\ref{rem-commutative-example}.
\end{remark}

We conclude this section with some examples. 
If $A=\kk[X_0,\dots,X_n]$ is a polynomial algebra, then we denote by $\partial_{X_i}$ the usual derivative with respect to the indeterminate $X_i$.
Recall that the set of derivations of $A$ is a free $A$-module with basis $(\partial_{X_0},\dots,\partial_{X_n})$.

\begin{proposition}
\label{classificationdim2}
Let $A=\kk[X_0,X_1]$.
Up to isomorphism, any solvable pair $(\Delta,\Gamma)$ on $A$ with $\Delta\neq0$ is of the form
\[\Delta=P\partial_{X_1},\quad \Gamma=Q\partial_{X_0}+R\partial_{X_1}\]
where $P,Q\in\kk[X_0]$, $P\neq0$ and $R=R_0 + R_1 X_1$ for some $R_0,R_1\in \kk[X_0]$ are such that
\begin{enumerate}
\item[(1)] if $P\in\kk^{\ast}$ or $Q=0$, then $R_1=1$
\item[(2)] if $\deg_{X_0}(P)>0$ and $Q\neq0$, then $R_1\neq1$ and we have $P(R_1-1)=Q\partial_{X_0}(P)$.
\end{enumerate}
In particular if $\deg_{X_0}(P)>0$, $Q\neq0$ and $R_1\in\kk\setminus\{1\}$, then there exists $\Lambda\in\kk^\ast$, $\alpha\in\kk$ and an integer $n>0$ such that $Q=\frac{R_1-1}{n}(X_0-\alpha)$ and $P=\Lambda(X_0-\alpha)^n$.
\end{proposition}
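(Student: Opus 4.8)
The plan is to first put $\Delta$ into a normal form and then read off $\Gamma$ from the relation $[\Delta,\Gamma]=\Delta$, viewing it as a system of linear equations on the coefficients of $\Gamma$.

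First I would invoke the classification of locally nilpotent derivations of a polynomial ring in two variables (Rentschler's theorem): since $\Delta\neq0$ is locally nilpotent, there is an automorphism $\sigma$ of $A=\kk[X_0,X_1]$ with $\sigma\Delta\sigma^{-1}=P\,\partial_{X_1}$ for some $P\in\kk[X_0]\setminus\{0\}$, after choosing coordinates so that $\ker\Delta=\kk[X_0]$. By Lemma~\ref{iso} the pair $(\sigma\Delta\sigma^{-1},\sigma\Gamma\sigma^{-1})$ is again a solvable pair yielding isomorphic algebras, so I may assume from the start that $\Delta=P\,\partial_{X_1}$. Note that $\Delta^2=0$, so every such $\Delta$ is indeed locally nilpotent and the normalization costs nothing.

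Next I would write $\Gamma=U\,\partial_{X_0}+V\,\partial_{X_1}$ with $U,V\in A$ and expand the identity $[\Delta,\Gamma]=\Delta$ by evaluating both sides on the generators. On $X_0$ it reads $P\,\partial_{X_1}(U)=0$, hence $\partial_{X_1}(U)=0$ and $U=:Q\in\kk[X_0]$. On $X_1$ it reads $P\,\partial_{X_1}(V)-Q\,\partial_{X_0}(P)=P$; since the right-hand side lies in $\kk[X_0]$ and $P\neq0$, matching powers of $X_1$ forces $\partial_{X_1}(V)\in\kk[X_0]$, so $V=R_0+R_1X_1$ with $R_0,R_1\in\kk[X_0]$, and the relation collapses to the single master equation
\[P(R_1-1)=Q\,\partial_{X_0}(P).\]
The two cases now follow by inspection: if $P\in\kk^\ast$ then $\partial_{X_0}(P)=0$, and if $Q=0$ the right-hand side vanishes, so in either case $R_1=1$; conversely if $\deg_{X_0}(P)>0$ and $Q\neq0$ then $\partial_{X_0}(P)\neq0$ (we are in characteristic zero), so $R_1=1$ would force $Q\,\partial_{X_0}(P)=0$ in a domain, a contradiction, whence $R_1\neq1$ and the master equation is precisely condition~(2).

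Finally, for the ``in particular'' statement I would analyze the master equation $Q\,\partial_{X_0}(P)=(R_1-1)P$ under the extra hypothesis $R_1\in\kk\setminus\{1\}$, so that $c:=R_1-1\in\kk^\ast$ is a nonzero scalar. Comparing degrees in $X_0$ gives $\deg Q+(n-1)=n$ with $n:=\deg_{X_0}(P)$, hence $\deg Q=1$. For the shape of $P$ I would work over the algebraic closure: at a root $\alpha$ of $P$ of multiplicity $m\geqslant1$, writing $P=(X_0-\alpha)^m\widetilde P$ with $\widetilde P(\alpha)\neq0$, substitution into $Q\,\partial_{X_0}(P)=cP$ and division by $(X_0-\alpha)^{m-1}$ followed by evaluation at $\alpha$ yields $m\,Q(\alpha)\widetilde P(\alpha)=0$, hence $Q(\alpha)=0$; since $Q$ is linear it has a single root $\alpha\in\kk$, so every root of $P$ equals $\alpha$ and $P=\Lambda(X_0-\alpha)^n$ with $\Lambda\in\kk^\ast$. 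Writing $Q=a(X_0-\alpha)$, substituting both expressions back and comparing leading coefficients gives $a\Lambda n=c\Lambda$, i.e. $a=(R_1-1)/n$, which is exactly the claimed form $Q=\frac{R_1-1}{n}(X_0-\alpha)$. The only genuine input beyond direct computation is Rentschler's normalization of $\Delta$; the remaining difficulty is the bookkeeping in the root analysis, which is routine once the degrees have been matched.
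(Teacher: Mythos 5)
Your proof is correct and follows essentially the same route as the paper's: Rentschler's normalization of $\Delta$, evaluation of $[\Delta,\Gamma]=\Delta$ on the two generators to obtain $Q\in\kk[X_0]$ and the master equation $P(R_1-1)=Q\partial_{X_0}(P)$, and then the classical analysis of $QP'=cP$ by degree and root-multiplicity comparison (which the paper merely labels ``classical'' and you spell out). One immaterial slip: for $\Delta=P\partial_{X_1}$ with $\deg_{X_0}P\geqslant 0$ one has $\Delta^2(X_1^2)=2P^2\neq0$, so $\Delta^2\neq0$ in general; local nilpotency still holds since $\Delta^k=P^k\partial_{X_1}^k$ (and is in any case inherited from the original $\Delta$ under conjugation), so nothing in the argument is affected.
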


\begin{proof}
Let $(\delta,\gamma)$ be a solvable pair on $A$.
Thanks to Rentschler's theorem, see \cite[Theorem 4.1]{Fre}, there exists $\alpha\in\mbox{Aut}(A)$ such that
$\Delta:=\alpha\gamma\alpha^{-1}=P\partial_{X_1}$ for some nonzero polynomial $P\in\kk[X_0]$.
Set $\Gamma:=\alpha\delta\alpha^{-1}=Q\partial_{X_0}+R\partial_{X_1}$ for some $Q,R\in A$.
Then $[\Delta,\Gamma]=\Delta$ implies that
\[0=\Delta(X_0)=\Delta\Gamma(X_0)-\Gamma\Delta(X_0)=\Delta(Q)\]
so that $Q\in\ker\Delta=\kk[X_0]$ since $P\neq0$.
Moreover we have
	\[P=\Delta(X_1)=\Delta\Gamma(X_1)-\Gamma\Delta(X_1)=\Delta(R)-\Gamma(P)=P\partial_{X_1}(R)-Q\partial_{X_0}(P)\]
so that $P(\partial_{X_1}(R)-1)=Q\partial_{X_0}(P)$.
Since $P\neq0$ two cases can happened
\begin{enumerate}
\item $(Q=0$ or $P\in\kk^{\ast})\iff \partial_{X_1}(R)=1 \iff R=X_1+R_0$ with $R_0\in\kk[X_0]$
\item $(Q\neq0$ and $\deg_{X_0}P>0) \iff \partial_{X_1}(R)\neq1$.
\end{enumerate}
In the second case, every factors in $P(\partial_{X_1}(R)-1)=Q\partial_{X_0}(P)$ is nonzero so that the degree in $X_1$ of $\partial_{X_1}(R)-1$ is equal to $0$.
Hence $R=R_0+R_1X_1$ with $R_0,R_1\in\kk[X_0]$ and $R_1\neq1$.
When $R_1=\mu \in \kk\setminus\{1\}$ we have $(\mu-1)P=(-Q)P'$ which classically implies that $\deg_{X_0}(Q)=1$ and thus $Q=\lambda(X_0-\alpha)$ for some $\lambda\in\kk^\ast$ and some 
$\alpha\in\kk$ and $P=\Lambda(X_0-\alpha)^n$ for some $\Lambda\in\kk^\ast$ and some integer $n$ and the relation $(\mu-1)P=(-Q)P'$ gives $\lambda=(\mu-1)/n$.
\end{proof}

\begin{corollary}
\label{S}
Every Poisson structure on $A=\kk[X_0,X_1]$ arising from a solvable pair is, up to isomorphism, of the form $\{X_0,X_1\}=S$, where $S\in\kk[X_0]$.
The algebra $(R,\ast)$ is given by two generators $X_0,X_1$ and one relation $X_0\ast X_1-X_1\ast X_0=S$.
In particular, the Poisson field $\F A$ is either Poisson commutative or a Poisson Weyl field. 
Similarly, the skewfield $\F R$ is either commutative or is a Weyl skewfield.
\end{corollary}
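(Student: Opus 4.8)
The plan is to read everything off from the normal form of Proposition~\ref{classificationdim2}. By Lemma~\ref{iso}, conjugating a solvable pair changes neither the Poisson isomorphism class of $A_{(\Delta,\Gamma)}$ nor the algebra isomorphism class of $R_{(\Delta,\Gamma)}$, so I may assume $(\Delta,\Gamma)$ is in the form $\Delta=P\partial_{X_1}$, $\Gamma=Q\partial_{X_0}+R\partial_{X_1}$ with $P,Q\in\kk[X_0]$ and $R=R_0+R_1X_1$ (the case $\Delta=0$ being trivial, with $S=0$). First I would compute the bracket directly from~\eqref{pbdg}: since $\Delta(X_0)=0$, $\Delta(X_1)=P$, $\Gamma(X_0)=Q$, one gets $\{X_0,X_1\}=-PQ=:S$, and $S\in\kk[X_0]$ because $P,Q\in\kk[X_0]=\ker\Delta$. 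This already gives the first assertion.

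Next I would compute the $\ast$-commutator from~\eqref{ast}. As $\Delta(X_0)=0$, the sum defining $X_0\ast f$ collapses to $X_0f$; in particular $X_0\ast X_1=X_0X_1$. For $X_1\ast X_0$ only the terms $i=0,1$ survive (since $\Delta^2(X_1)=\Delta(P)=0$), giving $X_1\ast X_0=X_0X_1+PQ$. Subtracting yields $X_0\ast X_1-X_1\ast X_0=-PQ=S$, the stated relation.

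The substantive point is that there are no further relations, i.e.\ that $R$ is presented by $X_0,X_1$ together with this single relation. Here I would identify $R$ with an Ore extension. The identity $c\ast f=cf$ for $c\in\kk[X_0]$ (again from $\Delta(c)=0$) shows that the left $\kk[X_0]$-module underlying $R$ is the ordinary one, hence free with basis $\{X_1^n\}_{n\ge0}$; and the same type of computation gives $X_1\ast c=c\ast X_1+\delta(c)$ for $c\in\kk[X_0]$, where $\delta=-S\,\partial_{X_0}$ is a derivation of $\kk[X_0]$. Thus $R\cong\kk[X_0][X_1;\delta]$, whose standard presentation is exactly $\kk\langle X_0,X_1\rangle/(X_0X_1-X_1X_0-S)$. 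To make the absence of extra relations rigorous I would introduce the surjection $\phi$ from $T:=\kk\langle X_0,X_1\rangle/(X_0X_1-X_1X_0-S)$ onto $R$ and pass to associated graded rings for the filtration by degree in $X_1$ (resp.\ in the second generator of $T$): the key observation is that $\Delta$ strictly lowers the $X_1$-degree while $\Gamma$ never raises it, so each correction term $\Delta^i(f)\binom{\Gamma}{i}(g)$ with $i\ge1$ has $X_1$-degree strictly below that of $fg$; hence $\ast$ and the ordinary product share the same top-degree part, $\gr R\cong\kk[X_0,X_1]$, and $\gr\phi$ is an isomorphism of polynomial rings, forcing $\phi$ to be an isomorphism. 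I expect this associated-graded bookkeeping to be the only mildly delicate step.

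Finally, for the field statements I would treat the cases $S=0$ and $S\neq0$ separately. If $S=0$ both structures are commutative. If $S\neq0$, then in $\F A$ the element $q:=X_1/S$ satisfies $\{X_0,q\}=\tfrac1S\{X_0,X_1\}=1$ (using $\{X_0,h(X_0)\}=0$), and since $X_1=Sq$ we have $\F A=\kk(X_0,q)$ with $\{X_0,q\}=1$, a Poisson Weyl field. Likewise, as $R$ is a Noetherian domain it admits a skewfield of fractions $\F R$, and setting $q:=X_1\ast S^{-1}$ (legitimate since $S=S(X_0)$ commutes with $X_0$) one computes $X_0\ast q-q\ast X_0=(X_0\ast X_1-X_1\ast X_0)\ast S^{-1}=1$; as $X_0$ and $q$ generate $\F R$, this exhibits it as a Weyl skewfield.
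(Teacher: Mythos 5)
Your proposal is correct and follows essentially the same route as the paper: reduce to the normal form of Proposition~\ref{classificationdim2} via Lemma~\ref{iso}, read off $S=-PQ$, identify $R$ with an Ore extension $\kk[X_0][X_1;\delta]$ over $\kk[X_0]=\ker\Delta$ (the paper delegates the basis argument for the $\ast$-powers of $X_1$ to Proposition~\ref{Oreext}, where you instead supply an associated-graded argument to the same effect), and localize at $S$ to exhibit the Weyl (skew)field. The extra care you take with the presentation claim and the sign $\delta=-S\,\partial_{X_0}$ is consistent with the stated relation $X_0\ast X_1-X_1\ast X_0=S$.
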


\begin{proof}
We have the result for $A$ with $S=-PQ$ thanks to Proposition \ref{classificationdim2} and by setting, if $S\neq0$, $X_1'=X_1S^{-1}$ so that $\{X_0,X_1'\}=1$.
We denote by $X_1^{\ast i}$ the $i^\textrm{th}$ power of $X_1$ with respect to the product $\ast$.
Observe that the powers $X_1^{\ast i}$ for $i\geqslant 0$ form a basis of $R=(A,\ast)$ over $\kk[X_0]$ so that $R$ is isomorphic to the Ore extension $\kk[X_0][X_1;S\partial_{X_0}]$ (see Proposition \ref{Oreext} for a proof in a more general situation).
In particular $R$ is a Noetherian domain, hence admits a skewfield of fraction $\F R$.
If $S\neq0$, set $X_1'=X_1\ast S^{-1}$ so that $X_0 \ast X_1' - X_1' \ast X_0 = 1$ and $R$ is isomorphic to a Weyl skewfield.
\end{proof}

\begin{example}\label{exdim2produit}
The Poisson structure on $A=\kk[X_0,X_1]$ given by $\{X_0,X_1\}=X_0X_1$ cannot be obtained by a solvable pair.
It follows from Corollary \ref{S} since $\F A$ (resp. $\F R$) cannot contains any Poisson bracket (resp. commutator) equal to $1$, see \cite{GL} (resp. \cite{AD}).
%(Generalization : orbit under automorphism group of $\{X_0,X_1\}=S(X_0)$ ??) 
\end{example}

\begin{example}
\label{exdim2}
Let $A=\kk[X_0,X_1]$ and set $(\Delta,\Gamma)=(\partial_{X_0},{X_0}\partial_{X_0}+X_1^{i}\partial_{X_1})$ for some integer $i\in\N$.
One easily verifies that $(\Delta,\Gamma)$ is a solvable pair on $A$ and that
\[\{X_0,X_1\}=X_1^i \qquad\mbox{and}\qquad X_0\ast X_1- X_1\ast X_0=X_1^i = X_1^{\ast i}\]
where $X_1^{\ast i}$ denote the $i^\textrm{th}$ power of $X_1$ with respect to the product $\ast$. 
We retrieve the following classical (Poisson) algebras:
\begin{itemize}
\item[$i=0$] the (Poisson) Weyl algebra,
\item[$i=1$] the (symmetric) enveloping algebra of the two dimensional non abelian Lie algebra,
\item[$i=2$] the (Poisson) Jordan plane.
\end{itemize}
Note that the (Poisson) Weyl algebra is classically obtained via the commuting pair of derivations $(\partial_{X_0},\partial_{X_1})$.
\end{example}

\begin{example}
\label{envelopingalg}
Let $A=\kk[X,Y_1,\dots,Y_n]$ and set $\Delta=\partial_{X}$ and $\Gamma=X\partial_{X}+\lambda_1Y_1\partial_{Y_1}+\cdots+\lambda_nY_n\partial_{Y_n}$ for some scalars $\lambda_1,\dots,\lambda_n$.
Then $(\Delta,\Gamma)$ is a solvable pair on $A$ and $A_{(\Delta,\Gamma)}$ (resp. $R_{(\Delta,\Gamma)}$) is isomorphic to the symmetric (resp. enveloping) algebra of the solvable Lie algebra $\mathfrak{g}$ with basis $\{x,y_1,\dots,y_n\}$ and nonzero Lie bracket 
\[[x,y_i]=\lambda_i y_i,\quad i=1,\dots,n.\] 
\end{example}

\subsection{Center} In this section, we study the elementary and general properties of solvable pairs related to the center of $A_{(\Delta,\Gamma)}$ and $R_{(\Delta,\Gamma)}$.

\begin{definition}
Let $A$ be a Poisson algebra.
The Poisson center of $A$ is the set 
\[Z_P(A)=\{z\in A\ |\ \{z,a\}=0\mbox{ for all }a\in A\}\]
of Poisson central elements.
It is a Poisson-commutative subalgebra of $A$.
\end{definition}

\begin{lemma}\label{lem-Pcenter} Assume that $\Delta\neq0$ and that $A=A_{(\Delta,\Gamma)}$ is a domain.
Set $R=R_{(\Delta,\Gamma)}$.
\begin{enumerate}
\item[(1)] Then $\ker\Delta\cap\ker\Gamma\subseteq Z_P(A)$ with equality if $\ker\Delta\neq\ker\Gamma$.
\item[(2)] We have $\ker\Delta\cap\ker\Gamma\subseteq Z(R)$. Moreover, 
$Z(R) \cap \ker \Delta \subseteq \ker \Gamma$. 
\item[(3)] Assume that $\ker \Delta \neq\ker \Gamma$. Then $Z(R) =\ker \Delta \cap \ker \Gamma$.
% Ancienne version améliorée par la précédente
%\item[(3)] Assume that there exists $x \in \ker \Delta$ such that $x$ is an eigenvector for $\Gamma$ for a nonzero eigenvalue
%$\lambda$. Then $Z(R) =\ker \Delta \cap \ker \Gamma$. 
\end{enumerate}
\end{lemma}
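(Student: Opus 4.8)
The plan is to prove the three statements in order, reducing the real content of (3) to (2) together with an elementary observation about the two kernels. Throughout I will use the identity $\Delta^{i}\Gamma=(\Gamma+i\,\id)\Delta^{i}$, which follows by an immediate induction from $[\Delta,\Gamma]=\Delta$, and its consequence that $\ker\Delta$ is stable under $\Gamma$ (since $\Delta\Gamma(a)=(\Gamma+1)\Delta(a)=0$ for $a\in\ker\Delta$), hence under each $\binom{\Gamma}{i}$. I will also use repeatedly that $A$ is a domain and that $\Delta\neq0$ is locally nilpotent, so that a \emph{local slice} exists: an element $s$ with $c:=\Delta(s)\in\ker\Delta\setminus\{0\}$ and $\Delta^{2}(s)=0$ (take $s=\Delta^{m-1}(b)$, where for some $b\notin\ker\Delta$ the integer $m\geqslant1$ is maximal with $\Delta^{m}(b)\neq0$).

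For (1), the inclusion $\ker\Delta\cap\ker\Gamma\subseteq Z_{P}(A)$ is immediate from \eqref{pbdg}. For the reverse inclusion under $\ker\Delta\neq\ker\Gamma$, let $z\in Z_{P}(A)$, so that $\Delta(z)\Gamma(a)=\Gamma(z)\Delta(a)$ for all $a$. If $\Delta(z)=0$, then choosing $a$ with $\Delta(a)\neq0$ forces $\Gamma(z)=0$ by the domain property, so $z\in\ker\Delta\cap\ker\Gamma$. If instead $\Delta(z)\neq0$, I will derive the contradiction $\ker\Delta=\ker\Gamma$: first $\Gamma(z)\neq0$ (otherwise $\Delta(z)\Gamma(a)=0$ gives $\Gamma=0$, whence $\Delta=[\Delta,\Gamma]=0$), and then the proportionality $\Delta(z)\Gamma(a)=\Gamma(z)\Delta(a)$ together with the domain property yields $\Delta(a)=0\Leftrightarrow\Gamma(a)=0$ for every $a$.

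For (2), the inclusion $\ker\Delta\cap\ker\Gamma\subseteq Z(R)$ follows from \eqref{ast}: if $\Delta(z)=\Gamma(z)=0$ then $\Delta^{i}(z)=0$ and $\binom{\Gamma}{i}(z)=\binom{0}{i}z=0$ for $i\geqslant1$ (as $z$ is a $\Gamma$-eigenvector of eigenvalue $0$), so $z\ast a=za=a\ast z$. For the second inclusion $Z(R)\cap\ker\Delta\subseteq\ker\Gamma$, take $z\in Z(R)$ with $\Delta(z)=0$; then $z\ast a=za$, and $z\ast a=a\ast z$ reduces to $\sum_{i\geqslant1}\Delta^{i}(a)\binom{\Gamma}{i}(z)=0$ for all $a$. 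Evaluating at a local slice $a=s$ kills every term with $i\geqslant2$ and leaves $c\,\Gamma(z)=0$, whence $\Gamma(z)=0$.

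The heart of the argument is (3), where the obstacle is to prove $Z(R)\subseteq\ker\Delta$; the two inclusions of (2) then give $Z(R)=\ker\Delta\cap\ker\Gamma$ at once. Suppose, for contradiction, that $z\in Z(R)$ with $d:=\deg_{\Delta}(z)\geqslant1$. Evaluating $z\ast a=a\ast z$ at $a\in\ker\Delta$ gives $\sum_{i\geqslant1}\Delta^{i}(z)\binom{\Gamma}{i}(a)=0$; since each $\binom{\Gamma}{i}(a)\in\ker\Delta$, applying $\Delta^{d-1}$ and using Leibniz collapses each summand to $\Delta^{d-1+i}(z)\binom{\Gamma}{i}(a)$, which survives only for $i=1$, leaving $\Delta^{d}(z)\,\Gamma(a)=0$. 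As $A$ is a domain and $\Delta^{d}(z)\neq0$, this forces $\Gamma(a)=0$ for all $a\in\ker\Delta$, i.e. $\ker\Delta\subseteq\ker\Gamma$. I then close with the observation that $\ker\Delta\subseteq\ker\Gamma$ in fact forces $\ker\Delta=\ker\Gamma$: if $f\in\ker\Gamma$ had $e:=\deg_{\Delta}(f)\geqslant1$, then $0=\Delta^{e}\Gamma(f)=(\Gamma+e\,\id)\Delta^{e}(f)$, while $\Delta^{e}(f)\in\ker\Delta\subseteq\ker\Gamma$ gives $\Gamma\Delta^{e}(f)=0$, so $e\,\Delta^{e}(f)=0$, contradicting $\deg_{\Delta}(f)=e$. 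This contradicts the hypothesis $\ker\Delta\neq\ker\Gamma$, so no such $z$ exists and $Z(R)\subseteq\ker\Delta$. The only delicate point I anticipate is the extraction of the single term $\Delta^{d}(z)\Gamma(a)$: it hinges on the $\Gamma$-stability of $\ker\Delta$, which makes the Leibniz expansion of $\Delta^{d-1}\bigl(\Delta^{i}(z)\binom{\Gamma}{i}(a)\bigr)$ collapse so that only $i=1$ contributes. Everything else is routine given the domain hypothesis and the local slice.
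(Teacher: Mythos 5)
Your proof is correct and follows essentially the same route as the paper's: the same local-slice element of $\ker\Delta^2\setminus\ker\Delta$ for (2), the same collapse of $\sum_{i\geqslant1}\Delta^i(z)\binom{\Gamma}{i}(a)$ under a suitable power of $\Delta$ using the $\Gamma$-stability of $\ker\Delta$, and the same use of $[\Delta^e,\Gamma]=e\Delta^e$ to rule out $\ker\Delta\subseteq\ker\Gamma$. The only differences are contrapositive rearrangements (in (1) you split on whether $\Delta(z)=0$; in (3) you start from a hypothetical non-$\ker\Delta$ central element rather than first producing $x\in\ker\Delta\setminus\ker\Gamma$), which do not change the substance.
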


\begin{proof}
(1) The inclusion $\ker\Delta\cap\ker\Gamma\subseteq Z_P(A)$ is clear.
Let $z\in Z_P(A)$.
First assume that there exists $x\in\ker\Delta\setminus\ker\Gamma$.
Then $0=\{z,x\}=\Delta(z)\Gamma(x)$, which implies $z\in\ker\Delta$.
For $y\notin\ker\Delta$ we have $0=\{y,z\}=\Delta(y)\Gamma(z)$ and so $z\in\ker\Gamma$. 
Now assume that there exists $x\in\ker\Gamma\setminus\ker\Delta$.
Then $0=\{x,z\}=\Delta(x)\Gamma(z)$ so that $z\in\ker\Gamma$.
Since $\ker\Gamma\neq A$ (otherwise $\Gamma=0$ which implies $\Delta=0$) we have for any $y\notin\ker\Gamma$ that $0=\{z,y\}=\Delta(z)\Gamma(y)$ and so $z\in\ker\Delta$.

(2) The inclusion $\ker\Delta\cap\ker\Gamma\subseteq Z(R)$ is clear.
Let us now consider $z \in Z(R) \cap \ker \Delta$. 
Since $\Delta \neq 0$ and $\Delta$ is locally nilpotent, there exists $x \in \ker \Delta^2 \setminus \ker \Delta$. 
We then have $z * x= zx$ since $z\in\ker \Delta$ and $x*z= zx+ \Delta(x)\Gamma(z)$ since $x \in \ker \Delta^2$. 
But $z \in Z(R)$ hence $\Delta(x)\Gamma(z)=0$ and $\Delta(x) \neq 0$. Thus $\Gamma(z)=0$. 

(3) We first show that $\ker \Delta \not \subseteq \ker \Gamma$.
If $\ker \Delta \subseteq \ker \Gamma$ then $\ker \Delta \varsubsetneq \ker \Gamma$ since $\ker \Delta \neq \ker \Gamma$.
Consider $x \in \ker \Gamma \setminus \ker \Delta$ and $n \geqslant 0$ such that $\Delta^n(x) \neq 0$ and $\Delta^{n+1}(x)=0$.
In particular $\Delta^n(x) \in \ker \Delta \subseteq \ker \Gamma$. 
Thus $[\Delta^n,\Gamma](x)= n\Delta^n(x)$ (see Lemma~\ref{lem-crochet}) implies $\Delta^n(\Gamma(x)) = n\Delta^n(x)$.
But $x \in \ker\Gamma$ thus $n\Delta^n(x)=0$ thus $n=0$ and $\Delta(x)=0$ which is absurd.
Hence we can consider $x \in \ker \Delta$ such that $\Gamma(x)\neq 0$ and choose %\textcolor{red}{a nonzero element} -> inutile de le supposer ton nul 
$z \in Z(R)$.
Then $x*z=xz$ since $x \in \ker \Delta$.
Moreover
\begin{equation}
\label{comuzx}
0= z*x- x*z = \sum_{i\geqslant 1}\Delta^i(z) \binom{\Gamma}{i}(x)
\end{equation}
But $[\Delta,\Gamma]= \Delta$ implies that $\ker\Delta$ is stable by $\Gamma$, hence stable by $\binom{\Gamma}{i}$ for all $i\geqslant 1$ (see Lemma \ref{comDG}).
Therefore we have $\binom{\Gamma}{i}(x) \in \ker \Delta$ for all $i\geqslant 1$. 
Let $N \geqslant 1$ be the smallest positive integer such that $\Delta^N(z)=0$.
Assume that $N\geqslant 2$.
By applying $\Delta^{N-2}$ to equation \eqref{comuzx} we obtain that $\Gamma(x) \Delta^{N-1}(z)=0$. 
Since $\Gamma(x)\neq 0$, we get a contradiction.
Hence $N=1$ i.e. $z \in \ker \Delta$ and assertion $(2)$ provides us with $z \in \ker \Gamma$.
% Ancien point numéro 3
%For the third point, let $z \in Z(R)$. Then $x*z=xz$ (since $x \in \ker \Delta)$ and 
%$$0= z*x- x*z = \sum_{i\geq 1}\Delta^i(z) \binom{\Gamma}{i}(x)=x \sum_{i\geq 1}\binom{\lambda}{i} \Delta^i(z) $$
%We then deduce that $\sum_{i\geq 1}\binom{\lambda}{i} \Delta^i(z)=0$. Let $N \geq 1$ the smallest positive integer $i$ such that
%$\Delta^i(z)=0$. Applying $\Delta^{N-2}$ to the preceding relation we get that $\lambda \Delta^{N-1}(z)=0$. 
%Since $\lambda\neq 0$, we get a contradiction. Hence $z \in \ker \Delta$. 
\end{proof}

\begin{example}
Recall the derivations $\Delta=\partial_{X_0}$ and $\Gamma=X_0\partial_{X_0}+X_1^i\partial_{X_1}$ from Example \ref{exdim2}.
One easily verifies that $\ker\Delta=\kk[X_1]$ and $\ker\Gamma=\kk$ so that $Z_P(A)=Z(R)=\kk$. %
%Ancienne version de la rédaction, avant l'amélioration du (3).
%For the case $i=1$, take $x=X_1$ in the proposition. For the others values of $i$, the proof of the third  point of Lemma~\ref{lem-Pcenter} can be adapted by considering $x \in \kk[X_1] \setminus \kk$ which verifies $x \in \ker \Delta$ and $\Gamma(x)\neq 0$. Since $\binom{\Gamma}{i}(x)\in \kk[X_1]$ and $\Delta$ is $k[X_1]$-linear, applying $\Delta^{N-2}$ to the relation $z*x-x*z$ gives $\Delta^{N-1}(z)\Gamma(x)=0$.
\end{example}

\begin{example}
Recall the derivations $\Delta=\partial_{X}$ and $\Gamma=X\partial_{X}+\lambda_1 Y_1\partial_{Y_1}+\dots+\lambda_n  Y_n\partial_{Y_n}$ from Example~\ref{envelopingalg}.
Then $\ker\Delta=\kk[Y_1,\dots,Y_n]\neq\ker\Gamma$ (as long as not all the $\lambda_i$ are zeros) so that $Z_P(A)=Z(R)=\ker\Delta\cap\ker\Gamma$. %(take $x=Y_i$ for $\lambda_i\neq0$ in the proposition).
Note that to understand
\[\ker\Delta\cap\ker\Gamma=\ker\Gamma|_{\kk[Y_1,\dots,Y_n]}=\mbox{Vect}\{Y_1^{\alpha_1}\cdots Y_n^{\alpha_n}\ |\ \lambda_1\alpha_1+\cdots+\lambda_n\alpha_n=0,\ \alpha_i\in\N\}\]
one needs to study the structure of the submonoid $\sum_{i=1}^n \lambda_i\N$ of $(\kk,+)$. %(or the $\Z$-module $\sum_{i=1}^n \lambda_i\Z$ if we allow for localization of the $Y_i$'s).
For instance, if the $\lambda_i$ are $\N$-linearly independent, then $Z_P(A)=Z(R)=\kk$. 
\end{example}

The following example illustrate the fact that when $\ker\Delta=\ker\Gamma$ the inclusion $\ker\Delta\cap\ker\Gamma\subseteq Z_P(A)$ can be strict. 

\begin{example}
\label{rem-commutative-example}
Let $A=\kk[X_0,\dots,X_n]$ and consider the solvable pair $(\Delta,\Gamma)=(X_0\partial_{X_1},X_1\partial_{X_1})$.
We have $A=Z_P(A)$ but $\ker\Delta=\ker\Gamma=\kk[X_0,X_2,X_3,\dots,X_n]\subsetneq A$ (see also~Corollary~\ref{cor-poissoncom}).
Similarly we have $Z(R)=R$ so $Z(R)\not\subseteq\ker\Delta=\ker\Gamma$.
\end{example}

\subsection{Derivation and automorphism}

\begin{definition} Let $A$ be a Poisson algebra.
\begin{enumerate}[{\rm (1)}]
\item A {\em Poisson derivation} is a derivation $\delta$ of $A$ such that for all $a,b\in A$
\[\delta(\{a,b\})=\{\delta(a),b\}+\{a,\delta(b)\}.\]
\item The set ${\rm P.Der}(A)$ of Poisson derivations of $A$ is a Lie subalgebra of the Lie algebra of derivations of 
the associative commutative algebra $A$.
%\item A {\em Poisson automorphism} is an automorphism $\Phi$ of $A$ such that for all $a,b\in A$
%\[\Phi(\{a,b\})=\{\Phi(a),\Phi(b)\}.\]
%\item The set ${\rm P.Aut}(A)$ of Poisson automorphisms of $A$ is a subgroup of the group of automorphisms of the associative commutative algebra $A$.
\end{enumerate}
\end{definition}

Let $A=A_{(\Delta,\Gamma)}$.
When $\Delta \neq 0$, the Lie algebra ${\rm P.Der}(A)$ is not reduced to $0$ thanks to the following 
easy lemma. 

\begin{lemma}\label{lem-pder}
The derivation $\Delta$ is a Poisson derivation of $A$.%
%\textcolor{red}{Suite \`a supprimer ? Réponse de V le 15 janvier : oui, d'accord, je mets en commentaire. 
%the derivation $\Gamma$ is a Poisson $(-\Delta)$-derivation, that is a derivation of $A$ such that for all $f,g\in A$
%\[\Gamma(\{f,g\})=\{\Gamma(f),g\}+\{f,\Gamma(g)\}-\Delta(f)\Gamma(g)+\Delta(g)\Gamma(f).\]
%The set of Poisson $(-\Delta)$-derivations is a $\kk$-vector space but not a Lie sub-algebra of ${\rm Der}(A)$ in general.}
\end{lemma}

\begin{remark}\label{rem-der-center} Poisson center and Poisson derivation are compatible in the following sense. 
Let $A$ be a Poisson algebra. The Poisson center of $A$ is stable by any Poisson derivation of $A$: 
for $\delta\in{\rm P.Der}(A)$ and $a \in Z_P(A)$, an easy computation shows that $\delta(a) \in Z_P(A)$.
\end{remark}

Note that the map $\Delta$ is not a derivation of $R_{(\Delta,\Gamma)}$ in general.
The following proposition can be seen as a deformation formula for the Poisson derivation $\Delta$.

\begin{theorem} \label{deltaR}
Let $(\Delta,\Gamma)$ be a solvable pair.
For $a \in \kk$, the map $\phi_a :=(\id+ \Delta)^a=\sum_{k \geqslant 0} \binom{a}{k} \Delta^k$ is a (graded if $\Delta$ is graded) $\kk$-algebra automorphism of $R=R_{(\Delta,\Gamma)}$.
The linear map
\[\delta=\sum_{r\geqslant 1}\frac{(-1)^{r-1}}{r}\Delta^r\]
is a (graded if $\Delta$ is graded) derivation of $R$.
\end{theorem}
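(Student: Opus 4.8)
The plan is to establish the two assertions in order, deducing the derivation $\delta$ from the automorphisms $\phi_a$ by differentiating at $a=0$; this is the concrete shadow of the formal identity $\phi_a=(\id+\Delta)^a=\exp(a\delta)$ with $\delta=\log(\id+\Delta)$. As a preliminary, note that local nilpotence of $\Delta$ makes every series below terminate on a given element: for fixed $f$ only finitely many $\Delta^k(f)$ are nonzero, so $\phi_a(f)=\sum_{k}\binom{a}{k}\Delta^k(f)$ and $\delta(f)$ are finite sums, and for fixed $f$ the element $\phi_a(f)$ is a polynomial in $a$ with coefficients in $A$. Thus all manipulations take place among finite sums and no convergence question arises.

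The core of the proof is a twisted Leibniz rule for the action of $\Delta$ on the product $\ast$. Starting from \eqref{ast} and using that $\Delta$ is a derivation of the commutative product, I would commute $\Delta$ past the operator $\binom{\Gamma}{i}$: the relation $\Delta\Gamma=(\Gamma+\id)\Delta$ (which is just $[\Delta,\Gamma]=\Delta$) yields $\Delta\binom{\Gamma}{i}=\binom{\Gamma+1}{i}\Delta$, and Pascal's identity $\binom{\Gamma+1}{i}=\binom{\Gamma}{i}+\binom{\Gamma}{i-1}$ together with a shift of summation index gives
\[ \Delta(f\ast g)=f\ast\Delta(g)+\Delta(f)\ast(\id+\Delta)(g). \]
Adding $f\ast g$ to each side and regrouping then produces $\phi_1(f\ast g)=(\id+\Delta)(f)\ast(\id+\Delta)(g)=\phi_1(f)\ast\phi_1(g)$, so that $\phi_1=\id+\Delta$ is an algebra endomorphism of $R$. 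I expect this commutation computation to be the main obstacle; once it is in hand the rest is formal.

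To reach general $a$, I would invoke the Vandermonde convolution, which upgrades to the operator group law $\phi_a\phi_b=\phi_{a+b}$; hence $\phi_0=\id$, each $\phi_a$ is bijective with inverse $\phi_{-a}$, and for $n\in\N$ one has $\phi_n=\phi_1^{\,n}$, a composite of endomorphisms and therefore itself an endomorphism. For arbitrary $a$, fixing $f,g$, both sides of $\phi_a(f\ast g)=\phi_a(f)\ast\phi_a(g)$ are $A$-valued polynomials in $a$ that coincide for every $a=n\in\N$; since $\car\kk=0$ the field is infinite, so they coincide for all $a\in\kk$. Combined with invertibility this shows each $\phi_a$ is an algebra automorphism of $R$.

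Finally, to obtain $\delta$ I would differentiate the polynomial identity $\phi_a(f\ast g)=\phi_a(f)\ast\phi_a(g)$ with respect to $a$ and set $a=0$. A direct computation gives $\frac{d}{da}\phi_a\big|_{a=0}=\sum_{k\geqslant1}\frac{(-1)^{k-1}}{k}\Delta^k=\delta$, and the bilinearity of $\ast$ supplies the product rule on the right-hand side, yielding $\delta(f\ast g)=\delta(f)\ast g+f\ast\delta(g)$. Thus $\delta$ is a derivation of $R$. The parenthetical graded claims follow because $\phi_a$ and $\delta$ are built from the powers $\Delta^k$, so they are compatible with any grading preserved by $\Delta$.
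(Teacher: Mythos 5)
Your argument is correct, and its first half coincides with the paper's: the twisted Leibniz rule $\Delta(f\ast g)=f\ast\Delta(g)+\Delta(f)\ast(\id+\Delta)(g)$, obtained from $\Delta\binom{\Gamma}{i}=\binom{\Gamma+1}{i}\Delta$ and Pascal's identity, is exactly the computation the paper performs to show $(\id+\Delta)(f)\ast(\id+\Delta)(g)=(\id+\Delta)(f\ast g)$. Where you genuinely diverge is in how the two remaining claims are deduced from this. The paper goes endomorphism $\Rightarrow$ derivation $\Rightarrow$ automorphism: it cites Proposition~2 of Vidal to conclude that $\delta=\log(\id+\Delta)$ is a derivation of $R$, notes that $a\delta$ is then a locally nilpotent derivation, and recovers $\phi_a=\exp(a\delta)$ as an automorphism (with the Chu--Vandermonde computation offered only as an alternative). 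You go endomorphism $\Rightarrow$ automorphism $\Rightarrow$ derivation: the group law $\phi_a\phi_b=\phi_{a+b}$ (Chu--Vandermonde, as in Lemma~\ref{compo}) gives $\phi_n=\phi_1^{\,n}$ for $n\in\N$, the identity $\phi_a(f\ast g)=\phi_a(f)\ast\phi_a(g)$ extends from $\N$ to all of $\kk$ because both sides are $A$-valued polynomials in $a$ over an infinite field, and formal differentiation at $a=0$ (using $\tfrac{d}{da}\binom{a}{k}\big|_{a=0}=\tfrac{(-1)^{k-1}}{k}$ for $k\geqslant 1$) yields the Leibniz rule for $\delta$. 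Your route is self-contained — it replaces the external reference to Vidal's logarithm-of-an-automorphism result by an elementary Zariski-density argument — at the mild cost of needing the polynomiality observation; the paper's route buys a cleaner conceptual picture ($\phi_a=\exp(a\delta)$ with $\delta$ a locally nilpotent derivation) at the cost of the citation. Both are complete proofs.
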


Strictly speaking $\delta$ is an element of the formal power series ring $\kk[\![\Delta]\!]$ but since $\Delta$ is locally nilpotent, $\delta(f)$ is a well-defined element of $R$ for any $f\in R$.

\begin{proof} We first prove that $\id + \Delta$ is an automorphism of $R$. 

\noindent $\begin{array}{r@{\,}c@{\,}l@{\,}l@{}}(\id + \Delta)(f)*(\id + \Delta)(g)&=& \sum_{i\geqslant 0} (\Delta^{i}(f) + \Delta^{i+1}(f))
(\binom{\Gamma}{i}(g) + \binom{\Gamma}{i}\Delta(g))& \\
&=& f*g+\sum_{i\geqslant 0}\Delta^{i+1}(f)\binom{\Gamma}{i}(g)+ \Delta^{i}(f)\binom{\Gamma}{i}\Delta(g) +
	\Delta^{i+1}(f)\binom{\Gamma}{i}\Delta(g)&\!(\ref{comDG})\\ 
&=& f*g+\sum_{i\geqslant 0}\Delta^{i+1}(f)\binom{\Gamma}{i}(g) + f*\Delta(g)+ 
\sum_{i\geqslant 0}\Delta^{i+1}(f)\left[\Delta,\binom{\Gamma}{i+1}\right](g) & \\
&=& f*g+\sum_{i\geqslant 0}\Delta^{i+1}(f)\binom{\Gamma}{i}(g) + f*\Delta(g)+ 
\sum_{i\geqslant 0}\Delta^{i}(f)\left[\Delta,\binom{\Gamma}{i}\right](g)&\\
&=&f*g+\sum_{i\geqslant 0}\Delta^{i+1}(f)\binom{\Gamma}{i}(g) +
 \sum_{i\geqslant 0}\Delta^{i}(f)\Delta\binom{\Gamma}{i}(g) &\\
&=& f*g + \Delta(f*g)
\end{array}$

\noindent Proposition~2 of~\cite{Vid} allows us to conclude that $\delta$ is a derivation of $R$. Hence $a \delta$ is a locally nilpotent derivation of $R$ for any $a\in\kk$.
So $\exp(a\delta)=(\id + \Delta)^a$ is an automorphism of $R$. 

This last property can be obtained by a direct computation using the combinatorial relation
% Commentaire sur la preuve : pour le calcul de (Id+Delta)^a(f*g) : on appelle k l'indice de developpement de (Id+Delta)^a, \ell l'indice de développement de f * g, i l'indice de développement 
% de la formule de Leibniz généralisée et j l'indice qui permet échange de Delta et Gamma (relation de l'appendice A). Alors en posant k-i=v, j =w et l+i=w+u, on obtient
% (Id+Delta)^a(f)*(Id+Delta)^a(g) grâce à l'égalité ci-dessous où u désigne l'indice de développement (Id + Delta)(f), v désigne l'indice de développement de (Id+ Delta)(g) et w l'indice de 
% developpement de \Delta^u(f)*\Delta^v(g). 

\noindent
\begin{center}$\begin{array}{r@{\,}c@{\,}l}\sum_{i=0}^{u} \binom{a}{v+i}\binom{v+i}{i}\binom{v}{u-i}= \sum_{i=0}^{u} \frac{a(a-1)\cdots (a-v-i+1)}{i!(v-u+i)!(u-i)!} &=&
\binom{a}{v}\sum_{i=0}^{u} \frac{(a-v)\cdots (a-v-i+1) v!}{i!(v-u+i)!(u-i)!}\\[1ex] &=& \binom{a}{v}\sum_{i=0}^{u} \binom{a-v}{i}\binom{v}{u-i}=\binom{a}{v}\binom{a}{u}\end{array}$
\end{center}
where the last equality is the Chu-Vandermonde identity (see~Appendix~\ref{combinatorial}).
\end{proof}

\begin{lemma} \label{compo}
For any $a,b\in\kk$ we have $\phi_a\circ\phi_b=\phi_{a+b}$.
Moreover if $\Delta \neq 0$, then $\phi_a = \id$ if and only if $a=0$. 
When $a \neq 0$ we have $\ker (\phi_a - \id)^n = \ker \Delta^n$  for every $n \in \N$.
\end{lemma}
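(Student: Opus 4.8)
The plan is to handle the three assertions in turn, using throughout that $\Delta$ is locally nilpotent, so that every formal power series in $\Delta$ defines a genuine $\kk$-linear endomorphism of $R$ (each application to an element being a finite sum), and that any two power series in $\Delta$ commute. For the composition law the quickest route is to invoke the identity $\phi_a=\exp(a\delta)$ established in Theorem~\ref{deltaR}: since $a\delta$ and $b\delta$ are scalar multiples of the single locally nilpotent operator $\delta$, they commute, whence $\phi_a\circ\phi_b=\exp(a\delta)\exp(b\delta)=\exp((a+b)\delta)=\phi_{a+b}$. Alternatively one argues purely combinatorially: the coefficient of $\Delta^n$ in $\phi_a\circ\phi_b=\big(\sum_k\binom{a}{k}\Delta^k\big)\big(\sum_l\binom{b}{l}\Delta^l\big)$ equals $\sum_{k=0}^n\binom{a}{k}\binom{b}{n-k}=\binom{a+b}{n}$ by the Chu--Vandermonde identity already used in the proof of Theorem~\ref{deltaR}, which is exactly the coefficient of $\Delta^n$ in $\phi_{a+b}$.

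For the second assertion, the case $a=0$ is immediate since $\phi_0=\sum_k\binom{0}{k}\Delta^k=\id$. Conversely, assume $\Delta\neq0$ and $\phi_a=\id$. As $\Delta$ is nonzero and locally nilpotent, I can exhibit an element $x$ with $\Delta(x)\neq0$ and $\Delta^2(x)=0$: starting from any $y$ with $\Delta(y)\neq0$, pick $m\geqslant1$ maximal with $\Delta^m(y)\neq0$ and set $x=\Delta^{m-1}(y)$. Then all powers $\Delta^k$ with $k\geqslant2$ annihilate $x$, so $\phi_a(x)=x+a\Delta(x)$, and $\phi_a(x)=x$ forces $a\Delta(x)=0$, hence $a=0$.

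For the last assertion, I would factor $\phi_a-\id=\sum_{k\geqslant1}\binom{a}{k}\Delta^k=\Delta\psi=\psi\Delta$, where $\psi:=\sum_{k\geqslant1}\binom{a}{k}\Delta^{k-1}=a\,\id+\eta$ and $\eta:=\sum_{k\geqslant2}\binom{a}{k}\Delta^{k-1}$ is a power series in $\Delta$ with vanishing constant term, hence locally nilpotent. When $a\neq0$ the operator $\psi=a\,\id+\eta$ is invertible, its inverse being the locally finite series $a^{-1}\sum_{m\geqslant0}(-a^{-1}\eta)^m$. Because $\Delta$ and $\psi$ commute we get $(\phi_a-\id)^n=\Delta^n\psi^n=\psi^n\Delta^n$ with $\psi^n$ invertible, so $(\phi_a-\id)^n(x)=0$ if and only if $\Delta^n(x)=0$; that is, $\ker(\phi_a-\id)^n=\ker\Delta^n$ for every $n\in\N$.

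The only point demanding genuine care is the invertibility of $\psi$ in the third part, and more precisely the local nilpotence of $\eta$ used to justify it: one checks that $\eta^m$ is a power series in $\Delta$ whose lowest-order term is a scalar multiple of $\Delta^m$, so that $\Delta^N(x)=0$ forces $\eta^N(x)=0$, and hence the Neumann-type series for $\psi^{-1}$ converges pointwise on $R$. Everything else reduces to routine bookkeeping with power series in the single locally nilpotent operator $\Delta$.
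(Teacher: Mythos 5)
Your proof is correct and follows essentially the same route as the paper: the Chu--Vandermonde computation for the composition law, a slice-type element $x$ with $\Delta(x)\neq 0$, $\Delta^2(x)=0$ for the second claim, and the factorization $(\phi_a-\id)^n=(\text{invertible power series in }\Delta)\circ\Delta^n$ for the third. The only cosmetic difference is that you justify the invertibility of the power series by a pointwise Neumann series, whereas the paper invokes the algebra homomorphism $\kk[\![T]\!]\to\operatorname{End}_\kk(A)$ sending $T$ to $\Delta$; both rest on the same fact.
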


\begin{proof}
Let $f\in R$.
By using equation (\ref{vandermonde}) we have
\begin{align*}
\phi_a\circ\phi_b(f)=\sum_{k,\ell\geqslant0}\binom{a}{k}\binom{b}{\ell}\Delta^{k+\ell}(f)=\sum_{u\geqslant0}\left(\sum_{k+\ell=u}\binom{a}{k}\binom{b}{\ell}\right)\Delta^u(f)=\sum_{u\geqslant0}\binom{a+b}{u}\Delta^u(f)=\phi_{a+b}(f).
\end{align*}
When $\Delta \neq 0$, there exists $f \in R$ such that $\Delta^2(f)=0$ and $\Delta(f)\neq 0$.
Then $\phi_a(f)= f + a \Delta(f)$, hence the second assertion is proved.
%Hence $\phi_a$ is bijective with inverse $\phi_{-a}$ since $\phi_0=\mbox{id}$.

To prove the last assertion, observe that since $\Delta$ is a locally nilpotent derivation, the map $\kk[\![T]\!] \rightarrow {\rm End}_\kk(A)$ sending $T$ to $\Delta$ is 	a well-defined $\kk$-algebra homomorphism.
In particular, every element of the form $\sum_{k \in \N} a_k \Delta^k$ where $a_k \in \kk$ for $k \in \N$ is invertible if $a_0 \neq 0$.
Since $(\phi_a-\id)^n = 
(a^n + \sum_{k \geqslant 1} b_k \Delta^k)\Delta^n$ for some $b_k \in \kk$ for all $k \geqslant 1$, we obtain the result.
\end{proof}

\subsection{A filtration on $R_{(\Delta,\Gamma)}$} \label{ssec-filtration}

It is a classical fact that locally nilpotent derivations induce filtrations on commutative algebras.
In this section we show that the sequence $(\ker\Delta^{i+1})_{i\in\N}$ provides us with a filtration of both the Poisson algebra $A_{(\Delta,\Gamma)}$ and the noncommutative algebra $R_{(\Delta,\Gamma)}$.

\begin{definition} Let $\Delta$ be a locally nilpotent derivation on $A$.
Set $\varepsilon(0)=-\infty$ and for any nonzero element $f \in A$ define $\varepsilon(f)=\min \{i \in \N,\ \Delta^{i+1}(f)=0\}$ and set $A^{\leqslant i}=\{f \in A,\ \varepsilon(f) \leqslant i\} = \ker\Delta^{i+1}$.
By convention set $A^{\leqslant-1}=\{0\}$.
\end{definition}

Note that $\varepsilon=\deg_\Delta$ in the notation of \cite[Section 1.1.8]{Fre} which is a degree function thanks to \cite[Proposition 6.1.1]{Now} since $\Delta$ is locally nilpotent.

\begin{lemma}\label{lem-crochet} For any polynomial $P\in\kk[T]$ we have $[P(\Delta),\Gamma]=Q(\Delta)$ where $Q=XP'$.
In particular, for any integer $i\geqslant0$ we have $\Delta^{i}\Gamma=\Gamma\Delta^{i}+i\Delta^{i}$.
\end{lemma}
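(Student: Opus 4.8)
The plan is to exploit the fact that, on the associative algebra $\mathrm{End}_\kk(A)$ of $\kk$-linear operators, the map $D\colon X\mapsto [X,\Gamma]=X\Gamma-\Gamma X$ is a derivation: for any operators $X,Y$ one has $[XY,\Gamma]=X[Y,\Gamma]+[X,\Gamma]Y$, which is immediate from expanding both sides. The defining relation of a solvable pair says precisely that $D(\Delta)=[\Delta,\Gamma]=\Delta$. Everything then reduces to applying $D$ to powers of $\Delta$ and to linear combinations thereof.

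First I would compute $D$ on the monomial $\Delta^i$ using the Leibniz rule for $D$ together with $D(\Delta)=\Delta$:
\[
[\Delta^i,\Gamma]=D(\Delta^i)=\sum_{j=0}^{i-1}\Delta^{j}\,D(\Delta)\,\Delta^{i-1-j}=\sum_{j=0}^{i-1}\Delta^{j}\,\Delta\,\Delta^{i-1-j}=i\,\Delta^i.
\]
This is exactly the asserted identity $\Delta^i\Gamma=\Gamma\Delta^i+i\Delta^i$, proving the ``in particular'' statement. Writing $P=\sum_i a_i T^i$ and using that $D$ is $\kk$-linear, I then obtain
\[
[P(\Delta),\Gamma]=D\!\left(\sum_i a_i\Delta^i\right)=\sum_i a_i\,i\,\Delta^i=Q(\Delta),
\]
where $Q(T)=\sum_i a_i\,i\,T^i=T\,P'(T)$, which is the general claim.

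Those who prefer to avoid the operator-derivation viewpoint can instead run a direct induction on $i$ for the power case: the base case $i=1$ is the hypothesis $[\Delta,\Gamma]=\Delta$, and in the inductive step one rewrites $\Delta^{i+1}\Gamma=\Delta(\Delta^i\Gamma)=\Delta(\Gamma\Delta^i+i\Delta^i)$ and uses $\Delta\Gamma=\Gamma\Delta+\Delta$ once more to peel off an extra factor, yielding $\Gamma\Delta^{i+1}+(i+1)\Delta^{i+1}$; linearity in $P$ finishes the argument. There is no genuine obstacle here: the only substantive input is the relation $[\Delta,\Gamma]=\Delta$ and the Leibniz rule, so the crux is simply recognizing that $[-,\Gamma]$ is a derivation of the operator algebra and setting up the bookkeeping so that $T\mapsto \Delta$ sends $TP'$ to $\sum_i a_i i\Delta^i$.
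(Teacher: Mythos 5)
Your proof is correct and matches the paper's approach: the paper likewise reduces by linearity to the monomials $P=T^i$ and handles those by induction, which is exactly your second argument (and your Leibniz-rule computation for $[-,\Gamma]$ acting on $\Delta^i$ is just a repackaging of the same induction). Nothing to add.
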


\begin{proof} By linearity, it is enough to prove the relation for $P=X^i$, where $i\in \N$, which is easily obtained by induction.
\end{proof}

\begin{lemma}
\label{filtdg}
Let $(\Delta,\Gamma)$ be a solvable pair on $A$ and fix integers $i,j\geqslant0$.
\begin{enumerate}
 \item[(1)] We have $\Delta(A^{\leqslant i})\subseteq A^{\leqslant i-1}$ and more precisely $\varepsilon(\Delta(f))=\varepsilon(f)-1$ for every $f \in A$.
 \item[(2)] We have $\Gamma(A^{\leqslant i})\subseteq A^{\leqslant i}$. Hence $\varepsilon(P(\Gamma)(f)) \leqslant \varepsilon(f)$ for every $f \in A$ and $P \in \kk[T]$. 
 % Remarque V (15/01) on a bien une inégalité et pas une égalité... Gamma peut faire descendre le degré (il suffit de penser à un élément non nul de Ker Gamma.
 \item[(3)] If $f,g\in A$ are such that $\varepsilon(f)=i$ and $\varepsilon(g)=j$, then $\varepsilon(fg)=i+j$.
 \item[(4)] For any $f,g \in A$ we have
 \[f\ast g-g\ast f = \Delta(f)\Gamma(g)-\Delta(g)\Gamma(f)+\sum_{i\geqslant 2}\left(\Delta^i(f)\binom{\Gamma}{i}(g)-\Delta^i(g)\binom{\Gamma}{i}(f)\right)\]
 In particular, if $\varepsilon(f)=i$ and $\varepsilon(g)=j$ then $\varepsilon( f*g - g*f ) \leqslant i + j - 1$.
\end{enumerate}
\end{lemma}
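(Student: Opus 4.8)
The plan is to treat the four assertions in order, the first three being quick consequences of the definition of $\varepsilon$ and of the commutation relation of Lemma~\ref{lem-crochet}, after which I would combine them to obtain the degree estimate in~(4). For~(1) I would argue directly from the definition: if $f \neq 0$ and $\varepsilon(f) = i \geqslant 1$, then $\Delta^{i+1}(f) = 0$ and $\Delta^i(f) \neq 0$, so $\Delta^i(\Delta(f)) = 0$ while $\Delta^{i-1}(\Delta(f)) = \Delta^i(f) \neq 0$, whence $\varepsilon(\Delta(f)) = i-1$; the cases $\varepsilon(f) = 0$ and $f = 0$ are trivial. For~(2), Lemma~\ref{lem-crochet} gives $\Delta^{i+1}\Gamma = \Gamma\Delta^{i+1} + (i+1)\Delta^{i+1}$, so applying this to any $f \in A^{\leqslant i} = \ker\Delta^{i+1}$ yields $\Delta^{i+1}(\Gamma(f)) = 0$, i.e.\ $\Gamma(A^{\leqslant i}) \subseteq A^{\leqslant i}$. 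Since $A^{\leqslant i}$ is a $\kk$-subspace stable under both $\Gamma$ and $\id$, it is stable under every $P(\Gamma)$, which gives $\varepsilon(P(\Gamma)(f)) \leqslant \varepsilon(f)$.

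For~(3) I would use the Leibniz formula $\Delta^{n}(fg) = \sum_{k=0}^{n}\binom{n}{k}\Delta^{k}(f)\Delta^{n-k}(g)$. Taking $n = i+j$, a summand survives only when $k \leqslant i$ and $n - k \leqslant j$ hold simultaneously, i.e.\ only for $k = i$, leaving $\binom{i+j}{i}\Delta^i(f)\Delta^j(g)$; this is nonzero because $A$ is a domain and $\car\kk = 0$, so $\varepsilon(fg) \geqslant i+j$, while $n = i+j+1$ forces every summand to vanish, giving $\varepsilon(fg) \leqslant i+j$. Alternatively this is simply the multiplicativity of the degree function $\varepsilon = \deg_\Delta$ already recorded just after its definition.

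Finally, for~(4), subtracting the two defining series~\eqref{ast} for $f\ast g$ and $g\ast f$ cancels the order-zero terms $fg = gf$ by commutativity of $A$ and produces the displayed identity verbatim. For the degree bound I would rewrite the right-hand side as the finite sum $\sum_{k \geqslant 1}\bigl(\Delta^k(f)\binom{\Gamma}{k}(g) - \Delta^k(g)\binom{\Gamma}{k}(f)\bigr)$ and estimate each summand: by~(1) iterated, $\varepsilon(\Delta^k(f)) = i - k$ (the term vanishing once $k > i$); by~(2), $\varepsilon\bigl(\binom{\Gamma}{k}(g)\bigr) \leqslant j$; hence by~(3), $\varepsilon\bigl(\Delta^k(f)\binom{\Gamma}{k}(g)\bigr) \leqslant (i-k)+j \leqslant i+j-1$ since $k \geqslant 1$, and symmetrically for the other half. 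Subadditivity of $\varepsilon$ over this finite sum then yields $\varepsilon(f\ast g - g\ast f) \leqslant i+j-1$.

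I do not anticipate a deep obstacle here: the only points requiring care are checking that the leading coefficient $\binom{i+j}{i}\Delta^i(f)\Delta^j(g)$ in~(3) is genuinely nonzero, which is exactly where the domain hypothesis and characteristic zero intervene, and, in~(4), keeping careful track of which steps give equalities and which give inequalities so that the shift coming from $k \geqslant 1$ is correctly propagated through the product rule of~(3).
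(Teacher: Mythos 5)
Your proposal is correct and follows the same route as the paper: (1) directly from the definition of $\varepsilon$, (2) via the stability of $\ker\Delta^{i+1}$ under $\Gamma$ given by Lemma~\ref{lem-crochet}, (3) via the Leibniz formula together with the domain and characteristic-zero hypotheses, and (4) by subtracting the two series in~\eqref{ast} and combining the degree estimates of (1)--(3). The paper's proof of (4) is a one-line reference to these ingredients; you have simply written out the same bookkeeping in full.
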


\begin{proof}
Assertion (1) is obvious since $\Delta(f) \in \ker \Delta^{i-1}$ if and only if $f \in \ker \Delta^i$. Assertion~(2) follows from~Lemma~\ref{lem-crochet} showing that $\ker \Delta^i$ is stable
by $\Gamma$ and hence by every polynomial in $\Gamma$. 
Assertion (3) is true since from Leibniz formula we get $\Delta^{i+j}(fg)=\binom{i+j}{i}\Delta^i(f)\Delta^j(g)\neq0$ because $A$ is a domain and $\car\kk=0$.
Assertion (4) follows from \eqref{ast} and then assertions (1) and (2).
\end{proof}

Recall that if $(T^{\leqslant i})_{i\in\N}$ is a filtration of a ring $T$, its associated graded ring $\gr(T)=\bigoplus T^{\leqslant i}/T^{\leqslant i-1}$ is an $\N$-graded ring whose homogeneous elements of degree $i$ are denoted by $x+T^{\leqslant i-1}$ for an element $x\in T^{\leqslant i}\setminus T^{\leqslant i-1}$.

\begin{proposition}\label{prop-linkRA} Assume that $A$ is a domain.
\begin{enumerate}
\item[(1)] The family $(A^{\leqslant i})_{i\geqslant0}$ is a Poisson algebra filtration of $A_{(\Delta,\Gamma)}$ of degree $-1$, meaning that it is an algebra filtration of $A$ together with $\{A^{\leqslant i},A^{\leqslant j}\}\subseteq A^{\leqslant i+j-1}$ for all integers $i,j\geqslant0$.
Moreover, the associated graded algebra $\gr(A)$ is a domain. 
\item[(2)] The family $(A^{\leqslant i})_{i\geqslant0}$ is an algebra filtration of $R_{(\Delta,\Gamma)}$.
Moreover, the associated graded algebra $\gr(R)$ is equal to $\gr(A)$.
\item[(3)] The commutative algebra $\gr(R)=\gr(A)$ can be endowed with the following three Poisson brackets
\begin{enumerate}
 \item[(a)] $\{\overline{f},\overline{g}\}':= (f*g-g*f) + A^{\leqslant i+j-2}$
 \item[(b)] $\{\overline{f},\overline{g}\}'':= \{f,g\}+ A^{\leqslant i+j-2}$
 \item[(c)] $\{\overline{f},\overline{g}\}''':= \overline{\Delta}(\overline{f})\overline{\Gamma}(\overline{g}) - \overline{\Gamma}(\overline{f})\overline{\Delta}(\overline{g}) \in A^{\leqslant i+j-1}/A^{\leqslant i+j-2}$
\end{enumerate}
for homogeneous elements $\overline{f}$ and $\overline{g}$ of respective $\varepsilon$-degree $i$ and $j$.
In (c) the pair of maps $(\overline{\Delta},\overline{\Gamma})$ is the solvable pair of homogeneous derivations of $\gr(A)$ of respective degree $-1$ and $0$ which is induced by the solvable pair of filtered derivations $\Delta$ and $\Gamma$ of $A$.
%\[
%\{\overline{f},\overline{g}\}':= \overline{f*g-g*f}= \overline{\Delta(f)\Gamma(g) - \Gamma(f)\Delta(g)}=\overline{\{f,g\}}\in A^{\leqslant i+j-1}/A^{\leqslant i+j-2}
%\]
%for homogeneous elements $\overline{f}\in A^{\leqslant i}/A^{\leqslant i-1}$ and $\overline{g}\in A^{\leqslant j}/A^{\leqslant j-1}$.
%This Poisson bracket is graded of degree $-1$. 
%For $x \in R^{\leqslant i+j-1}$, we denote by $\overline{x}$ the image of $x$ 
%in $R^{\leqslant i+j-1}/R^{\leqslant i+j-2}$. The map
%$$(\textrm{gr}(f),\textrm{gr}(g)) \in {\textrm{gr}}(A) \times \textrm{gr}(A) \longmapsto 
%\{\textrm{gr}(f),\textrm{gr}(g)\}:= \overline{f*g-g*f}= \overline{\Delta(f)\Gamma(g) - \Gamma(f)\Delta(g)} \in \textrm{gr}(A)$$
%\noindent is then well defined and defines a Poisson structure on $\textrm{gr}(A)$.
\item[(4)] The Poisson structures defined in (3) are all equal and make $\gr(A)=\gr(R)$ into a graded Poisson algebra of degree $-1$.
%The maps $\overline{\Delta}$ and $\overline{\Gamma}$ given by $\overline{\Delta}(\overline{f})=\overline{\Delta(f)}$ and $\overline{\Gamma}(\overline{f})=\overline{\Gamma(f)}$ on homogeneous elements $\overline{f}$ extends uniquely by linearity to well-defined graded derivations of $\gr(A)$ of respective degree $-1$ and $0$.
%Moreover they satisfy $[\overline{\Delta},\overline{\Gamma}]=\overline{\Delta}$. 
%\item[(7)] The Poisson structure obtained in (3) on $\gr(R)$ is equal to the Poisson structure given by the solvable pair $(\overline{\Delta},\overline{\Gamma})$ on $\gr(A)$, that is :
%\[\{\overline{f},\overline{g}\}=\overline{\Delta}(\overline{f})\overline{\Gamma}(\overline{g}) - \overline{\Gamma}(\overline{f})\overline{\Delta}(\overline{g})\]
%for homogeneous elements $\overline{f}\in A^{\leqslant i}/A^{\leqslant i-1}$ and $\overline{g}\in A^{\leqslant j}/A^{\leqslant j-1}$.
\end{enumerate}
\end{proposition}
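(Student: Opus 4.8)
The plan is to reduce everything to the elementary observation of Section~\ref{sec-present}: on any commutative algebra a biderivation of the form $D\wedge E$ with $[D,E]=D$ automatically satisfies the Jacobi identity. Concretely, I would (i) check the three formulas descend to well-defined bilinear maps on $\gr(A)=\gr(R)$, (ii) prove they coincide, and (iii) recognise their common value as $\overline{\Delta}\wedge\overline{\Gamma}$ for the induced solvable pair on $\gr(A)$, from which the Poisson axioms and the degree follow for free.

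For well-definedness, bracket (c) is unproblematic: $\overline{\Delta}$ and $\overline{\Gamma}$ are the homogeneous derivations of $\gr(A)$ induced by $\Delta$ and $\Gamma$, well-defined of respective degrees $-1$ and $0$ because $\Delta(A^{\leqslant i})\subseteq A^{\leqslant i-1}$ and $\Gamma(A^{\leqslant i})\subseteq A^{\leqslant i}$ (Lemma~\ref{filtdg}). For (a) and (b), replacing $f$ by $f+h$ with $\varepsilon(h)\leqslant i-1$ changes $f\ast g-g\ast f$ by $h\ast g-g\ast h$, of $\varepsilon$-degree $\leqslant i+j-2$ by Lemma~\ref{filtdg}(4), and changes $\{f,g\}$ by $\{h,g\}$, of $\varepsilon$-degree $\leqslant i+j-2$ by Proposition~\ref{prop-linkRA}(1); both lie in $A^{\leqslant i+j-2}$, so the classes are unchanged, the second variable being symmetric. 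To see the three brackets agree, I would read off the leading terms. In the expansion of Lemma~\ref{filtdg}(4) every summand with index $k\geqslant 2$ has $\varepsilon(\Delta^k(f)\binom{\Gamma}{k}(g))\leqslant(i-k)+j\leqslant i+j-2$, hence vanishes in $\gr(A)$, leaving $\{\overline{f},\overline{g}\}'=\{f,g\}+A^{\leqslant i+j-2}=\{\overline{f},\overline{g}\}''$. Unfolding the induced derivations gives $\overline{\Delta}(\overline{f})\,\overline{\Gamma}(\overline{g})=\Delta(f)\Gamma(g)+A^{\leqslant i+j-2}$ and $\overline{\Gamma}(\overline{f})\,\overline{\Delta}(\overline{g})=\Gamma(f)\Delta(g)+A^{\leqslant i+j-2}$, whose difference is once more the class of $\{f,g\}$, so $\{\overline{f},\overline{g}\}'''=\{\overline{f},\overline{g}\}''$.

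It then remains to see that the common bracket is Poisson of degree $-1$, and here description (c) does the work. A one-line computation on each quotient $A^{\leqslant i}/A^{\leqslant i-1}$ shows $[\overline{\Delta},\overline{\Gamma}]=\overline{[\Delta,\Gamma]}=\overline{\Delta}$, while $\overline{\Delta}$ lowers degree by one and is therefore locally nilpotent; thus $(\overline{\Delta},\overline{\Gamma})$ is a solvable pair on the commutative algebra $\gr(A)$. By the verification recalled in Section~\ref{sec-present}, the biderivation $\overline{\Delta}\wedge\overline{\Gamma}=\{-,-\}'''$ satisfies the Jacobi identity (bilinearity, antisymmetry and the Leibniz rule being evident), so it is a Poisson bracket; and since $\deg\overline{\Delta}=-1$ and $\deg\overline{\Gamma}=0$, it sends degrees $(i,j)$ to degree $i+j-1$, i.e.\ it has degree $-1$. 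The only point requiring care is the bookkeeping of $\varepsilon$-degrees when discarding the higher-order terms and identifying leading parts — in particular that $\Delta(f)\Gamma(g)$ is read in the right homogeneous component even when $\varepsilon(\Gamma(g))<j$; working throughout with the induced map $\overline{\Gamma}$ rather than with $\Gamma(g)$ itself keeps this uniform and is what makes the identification $\{-,-\}''=\{-,-\}'''$ clean.
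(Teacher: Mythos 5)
Your argument for parts (3) and (4) is correct, but it takes a cleaner logical route than the paper. The paper first establishes \emph{separately} that each of the three formulas defines a Poisson bracket — $\{-,-\}'$ via the filtered semiclassical limit construction of \cite[Section 2.4]{Goo}, $\{-,-\}''$ by a ``tedious but straightforward computation'', and $\{-,-\}'''$ from the fact that $(\overline{\Delta},\overline{\Gamma})$ is a solvable pair on $\gr(A)$ — and only then checks the three coincide. You instead check well-definedness, prove the three classes agree by comparing leading terms (exactly the computations the paper does in its part (4)), and then verify the Jacobi identity \emph{once}, for description (c), by observing that $[\overline{\Delta},\overline{\Gamma}]=\overline{\Delta}$ so that $\overline{\Delta}\wedge\overline{\Gamma}$ is automatically Poisson by the remark of Section~\ref{sec-present}. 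This buys you the Poisson axioms for (a) and (b) for free and avoids both the external citation and the unwritten computation; the price is only that you must check well-definedness of (a) and (b) directly, which you do correctly via Lemma~\ref{filtdg}. Your attention to the degenerate case $\varepsilon(\Gamma(g))<j$, handled by working with $\overline{\Gamma}$ throughout, is exactly the right precaution.

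Two small caveats. First, your proposal does not address parts (1) and (2) of the statement at all: that $(A^{\leqslant i})_i$ is an algebra filtration of $A$ and of $R$, that $\gr(A)$ is a domain (assertion (3) of Lemma~\ref{filtdg}), and that $\gr(R)=\gr(A)$ as algebras (from $f\ast g=fg+u$ with $u\in A^{\leqslant i+j-1}$). These are routine consequences of Lemma~\ref{filtdg} and equations \eqref{pbdg} and \eqref{ast}, but they are part of the claim and are moreover used implicitly in your own argument, so they should be stated. Second, you cite Proposition~\ref{prop-linkRA}(1) to bound $\varepsilon(\{h,g\})$ while proving the proposition itself; the bound really comes from \eqref{pbdg} together with assertions (1) and (2) of Lemma~\ref{filtdg}, and should be attributed there to avoid the appearance of circularity.
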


\begin{proof} 
(1). The fact that $(A^{\leqslant i})_{i\geqslant0}$ is an algebra filtration of $A$ is an easy consequence of Leibniz formula, see \cite[Section 1.1.8]{Fre} for instance. 
It is a Poisson filtration of degree $-1$ thanks to equation \eqref{pbdg} and assertions (1) and (2) of Lemma \ref{filtdg}.
Moreover $\gr(A)$ is a domain thanks to assertion (3) of Lemma \ref{filtdg}.

(2). The fact that $(A^{\leqslant i})_{i\geqslant0}$ is an algebra filtration of $R_{(\Delta,\Gamma)}$ follows from equation \eqref{ast} since the degree $\varepsilon$ is decreased by $\Delta$ and preserved by $\Gamma$ thanks to Lemma \ref{filtdg}.
More precisely, if $f,g\in A$ are such that $\varepsilon(f)=i$ and $\varepsilon(g)=j$, then $f\ast g =fg + u$ for some $u\in A^{\leqslant i+j-1}$. 
This also implies that the associated graded algebras $\gr(A)$ and $\gr(R)$ have the same multiplication, so that they are indeed equal.

(3). The fact that $\{-,-\}'$ is a Poisson bracket on $\gr(R)$ follows the filtered version of the semi-classical limit construction, see \cite[Section 2.4]{Goo}.
The fact that $\{-,-\}''$ is a well-defined biderivation satisfying the Jacobi identity on $\gr(A)$ follows by tedious but straightforward computation from the fact that $\{-,-\}$ is a filtered Poisson bracket on $A$.
Finally $\{-,-\}'''$ is a Poisson bracket since $(\overline{\Delta},\overline{\Gamma})$ is a solvable pair of derivations of $\gr(A)$.

(4). The Poisson brackets $\{-,-\}'$ and $\{-,-\}''$ are the same since both $f\ast g-g\ast f$ and $\{f,g\}$ belong to $A^{\leqslant i+j-1}$ combined with $f\ast g-g\ast f-\{f,g\}\in A^{\leqslant i+j-2}$ thanks to assertion 4 of Lemma \ref{filtdg}.
Finally, for homogeneous elements $\overline{f}$ and $\overline{g}$ of $\varepsilon$-degree $i$ and $j$ we have 
\begin{align*}
 \{\overline{f},\overline{g}\}'''
 & = \overline{\Delta}(\overline{f})\overline{\Gamma}(\overline{g}) - \overline{\Gamma}(\overline{f})\overline{\Delta}(\overline{g}) \\
 & = (\Delta(f)+A^{\leqslant i-2})(\Gamma(g)+A^{\leqslant i-1}) -  (\Delta(g)+A^{\leqslant i-2})(\Gamma(f)+A^{\leqslant i-1}) \\
 & = (\Delta(f)\Gamma(g)+A^{\leqslant i+j-2}) -  (\Delta(g)\Gamma(f)+A^{\leqslant i+j-2}) \\
 & = (\Delta(f)\Gamma(g) - \Delta(g)\Gamma(f))+A^{\leqslant i+j-2}  = \{f,g\} + A^{\leqslant i+j-2} = \{\overline{f},\overline{g}\}''
 \end{align*}
Hence $\{-,-\}''$ and $\{-,-\}'''$ are the same.
\end{proof}

In fact it is enough to assume that $\ker \Delta$ is a domain to obtain that $R$ is a domain.
More precisely we have the following corollary.

\begin{corollary}\label{cor-domain}
The following assertions are equivalent
\begin{enumerate}
\item $\ker \Delta$ is a domain,
\item $A$ is a domain,
\item $\gr(A) = \gr(R)$ is a domain,
\item $R$ is a domain.
\end{enumerate}
\end{corollary}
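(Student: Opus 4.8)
The plan is to establish the cycle of implications $(1)\Rightarrow(2)\Rightarrow(3)\Rightarrow(4)\Rightarrow(1)$. Two of these links are essentially already in hand. The implication $(2)\Rightarrow(3)$ is precisely the last sentence of Proposition~\ref{prop-linkRA}(1) together with the identification $\gr(R)=\gr(A)$ from Proposition~\ref{prop-linkRA}(2). The implication $(4)\Rightarrow(1)$ is immediate: by definition $\ker\Delta=A^{\leqslant 0}$, and for $f,g\in\ker\Delta$ all higher terms in \eqref{ast} vanish, so $f\ast g=fg\in\ker\Delta$; thus $\ker\Delta$ is a (commutative) subalgebra of $R$ and inherits the domain property from $R$. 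So the only substantive work lies in $(1)\Rightarrow(2)$ and $(3)\Rightarrow(4)$.

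For $(1)\Rightarrow(2)$ I would rerun the computation of Lemma~\ref{filtdg}(3), but paying attention to where the leading coefficients live. Given nonzero $f,g\in A$ with $\varepsilon(f)=i$ and $\varepsilon(g)=j$, the Leibniz formula yields
\[\Delta^{i+j}(fg)=\binom{i+j}{i}\Delta^i(f)\Delta^j(g),\]
every other term dropping out because $\Delta^{i+1}(f)=\Delta^{j+1}(g)=0$. The crucial point is that $\Delta^i(f)$ and $\Delta^j(g)$ are \emph{nonzero} elements of $\ker\Delta$; since $\ker\Delta$ is assumed to be a domain their product is nonzero, and $\binom{i+j}{i}\neq0$ as $\car\kk=0$. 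Hence $\Delta^{i+j}(fg)\neq0$, so $fg\neq0$ and $A$ is a domain. What makes the weak hypothesis (1) suffice is exactly that this argument only ever multiplies two elements of $\ker\Delta$, never two arbitrary elements of $A$.

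For $(3)\Rightarrow(4)$ I would invoke the standard transfer of the domain property from an associated graded ring to the filtered ring. The filtration $(A^{\leqslant i})_{i\geqslant0}$ of $R$ is exhaustive (since $\Delta$ is locally nilpotent, every element lies in some $\ker\Delta^{i+1}$) and separated ($A^{\leqslant-1}=\{0\}$), so each nonzero $a\in R$ has a well-defined leading symbol $\overline{a}\neq0$ in $\gr(R)$ of degree $\varepsilon(a)$. For nonzero $a,b\in R$, the element $\overline{a}\,\overline{b}$ is the degree-$(\varepsilon(a)+\varepsilon(b))$ symbol of $ab$; since $\gr(R)=\gr(A)$ is a domain we get $\overline{a}\,\overline{b}\neq0$, whence $ab\notin A^{\leqslant\varepsilon(a)+\varepsilon(b)-1}$ and in particular $ab\neq0$. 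This closes the cycle and gives all four equivalences.

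I do not expect a genuine obstacle here: all four statements are controlled by the same leading-term mechanism built on the Leibniz rule and the fact that $\Delta$ strictly drops $\varepsilon$-degree. The one point deserving care is making explicit in $(1)\Rightarrow(2)$ that the products entering the top Leibniz term already lie in $\ker\Delta$, so that the a priori weakest condition is in fact enough to force $A$, and hence $\gr(A)=\gr(R)$ and $R$, to be domains.
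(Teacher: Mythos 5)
Your proof is correct and follows essentially the same route as the paper: the same cycle $(1)\Rightarrow(2)\Rightarrow(3)\Rightarrow(4)\Rightarrow(1)$, with $(2)\Rightarrow(3)$ read off from Proposition~\ref{prop-linkRA}, $(3)\Rightarrow(4)$ by the classical leading-symbol transfer, $(4)\Rightarrow(1)$ because $\ker\Delta$ is a subring of $R$, and $(1)\Rightarrow(2)$ by observing that the top Leibniz term $\binom{i+j}{i}\Delta^i(f)\Delta^j(g)$ is a product of nonzero elements of $\ker\Delta$. Your explicit remark that the argument only ever multiplies elements of $\ker\Delta$ is exactly the point the paper makes.
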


\begin{proof}
$(2) \implies (3)$ is part of assertion $(1)$ of Proposition~\ref{prop-linkRA}.

$(3) \implies (4)$ is classical. %Corollary~\ref{cor-domain}.

$(4) \implies (1)$ since $\ker \Delta$ is a subring of $R$.

$(1) \implies (2)$. 
With the notation of the proof of assertion $(3)$ of Lemma~\ref{filtdg}, we have $\Delta^{i}(f) \in \ker \Delta$ and $\Delta^j(g) \in \ker \Delta$. So 
$\binom{i+j}{i}\Delta^i(f)\Delta^j(g) \neq 0$ implies that $\Delta^{i+j}(fg) \neq 0$, hence $fg \neq 0$.
\end{proof}

\begin{example}
Consider the solvable pair on $A=\kk[X_0,X_1]$ given by $\Delta=X_0\partial_{X_1}$ and $\Gamma = a X_0 \partial_{X_0} + (a+1) X_1 \partial_{X_1}$ for $a\in\kk$.
We have $A^{\leqslant i}=\ker\Delta^{i+1}=\bigoplus_{\ell\leqslant i}\kk[X_0]X_1^\ell$.
Then $\gr(A)=\bigoplus_{i\geqslant0}A^{\leqslant i}/A^{\leqslant i-1}\cong\bigoplus_{i\geqslant0}\kk[X_0]X_1^i\cong A$.
Observe that $X_0\in A^{\leqslant 0}$ and $X_1\in A^{\leqslant 1}$ and
\[X_0\ast X_1-X_1\ast X_0 = - a X_0\ast X_0 \in A^{\leqslant 0}\]
so that the induced Poisson bracket on $\gr(A)$ is given by
\[\{X_0+A^{\leqslant -1},X_1+A^{\leqslant 0}\}= - a X_0\ast X_0 + A^{\leqslant -1}.\]
\end{example}

In the previous example $\gr(A)$ is isomorphic to $A$ as a Poisson algebra.
This is not always the case as the following example demonstrates.

\begin{example}
Consider the solvable pair on $A=\kk[X_0,X_1,X_2]$ given by $\Delta=X_0\partial_{X_1}$ and $\Gamma=aX_0\partial_{X_0}+(a+1)X_1\partial_{X_1}+(aX_2+X_0)\partial_{X_2}$.
Since $\ker\Delta=\kk[X_0,X_2]$ it is clear that $\gr(A)\cong A$.
%Hence $A^{\leqslant i}=\ker\Delta^{i+1}=\bigoplus_{\ell\leqslant i}\kk[X_0,X_2]X_1^\ell$ and
%then $\gr(A)=\bigoplus_{i\geqslant0}A^{\leqslant i}/A^{\leqslant i-1}\cong\bigoplus_{i\geqslant0}\kk[X_0,X_2]X_1^i\cong A$.
The induced solvable pair on $\gr(A)$ is given by $\overline{\Delta}=X_0\partial_{X_1}$ and $\overline{\Gamma}=aX_0\partial_{X_0}+(a+1)X_1\partial_{X_1}+aX_2\partial_{X_2}$.
Despite being isomorphic as associative algebras, $\gr(A)$ and $A$ are not isomorphic as Poisson algebras (the set of linear Poisson derivations provides an invariant, details are given in Section \ref{nondiaggam}).
\end{example}

The following example focuses on the associative structure of $\gr(A)$.
To lighten notation, we denote by $\gr(f)=f+A^{\leqslant i-1}$ the image in $\gr(A)$ of any element $f\in A$ with $\epsilon(f)=i\in\N$.

\begin{example}
Consider the derivation $\Delta=X_0\partial_{X_1}+X_1\partial_{X_2}$ of $A=\kk[X_0,X_1,X_2]$.
It is classical that $A^{\leqslant 0}=\ker\Delta=\kk[X_0,F_2]$ where $F_2=2X_0X_2-X_1^2$, see for instance \cite[Example 6.7.1.]{Now}. 
Recall that $\varepsilon(fg)=\varepsilon(f)+\varepsilon(g)$ and that $\varepsilon(X_k)=k$ for $k\in\{0,1,2\}$.
Therefore, for all $i\geqslant 1$, we have $\varepsilon(X_1{X_2}^{i-1})= 2i-1$ and $\varepsilon({X_2}^{i})=2i$.
Hence the family $\mathcal{B}=(1,X_1{X_2}^{i-1},{X_2}^{2i})_{i\geqslant 1}$ is free over $A^{\leqslant 0}$.
Moreover observe that the Hilbert series of $A^{\leqslant 0} \bigoplus_{i\geqslant 1} (A^{\leqslant 0} X_1 {X_2}^{i-1} \oplus  A^{\leqslant 0} {X_2}^{i})$ is the same that the one of $\kk[X_0, X_1, X_2]$.
Hence $\mathcal{B}$ is a basis of $\kk[X_0,X_1,X_2]$ over $A^{\leqslant 0}$.
This implies that $\gr(A) = A^{\leqslant 0} \oplus \bigoplus_{i\geqslant1} A^{\leqslant 0}\ \gr(X_1{X_2}^{i-1}) \oplus A^{\leqslant 0}\ \gr({X_2}^{i})$. 
Observe that $\mbox{gr}(X_2) \in \gr(A)$ generates a polynomial ring over $A^{\leqslant 0}$ and that we have the relation $X_1^2= 2X_0X_2 - F_2$.
Therefore $\gr(A) \cong k[X_0,F_2,X_2][T]/(T^2 - (2X_0 X_2 - F_2)) \cong k[X_0,X_2,T] \cong A$ since $T^2 - (2X_0 X_2 - F_2)$ is a degree one polynomial in $F_2$.
\end{example}

Although this is true in the three preceding examples, it is unclear to us whether or not $\gr(A)\cong A$ in general.
We will define in Section \ref{finerfiltation} a finer filtration for which it is always true.

\subsection{Normal elements}

The filtration introduced in the previous section allow us to study Poisson automorphisms and Poisson derivations of $\gr(R) = \gr(A)$ induced by (strongly) normal elements of $R$.
This will be useful in Section~\ref{sec-CYunimod}.
%We also introduce the notion of strongly normal element (see Definition~\ref{dfn-strongly-ne}) which are specific normal elements of $R$ and which will be very useful in Sections~\ref{sec-trigo}, \ref{sec-diago-R} and~\ref{unimod}.
Recall that if $\sigma$ and $\tau$ are endomorphisms of a ring $T$ a $(\sigma,\tau)$-derivation of $T$ is an endomorphism $\delta$ of $T$ such that $\delta(rs)=\sigma(r)\delta(s)+\delta(r)\tau(s)$ for all $r,s\in T$.

\begin{lemma}\label{lem-autom-deriv}
Let $(\Gamma,\Delta)$ be a solvable pair on $A$ and consider an automorphism $\Phi$ of $R=(A,\ast)$ that is compatible with the filtration $\varepsilon$, i.e. such that $\varepsilon(\Phi(f))=\varepsilon(f)$ for all $f \in R$.
Then $\Phi$ induces a Poisson automorphism $\ol{\Phi}$ of $\gr(R)=\gr(A)$ given by $\ol{\Phi}(f+A^{\leqslant i})=\Phi(f)+A^{\leqslant i}$ for all $f\in R$ with $\varepsilon(f)=i$.
Moreover if $\ol{\Phi}=\id$, then $\Phi-\id$ induces a Poisson derivation of $\gr(R)=\gr(A)$ of degree $-1$.
\end{lemma}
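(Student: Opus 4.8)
The plan is to run everything through the associated graded, using that by Proposition~\ref{prop-linkRA} the algebra $\gr(R)=\gr(A)$ carries the Poisson bracket $\{\ol{f},\ol{g}\}'=(f\ast g-g\ast f)+A^{\leqslant i+j-2}$ on homogeneous elements of $\varepsilon$-degrees $i,j$, and that the $\ast$-product and the commutative product of $A$ agree at top degree.

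For the first assertion I would first note that the hypothesis $\varepsilon(\Phi(f))=\varepsilon(f)$ gives $\Phi(A^{\leqslant i})\subseteq A^{\leqslant i}$, and applying it to $f=\Phi^{-1}(h)$ shows $\varepsilon(\Phi^{-1}(h))=\varepsilon(h)$, so $\Phi^{-1}$ is filtered as well and $\Phi(A^{\leqslant i})=A^{\leqslant i}$. Hence $\Phi$ descends to a graded $\kk$-linear map $\ol{\Phi}$ on $\gr(R)$, which is an algebra automorphism by the usual filtered-to-graded functoriality (its inverse is $\ol{\Phi^{-1}}$). To see it is Poisson I would simply compute, for homogeneous $\ol{f},\ol{g}$ of degrees $i,j$ and using that $\Phi$ is a $\kk$-algebra morphism of $R$,
\[ \ol{\Phi}(\{\ol{f},\ol{g}\}') = \Phi(f\ast g-g\ast f)+A^{\leqslant i+j-2} = \big(\Phi(f)\ast\Phi(g)-\Phi(g)\ast\Phi(f)\big)+A^{\leqslant i+j-2} = \{\ol{\Phi}(\ol{f}),\ol{\Phi}(\ol{g})\}', \]
where the last equality reads off the defining formula for $\{-,-\}'$ using $\varepsilon(\Phi(f))=i$ and $\varepsilon(\Phi(g))=j$.

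For the second assertion, assume $\ol{\Phi}=\id$. This means $\Phi(f)-f\in A^{\leqslant i-1}$ whenever $\varepsilon(f)\leqslant i$, i.e. $D:=\Phi-\id$ satisfies $D(A^{\leqslant i})\subseteq A^{\leqslant i-1}$, so $D$ strictly lowers the $\varepsilon$-degree. I would then define $\ol{D}$ on $\gr(A)$ of degree $-1$ by $\ol{D}(f+A^{\leqslant i-1})=D(f)+A^{\leqslant i-2}$ for $\varepsilon(f)=i$; well-definedness is immediate since $D(A^{\leqslant i-1})\subseteq A^{\leqslant i-2}$. Writing $\Phi(f)=f+D(f)$, the identity $\Phi(f\ast g)=\Phi(f)\ast\Phi(g)$ expands to $D(f\ast g)=f\ast D(g)+D(f)\ast g+D(f)\ast D(g)$, and applied to the $\ast$-commutator $c=f\ast g-g\ast f$ it gives
\[ D(c)=\big(f\ast D(g)-D(g)\ast f\big)+\big(D(f)\ast g-g\ast D(f)\big)+\big(D(f)\ast D(g)-D(g)\ast D(f)\big). \]

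The key point, and the only real obstacle, is that $D=\Phi-\id$ is not a derivation on the nose: one must see that its "quadratic" term is negligible in the relevant graded piece. Here $D(f)\ast D(g)-D(g)\ast D(f)$ is a $\ast$-commutator of elements of degrees $\leqslant i-1$ and $\leqslant j-1$, so by Lemma~\ref{filtdg}(4) it lies in $A^{\leqslant i+j-3}$, precisely the submodule factored out in the degree $i+j-2$ piece where $\ol{D}(\{\ol{f},\ol{g}\}')$ lives. The remaining two commutators are, modulo $A^{\leqslant i+j-3}$, the representatives of $\{\ol{f},\ol{D}(\ol{g})\}'$ and $\{\ol{D}(\ol{f}),\ol{g}\}'$ (again by the defining formula for $\{-,-\}'$, in degrees $i+(j-1)-1$ and $(i-1)+j-1$), yielding the Poisson-derivation identity. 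The same cancellation applied to the plain product $f\ast g$ — where the quadratic term $D(f)\ast D(g)\in A^{\leqslant i+j-2}$ now drops out of the degree $i+j-1$ piece — shows $\ol{D}$ is a derivation of the commutative algebra $\gr(A)$, so altogether $\ol{D}\in{\rm P.Der}(\gr(A))$. I expect the only care needed is the degree bookkeeping, including the harmless boundary cases (e.g. $\varepsilon(D(g))<j-1$), where both sides of each identity simply vanish in the graded piece under consideration.
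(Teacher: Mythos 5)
Your proof is correct and takes essentially the same route as the paper's: descend $\Phi$ to $\gr(R)$ and verify the Poisson compatibility through the $\ast$-commutator realization $\{-,-\}'$ of the bracket, then expand the multiplicativity of $\Phi$ on the commutator and do the degree bookkeeping. The only difference is in packaging: the paper writes $\Phi-\id$ as a $(\Phi,\id)$- and $(\id,\Phi)$-derivation and uses $\ol{\Phi}=\id$ to remove the twist, whereas you expand directly and kill the quadratic term $D(f)\ast D(g)-D(g)\ast D(f)$ by the degree count from Lemma~\ref{filtdg}(4); you also explicitly check that $\ol{D}$ is a derivation of the commutative product of $\gr(A)$, a point the paper leaves implicit.
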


\begin{proof}
Since $\varepsilon(\Phi(f))=\varepsilon(f)$ for all $f \in R$ it is clear that $\ol\Phi$ is a well-defined automorphism of $\gr(R)$.
Consider $f,g \in R$ such that $\varepsilon(f)=i$ and $\varepsilon(g)=j$ and denote by $\ol{f},\ol{g}$ their images in $\gr(R)$.
We have
$$\ol{\Phi}(\{\ol{f},\ol{g}\})= \ol{\Phi}(f \ast g - g \ast f + A^{\leqslant i+j -2}) =\Phi(f) \ast \Phi(g) - \Phi(g) \ast \Phi(f) + A^{\leqslant i+j -2}=\{\ol{\Phi}(\ol{f}),\ol{\Phi}(\ol{g})\}$$
hence $\ol\Phi$ is a Poisson automorphism of $\gr(A)$. 

The equality $\ol{\Phi}=\id$ means that if $\varepsilon(f)=i$ then $\varepsilon(\Phi(f)-f) \leqslant i-1$.
Hence $\Phi-\id$ induces a well-defined vector space endomorphism $\ol{\Phi-\id}$ of $\gr(R)$ of degree $-1$ by setting $\ol{(\Phi-\id)}(\ol{f})=\Phi(f)-f + A^{\leqslant i-2}$ for all $\ol{f}\in \gr(R)$ where $f\in R$ is such that $\varepsilon(f)=i$.
By definition we have $\ol{(\Phi-\id)}(\{\ol{f},\ol{g}\})= (\Phi-\id)(f\ast g-g\ast f) + A^{\leqslant i+j-3}$.
Since $\Phi-\id$ is both a $(\Phi,\id)$-derivation and a $(\id,\Phi)$-derivation of $R$ we obtain 
$$(\Phi-\id)(f\ast g-g\ast f)=\big(\Phi(f)\ast (\Phi(g)-g) - (\Phi(g)-g)\ast \Phi(f)\big) + \big((\Phi(f)-f)\ast g - g \ast (\Phi(f)-f)\big)\,.$$
Hence $\ol{(\Phi-\id)}(\{\ol{f},\ol{g}\}) = \{\ol{\Phi}(\ol{f}), \ol{(\Phi-\id)}(\ol{g})\} + \{\ol{(\Phi-\id)}(\ol{f}), \ol{g}\}$ and we get the result using the fact that $\ol{\Phi}=\id$.
\end{proof}

%\begin{lemma}\label{lem-phi-id} Let $\Phi$ be an automorphism of a $\kk$-algebra $T$. Then $\Phi-\id$ is a $\Phi$-derivation of $T$ and also a $(\id,\Phi)$-derivation of $T$.
%\end{lemma}
%
%\begin{proof} For $u,v \in T$, we have $\Phi(uv)-uv= \Phi(u)(\Phi(v)-v) + (\Phi(u)-u)v$ and $\Phi(uv)-uv= u(\Phi(v)-v) + (\Phi(u)-u)\Phi(v)$
%\end{proof}

\begin{definition} Let $T$ be a $\kk$-algebra that is a domain and let $N\in T$ be a normal element.
For every $u \in T$ there exists a unique element $\Phi(u) \in T$ such that $uN= N\Phi(u)$.
Then $\Phi$ is a $\kk$-algebra automorphism of $T$ and is called the automorphism associated to $N$ and denoted by $\Phi_N$.
\end{definition}
%\begin{proof} The existence of $\Phi(u)$ follows from the definition of being normal. The unicity from the fact that $T$ is a domain. 
%The bilinearity of the product in $T$ shows that $\Phi$ is a $\kk$-linear map. The associativity of the product in $T$ shows that $\Phi(uv)= \Phi(u)\Phi(v)$ for $u,v \in T$. For
%$$N\Phi(uv)=(uv)N=u(vN)=u N \Phi(v))=N\Phi(u)\Phi(v)\,.$$
%\end{proof}

\begin{lemma}\label{lem-autom-normalR}
Let $(\Gamma,\Delta)$ be a solvable pair on a domain $A$ and consider $R=(A,\ast)$.
Let $N$ be a normal element in $R$ and consider $\Phi_N$ its associated automorphism.
We denote by $\ol{N}$ the image of $N$ in $\gr(R)$. 
Then $\Phi_N$ is compatible with $\varepsilon$.
Moreover $\ol{\Phi_N} = \id$ and $\ol{(\Phi_N-\id)}$ verifies that $\ol{N}\ \ol{(\Phi_N-\id)} = \{\,\cdot\,,\ol{N}\}$.
In particular, $\ol{N}$ is Poisson normal in $\gr(R)$.
\end{lemma}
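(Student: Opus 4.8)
The plan is to exploit the fact, recorded in Lemma~\ref{lem-autom-deriv} and Proposition~\ref{prop-linkRA}, that passing to the associated graded turns the commutator of $R$ into the Poisson bracket of $\gr(A)=\gr(R)$, and then to feed the defining relation $u\ast N=N\ast\Phi_N(u)$ into this machinery. The only genuinely analytic input needed is that $\varepsilon$ is additive for the product $\ast$: if $\varepsilon(f)=i$ and $\varepsilon(g)=j$, the leading term of $f\ast g$ is the ordinary product $fg$, so $\varepsilon(f\ast g)=i+j$ and $\ol{f\ast g}=\ol{f}\,\ol{g}$ in $\gr(R)$ (Proposition~\ref{prop-linkRA}(2) together with Lemma~\ref{filtdg}(3)).

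First I would prove that $\Phi_N$ is compatible with $\varepsilon$. Applying additivity of $\varepsilon$ to both sides of $u\ast N=N\ast\Phi_N(u)$ gives $\varepsilon(u)+\varepsilon(N)=\varepsilon(N)+\varepsilon(\Phi_N(u))$, whence $\varepsilon(\Phi_N(u))=\varepsilon(u)$ for every $u$. By Lemma~\ref{lem-autom-deriv} this already produces the induced Poisson automorphism $\ol{\Phi_N}$ of $\gr(R)$. To see that $\ol{\Phi_N}=\id$, I would take the image of $u\ast N=N\ast\Phi_N(u)$ in $\gr(R)$: using $\ol{f\ast g}=\ol{f}\,\ol{g}$ and the commutativity of $\gr(R)$, this reads $\ol{N}\,\ol{u}=\ol{u}\,\ol{N}=\ol{N}\,\ol{\Phi_N}(\ol{u})$. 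Since $\gr(R)=\gr(A)$ is a domain (Proposition~\ref{prop-linkRA}(1)) and $\ol{N}\neq 0$, we may cancel $\ol{N}$ and conclude $\ol{\Phi_N}(\ol{u})=\ol{u}$ for all homogeneous $\ol{u}$.

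With $\ol{\Phi_N}=\id$ in hand, Lemma~\ref{lem-autom-deriv} guarantees that $\Phi_N-\id$ descends to a Poisson derivation $\ol{(\Phi_N-\id)}$ of degree $-1$. For the main identity I would rewrite the defining relation as
\[u\ast N-N\ast u=N\ast(\Phi_N-\id)(u).\]
The left-hand side is the commutator, whose image in degree $\varepsilon(u)+\varepsilon(N)-1$ is exactly $\{\ol{u},\ol{N}\}$ by Proposition~\ref{prop-linkRA}(3)(a) and~(4). The right-hand side has leading term $N\cdot(\Phi_N-\id)(u)$, whose class in $\gr(R)$ is $\ol{N}\,\ol{(\Phi_N-\id)}(\ol{u})$. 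Equating the two yields $\{\ol{u},\ol{N}\}=\ol{N}\,\ol{(\Phi_N-\id)}(\ol{u})$, i.e.\ the asserted operator identity $\{\,\cdot\,,\ol{N}\}=\ol{N}\,\ol{(\Phi_N-\id)}$. Poisson normality of $\ol{N}$ then follows immediately, since this exhibits $\{\ol{u},\ol{N}\}$ as a multiple of $\ol{N}$ for every $\ol{u}$.

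The step requiring the most care will be the bookkeeping of $\varepsilon$-degrees in this last display: one must check that each side genuinely sits in degree $\varepsilon(u)+\varepsilon(N)-1$ and that the two leading-term computations land in the same graded piece. In particular one should note that the identity $\ol{N}\,\ol{(\Phi_N-\id)}(\ol{u})=\overline{N\ast(\Phi_N-\id)(u)}$ holds even in the degenerate case $\varepsilon(\Phi_N(u)-u)<\varepsilon(u)-1$, where both sides vanish. Everything else is a direct translation of the defining relation through the graded functor.
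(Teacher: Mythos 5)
Your proof is correct and follows essentially the same route as the paper's: both arguments extract the leading $\varepsilon$-terms of the defining relation $u\ast N=N\ast\Phi_N(u)$, first to get compatibility with $\varepsilon$ and $\ol{\Phi_N}=\id$ by cancelling $\ol{N}$ in the domain $\gr(R)$, and then to identify $\ol{N}\,\ol{(\Phi_N-\id)}(\ol{u})$ with $\{\ol{u},\ol{N}\}$. The only difference is cosmetic: where you invoke Proposition~\ref{prop-linkRA}(3)--(4) to pass from the commutator to the Poisson bracket, the paper expands the product $\ast$ by hand, isolates $\Delta(f)\Gamma(N)-\Delta(N)\Gamma(\Phi_N(f))$, and discards the terms of degree $\leqslant i+j-2$ directly.
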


\begin{proof}
Let $f \in R$ and set $i=\varepsilon(f)$ and $j=\varepsilon(N)$.
Since $f\ast N = N\ast\Phi_N(f)$ and $A$ is a domain, we have $i+j=\varepsilon(\Phi_N(f))+j$.
Therefore $\Phi_N$ is compatible with $\varepsilon$. 

The definition of the product $\ast$ implies that $f \ast N = fN + u$ where $u \in A^{\leqslant i+j-1}$.
Similarly $N\ast\phi_N(f) = N\Phi_N(f) + v$ where $v \in A^{\leqslant i+j-1}$.
Hence $N(\Phi_N(f)-f) \in A^{\leqslant i+j-1}$.
Since $A$ is a domain, we obtain that $\Phi_N(f)-f \in A^{\leqslant i-1}$ i.e. $\ol{\Phi_N}=\id$.
Moreover, we have the following equalities 
\[0=N \ast \Phi_N(f) - f \ast N= N(\Phi_N(f)-f) + \Delta(N)\Gamma(\Phi_N(f))-\Delta(f)\Gamma(N)+ u\]
for some $u \in A^{\leqslant i+j-2}$. 
Hence $N (\Phi_N(f)-f)= \Delta(f)\Gamma(N)-\Delta(N)\Gamma(\Phi_N(f)) - u$ with $\Delta(f)\Gamma(\Phi_N(f))-\Delta(N)\Gamma(f) \in A^{\leqslant i+j-1}$ since $\Phi_N$ is compatible with $\varepsilon$.
This implies the desired equality since $\ol{\Phi_N}=\id$.
\end{proof}

\begin{definition}[Strongly normal element]
\label{dfn-strongly-ne}
Let $(\Delta,\Gamma)$ be a solvable pair on $A$.
An element $f \in A$ is said to be strongly normal if $\Delta(f)=0$ and if there exists $\alpha\in \kk$ such that $\Gamma(f)=\alpha f$. 
\end{definition}

\begin{lemma}
\label{lem-strongly} 
Let $(\Delta,\Gamma)$ be a solvable pair on $A$.
If $f$ is strongly normal in $A$ then $f$ is normal in $R$ as well as Poisson normal in $A$.
Set $\Gamma(f)=\alpha f$ where $\alpha\in\kk$.
If $A$ is a domain, then the automorphism of $R$ associated to $f$ is $\phi_\alpha$ (see Theorem \ref{deltaR}), the Poisson derivation associated to $f$ is $\alpha \Delta$ and we have $\ol{(\phi_\alpha - \id)}=\alpha \ol{\Delta}$ as derivations of $\gr(A)$.
\end{lemma}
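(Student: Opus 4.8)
The plan is to exploit the eigenvalue condition $\Gamma(f)=\alpha f$ through the operators $\binom{\Gamma}{i}$. The first step is the key observation that $\binom{\Gamma}{i}(f)=\binom{\alpha}{i}f$ for every $i\geqslant 0$: applying the factors $(\Gamma-(i-1)\id),\dots,(\Gamma-\id),\Gamma$ successively to the eigenvector $f$ produces the scalar $\alpha(\alpha-1)\cdots(\alpha-i+1)/i!$. Together with $\Delta(f)=0$ this gives at once, from the defining formula~\eqref{ast}, the two one-sided products $f\ast h=fh$ for every $h\in A$ (all terms with $i\geqslant 1$ vanish because $\Delta(f)=0$) and $g\ast f=\sum_{i\geqslant 0}\binom{\alpha}{i}\Delta^i(g)\,f=\phi_\alpha(g)\,f$, where $\phi_\alpha$ is the automorphism of Theorem~\ref{deltaR}.

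Combining these yields $g\ast f=f\,\phi_\alpha(g)=f\ast\phi_\alpha(g)$ for all $g$, the last equality because $f\ast h=fh$. Since $\phi_\alpha$ is a bijective automorphism of $R$, this shows $Rf=fR$, so $f$ is normal; comparison with the defining relation $uN=N\Phi_N(u)$ identifies the associated automorphism as $\Phi_f=\phi_\alpha$. For Poisson normality I would substitute directly into~\eqref{pbdg}: using $\Delta(f)=0$ and $\Gamma(f)=\alpha f$ one gets $\{g,f\}=\Delta(g)\Gamma(f)-\Delta(f)\Gamma(g)=\alpha f\,\Delta(g)=f\,(\alpha\Delta)(g)$, so $f$ is Poisson normal with associated Poisson derivation $\alpha\Delta$; that this is genuinely a Poisson derivation follows from Lemma~\ref{lem-pder}.

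It remains to identify the induced degree $-1$ operator on $\gr(A)$. Here I would expand $\phi_\alpha-\id=\sum_{k\geqslant 1}\binom{\alpha}{k}\Delta^k=\alpha\Delta+\binom{\alpha}{2}\Delta^2+\cdots$. By Lemma~\ref{filtdg}(1) each $\Delta^k$ lowers the $\varepsilon$-degree by exactly $k$; hence for $f$ with $\varepsilon(f)=i$ all terms $\binom{\alpha}{k}\Delta^k(f)$ with $k\geqslant 2$ lie in $A^{\leqslant i-2}$, so that $(\phi_\alpha-\id)(f)\equiv\alpha\Delta(f)\pmod{A^{\leqslant i-2}}$. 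Recalling from Lemma~\ref{lem-autom-deriv} that $\ol{(\phi_\alpha-\id)}(\ol f)=(\phi_\alpha-\id)(f)+A^{\leqslant i-2}$, this reads exactly as $\ol{(\phi_\alpha-\id)}=\alpha\ol{\Delta}$.

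I do not expect a genuine obstacle: the argument is essentially bookkeeping. The only point requiring care is keeping the side and sign conventions consistent with the definitions of $\Phi_N$ and of the Poisson derivation associated to a normal element (as fixed in Lemma~\ref{lem-autom-normalR}), and recording the eigenvalue computation $\binom{\Gamma}{i}(f)=\binom{\alpha}{i}f$ before it is used in both the associative and the graded steps.
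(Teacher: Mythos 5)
Your proof is correct. The first two thirds — the computation $\binom{\Gamma}{i}(f)=\binom{\alpha}{i}f$, the identities $f\ast h=fh$ and $g\ast f=f\,\phi_\alpha(g)$ giving normality with associated automorphism $\phi_\alpha$, and the direct substitution into \eqref{pbdg} giving Poisson normality with derivation $\alpha\Delta$ — coincide with the paper's argument.

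For the final identity $\ol{(\phi_\alpha-\id)}=\alpha\ol{\Delta}$ you take a genuinely different and more direct route. The paper argues indirectly: it invokes Lemma~\ref{lem-autom-normalR} to get $\ol{f}\,\ol{(\phi_\alpha-\id)}(\ol{g})=\{\ol{g},\ol{f}\}$, computes $\{\ol{g},\ol{f}\}=\alpha\ol{\Delta}(\ol{g})\ol{f}$ from the strong normality of $f$, and then cancels $\ol{f}$ using that $\gr(A)$ is a domain. You instead expand $\phi_\alpha-\id=\alpha\Delta+\binom{\alpha}{2}\Delta^2+\cdots$ and observe, via Lemma~\ref{filtdg}(1), that every term with $k\geqslant 2$ lands in $A^{\leqslant i-2}$, so the induced degree $-1$ map is $\alpha\ol{\Delta}$ on the nose. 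Your argument is more elementary (it needs neither Lemma~\ref{lem-autom-normalR} nor the cancellation in the domain $\gr(A)$ for this step) and it actually proves slightly more: the identity holds for the operator $\phi_\alpha-\id$ independently of the particular normal element $f$, whereas the paper's route only sees it through the action of $\ol{f}$. The paper's route, on the other hand, illustrates the general mechanism (normal element $\Rightarrow$ Poisson normal image in $\gr$) that is reused later. One small point of hygiene: before writing $\ol{(\phi_\alpha-\id)}$ you should record that $\phi_\alpha$ is compatible with $\varepsilon$ and that $\ol{\phi_\alpha}=\id$ (so that Lemma~\ref{lem-autom-deriv} applies); both follow immediately from the same expansion, since $\phi_\alpha(g)-g\in A^{\leqslant \varepsilon(g)-1}$. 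Also avoid reusing the letter $f$ for both the strongly normal element and the generic element of $A$ in the graded step.
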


\begin{proof}
Since $f \in \ker \Delta$, we have $f \ast g=fg$ for all $g\in R$, hence $f \ast R = f A$.
Also we have
$$g\ast f=\sum_{k\geqslant0} \Delta^k(g)\binom{\Gamma}{k}(f)= \sum_{k\geqslant0} \Delta^k(g)\binom{\alpha}{k} f = f\sum_{k\geqslant0} \binom{\alpha}{k}\Delta^k(g)=f \phi_\al(g) \,.$$
%=f \ast \phi_\al(g)\,.$$
Recall from Theorem~\ref{deltaR} that $\phi_\alpha$ is an automorphism of $R$.
Hence the equality $g\ast f=f\phi_\alpha(g)$ implies that $ R\ast f= Af$.
Therefore we have $f \ast R = fA =R \ast f$ and $f$ is normal in $R$.
Moreover, for every $g \in A$, we have $\{g,f\}=\Delta(g)\Gamma(f)- \Gamma(g)\Delta(f)= \alpha f \Delta(g) \in f A$ and we conclude that $f$ is Poisson normal and that the Poisson derivation associated to $f$ is $\alpha \Delta$.

Assume that $A$ is a domain.
Since $f \in \ker \Delta$ and $\Gamma(f)=\alpha f$, we have $\ol{\Delta}(f)=0$ and $\ol{\Gamma}(\ol{f})=\alpha\ol{f}$.
Hence $\{\ol{g},\ol{f}\} = \alpha \ol{\Delta}(\ol{g})\ol{f}$.
Thanks to Lemma~\ref{lem-autom-normalR} we have $\{\ol{g},\ol{f}\}=\ol{(\phi_\alpha-\id)}(\ol{g})\ol{f}$.
Since $\gr(A)$ is a domain we obtain the relation $\ol{(\phi_\alpha-\id)}=\alpha \ol{\Delta}$.
\end{proof}

The converse of Lemma \ref{lem-strongly} will be proved in Section~\ref{sec-trigo}: in the case of a linear solvable pair, every normal element in $R$ or Poisson normal element in $A$ is strongly normal (Proposition~\ref{prop-normal-element}).

\subsection{A canonical form up to localization}\label{ssec-localization}

It is well know that locally nilpotent derivations with slices feature very interesting properties. 
For instance, if $A$ is a $\kk$-algebra that is a domain and $\Delta$ is a nilpotent derivation of $A$ admitting an element $s\in A$ such that $\Delta(s)=1$ ($s$ is called a slice for $\Delta$), then $A$ is a polynomial ring in the variable $s$ over the base ring $A^\Delta=\ker\Delta$, see \cite{DF} and \cite{VdE}.
In this section we exploit this fact to show that, up to localization, the algebras $A_{(\Delta,\Gamma)}$ and $R_{(\Delta,\Gamma)}$ are (Poisson-) Ore extensions of the (localized) kernel of $\Delta$ by the (Poisson) derivation $\Gamma$.

Let $(\Delta,\Gamma)$ be a solvable pair on $A$ with $\Delta\neq0$ and choose $r\in A$ such that $\Delta(r)\neq0$ and $\Delta^2(r)=0$ (any element of $A^{\leqslant 1}\setminus A^{\leqslant 0}$).
We consider the localization $A^\circ=S_r^{-1} A$ where $S_r=\{\Delta(r)^i\ |\ i\geqslant0\}$.
The pair of derivations $(\Delta,\Gamma)$ extends uniquely by localization into a solvable pair of derivations on $A^\circ$ again denoted by $(\Delta,\Gamma)$.
Note that $\Delta$ is locally nilpotent on $A^\circ$ since $\Delta(r)\in A^\Delta$. %, where $A^\Delta$ denote the kernel of $\Delta$ as a derivation from $A$ to $A$.
Observe that the element $s:=r\Delta(r)^{-1}\in A^\circ$ is a slice for $\Delta$ since
 \[\Delta(s)=\Delta(r)\Delta(r)^{-1}-r\Delta(r)^{-2}\Delta^2(r)=1.\]

\subsubsection{The Poisson case.}

We consider the Poisson algebra $(A^\circ)_{(\Delta,\Gamma)}$ arising from the solvable pair $(\Delta,\Gamma)$ on $A^\circ$. 
Note that the kernel $(A^\circ)^\Delta$ is a Poisson commutative subalgebra of $A^\circ$ that is stable by $\Gamma$ since for any $a\in (A^\circ)^\Delta$ we have $\Delta\Gamma(a)=\Gamma\Delta(a)+\Delta(a)=0$.
Hence $\Gamma$ induces a Poisson derivation of $(A^\circ)^\Delta$.
This allows us to define the so-called Poisson-Ore extension $(A^\circ)^\Delta[X;\Gamma]_P$ which is equal to the polynomial ring $(A^\circ)^\Delta[X]$ as a commutative algebra and whose Poisson bracket is defined to be the unique extension of the Poisson bracket of $(A^\circ)^\Delta$ such that $\{X,a\}=\Gamma(a)$ for all $a\in (A^\circ)^\Delta$, see \cite{Oh}. 

\begin{proposition}
The map $\phi : (A^\circ)^\Delta[X;\Gamma]_P \to (A^\circ)_{(\Delta,\Gamma)}$ given by
\[\phi\left(\sum_{i=0}^n a_i X^i\right) =\sum_{i=0}^n a_i s^i\]
where $a_i\in (A^\circ)^\Delta$ for all $i$, is a Poisson algebra isomorphism. 
\end{proposition}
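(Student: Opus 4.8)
The plan is to show that the map $\phi$ is a well-defined bijective morphism of commutative algebras that moreover transports the Poisson-Ore bracket of $(A^\circ)^\Delta[X;\Gamma]_P$ to the Poisson bracket of $(A^\circ)_{(\Delta,\Gamma)}$. The key structural input is the slice: since $\Delta(s)=1$ and $A^\circ$ is a domain, the classical slice theorem (\cite{DF}, \cite{VdE}, quoted in the section preamble) tells us that $A^\circ$ is a polynomial ring in $s$ over $(A^\circ)^\Delta=\ker\Delta$, i.e. $A^\circ=(A^\circ)^\Delta[s]$ with $s$ transcendental over $(A^\circ)^\Delta$. This is exactly what makes $\phi$ a bijection of commutative rings, since $\phi$ sends the monomial $X^i$ to $s^i$ and restricts to the identity on $(A^\circ)^\Delta$.

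\textbf{First step: $\phi$ is a commutative algebra isomorphism.}

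First I would record that $\phi$ is an isomorphism of commutative algebras. By the slice theorem $(s^i)_{i\geqslant 0}$ is a free basis of $A^\circ$ as a module over $(A^\circ)^\Delta$, and $(X^i)_{i\geqslant 0}$ is by definition a free basis of the polynomial ring $(A^\circ)^\Delta[X]$. Hence $\phi$, being the $(A^\circ)^\Delta$-linear map matching these bases, is an isomorphism of $(A^\circ)^\Delta$-modules, and since it is multiplicative on the generator ($\phi(X)=s$) it is an isomorphism of commutative algebras.

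\textbf{Second step: $\phi$ is a Poisson morphism.}

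The substance is checking compatibility of brackets. Because both sides are commutative algebras generated over $(A^\circ)^\Delta$ by a single element (namely $X$ on the left, $s$ on the right), and a Poisson bracket is a biderivation, it suffices to verify that $\phi$ respects the bracket on the generating data: that $\phi$ restricts to a Poisson map on $(A^\circ)^\Delta$, and that $\{\phi(X),\phi(a)\}=\phi(\{X,a\})$ for all $a\in(A^\circ)^\Delta$. For the first, note that on $(A^\circ)^\Delta$ the source bracket is the inherited Poisson bracket and $\phi$ is the identity there, so I must check that $(A^\circ)^\Delta$ is a Poisson subalgebra of $(A^\circ)_{(\Delta,\Gamma)}$ whose induced bracket agrees; but for $a,b\in\ker\Delta$ the defining formula \eqref{pbdg} gives $\{a,b\}=\Delta(a)\Gamma(b)-\Delta(b)\Gamma(a)=0$, so $(A^\circ)^\Delta$ is Poisson-commutative on the target side and matches the source (which is likewise Poisson-commutative over the commutative ring $(A^\circ)^\Delta$ in these variables). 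For the generator relation, by definition of the Poisson-Ore extension we have $\{X,a\}=\Gamma(a)$, so I must verify $\{s,a\}=\Gamma(a)$ in $(A^\circ)_{(\Delta,\Gamma)}$ for every $a\in\ker\Delta$. Using \eqref{pbdg} and $\Delta(s)=1$, $\Delta(a)=0$, this reads $\{s,a\}=\Delta(s)\Gamma(a)-\Delta(a)\Gamma(s)=\Gamma(a)$, as required.

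\textbf{Conclusion and the main obstacle.}

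Having matched $\phi$ on the generators, I would invoke that a morphism of commutative algebras which is compatible with the Poisson bracket on a generating set is automatically a Poisson morphism, because both brackets are biderivations satisfying the Leibniz rule and the generators $(A^\circ)^\Delta\cup\{X\}$ generate the source as an algebra; one extends the identity $\phi(\{u,v\})=\{\phi(u),\phi(v)\}$ from generators to arbitrary monomials $u=a X^i$, $v=b X^j$ by the Leibniz rule, using the already-established cases. The main obstacle, and the only genuinely nontrivial point, is the verification that $\phi$ is a bijection, which rests entirely on the slice theorem guaranteeing that $s$ is a free polynomial generator of $A^\circ$ over $\ker\Delta$; the bracket computations are then short and forced by \eqref{pbdg}. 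One should also take a moment to confirm that the Poisson-Ore bracket on the source is well defined, i.e. that $\Gamma$ restricts to a Poisson derivation of $(A^\circ)^\Delta$, which was already observed in the paragraph preceding the proposition.
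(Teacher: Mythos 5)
Your proof is correct and follows essentially the same route as the paper's: the two key inputs in both arguments are the computation $\{s,a\}=\Delta(s)\Gamma(a)-\Delta(a)\Gamma(s)=\Gamma(a)$ for $a\in(A^\circ)^\Delta$ and the slice theorem $A^\circ=(A^\circ)^\Delta[s]$ for bijectivity. The only (cosmetic) difference is that the paper packages the extension-from-generators step as the universal property of the Poisson-Ore extension, whereas you verify bracket-compatibility directly via the biderivation/Leibniz argument; both are fine.
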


\begin{proof} The inclusion $(A^\circ)^\Delta \to A^\circ$ is a Poisson algebras homomorphism.
Moreover for $a \in (A^\circ)^\Delta$, we have $\{s,a\} = \Delta(s)\Gamma(a)-\Gamma(s)\Delta(a) = \Gamma(a)$ since $\Delta(s)=1$. 
Thanks to the universal property of the Poisson-Ore extension $((A^\circ)^\Delta)[X;\Gamma]_P$ (see \cite[Proposition 1.1.15]{Lec}), there exists a unique Poisson algebra homomorphism from $((A^\circ)^\Delta)[X;\Gamma]_P$ to $(A^\circ)_{(\Delta,\Gamma)}$
sending $a \in (A^\circ)^\Delta$ to $a$ and $X$ to $s$.
It is given by $\phi$. 
Since $A^\circ=(A^\circ)^\Delta[s]$ is a polynomial ring in the slice $s$ over $(A^\circ)^\Delta$ (\cite[Corollary 1.2]{VdE}), it is clear that $\phi$ is an isomorphism.
\end{proof}

% Proposition sans la propriété universelle
%\begin{proposition}
%The map $\phi : (A^\circ)_{(\Delta,\Gamma)} \to ((A^\circ)^\Delta)_{(\Delta,\Gamma)}[X;\Gamma]_P$ given by
%\[\phi\left(\sum_{i=0}^n a_i s^i\right)=\sum_{i=0}^n a_i X^i\]
%for all $n\in\N$ and all $a_i\in A^\circ$ is a Poisson algebra isomorphism. 
%\end{proposition}
%
%\begin{proof}
%Since $A^\circ=(A^\circ)^\Delta[s]$ is a polynomial ring in $s$ over $(A^\circ)^\Delta$, it is clear that $\phi$ is an algebra isomorphism.
%Moreover for all $a,b\in (A^\circ)^\Delta$ we have
%\[\phi(\{a,b\}) = \phi\big(\Delta(a)\Gamma(b)-\Delta(b)\Gamma(a)\big) = \phi(0) = 0 = \{a,b\}=\{\phi(a),\phi(b)\}\]
%and for all $a\in (A^\circ)^\Delta$ we have
%\[\phi(\{s,a\})=\phi\big(\Delta(s)\Gamma(a)-\Delta(a)\Gamma(s)\big)=\phi(\Gamma(a))=\Gamma(a)=\{X,a\}=\{\phi(s),\phi(a)\}\] 
%and $\phi$ is a Poisson map.
%\end{proof}
\subsubsection{The noncommutative case}

We consider the algebra $R^\circ=(R^\circ)_{(\Delta,\Gamma)}=(A^\circ,\ast)$ arising from the solvable pair $(\Delta,\Gamma)$ on $A^\circ$.
Recall that
\[a\ast b = ab+\sum_{i\geqslant 1}\Delta^i(a)\binom{\Gamma}{i}(b)\]
for all $a,b\in A^\circ$.
In particular $(A^\circ)^\Delta$ is a commutative subalgebra of $R^\circ$.
Since $\Gamma$ leaves $(A^\circ)^\Delta$ invariant, we can consider the Ore extension $(A^\circ)^\Delta[X;\Gamma]$ which is, by definition \cite[Chapter 2]{GW}, a free left $(A^\circ)^\Delta$-module with basis $\{X^i\ |\ i\geqslant 0\}$ and whose multiplication extends the multiplication of $(A^\circ)^\Delta$ by the rule $Xa=aX+\Gamma(a)$ for all $a\in (A^\circ)^\Delta$.
For all integer $i\geqslant0$ let us denote by $s^{\ast i}$ the $i^{\textrm{th}}$ power of $s$ with respect to the product $\ast$. 

\begin{proposition}
\label{Oreext}
The map $\phi : (A^\circ)^\Delta[X;\Gamma] \to R^\circ$ given by
\[\phi\left(\sum_{i=0}^n a_i X^i\right)=\sum_{i=0}^n a_i s^{\ast i}\]
where $a_i\in (A^\circ)^\Delta$ for all $i$, is an algebra isomorphism.
\end{proposition}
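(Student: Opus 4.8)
The plan is to mimic exactly the structure of the proof just given for the Poisson-Ore extension, replacing the universal property of the Poisson-Ore extension with the universal property of the (ordinary) Ore extension $(A^\circ)^\Delta[X;\Gamma]$. The key input is again that $s = r\Delta(r)^{-1}$ is a slice, so that $A^\circ = (A^\circ)^\Delta[s]$ is a genuine polynomial ring in $s$ over the kernel (by \cite[Corollary 1.2]{VdE}); this is what will make $\phi$ bijective at the level of underlying modules.

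First I would verify that $s$ satisfies, inside $R^\circ$, the defining commutation relation of the Ore extension, namely that for every $a \in (A^\circ)^\Delta$ one has $s \ast a - a \ast s = \Gamma(a)$. Since $a \in \ker\Delta$, the formula for $\ast$ collapses to $a \ast s = as$, while $s \ast a = as + \Delta(s)\Gamma(a) + \sum_{i \geqslant 2}\Delta^i(s)\binom{\Gamma}{i}(a)$. Because $\Delta^2(s)=0$ and $\Delta(s)=1$, all higher terms vanish and we are left with $s\ast a - a \ast s = \Gamma(a)$, exactly as required. This is the associative analogue of the computation $\{s,a\}=\Gamma(a)$ in the Poisson case.

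Next, the universal property of $(A^\circ)^\Delta[X;\Gamma]$ (see \cite[Chapter 2]{GW}) produces a unique $\kk$-algebra homomorphism $\phi$ sending each $a \in (A^\circ)^\Delta$ to itself and sending $X$ to $s$; by the free-module structure of the Ore extension this homomorphism is given precisely by the stated formula $\phi\bigl(\sum_i a_i X^i\bigr) = \sum_i a_i s^{\ast i}$. It remains to check that $\phi$ is bijective. Surjectivity and injectivity follow together once one observes that the powers $(s^{\ast i})_{i \geqslant 0}$ form a free basis of $R^\circ$ as a left $(A^\circ)^\Delta$-module: by assertion (2) of Proposition~\ref{prop-linkRA} the product $\ast$ only differs from the commutative product by terms of strictly lower $\varepsilon$-degree, so $s^{\ast i} = s^i + (\text{lower } \varepsilon\text{-degree})$, and since $\varepsilon(s)=1$ the family $(s^{\ast i})$ is triangular with respect to the basis $(s^i)$ of the polynomial ring $A^\circ = (A^\circ)^\Delta[s]$. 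Hence $\phi$ carries a basis to a basis and is an isomorphism of $(A^\circ)^\Delta$-modules, and being an algebra homomorphism it is an algebra isomorphism.

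**The main obstacle** is nothing deep but rather bookkeeping: one must be careful that the Ore extension convention ($Xa = aX + \Gamma(a)$) matches the sign and ordering of the commutator $s\ast a - a\ast s$, and that the triangularity argument for the module basis is stated cleanly rather than re-deriving it. Since the slice property $A^\circ=(A^\circ)^\Delta[s]$ and the filtered behaviour of $\ast$ are both already established, the proof is genuinely a transcription of the Poisson argument into the associative setting.
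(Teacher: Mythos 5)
Your proof is correct and follows the paper's strategy for the first half (verify the commutation relation $s\ast a - a\ast s=\Gamma(a)$ using $\Delta(s)=1$, $\Delta^2(s)=0$, then invoke the universal property of the Ore extension), but you take a genuinely different route for bijectivity. The paper proves by an explicit induction that $s^{\ast(i+1)}=s\ast s^{\ast i}$ expands as $s^{i+1}+\sum_{j\leqslant i}a_{j,i+1}s^j$ with coefficients in $(A^\circ)^\Delta$, the key computational point being that $\Gamma(s)-s\in(A^\circ)^\Delta$ so that the lower-order terms really do have coefficients in the kernel. You instead invoke the filtered compatibility of $\ast$ (Proposition~\ref{prop-linkRA}(2)) to get $s^{\ast i}=s^i+u_i$ with $\varepsilon(u_i)\leqslant i-1$, and conclude by triangularity. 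This is cleaner and avoids the computation, but it has one step you should make explicit: triangularity over the \emph{module} $(A^\circ)^\Delta$ requires knowing that the filtration level $A^{\circ,\leqslant i-1}=\ker\Delta^{i}$ coincides with $\bigoplus_{k=0}^{i-1}(A^\circ)^\Delta s^k$. This does hold --- writing $f=\sum a_k s^k$ with $a_k\in(A^\circ)^\Delta$, the slice property gives $\Delta^{i}(f)=\sum_{k\geqslant i}k(k-1)\cdots(k-i+1)a_k s^{k-i}$, which vanishes iff $a_k=0$ for $k\geqslant i$ --- but without this identification, "lower $\varepsilon$-degree'' does not by itself say the correction terms are $(A^\circ)^\Delta$-combinations of $1,s,\dots,s^{i-1}$, which is exactly what the paper's induction is designed to establish. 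With that line added, your argument is complete and arguably more conceptual than the paper's.
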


\begin{proof} The inclusion map $(A^\circ)^\Delta \subseteq R^\circ$ is an algebra homomorphism.
Moreover for $a \in (A^\circ)^\Delta$, we have $a \ast s = as$ and $s \ast a= sa +\Gamma(a)$ since 
$\Delta(s)=1$ and $\Delta^2(s)=0$.
Hence from the universal property (see \cite[Exercise 2.F]{GW}) of $(A^\circ)^\Delta[X;\Gamma]$ there exists a unique algebra homomorphism from $(A^\circ)^\Delta[X;\Gamma]$ to $R^\circ$ sending $a \in (A^\circ)^\Delta$ to $a \in R^\circ$ and $X$ to $s$.
It is given by $\phi$.
So it remains to show that $\phi$ is bijective.
For, we will show by induction on $i\geqslant0$ that there exist elements $a_{j,i}\in (A^\circ)^\Delta$  with $0\leqslant j<i$ such that $s^{\ast i}=s^i + \sum_{j=0}^{i-1} a_{j,i}s^j$.
Cases $i=0$ and $i=1$ are trivial.
Recall that $\Delta(s)=1$ and $\Delta^2(s)=0$ in order to compute $s^{\ast (i+1)}$
\begin{align*}
s^{\ast (i+1)} = s \ast s^{\ast i}=& s \ast s^i + s \ast \sum_{j=0}^{i-1} a_{j,i}s^j =s^{i+1} + \Gamma(s^i) + \sum_{j=0}^{i-1} a_{j,i}s^{j+1} + \sum_{j=0}^{i-1} \Gamma(a_{j,i}s^j)\\
=&s^{i+1} + i(\Gamma(s)-s)s^{i-1} + is^i + \sum_{j=1}^{i} a_{j-1,i}s^{j} + \sum_{j=0}^{i-1} \Gamma(a_{j,i}) s^j + \sum_{j=0}^{i-1} a_{j,i} \Gamma(s^j)\\
=&s^{i+1} + i(\Gamma(s)-s)s^{i-1} + \sum_{j=0}^{i} b_{j,i}s^j + \sum_{j=0}^{i-1} j a_{j,i} (\Gamma(s) - s ) s^{j-1} + \sum_{j=0}^{i-1} ja_{j,i}s^j
\end{align*}
where the $b_{j,i}$ for $0\leqslant j\leqslant i$ are elements of  $(A^\circ)^\Delta$ expressed in terms of $i,$ $a_{j-1,i}$ and $\Gamma(a_{j,i})$.
Moreover we have $\Delta(\Gamma(s)-s) = \Gamma \Delta(s) = \Gamma(1)=0$.
Hence $\Gamma(s)-s \in (A^\circ)^\Delta$ and $s^{\ast (i+1)}$ has the desired form.
We then deduce that $(s^{\ast i})_{i\geqslant0}$ is a basis of $R^\circ$ over $(A^\circ)^\Delta$ since $(s^i)_{i \geqslant0}$ is (\cite[Corollary 1.2]{VdE}) and we conclude that $\phi$ is bijective.
\end{proof}

%\begin{corollary}
%Assume that $A^\circ$ is finitely generated {\color{red}(which is the case if $A$ is)}.
%Then the algebra $R^\circ$ is noetherian.
%\end{corollary}

%\begin{proof}
%The kernel $(A^\circ)^\Delta$ is finitely generated thanks to \cite{VdE} and the hypothesis on %$A^\circ$.
%Therefore the Ore extension $R^\circ\cong(A^\circ)^\Delta[X;\Gamma]$ is noetherian thanks to the Skew Hilbert Basis Theorem \cite[Theorem 2.6]{GW}.
%\end{proof}

\subsection{Skewfield of fractions}

When $A$ is a polynomial ring and the action of $\Gamma$ is diagonal, the results of the previous section allow us to relate $R^\circ$ to a particular Lie algebra and to described the skewfield of fractions of $R_{(\Delta,\Gamma)}$.
Recall that, given a field $F$, the Weyl skewfield $D_{1}(F)$ is the skewfield of fractions of the algebra generated over $F$ by two elements $u$ and $v$ such that $uv-vu=1$.
Linear diagonalizable solvable pairs will be defined in Section \ref{sec-diago}.

\begin{proposition}
\label{Weyl}
Assume that $A$ is a polynomial ring and that $(\Delta,\Gamma)$ is a nonzero linear diagonalizable solvable pair on $A$. 
Then $R_{(\Delta,\Gamma)}$ is, up to localization,  isomorphic to the enveloping algebra of a completely solvable Lie algebra $\mathfrak{g}$.
In particular, if $\mathfrak{g}$ is algebraic and non abelian, the skewfield of fractions of $R_{(\Delta,\Gamma)}$ is isomorphic to the Weyl skewfield $D_{1}(F)$ for a purely transcendental extension $F$ of $\kk$.
\end{proposition}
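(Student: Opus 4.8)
The plan is to reduce to the local slice presentation of Proposition~\ref{Oreext} and then to recognise the resulting Ore extension as a localised enveloping algebra. We may assume $\Delta\neq0$, since otherwise $R$ is commutative, $\mathfrak{g}$ is abelian, and the first assertion is trivial while the second is vacuous. First I would use the diagonalisability of $\Gamma$: as $(\Delta,\Gamma)$ is linear and $\Gamma$ is diagonalisable, the relation $[\Delta,\Gamma]=\Delta$ forces $\Delta$ to map the $\Gamma$-eigenspace of eigenvalue $\mu$ into that of eigenvalue $\mu-1$; thus $\Delta$ is a graded nilpotent operator on the degree-one component $V=\bigoplus_\mu V_\mu$ and therefore admits a homogeneous Jordan basis. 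Fixing coordinates $X_0,\dots,X_n$ of this kind, I pick $r=X_1$ at the second node of a chain, so that $\Delta(r)=X_0\in\ker\Delta$ is a $\Gamma$-eigenvector. Localising at $S_r=\{X_0^i\}$ yields a slice $s=X_1/X_0$ with $\Delta(s)=1$, and a short computation gives $\Gamma(s)=s$, so $s$ is itself a $\Gamma$-eigenvector.

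Next, Proposition~\ref{Oreext} identifies $R^\circ$ with the Ore extension $(A^\circ)^\Delta[X;\Gamma]$, the variable $X$ corresponding to $s$. The heart of the matter is to describe $(A^\circ)^\Delta$. Since $\Gamma$ acts semisimply on $A$ and $\ker\Delta$ is $\Gamma$-stable, the kernel is spanned by $\Gamma$-eigenvectors, and it is finitely generated by Weitzenb\"ock's theorem; I would show that after inverting $X_0$ the algebra $(A^\circ)^\Delta$ becomes a Laurent polynomial ring $\kk[z_1^{\pm1},\dots,z_k^{\pm1},z_{k+1},\dots,z_m]$ on finitely many $\Gamma$-eigenvectors $z_j$, with $\Gamma(z_j)=\lambda_j z_j$ and $\lambda_j\in\kk$, using the slice retraction $\exp(-s\Delta)\colon A^\circ\to(A^\circ)^\Delta$ to express the remaining coordinates through the $z_j$ and $s$. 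Granting this, the $z_j$ commute in $R^\circ$ (they lie in the commutative subalgebra $\ker\Delta$) while the Ore relation reads $X\ast z_j-z_j\ast X=\Gamma(z_j)=\lambda_j z_j$.

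It follows that $R^\circ$ is the localisation, at the inverted $z_j$, of $U(\mathfrak{g})$, where $\mathfrak{g}=\kk X\oplus\bigoplus_j\kk z_j$ carries the brackets $[X,z_j]=\lambda_j z_j$ and $[z_i,z_j]=0$. Every $\ad$ on $\mathfrak{g}$ then has spectrum contained in $\kk$, so $\mathfrak{g}$ is completely solvable, which proves the first assertion. For the second, I pass to fraction fields, where localisation is invisible: $\F R=\F R^\circ=\F U(\mathfrak{g})$. When $\mathfrak{g}$ is algebraic and non-abelian, the Gelfand--Kirillov conjecture is a theorem, giving $\F U(\mathfrak{g})\cong D_d(F)$ for a purely transcendental extension $F$ of $\kk$, where $2d$ is the dimension of a generic coadjoint orbit of $\mathfrak{g}$; equivalently $2d$ is the Poisson rank of the semiclassical limit $A_{(\Delta,\Gamma)}$. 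The rank computation of Section~\ref{sec-pol-ring} shows this rank equals $2$, whence $d=1$ and $\F R\cong D_1(F)$.

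The main obstacle is the middle step: proving that, after localisation, $(A^\circ)^\Delta$ is a genuine Laurent polynomial ring on $\Gamma$-eigenvectors, and not merely a finitely generated eigenvector-spanned algebra carrying relations. Without this one obtains only a surjection $U(\mathfrak{g})\twoheadrightarrow R^\circ$ rather than an isomorphism, since the kernel of a linear locally nilpotent derivation is typically not a polynomial ring before localising. I expect this to follow from the slice retraction together with the explicit eigenvector description of $\ker\Delta$ obtained in Section~\ref{sec-diago}, which is exactly where diagonalisability is decisive.
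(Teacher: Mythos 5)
Your proposal is correct and follows essentially the same route as the paper: localise at $X_0$, use the slice $s=X_1X_0^{-1}$ and the retraction $\pi=\exp(-s\Delta)$ to exhibit $(A^\circ)^\Delta$ as a Laurent polynomial ring on algebraically independent $\Gamma$-eigenvectors $Y_i=\pi(X_i)=X_i+F_i$ (the triangularity of the $F_i$ gives independence, and $[\Delta,\Gamma]=\Delta$ gives $\pi\Gamma=\Gamma\pi$), then identify $R^\circ\cong(A^\circ)^\Delta[X;\Gamma]$ with a localised $U(\mathfrak g)$ and invoke McConnell. The only (harmless) divergence is at the very end: you pin down $d=1$ via the Poisson rank $2$ of the semiclassical limit, whereas the paper computes the index of $\mathfrak g$ to be $n-1$ using Ooms.
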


\begin{proof}
Since $\Delta(X_1)=X_0$ and $\Delta(X_0)=0$ we can apply the slice construction with the element $s=X_1X_0^{-1}$ of $A^\circ=A[X_0^{-1}]$.
In particular, the map $\pi:A^{\circ}\to A^{\circ}$ given by $\pi(a)=\sum_{p\geqslant 0}\frac{(-s)^p}{p!}\Delta^p(a)$ is an algebra homomorphism such that $(A^\circ)^\Delta=\pi(A^\circ)$, see \cite{VdE}.
Therefore the algebra $(A^\circ)^\Delta$ is generated by elements $Y_0^{\pm1},Y_2,\dots,Y_n$ where $Y_i$ denote the image of $X_i$ by $\pi$ (note that $\pi(X_0)=X_0$ and $\pi(X_1)=0$).  
Observe that ${Y_0,Y_2,\dots,Y_n}$ is a set of algebraically independent elements of $A^\circ$ since, for $i\geqslant 2$, we can write $Y_i=\pi(X_i)=X_i+F_i$ where $F_i\in\kk[X_0^{\pm1},X_1,\dots,X_{i-1}]$.
Moreover each $Y_i$ is an eigenvector for $\Gamma$ because one can verify that $\Gamma$ and $\pi$ commute thanks to the equality $\Delta\Gamma-\Gamma\Delta=\Delta$.

Denote by $\mathfrak{g}$ the completely solvable Lie algebra with basis $\{y_0,y_2,\dots,y_n,x\}$ and nonzero brackets $[x,y_i]=\lambda_i y_i$ for all $i$, where $\lambda_i\in \kk$ is the eigenvalue of $Y_i$ (equivalently $X_i$) for $\Gamma$.
The element $y_0$ is normal in $U(\mathfrak g)$ and is its clear that $(A^\circ)^\Delta[X;\Gamma]$ is isomorphic to the localization $U(\mathfrak g)[y_0^{-1}]$.
The first part of the proof is now complete since $R^\circ\cong(A^\circ)^\Delta[X;\Gamma]$ thanks to Proposition \ref{Oreext}.

The second assertion follows firstly from \cite{Mc}: skewfields of fractions of enveloping algebras of algebraic completely solvable Lie algebras are isomorphic to Weyl skewfields.
Moreover, the transcendence degree of the center of the enveloping skewfield of $\mathfrak{g}$ is equal to the index of $\mathfrak{g}$ whose value can be easily computed to be $n-1$ thanks to \cite{Ooms}.
The result follows.
\end{proof}

\begin{remark}
With similar arguments (using \cite{TY}) the following Poisson version can be obtained: under the assumption of Proposition \ref{Weyl}, the Poisson field of $A_{(\Delta,\Gamma)}$ is isomorphic to a Poisson Weyl field over a purely transcendental extension $F$ of $\kk$, that is, a rational functions field $F(U,V)$ with $\{U,V\}=1$.
\end{remark}

\section{The rank of $A_{(\Delta,\Gamma)}$}
\label{sec-pol-ring}

We recall the notion of rank of an affine Poisson variety following \cite{Van}.
Let $V=\kk^{n+1}$ be a Poisson variety, that is, an affine variety $V$ whose coordinate ring $A=\mathcal{O}(\kk^{n+1})=\kk[X_0,\dots,X_n]$ is endowed with a Poisson structure.
The Poisson matrix of $A$ is the skew-symmetric matrix $\Pi(A)=\big(\{X_i,X_j\}\big)_{0\leqslant i,j\leqslant n}$.
For any $p\in V$ the rank $\mbox{Rk}_p(A)$ of the Poisson bracket at $p$ is the rank of the Poisson matrix $\Pi(A)$ evaluated at $p$.
Finally we define the rank of $A$ to be the maximum of the $\mbox{Rk}_p(A)$ for $p\in V$. 
Note that $A$ is Poisson commutative if and only if its rank is zero. 
If $(\Delta,\Gamma)$ is a solvable pair on $A$ we have
\[\Pi(A_{(\Delta,\Gamma)})=
\begin{pmatrix}
\Delta(X_0)\\
\Delta(X_1)\\
\vdots\\
\Delta(X_n)
\end{pmatrix}
\begin{pmatrix}
\Gamma(X_0)\\
\Gamma(X_1)\\
\vdots\\
\Gamma(X_n)
\end{pmatrix}^T
-
\begin{pmatrix}
\Gamma(X_0)\\
\Gamma(X_1)\\
\vdots\\
\Gamma(X_n)
\end{pmatrix}
\begin{pmatrix}
\Delta(X_0)\\
\Delta(X_1)\\
\vdots\\
\Delta(X_n)
\end{pmatrix}^T.
\]
Therefore, when $A_{(\Delta,\Gamma)}$ is not Poisson commutative, its rank is equal to $2$.
When $\Delta$ and $\Gamma$ are linear, the algebra $A_{(\Delta,\Gamma)}$ is the semiclassical limit of the AS regular algebra $R_{(\Delta,\Gamma)}$ (see Corollary \ref{Cor-ASregul}), which explains the title of the article.

Note that, since the rank of a Poisson algebra $A$ is invariant under isomorphism thanks to \cite[Proposition 2.17]{Van}, there exist Poisson brackets that cannot be obtained from solvable pairs since Poisson structures with higher ranks do exist.
Moreover, even in rank $2$ it is not possible to obtain every Poisson structure, not even homogeneous ones, from a solvable pair, see Example~\ref{exdim2produit}.

\section{Linear solvable pairs} \label{sec-linear}

From this section included and until the end of this article we focus on the case of linear solvable pairs on polynomial algebras.
Recall that the polynomial algebra $A=\kk[X_0,\dots,X_n]=\bigoplus_{i\geqslant0}A_i$ is a connected graded algebra, where $A_i$ denote the vector space of homogeneous polynomials of degree $i$.
A derivation of $A$ is called linear if it is homogeneous of degree $0$.
Note that linear derivations are in one to one correspondence with endomorphisms of $A_1$ or equivalently with square matrices of order $n+1$ (acting on $A_1$ by left multiplication).
Moreover, locally nilpotent derivations of $A$ correspond to nilpotent endomorphisms/matrices.
When dealing with a linear derivation of $A$, typically $\Delta$, its rank will always mean the rank of its restriction  as an endomorphism of (a subspace of) $A_1$.

A Poisson structure on $A$ is called homogeneous if $\{A_i,A_j\}\subseteq A_{i+j}$ and in that case $A$ is said to be an homogeneous polynomial Poisson algebra. 
We provide a couple of invariants for homogeneous polynomial Poisson algebras.
Denote by ${\rm P.Der}_{\rm gr}(A)$ the Lie algebra of linear Poisson derivations of $A$.

\begin{proposition}
\label{isoda}
Let $A$ and $B$ be two isomorphic homogeneous polynomial Poisson algebras.
\begin{enumerate}[{\rm (1)}]
\item There exists a Lie algebra isomorphism between ${\rm P.Der}_{\rm gr}(A)$ and ${\rm P.Der}_{\rm gr}(B)$ that sends locally nilpotent derivations to locally nilpotent derivations.
\item There exists a vector space isomorphism between $A_1\cap Z_P(A)$ and $B_1\cap Z_P(B)$.
\end{enumerate}
\end{proposition}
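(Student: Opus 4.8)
The plan is to transport everything through the Poisson isomorphism by conjugation, using the Euler derivation to keep track of the grading. Let $\psi\colon A\to B$ be the given isomorphism of Poisson algebras. The map $F_\psi\colon\delta\mapsto\psi\circ\delta\circ\psi^{-1}$ is a bijection from derivations of $A$ to derivations of $B$; it sends Poisson derivations to Poisson derivations (immediate from $\psi(\{-,-\})=\{\psi(-),\psi(-)\}$), it is a Lie algebra homomorphism, and it preserves local nilpotency since $(\psi\delta\psi^{-1})^k=\psi\delta^k\psi^{-1}$. Thus $F_\psi$ already yields a Lie algebra isomorphism ${\rm P.Der}(A)\to{\rm P.Der}(B)$ between the full Poisson-derivation algebras that sends locally nilpotent derivations to locally nilpotent derivations; the only remaining point for assertion (1) is to cut out the graded (degree $0$) part in an intrinsic, transportable way.

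For this I would first record that the Euler derivation $E_A$, defined by $E_A(f)=if$ for $f\in A_i$, is a Poisson derivation of $A$: since the bracket is homogeneous, $\{f,g\}\in A_{i+j}$ for $f\in A_i$ and $g\in A_j$, so both $E_A(\{f,g\})$ and $\{E_A(f),g\}+\{f,E_A(g)\}$ equal $(i+j)\{f,g\}$. Decomposing an arbitrary Poisson derivation into its homogeneous components $\delta=\sum_k\delta_k$ and computing $[\delta_k,E_A]=-k\,\delta_k$ shows that
\[
{\rm P.Der}_{\rm gr}(A)=\{\delta\in{\rm P.Der}(A)\ :\ [\delta,E_A]=0\},
\]
that is, ${\rm P.Der}_{\rm gr}(A)$ is the centralizer of $E_A$ in ${\rm P.Der}(A)$. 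Consequently $F_\psi$ restricts to a Lie algebra isomorphism from this centralizer onto the centralizer of $E':=F_\psi(E_A)$ in ${\rm P.Der}(B)$, still preserving local nilpotency.

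It then remains to identify the centralizer of the transported Euler derivation $E'$ with ${\rm P.Der}_{\rm gr}(B)=Z_{{\rm P.Der}(B)}(E_B)$. When $\psi$ is a graded isomorphism one has $E'=E_B$ and assertion (1) follows at once; for (2) one uses in addition that a graded $\psi$ satisfies $\psi(A_1)=B_1$ while any Poisson isomorphism satisfies $\psi(Z_P(A))=Z_P(B)$, so $\psi$ restricts to a vector space (indeed linear) isomorphism $A_1\cap Z_P(A)\xrightarrow{\ \sim\ }B_1\cap Z_P(B)$.

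The main obstacle is precisely the non-graded case. An arbitrary Poisson isomorphism need not carry $E_A$ to $E_B$, but only to a semisimple Poisson derivation $E'$ with the same spectrum; its eigenspaces $\psi(A_i)$ define a second connected homogeneous grading on $B$. The crux is therefore to show that the standard Euler derivation $E_B$ and the transported one $E'$ are conjugate by a Poisson automorphism of $B$ (equivalently, that the two connected homogeneous gradings are equivalent): this would upgrade the centralizer isomorphism to ${\rm P.Der}_{\rm gr}(A)\cong{\rm P.Der}_{\rm gr}(B)$ and, applying the same conjugating automorphism to the degree-$1$ central part, finish (2). I expect this conjugacy to be where the polynomial structure and the homogeneity of the bracket must really be used; if instead one reads isomorphism of homogeneous polynomial Poisson algebras as \emph{graded} Poisson isomorphism, this obstacle disappears and the two steps above already constitute a complete proof.
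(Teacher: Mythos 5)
Your conjugation argument and the characterization of ${\rm P.Der}_{\rm gr}(A)$ as the centralizer of the Euler derivation are both correct, but the proof is not complete: the step you explicitly leave open --- passing from an arbitrary Poisson isomorphism to a graded one --- is exactly the content of the proposition, and it is not something one can "read into" the hypothesis. The statement assumes only that $A$ and $B$ are isomorphic as Poisson algebras, with no gradedness assumption on the isomorphism, so the obstacle you identify at the end is genuine and must be resolved.

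The paper resolves it by citing \cite[Proposition 8.8]{LGPV}: a Poisson isomorphism between homogeneous polynomial Poisson structures can always be realized by a \emph{linear} isomorphism $\varphi\colon A\to B$. Once such a $\varphi$ exists, everything you wrote for the graded case applies verbatim ($\delta\mapsto\varphi\delta\varphi^{-1}$ gives (1), and $\varphi|_{A_1\cap Z_P(A)}$ gives (2) since $\varphi(A_1)=B_1$ and $\varphi(Z_P(A))=Z_P(B)$). Note that the paper does not go through your proposed reformulation (conjugacy of the transported Euler derivation $E'$ with $E_B$ inside ${\rm P.Der}(B)$); it simply replaces the given isomorphism by a linear one supplied by the cited result, which is logically weaker than what you ask for and is all that is needed. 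Without that external input, or a proof of the conjugacy you sketch, the argument does not close.
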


\begin{proof}
Thanks to \cite[Proposition 8.8]{LGPV} a Poisson isomorphism between homogeneous polynomial Poisson structures can always be realized by a linear isomorphism.
We denote by $\varphi:A\to B$ such an isomorphism.
For (1), the map $\Psi:{\rm P.Der}_{\rm gr}(A)\rightarrow {\rm P.Der}_{\rm gr}(B)$ given by $\Psi=\varphi\delta\varphi^{-1}$ is the desired isomorphism and for (2), the restriction of $\varphi$ to $A_1\cap Z_P(A)$ provides us with the desired isomorphism since $\varphi(A_1)=B_1$ and 
$\varphi(Z_P(A))=Z_P(B)$.
\end{proof}

\begin{definition} A solvable pair $(\Delta,\Gamma)$ is said to be linear if $\Delta$ and $\Gamma$ are linear derivations of $A$.
\end{definition}

\begin{remark}
In a solvable pair, the derivation $\Delta$ is locally nilpotent.
It implies, when the solvable pair is linear, that $\Delta$ is a nilpotent endomorphism of $A_1$.
We remark that when $\Delta$ and $\Gamma$ are linear derivations of $A$ verifying $[\Delta,\Gamma]=\Delta$ then $\Delta$ is automatically locally nilpotent.
The proof goes as follows.
We have $[P(\Delta),\Gamma]= (TP')[\Delta]$ for every polynomial $P \in \kk[T]$ by assertion (2) of Lemma \ref{filtdg}.
If $P$ is the minimal polynomial of $\Delta$ acting on the finite dimensional space $A_1$, we obtain
that $TP'(T)$ is collinear to $P$.
Hence $P=T^\ell$ for some $\ell$ and $\Delta$ acting on $A_1$ is nilpotent.
Therefore $\Delta$ is locally nilpotent on $A$ and $(\Delta,\Gamma)$ a linear solvable pair. 
\end{remark}

The article~\cite{LS} consider the case where $\Delta$ is the maximal Jordan block.
In the following we will investigate algebras $A_{(\Delta,\Gamma)}$ and $R_{(\Delta,\Gamma)}$ arising from linear derivations $\Delta$ with more than one Jordan block. 
Let $(\Delta,\Gamma)$ be a linear solvable pair on $A$.
Thanks to Lemma~\ref{iso} one can always assume that $\Delta$ is in canonical Jordan form, up to a linear automorphism of $A$.
Recall that in canonical Jordan form, a nilpotent matrix is made of diagonal blocks of the form $J_{n_i}(0)$, 
i.e. blocks of size $n_i$ with $0$s everywhere except on the upper diagonal, where the entries are $1$. 
It can be assumed that the size of the diagonal blocks $J_{n_i}(0)$ is decreasing. 
So, given such a $\Delta$, what are the possible choices for $\Gamma$?

\begin{lemma}
The set of endomorphism $\Gamma\in{\rm End}_\kk(A_1)$ such that $[\Delta,\Gamma]=\Delta$ is given by
\[\Gamma_0+\mathcal{C}(\Delta)\]
where $\mathcal{C}(\Delta)=\{f\in{\rm End}_\kk(A_1)\ ;\ [f,\Delta]=0\}$ is the commutant of $\Delta$ in ${\rm End}_\kk(A_1)$ and where $\Gamma_0\in{\rm End}_\kk(A_1)$ is such that $[\Delta,\Gamma_0]=\Delta$.
\end{lemma}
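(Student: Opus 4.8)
The plan is to characterize the affine set of solutions to the inhomogeneous linear equation $[\Delta,\Gamma]=\Delta$ in the unknown $\Gamma\in{\rm End}_\kk(A_1)$. The key observation is that the map $\ad_\Delta:{\rm End}_\kk(A_1)\to{\rm End}_\kk(A_1)$ defined by $\ad_\Delta(\Gamma)=[\Delta,\Gamma]$ is $\kk$-linear, so the equation $\ad_\Delta(\Gamma)=\Delta$ is an inhomogeneous linear equation. Its solution set, if nonempty, is precisely a coset $\Gamma_0+\ker(\ad_\Delta)$ of the kernel, where $\Gamma_0$ is any particular solution. By definition $\ker(\ad_\Delta)=\{f\in{\rm End}_\kk(A_1)\ ;\ [\Delta,f]=0\}=\mathcal{C}(\Delta)$ is the commutant of $\Delta$, which immediately gives the stated form $\Gamma_0+\mathcal{C}(\Delta)$.

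The one genuine point that needs verification is that the solution set is nonempty, i.e. that a particular solution $\Gamma_0$ with $[\Delta,\Gamma_0]=\Delta$ actually exists. This is where I would do the real work. Since $\Delta$ is nilpotent on the finite-dimensional space $A_1$ (as noted in the remark preceding the lemma), I would exhibit $\Gamma_0$ explicitly. The cleanest approach is to use the Jordan structure: writing $\Delta$ in canonical Jordan form with blocks $J_{n_k}(0)$, I would define $\Gamma_0$ blockwise. On a single Jordan block spanned by a basis $e_1,\dots,e_{n_k}$ with $\Delta(e_j)=e_{j-1}$ (and $\Delta(e_1)=0$), setting $\Gamma_0(e_j)=(j-1)e_j$ gives a diagonal grading operator satisfying $[\Delta,\Gamma_0]=\Delta$ on that block, as one checks directly: $[\Delta,\Gamma_0](e_j)=\Delta((j-1)e_j)-\Gamma_0(e_{j-1})=(j-1)e_{j-1}-(j-2)e_{j-1}=e_{j-1}=\Delta(e_j)$. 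Taking the direct sum of these block operators yields a global $\Gamma_0$ with $[\Delta,\Gamma_0]=\Delta$. Alternatively, one may invoke the Jacobson--Morozov framework: a nilpotent $\Delta$ embeds in an $\mathfrak{sl}_2$-triple $(\Delta,H,\cdot)$ with $[H,\Delta]=2\Delta$, so that $\Gamma_0=-\tfrac12 H$ works; but the explicit block construction is more elementary and self-contained.

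With existence of $\Gamma_0$ secured, the remainder is the standard linear-algebra fact. If $\Gamma$ is any solution, then $[\Delta,\Gamma-\Gamma_0]=[\Delta,\Gamma]-[\Delta,\Gamma_0]=\Delta-\Delta=0$, so $\Gamma-\Gamma_0\in\mathcal{C}(\Delta)$, giving $\Gamma\in\Gamma_0+\mathcal{C}(\Delta)$. Conversely, for any $c\in\mathcal{C}(\Delta)$ we have $[\Delta,\Gamma_0+c]=[\Delta,\Gamma_0]+[\Delta,c]=\Delta+0=\Delta$, so every element of $\Gamma_0+\mathcal{C}(\Delta)$ is a solution. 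This establishes the equality of sets.

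I expect the only nontrivial step to be pinning down a concrete $\Gamma_0$, and even that is routine once the Jordan form of $\Delta$ is used; the coset description is then purely formal. The statement as phrased already posits the existence of such a $\Gamma_0$, so in the written proof one could either take that as given and verify only the two inclusions, or include the short block-diagonal construction for completeness. I would include the construction, since it costs little and makes the lemma self-contained.
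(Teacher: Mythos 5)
Your proof is correct and follows the same route as the paper, whose entire proof is the one-line observation that the difference of two solutions of $[\Delta,\Gamma]=\Delta$ lies in $\mathcal{C}(\Delta)$. The additional blockwise construction of a particular $\Gamma_0$ is sound but not required for the lemma as stated, since the statement already posits a $\Gamma_0$ with $[\Delta,\Gamma_0]=\Delta$ (the paper exhibits one explicitly, $\Gamma_0=\mathrm{diag}(0,1,\dots,n)$, in the maximal-block discussion immediately following).
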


\begin{proof}
Follows from the fact that for two solvable pairs $(\Delta,\Gamma)$ and $(\Delta,\tilde{\Gamma})$ 
we have $[\Delta, \Gamma-\tilde{\Gamma}]=0$.   
\end{proof}

\subsection{The case of $\Delta$ maximal}
\label{caseDeltamax}

When $\Delta$ is a maximal Jordan block one can choose $\Gamma_0={\rm diag}(0,1,\dots,n)$. %Moreover it is classical that the commutant of $\Delta$ is $\mathcal{C}(\Delta)={\rm Span}(\id,\Delta,\Delta^2,\dots,\Delta^{n})$.
Recall the algebras $A(n,a)$ and $R(n,a)$ from Example \ref{LSnot}.

\begin{theorem}
\label{dmax}
Given a solvable pair $(\Delta,\Gamma)$ with $\Delta$ a maximal Jordan block, there exists  $u\in{\rm GL}(A_1)\cap\mathcal{C}(\Delta)$ such that $u\Gamma u^{-1}=a {\rm Id}_{A_1}+\Gamma_0$, where $a\in\kk$ is such that ${\rm tr}(\Gamma)=(n+1)(a+n/2)$.
In particular $A_{(\Delta,\Gamma)}\cong A(n,a)$ and $R_{(\Delta,\Gamma)}\cong R(n,a)$.
\end{theorem}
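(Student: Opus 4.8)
The plan is to construct the conjugating element $u$ explicitly inside the commutant of $\Delta$, using Lemma~\ref{lem-crochet}. Because $\Delta$ is a single maximal Jordan block, it is a cyclic nilpotent endomorphism of $A_1$, so by standard linear algebra its commutant is exactly the algebra of polynomials in $\Delta$, namely $\mathcal{C}(\Delta)=\kk[\Delta]=\mathrm{span}_\kk(\mathrm{Id}_{A_1},\Delta,\dots,\Delta^n)$. By the preceding lemma we may thus write $\Gamma=\Gamma_0+C$ with $C=\sum_{i=0}^n c_i\Delta^i\in\mathcal{C}(\Delta)$. Any candidate $u\in\mathrm{GL}(A_1)\cap\mathcal{C}(\Delta)$ is itself a polynomial in $\Delta$ and hence commutes with $C$, so $u\Gamma u^{-1}=u\Gamma_0 u^{-1}+C$. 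It therefore suffices to find an invertible $u\in\kk[\Delta]$ satisfying $u\Gamma_0 u^{-1}=\Gamma_0-N$, where $N:=\sum_{i\geqslant 1}c_i\Delta^i$; indeed, for such a $u$ we obtain $u\Gamma u^{-1}=(\Gamma_0-N)+C=c_0\,\mathrm{Id}_{A_1}+\Gamma_0$, which is the assertion with $a=c_0$.

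The core of the argument is to solve $u\Gamma_0 u^{-1}=\Gamma_0-N$ inside $\kk[\Delta]$. Writing $u=p(\Delta)$, this is equivalent to $[u,\Gamma_0]=-Nu$. Since $\Gamma_0$ also satisfies $[\Delta,\Gamma_0]=\Delta$, Lemma~\ref{lem-crochet} gives $[p(\Delta),\Gamma_0]=\Delta\,p'(\Delta)$; setting $\tilde n(T):=\sum_{i\geqslant 1}c_iT^{i-1}$ so that $N=\Delta\,\tilde n(\Delta)$, the equation becomes
\[\Delta\,p'(\Delta)=-\Delta\,\tilde n(\Delta)\,p(\Delta).\]
I would obtain this from the differential relation $p'=-\tilde n\,p$, which I solve in the power series ring $\kk[\![T]\!]$ by $p=\exp\!\big(-\!\int_0^T\tilde n\big)$, a well-defined series with $p(0)=1$. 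Applying the substitution homomorphism $\kk[\![T]\!]\to\kk[\Delta]\cong\kk[T]/(T^{n+1})$ (valid since $\Delta^{n+1}=0$, as already used in the proof of Lemma~\ref{compo}) yields $p'(\Delta)=-\tilde n(\Delta)p(\Delta)$, hence the displayed identity; moreover $u=p(\Delta)$ is invertible because its constant term is $p(0)=1$. This produces the required $u\in\mathrm{GL}(A_1)\cap\mathcal{C}(\Delta)$ with $u\Gamma u^{-1}=c_0\,\mathrm{Id}_{A_1}+\Gamma_0$.

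I expect the only genuine difficulty to be exactly this last construction. In the Jordan basis of $\Delta$ the matrix of $\Gamma$ is triangular with distinct diagonal entries $c_0,c_0+1,\dots,c_0+n$, so $\Gamma$ is diagonalizable and is trivially conjugate to $c_0\,\mathrm{Id}_{A_1}+\Gamma_0$ by \emph{some} element of $\mathrm{GL}(A_1)$; what is not automatic, and what the exponential solution of $p'=-\tilde n\,p$ secures, is that the conjugation can be carried out \emph{within} $\mathcal{C}(\Delta)$, so that $\Delta$ is left unchanged. The value of $a$ then follows from invariance of the trace:
\[\mathrm{tr}(\Gamma)=\mathrm{tr}(c_0\,\mathrm{Id}_{A_1}+\Gamma_0)=(n+1)c_0+\tfrac{n(n+1)}{2}=(n+1)\big(c_0+\tfrac n2\big),\]
so that $a=c_0$ is indeed determined by the stated formula.

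Finally, for the isomorphisms I would regard $u$ as the graded $\kk$-algebra automorphism $\sigma$ of $A$ it induces on $A_1$. Since $u\in\mathcal{C}(\Delta)$ we have $\sigma\Delta\sigma^{-1}=\Delta$, while $\sigma\Gamma\sigma^{-1}=a\,\mathrm{Id}_{A_1}+\Gamma_0$; the latter pair is precisely the one with $\Delta(X_i)=X_{i-1}$ and $\Gamma(X_i)=(a+i)X_i$ defining $A(n,a)$ and $R(n,a)$ in Example~\ref{LSnot}. Lemma~\ref{iso} then yields the Poisson isomorphism $A_{(\Delta,\Gamma)}\cong A(n,a)$ and the $\kk$-algebra isomorphism $R_{(\Delta,\Gamma)}\cong R(n,a)$, completing the proof.
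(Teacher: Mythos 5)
Your proof is correct, but it follows a genuinely different route from the paper's. The paper works with eigenvectors of $\Gamma$: in a Jordan basis for $\Delta$ the matrix of $\Gamma$ is triangular, and the relation $\Gamma(e_i)=\Delta^{n-i}\Gamma(e_n)-(n-i)\Delta^{n-i}(e_n)$ forces the diagonal entries to be $\lambda_n-n,\dots,\lambda_n$; since these are distinct, $\Gamma$ is diagonalizable, the eigenvector $e'_n$ for $\lambda_n$ cannot lie in the $\Gamma$-stable hyperplane $\ker\Delta^n$, and $(\Delta^n(e'_n),\dots,e'_n)$ is a new Jordan basis in which $\Gamma$ is $\diag(a,\dots,a+n)$ — the element $u$ of the statement being the (implicit) transition map between the two Jordan bases, which automatically commutes with $\Delta$. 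You instead work entirely inside the commutant: you split $\Gamma=\Gamma_0+c_0\,\mathrm{Id}+N$ with $N\in\Delta\,\kk[\Delta]$ and kill $N$ by conjugating with $u=p(\Delta)$, where $p=\exp\bigl(-\int_0^T\tilde n\bigr)$ solves the linear ODE $p'=-\tilde n\,p$ that Lemma~\ref{lem-crochet} translates into $[u,\Gamma_0]=-Nu$. Your argument is fully constructive (it produces an explicit formula for $u$, and visibly uses the two ``classical facts'' the paper lists but barely exploits), at the cost of a small amount of care in passing from the power-series identity $p'=-\tilde n p$ to its evaluation at the nilpotent $\Delta$ — a point you handle correctly since $\Delta^{n+1}=0$ makes the substitution a ring homomorphism and the extra factor of $\Delta$ absorbs the truncation discrepancy. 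The paper's argument is shorter and more conceptual but yields $u$ only indirectly; both correctly reduce the isomorphism statement to Lemma~\ref{iso} and both recover $a$ from the trace in the same way.
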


The proof relies on the following classical facts.
Recall that $A_1=\kk X_0\oplus\cdots\oplus\kk X_n$ is a $\kk$-vector space of dimension $n+1$.
\begin{enumerate}[(1)]
\item We have $\mathcal{C}(\Delta)=\kk[\Delta]={\rm Span}(\id,\Delta,\Delta^2,\dots,\Delta^{n})$ as a subalgebra of ${\rm End}_\kk(A_1)$.
%We have $\mathcal{C}(\Delta)=\kk[\Delta]$ as a subalgebra of ${\rm End}_\kk(A_1)$.
%Moreover $\mathcal{C}(\Delta)={\rm Span}(\id,\Delta,\Delta^2,\dots,\Delta^{n})$.
\item Let $u\in\mathcal{C}(\Delta)$ and $P\in\kk[X]$ such that $u=P(\Delta)$.
Then $u\in{\rm GL}(A_1)$ if and only if $P(0)\neq0$.
%\item For any $P\in\kk[X]$ we have $[P(\Delta),\Gamma_0]=Q(\Delta)$, where $Q(X)=XP'(X)$ (follows easily by induction that $[\Delta^i,\Gamma_0]=i\Delta^{i-1}$ for all $i\geqslant 0$).
\end{enumerate}

%\begin{lemma} \label{comm}
%Consider a solvable pair $(\Delta,\Gamma)$. For any $P\in\kk[X]$ we have $[P(\Delta),\Gamma]=Q(\Delta)$, where $Q(X)=XP'(X)$.
%\end{lemma}
%\begin{proof}
%By linearity it suffices to show the result for $P=X^i$, $i\geqslant 0$.
%For $i=0$ and $i=1$ the result is clear.
%Let $i>0$ and assume that $[\Delta^i,\Gamma]=i\Delta^{i}$.
%Then
%\[[\Delta^{i+1},\Gamma]=\Delta^{i+1}\Gamma-\Gamma\Delta^{i+1}=\Delta^i([\Delta,\Gamma])+\Delta^i\Gamma\Delta-\Gamma\Delta^{i+1}=\Delta^{i+1}+[\Delta^i,\Gamma]\Delta=(i+1)\Delta^{i+1}.\]
%\end{proof}

\begin{proof}[Proof of Theorem \ref{dmax}]
The relation $[\Delta^i,\Gamma]=i\Delta^{i}$ shows that $\ker \Delta^i$ is stable by $\Gamma$. 
So let us consider a basis $(e_0,\ldots,e_n)$ of $A_1$ in which $\Delta$ has canonical Jordan form.
In such a basis $\Gamma$ is upper triangular.
Let us denote by $(\lambda_0,\ldots,\lambda_n)$ the diagonal part of $\Gamma$. 
We have 
\[\Gamma(e_i) = \Gamma(\Delta^{n-i}(e_n))= \Delta^{n-i}\Gamma(e_n) - (n-i) \Delta^{n-i}(e_n)\,.
\]
Thus by computing modulo $\textrm{vect}(e_0,\ldots, e_{i-1})$, we get that $\lambda_{i}=\lambda_{n}-(n-i)$ for all $0\leqslant i<n$.
In particular $\Gamma$ has $n+1$ distinct eigenvalues and thus a basis of eigenvectors. We then deduce that there exists an eigenvector $e'_n$ of $\Gamma$ which is not in $\ker \Delta^n$.
Using the relation $[\Delta^i,\Gamma]=i\Delta^{i}$, 
we obtain that $(\Delta^n(e'_n),\Delta^{n-1}(e'_n),\ldots, e'_n)$ is a basis of $A_1$ in which $\Delta$ has
canonical Jordan form and $\Gamma$ is diagonal $(a,a+1,\ldots, a+n)$ where $a=\lambda_0=\lambda_n-n$.
Finally we get
\[{\rm tr}(\Gamma)=a(n+1)+\frac{n(n+1)}{2}\]
showing that $a$ is uniquely determined by $\Gamma$.
\end{proof}

\begin{example}
\label{exn=1}
Let $n=1$.
There exists only two isomorphism classes for the algebras $A=A_{(\Delta,\Gamma)}$ and $R=R_{(\Delta,\Gamma)}$ with non necessarily maximal linear solvable pairs.
If $\Delta=0$ then $A=R=\kk[X_0,X_1]$ is a (Poisson) commutative algebra.
If $\Delta\neq0$ then it is necessarily a maximal Jordan block since $n=1$.
Therefore, thanks to Theorem \ref{dmax}, we have $A\cong A(1,a)$ and $R\cong R(1,a)$ with $a\in\kk$.
If $a=0$ we retrieve the commutative case (this is the example of Remark~\ref{rem-commutative-example}) 
and when $a\neq0$, $A$ is isomorphic to the Poisson algebra $\kk[x,y]$ with Poisson bracket $\{x,y\}=x^2$ and $R$ is isomorphic to the Jordan plane, that is, the algebra given by two generators $x,y$ and the relation $xy-yx=x^2$.
\end{example}

\subsection{Center in the linear case.}

The following result is a generalization of~Lemma~\ref{lem-Pcenter} in the linear case.
We describe the Poisson center of $A$ and the center of $R$ which allow us to determine exactly when $A$ is Poisson commutative and $R$ is commutative.
The results of this section will be improved by Proposition~\ref{prop-normal-element} but are stated here since they will be needed in the proof of Proposition~\ref{prop-normal-element}.
%In the next proposition the rank of $\Delta$ seen as an endomorphism of $A_1$ is denoted by $\rm{rank}(\Delta)$.

\begin{proposition}\label{prop-Pcenter-lin} Let $(\Delta,\Gamma)$ be a linear solvable pair on $A$. Assume that the rank of $\Delta$ is at least $2$.
Then we have $A_1 \cap Z_P(A)=\ker \Delta \cap \ker \Gamma \cap A_1$ and $A_1 \cap Z(R)=\ker \Delta \cap \ker \Gamma \cap A_1$.
\end{proposition}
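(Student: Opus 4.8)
The plan is to prove each of the two stated equalities by a double inclusion, the inclusion $\supseteq$ being immediate: intersecting Lemma~\ref{lem-Pcenter}(1) and~(2) with $A_1$ yields $\ker\Delta\cap\ker\Gamma\cap A_1\subseteq A_1\cap Z_P(A)$ and $\ker\Delta\cap\ker\Gamma\cap A_1\subseteq A_1\cap Z(R)$. So the entire content lies in the reverse inclusions. Throughout I would use that $A$ is a polynomial algebra, hence a domain in which every nonzero linear form is irreducible, and that $\Delta\neq 0$ because $\operatorname{rank}(\Delta)\geqslant 2$.

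For the Poisson center, let $z\in A_1\cap Z_P(A)$ and write $a=\Delta(z)$, $b=\Gamma(z)\in A_1$. Evaluating $\{z,g\}=\Delta(z)\Gamma(g)-\Delta(g)\Gamma(z)=0$ on $g\in A_1$ gives the identity $a\,\Gamma(g)=b\,\Delta(g)$ in $A_2$ for all $g\in A_1$. First I would rule out that $a,b$ are linearly independent: for non-proportional nonzero linear forms one has $a\,A_1\cap b\,A_1=\kk\,ab$ inside the polynomial ring, so $b\,\Delta(g)=a\,\Gamma(g)\in\kk\,ab$ forces $\Delta(g)\in\kk a$ for every $g$, i.e. $\operatorname{rank}(\Delta)\leqslant 1$, a contradiction. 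Hence $a,b$ are dependent. If $a\neq0$, writing $b=\lambda a$ gives $\Gamma(g)=\lambda\Delta(g)$ for all $g$, i.e. $\Gamma=\lambda\Delta$ on $A_1$, whence $\Delta=[\Delta,\Gamma]=0$, absurd; so $a=0$, and then $b\,\Delta(g)=0$ for all $g$ together with $\Delta\neq0$ forces $b=0$. Thus $z\in\ker\Delta\cap\ker\Gamma$, as desired.

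For the center of $R$, I would reduce to the Poisson case. Since algebra automorphisms preserve the center, Theorem~\ref{deltaR} shows $\phi_a(z)\in Z(R)$ for every $a\in\kk$ whenever $z\in A_1\cap Z(R)$; as $a$ varies, a Vandermonde argument over the infinite field $\kk$ shows that the span of the $\phi_a(z)$ contains each $\Delta^k(z)$, so $A_1\cap Z(R)$ is stable under $\Delta$. Arguing by contradiction, suppose some $z\in A_1\cap Z(R)$ has $\Delta(z)\neq0$, and let $p=\varepsilon(z)\geqslant1$; replacing $z$ by $\Delta^{p-1}(z)$ (still central by the previous remark) I may assume $\Delta(z)\neq0$ and $\Delta^2(z)=0$. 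Then $z\ast g=g\ast z$, after cancelling the commutative term $zg$, collapses to
\[
\Delta(z)\,\Gamma(g)=\sum_{k\geqslant 1}\Delta^k(g)\binom{\Gamma}{k}(z)\qquad(g\in A_1).
\]
Writing $a'=\Delta(z)$, Lemma~\ref{lem-Pcenter}(2) gives $a'\in Z(R)\cap\ker\Delta\subseteq\ker\Gamma$. Testing the displayed identity on $g\in\ker\Delta\cap A_1$ kills the right-hand side and yields $a'\,\Gamma(g)=0$, hence $\ker\Delta\cap A_1\subseteq\ker\Gamma$. Using $[\Delta,\Gamma]=\Delta$ and $a'\in\ker\Gamma$ one checks $\Gamma(z)-z\in\ker\Delta\cap A_1\subseteq\ker\Gamma$, whence $\Gamma(\Gamma-\id)(z)=0$ and therefore $\binom{\Gamma}{k}(z)=0$ for all $k\geqslant2$. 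The displayed identity then reads $\Delta(z)\Gamma(g)-\Delta(g)\Gamma(z)=\{z,g\}=0$ for all $g$, so $z$ is Poisson central; the Poisson case forces $\Delta(z)=0$, contradicting $\Delta(z)\neq0$. Hence $A_1\cap Z(R)\subseteq\ker\Delta$, and Lemma~\ref{lem-Pcenter}(2) upgrades this to $\ker\Delta\cap\ker\Gamma\cap A_1$.

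I expect the center of $R$ to be the main obstacle: unlike the Poisson bracket, the $\ast$-commutator carries the higher corrections $\sum_{k\geqslant2}\Delta^k(\cdot)\binom{\Gamma}{k}(\cdot)$, which prevent a direct case analysis. The key device is to reduce the nilpotency index of $z$ to $\varepsilon(z)=1$ via the automorphisms $\phi_a$, at which point these corrections vanish identically and the commutator relation linearizes back into exactly the Poisson identity, where the hypothesis $\operatorname{rank}(\Delta)\geqslant2$ does the decisive work.
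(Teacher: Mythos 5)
Your proof is correct, but it follows a genuinely different route from the paper's. For the Poisson part, the paper first splits off the case $\ker\Gamma\neq\ker\Delta$ via Lemma~\ref{lem-Pcenter}, then in the case $\ker\Gamma=\ker\Delta$ proves $z\in\ker\Delta^2$ by pairing $z$ with $\Delta(z)$ and only afterwards invokes $\operatorname{rank}(\Delta)\geqslant 2$ to produce a $y$ with $\Delta(y)$ non-collinear to $\Delta(z)$; you instead run a single divisibility argument on $\Delta(z)\Gamma(g)=\Gamma(z)\Delta(g)$ for all $g\in A_1$, with a case split on the linear (in)dependence of $\Delta(z)$ and $\Gamma(z)$ --- this avoids the kernel dichotomy entirely and is arguably cleaner, though both proofs ultimately rest on the same UFD input (non-associated irreducible linear forms). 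For the center of $R$, the paper mimics its Poisson computation in parallel, handling the correction terms by showing $z\in\ker\binom{\Gamma}{2}$ directly from $z\ast\Delta^{n-1}(z)=\Delta^{n-1}(z)\ast z$; you instead reduce to the Poisson case by first using the automorphisms $\phi_a$ of Theorem~\ref{deltaR} and a Vandermonde argument to show $A_1\cap Z(R)$ is $\Delta$-stable (the paper uses this exact device later, in Proposition~\ref{prop-commutation}, but not here), then normalizing to $\varepsilon(z)=1$ and killing the higher binomial terms so that the $\ast$-commutator identity literally becomes the Poisson one. Your reduction buys modularity (the $R$ statement becomes a corollary of the Poisson statement) at the cost of invoking more machinery ($\phi_a$ and the center-stability argument), whereas the paper's parallel treatment is more self-contained; both are valid.
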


\begin{proof} Since in this proof we only consider restrictions of $\Delta$ and $\Gamma$ to $A_1$, 
we simply denote by $\ker \Delta$ the set $\ker \Delta \cap A_1$ and similarly for $\Gamma$. 
Since we have $\ker \Gamma \cap \ker \Delta \subseteq Z_P(A)$ and $\ker \Gamma \cap \ker \Delta \subseteq Z(R)$ with equality if $\ker \Gamma \neq \ker \Delta$ thanks to Lemma~\ref{lem-Pcenter}, it only remains to study the case $\ker \Gamma=\ker \Delta$.

Consider $z \in Z_P(A) \cap A_1$. 
Let us first show that $z \in \ker \Delta^2$.
If not, $z, \Delta(z)$ and $\Delta^2(z)$ are linearly independent over $\kk$.
Then $0=\{z,\Delta(z)\}=\Delta(z)\Gamma \Delta(z) - \Delta^2(z)\Gamma(z)$. 
Since $\ker \Delta = \ker \Gamma$ and $z \notin \ker \Delta^2$ we get that $\Gamma\Delta(z) \neq 0$. 
Moreover $\Delta(z), \Gamma \Delta(z), \Delta^2(z)$ and $\Gamma(z)$ are homogeneous element of $A$ of degree $1$, hence are irreducible element of the unique factorization domain $A$.
Since $\Delta(z)$ and $\Delta^2(z)$ are not collinear, they are non associated irreducible elements of $A$.
The unique factorization of $\Delta(z)\Gamma \Delta(z) = \Delta^2(z)\Gamma(z)$ ensures us that 
there exists $\alpha \in \kk$ such that $\Gamma \Delta(z)= \alpha \Delta^2(z)$ and $\Gamma(z)= \alpha \Delta(z)$. 
Computing $\Delta(z)=[\Delta,\Gamma](z)= \alpha \Delta^2(z) -\alpha\Gamma \Delta(z)=0$, we get a contradiction. 
Hence $z \in \ker \Delta^2$.

Before pursuing the computation of the Poisson center of $A$, we switch for a moment to the study of the center of $R$.
Consider $z \in Z(R) \cap A_1$.
As in the Poisson case, we first show that $z \in \ker \Delta^2$.
If not, the integer such that $\Delta^n(z) \neq0$ et $\Delta^{n+1}(z)=0$ is greater than $1$.
We now compute $z \ast \Delta^{n-1}(z)=\Delta^{n-1}(z)\ast z$. For $\Delta((\Gamma-\id)(\Delta^{n-1}(z)) = \Gamma\Delta^{n}(z) + \Delta^n(z)-\Delta^n(z) =0$.
Indeed $\Delta^n(z) \in \ker \Delta=\ker \Gamma$. So $(\Gamma-\id)(\Delta^{n-1}(z)) \in \ker \Delta=\ker\Gamma$.
Hence $\Delta^{n-1}(z)\in \ker\binom{\Gamma}{2}$. Hence the relation $z \ast \Delta^{n-1}(z)=\Delta^{n-1}(z)\ast z$ becomes $\Delta(z)\Gamma(\Delta^{n-1}(z))=\Gamma(z)\Delta^{n}(z)$.
Since $n > 1$ the element $\Delta(z)$ and $\Delta^n(z)$ are linearly independents.
As in the preceding paragraph, we deduce that there exists $\alpha \in \kk$ such that $\alpha \Delta(z)=\Gamma(z)$ and $\alpha\Delta^{n}(z)=\Gamma(\Delta^{n-1}(z))$.
The relation $[\Delta^{n-1},\Gamma](z)=(n-1)\Delta^{n-1}(z)$ can then be written as $\alpha \Delta^{n}(z)-\alpha\Delta^{n}(z)=(n-1)\Delta^{n-1}(z)$.
Hence $(n-1)\Delta^{n-1}(z)=0$, which is absurd since $n>1$ and $\Delta^{n-1}(z)\neq0$.
We deduce that $z \in \ker \Delta^2$ and therefore that $z=\Delta^{n-1}(z) \in \ker\binom{\Gamma}{2}\,.$

We now finish the proof by showing that in both cases $z \in \ker \Delta$ when the rank of $\Delta $ is at least equal to $2$. 
If $z \notin\ker \Delta$ then there exists $y$ such that $\Delta(y)$ is not collinear to $\Delta(z)$. 
For the Poisson case, we have $0=\{z,y\}=\Delta(z)\Gamma(y) - \Delta(y)\Gamma(z)$.
For the case of $R$, we have $z\ast y = zy + \Delta(z)\Gamma(y)$ since $z \in \ker \Delta^2$. But 
$y \ast z = yz + \Delta(y)\Gamma(z)$ since $z \in \ker \binom{\Gamma}{2}$. Finally, we also have the equality $\Delta(z)\Gamma(y)=\Delta(y)\Gamma(z)$.
Since $\ker \Delta=\ker \Gamma$, we get that $\Gamma(y) \neq 0$ and $\Gamma(z) \neq 0$.
Using the fact that $\Delta(y)$ and $\Delta(z)$ are not collinear, we can mimic the argument of the two preceding paragraphs to get the relation $\Gamma(z)=\alpha \Delta(z)$ for some $\alpha \in \kk$. 
We then obtain $\Delta(z)=[\Delta,\Gamma](z)= \alpha \Delta^2(z) - \Gamma \Delta(z)$.
But $z \in \ker \Delta^2$ and hence $\Delta(z) \in \ker \Delta = \ker \Gamma$.
Therefore $\Delta(z)=0$ and $z \in \ker\Delta\cap \ker \Gamma$.
\end{proof}

\begin{corollary}\label{cor-poissoncom} Let $(\Delta,\Gamma)$ be a linear solvable pair on $A$. Then 
$A_1 \cap Z_P(A) \neq \ker \Delta \cap \ker \Gamma \cap A_1$ implies that $A$ is Poisson commutative and 
$A_1 \cap Z(R) \neq \ker \Delta \cap \ker \Gamma \cap A_1$ implies that $R$ is commutative.

Moreover $A$ is Poisson commutative if and only if $R$ is commutative if and only if $\Delta =0$ or $(\Delta,\Gamma)$ is conjugated to $(X_0\partial_{X_1},X_1\partial_{X_1})$.
%$(\Delta_0,\Gamma_0)$ where 
%$$\Delta_0=\begin{pmatrix} 0&1& \\ &0&& \\ &&\ddots&\\ &&&0 \end{pmatrix}= X_0\partial_{X_1} \qquad \textrm{and}\qquad 
%\Gamma_0=\begin{pmatrix} 0&&&& \\ &1&&& \\ &&0&&\\ &&&\ddots& \\ &&&&0 \end{pmatrix}= X_1\partial_{X_1}$$
\end{corollary}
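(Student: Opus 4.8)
The plan is to argue according to the rank $d$ of the nilpotent endomorphism $\Delta|_{A_1}$, reducing the bulk of the work to the single case $d=1$. For the two implications I would argue by contraposition together with Proposition~\ref{prop-Pcenter-lin}: if $d\geqslant 2$ then $A_1\cap Z_P(A)=\ker\Delta\cap\ker\Gamma\cap A_1$ and $A_1\cap Z(R)=\ker\Delta\cap\ker\Gamma\cap A_1$, so whenever one of the displayed inequalities holds we must have $d\leqslant 1$. The case $d=0$ means $\Delta=0$, which already forces $A$ Poisson commutative and $R$ commutative (clear from \eqref{pbdg} and \eqref{ast}); so everything comes down to a direct analysis of $d=1$.

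For $d=1$ I would first fix coordinates. A rank-one nilpotent endomorphism has a single Jordan block of size two, so by Lemma~\ref{iso} I may assume $\Delta=X_0\partial_{X_1}$, whence $\ker\Delta\cap A_1=\mathrm{Span}(X_0,X_2,\dots,X_n)$ and $\im\Delta\cap A_1=\kk X_0$. Both of these are $\Gamma$-stable: the kernel by Lemma~\ref{lem-crochet}, and the image because $\Gamma\Delta=\Delta(\Gamma-\id)$. Hence $\Gamma(X_0)=\lambda X_0$ for some $\lambda\in\kk$, and evaluating $[\Delta,\Gamma](X_1)=X_0$ pins down $\Gamma(X_1)=(\lambda+1)X_1+w$ with $w\in\ker\Delta\cap A_1$. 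The only brackets among generators that can be nonzero are then $\{X_1,X_j\}=X_0\,\Gamma(X_j)$ for $j\neq 1$.

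The heart of the argument is the following dichotomy, obtained by computing the center on $A_1$. For $z=\sum_i a_iX_i$ one finds $\{z,X_j\}=a_1X_0\,\Gamma(X_j)$ for $j\neq 1$ and $\{z,X_1\}=-X_0\sum_{i\neq 1}a_i\Gamma(X_i)$. If $\Gamma$ vanishes on $\ker\Delta\cap A_1$ then $\lambda=0$, all brackets vanish, $A$ is Poisson commutative, and the linear change of coordinates replacing $X_1$ by $X_1+w$ (which fixes $\Delta$, since $w\in\ker\Delta$, and makes the new degree-one generator an eigenvector of $\Gamma$) exhibits $(\Delta,\Gamma)$ as conjugate to $(X_0\partial_{X_1},X_1\partial_{X_1})$; here $A_1\cap Z_P(A)=A_1$ properly contains $\ker\Delta\cap\ker\Gamma\cap A_1$, so the inequality can hold. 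Otherwise some $\Gamma(X_{j_0})\neq 0$ with $j_0\neq 1$, and the first formula forces $a_1=0$, after which the second forces $\Gamma(z)=0$; thus $z\in\ker\Delta\cap\ker\Gamma\cap A_1$ and equality holds. To transport this to $R$ I would use that, $\Delta$ and $\Gamma$ being homogeneous of degree $0$, the product $\ast$ is graded and $R$ is connected graded and generated in degree one, so $z\in A_1$ lies in $Z(R)$ iff it $\ast$-commutes with every $X_j$; since $\Delta^2$ vanishes on $A_1$ one has $z\ast X_j-X_j\ast z=\Delta(z)\Gamma(X_j)-\Delta(X_j)\Gamma(z)=\{z,X_j\}$, so $A_1\cap Z(R)=A_1\cap Z_P(A)$ and the identical dichotomy applies.

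Finally the equivalences follow by collecting cases: for $\Delta=0$ and for $d=1$ with $\Gamma$ killing $\ker\Delta\cap A_1$ both $A$ and $R$ are (Poisson) commutative and the pair is $\Delta=0$ or the stated normal form; for $d\geqslant 2$ Proposition~\ref{prop-Pcenter-lin} shows $A_1\cap Z_P(A)$ and $A_1\cap Z(R)$ are proper in $A_1$, so neither $A$ is Poisson commutative nor $R$ is commutative, and $(\Delta,\Gamma)$ is not conjugate to the rank-one normal form. The main obstacle is the case $d=1$, and within it the simultaneous treatment of $A$ and $R$: the crux is the identity $z\ast X_j-X_j\ast z=\{z,X_j\}$ on $A_1$, valid precisely because $\Delta^2|_{A_1}=0$, which lets one reuse the Poisson computation verbatim for $R$. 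Everything else—the normalization of $\Delta$, the description of $\Gamma$, and the final change of variables—is routine once this identity is in place.
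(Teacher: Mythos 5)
Your proof is correct and follows essentially the same route as the paper's: both reduce to $\operatorname{rank}\Delta\leqslant 1$ via Proposition~\ref{prop-Pcenter-lin} and then normalize the rank-one pair to $(X_0\partial_{X_1},X_1\partial_{X_1})$ by absorbing the $\ker\Delta$-part of $\Gamma(X_1)$ into $X_1$. The only organizational difference is that where the paper invokes Lemma~\ref{lem-Pcenter} to force $\ker\Delta=\ker\Gamma$, you recover the same dichotomy by a direct computation of the degree-one center, and your identity $z\ast X_j-X_j\ast z=\{z,X_j\}$ on $A_1$ (valid because $\Delta^2|_{A_1}=0$ in the rank-one case) is the same observation the paper uses when it notes that $X_i\ast X_j=X_iX_j$ as soon as $i$ or $j$ indexes a kernel variable.
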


\begin{proof} Assume that $A_1 \cap Z_P(A) \neq \ker \Delta \cap \ker \Gamma \cap A_1$ or $A_1 \cap Z(R) \neq \ker \Delta \cap \ker \Gamma \cap A_1$,
Lemma~\ref{lem-Pcenter} and Proposition~\ref{prop-Pcenter-lin} show that $\Delta=0$ (and $\Gamma \neq 0$) 
or $\Delta$ is of Jordan type $(2,1,\ldots,1)$ and $\ker \Gamma=\ker \Delta$.

In the first case $A$ is Poisson commutative and $R$ is commutative.
In the second case, up to a change of basis, we have $\Delta=X_0\partial_{X_1}$ and $(X_0,X_2,\dots,X_n)$ is a basis of $\ker\Delta=\ker\Gamma$.
Hence $\Gamma=(\alpha_0X_0+\alpha_1X_1+\cdots+\alpha_nX_n)\partial_{X_1}$.
The relation $[\Delta,\Gamma]=\Delta$ ensures that $\alpha_1=1$.
Replacing $X_1$ by $X_1+\alpha_0X_0+\alpha_2X_2+\cdots+\alpha_nX_n$ in the basis, we obtain $\Gamma=X_1\partial_{X_1}$ as desired.
Since $\ker \Gamma=\ker \Delta$ we have $\{X_i,X_j\} =0$ and $X_i \ast X_j=X_iX_j$ as soon as $i$ or $j$ belongs to $\{0,2,\ldots,n\}$.
In particular $R$ is commutative and $A$ is Poisson commutative.
%In the first case $A$ is clearly Poisson commutative and $R$ is commutative. In the second case, we can choose a basis
%$(e_0,\ldots, e_n)$ of $A_1$ such that $\Delta(e_1)=e_0$ and $(e_0,e_2\ldots, e_n)$ is a basis of $\ker \Delta = \ker \Gamma$. 
%In such a basis, the matrix of $\Delta$ and $\Gamma$ are given by 
%$$\begin{pmatrix} 0&1& \\ &0&& \\ &&\ddots&\\ &&&0 \end{pmatrix} \qquad \textrm{and}\qquad 
%\begin{pmatrix} 0&\alpha_0&0 &\cdots & 0\\ \vdots&\vdots&\vdots&&\vdots \\ \vdots&\vdots&\vdots&&\vdots\\ 0&\alpha_n&0&\cdots&0 \end{pmatrix}$$ 
%The relation $[\Delta,\Gamma]=\Delta$ ensures that $\alpha_1=1$.
%Changing the basis $(e_0,\ldots, e_n)$ to the basis
%$(e_0,e_1 + \alpha_0e_0 + \alpha_2e_2 + \cdots + \alpha_n e_n,e_2,\ldots,e_n)$ we get the matrices $\Delta_0$ and $\Gamma_0$.
%In such a basis, we have that $\{X_i,X_j\} =0$ and $X_i \ast X_j=X_iX_j$ as soon as $i$ or $j$ belongs to $\{0,2,\ldots,n\}$ (since $\ker \Gamma=\ker \Delta$).
%In particular $R$ is commutative and $A$ is Poisson commutative.

It remains so show if $A$ is Poisson commutative or $R$ is commutative then $\Delta=0$ or $(\Delta,\Gamma)$ is conjugated to $(X_0\partial_{X_1},X_1\partial_{X_1})$.
If $A_1 \cap Z_P(A) \neq \ker \Delta \cap \ker \Gamma \cap A_1$ or $A_1 \cap Z(R) \neq \ker \Delta \cap \ker \Gamma \cap A_1$ then the first part of the proof allows us to conclude. 
If $A_1 \cap Z_P(A) = \ker \Delta \cap \ker \Gamma \cap A_1$ or $A_1 \cap Z(R) = \ker \Delta \cap \ker \Gamma \cap A_1$ then $A_1=\ker \Delta \cap \ker \Gamma \cap A_1$
and $\Delta=0$.
\end{proof}

\begin{comment}
{\color{red}Je pense que la section suivante n'est pas necessaire. V 9 février: Ok, je suis d'accord ! J'ai mis en commentaire
\subsection{Application: a non-isomorphism theorem}

The rank 2 property of $(A,\{\cdot,\cdot\})$ may be precised in the linear case with the following proposition. 

\begin{proposition}\label{isotensor} Let $A=A_{(\Delta,\Gamma)}$ where $(\Delta,\Gamma)$ is a linear solvable pair
with $\Delta \neq 0$ and $A_1 \cap \ker \Gamma\cap \ker \Delta = \{0\}$.  If $A\cong\bigotimes_{i=1}^s A_{(\Delta_i,\Gamma_i)}$ 
for linear solvable pair $(\Delta_i,\Gamma_i)$ then $s=1$.
\end{proposition}

\begin{proof} Let $B=\bigotimes_{i=1}^s A_{(\Delta_i,\Gamma_i)}$.
Thanks to Corollary~\ref{cor-nonisom}, there exists a unique index $i$ such that $A_{(\Delta_i,\Gamma_i)}$
is not Poisson commutative. Hence the Poisson center of $B$ contains $\bigotimes_{j\neq i} A_{(\Delta_j,\Gamma_j)}$.
Thanks to~Proposition~\ref{isoda}~(2), we have $Z_P(A) \cap A_1=Z_P(B) \cap B_1$.  
But the hypothesis on $A_1 \cap\ker \Gamma\cap \ker \Delta$ shows that~$Z_P(A)\cap A_1=\{0\}$ (see~Lemma~\ref{lem-Pcenter}). 
We thus conclude that $s=1$. 
\end{proof}
}
\end{comment}

\subsection{A finer filtration for the linear case}
\label{finerfiltation}

In section~\ref{ssec-filtration} we defined a filtration $\varepsilon$ on any commutative $\kk$-algebra $A$ endowed with a solvable pair $(\Delta,\Gamma)$.
When $A$ is a polynomial ring and $\Delta$, $\Gamma$ are linear, we can define a finer filtration $\wteps$ on $A$.
The new filtration is defined as the extension to $A$ of the restriction to $A_1$ of the filtration $\varepsilon$.
Recall that $A_1$ denote the set of homogeneous polynomial of degree $1$ which is stable by $\Delta$ and $\Gamma$ since the pair $(\Delta,\Gamma)$ is linear.
Appendix~\ref{symalg-filtre} is devoted to the construction of this filtration $\wteps$ and the study of its properties, see in particular Example~\ref{example-epsilon-tilde}. 
When $\Delta$ is a maximal Jordan block, then $\wteps$ is the filtration used in~\cite[Section 3]{LS}. 

For this filtration $\wteps$, analogs of Lemma~\ref{filtdg} and Proposition~\ref{prop-linkRA} are valid. Precisely, we have the following proposition. 

\begin{proposition}\label{prop-epsilontilde}
Let $(\Delta,\Gamma)$ be a linear solvable pair on $A=\kk[X_0,\dots,X_n]$ and fix integers $i,j\geqslant0$. Set $A^{\leqslant i,\wteps} = 
\{x \in A,\ \wteps(x) \leqslant i\}$.
\begin{enumerate}
 \item[(1)] We have $\Delta(A^{\leqslant i,\wteps})\subseteq A^{\leqslant i-1,\wteps}$ and so $\wteps(\Delta(f))\leqslant \wteps(f)-1$ for every $f \in A$.
 \item[(2)] We have $\Gamma(A^{\leqslant i,\wteps})\subseteq A^{\leqslant i,\wteps}$. Hence $\wteps(P(\Gamma)(f))\leqslant \wteps(f)$ for every $P \in \kk[T]$ and $f \in A$.
 \item[(3)] If $f,g\in A$ are such that $\wteps(f)=i$ and $\wteps(g)=j$, then $\wteps(fg)=i+j$.
 \item[(4)] The family $(A^{\leqslant i,\wteps})_{i\in \N}$ is a Poisson algebra filtration of $A_{(\Delta,\Gamma)}$ of degree $-1$, meaning that it is an algebra filtration of $A$ together with 
 $\{A^{\leqslant i,\wteps},A^{\leqslant j,\wteps}\}\subseteq A^{\leqslant i+j-1,\wteps}$ for all $i,j\geqslant 0$.
 \item[(5)] The family $(A^{\leqslant i})_{i\in \N}$ is an algebra filtration of $R_{(\Delta,\Gamma)}$. Moreover, the associated graded algebra $\gr^{\wteps}(R)$ is equal to $\gr^{\wteps}(A)$.
 \item[(6)] For any $f,g \in A$ we have
 \[f\ast g-g\ast f = \Delta(f)\Gamma(g)-\Delta(g)\Gamma(f)+\sum_{k\geqslant 2}\left(\Delta^k(f)\binom{\Gamma}{k}(g)-\Delta^k(g)\binom{\Gamma}{k}(f)\right)\]
In particular, if $\wteps(f)=i$ and $\wteps(g)=j$ then $\wteps(f*g - g*f) \leqslant i + j - 1$.
\item[(7)] The commutative algebra $\gr^{\wteps}(R)=\gr^{\wteps}(A)$ can be endowed with the following three Poisson brackets
\begin{enumerate}
 \item[(a)] $\{\overline{f},\overline{g}\}':= (f*g-g*f) + A^{\leqslant i+j-2,\wteps}$
 \item[(b)] $\{\overline{f},\overline{g}\}'':= \{f,g\}+ A^{\leqslant i+j-2,\wteps}$
 \item[(c)] $\{\overline{f},\overline{g}\}''':= \overline{\Delta}(\overline{f})\overline{\Gamma}(\overline{g}) - \overline{\Gamma}(\overline{f})\overline{\Delta}(\overline{g}) \in A^{\leqslant i+j-1,\wteps}/A^{\leqslant i+j-2,\wteps}$
\end{enumerate}
for homogeneous elements $\overline{f}$ and $\overline{g}$ of respective ${\wteps}$-degree $i$ and $j$.
In (c) the pair of maps $(\overline{\Delta},\overline{\Gamma})$ is the solvable pair of homogeneous derivations of $\gr^{\wteps}(A)$ of respective degree $-1$ and $0$ which is induced by the solvable pair of filtered derivations $\Delta$ and $\Gamma$ of $A$ (see Proposition~\ref{prop-graduation-delta-gamma}).
\item[(8)] The Poisson structures defined in (3) are all equal and make $\gr^{\wteps}(A)=\gr^{\wteps}(R)$ into a graded Poisson algebra of degree $-1$.
\end{enumerate}
\end{proposition}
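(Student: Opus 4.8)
The plan is to mirror, step by step, the proofs of Lemma~\ref{filtdg} and Proposition~\ref{prop-linkRA}, the only genuinely new ingredient being the behaviour of the weighted degree $\wteps$ under $\Delta$, $\Gamma$ and multiplication. Once assertions (1), (2), (3) and (6) are established, assertions (4), (5), (7) and (8) will follow by the very same formal arguments already used for $\varepsilon$, since those arguments only ever invoke that $\Delta$ strictly lowers the degree, that $\Gamma$ preserves it, and that the degree is additive on products.

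First I would settle the three basic properties. By construction $\wteps$ is the weighted degree on $A=\kk[X_0,\dots,X_n]$ obtained by extending multiplicatively the restriction $\varepsilon|_{A_1}$, whose values on a Jordan basis are the block heights; the facts that $\wteps$ is a genuine degree function, that the induced maps $\overline{\Delta}$ and $\overline{\Gamma}$ of respective degrees $-1$ and $0$ form a solvable pair on $\gr^{\wteps}(A)$, and that $\gr^{\wteps}(A)$ is again a polynomial ring, are exactly what Appendix~\ref{symalg-filtre} (see Proposition~\ref{prop-graduation-delta-gamma}) provides. For (1), since $\Delta$ decreases $\varepsilon=\wteps$ by one on $A_1$ and $\Delta$ is a derivation, the Leibniz rule gives $\wteps(\Delta(f))\leqslant\wteps(f)-1$ on each monomial, hence on every $f$; note that, unlike in Lemma~\ref{filtdg}(1), only the inequality survives because the leading form may cancel under $\Delta$. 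For (2), the subspaces $\ker\Delta^{i+1}\cap A_1$ defining $\wteps$ on $A_1$ are stable under $\Gamma$ by Lemma~\ref{lem-crochet}, so $\Gamma$ preserves $\wteps$ on $A_1$ and thus, being a derivation, on all of $A$; the bound for $P(\Gamma)$ follows. Assertion (3), the additivity $\wteps(fg)=\wteps(f)+\wteps(g)$, is precisely the statement that $\gr^{\wteps}(A)$ is a domain, which holds because it is a polynomial ring. Finally (6) is read off from the defining formula \eqref{ast} for $\ast$, the bound following from (1), (2) and (3).

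With (1)--(3) and (6) in hand, the remaining assertions are immediate transcriptions of Proposition~\ref{prop-linkRA}. For (4), the Poisson filtration property follows from \eqref{pbdg} since $\wteps(\Delta(f)\Gamma(g))\leqslant(i-1)+j$ by (1), (2) and (3). For (5), the expansion $f\ast g=fg+\sum_{k\geqslant1}\Delta^k(f)\binom{\Gamma}{k}(g)$ has all correction terms of $\wteps$-degree at most $i+j-1$, so $(A^{\leqslant i,\wteps})_i$ filters $R$ and the leading term of $f\ast g$ agrees with that of $fg$, whence $\gr^{\wteps}(R)=\gr^{\wteps}(A)$. Assertions (7) and (8) then reproduce Proposition~\ref{prop-linkRA}(3)--(4): $\{-,-\}'$ is a Poisson bracket by the filtered semiclassical-limit construction of \cite[Section~2.4]{Goo}, it coincides with $\{-,-\}''$ because $f\ast g-g\ast f-\{f,g\}\in A^{\leqslant i+j-2,\wteps}$ by (6), and its identification with $\{-,-\}'''$ is the same short computation as before, now carried out with $\overline{\Delta}$ and $\overline{\Gamma}$.

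The only real obstacle is thus relegated to the Appendix: one must check that the naive extension of $\varepsilon|_{A_1}$ to a weighted degree on $A$ is well behaved, i.e. that $\Delta$ and $\Gamma$ descend to a solvable pair of homogeneous derivations on the associated graded ring and that no degeneracy of the leading forms destroys additivity. I expect this bookkeeping --- keeping track of the Jordan-block heights that define the weights, and verifying that the inequality in (1) does not spoil the degree count in (4) and (5) --- to be where care is needed; everything past that point is a faithful repetition of the arguments already given for the coarser filtration $\varepsilon$.
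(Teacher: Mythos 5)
Your proposal is correct and follows essentially the same route as the paper: both delegate the structural facts about $\wteps$ (that $\gr^{\wteps}(A)$ is a polynomial ring, hence a domain, and that $\overline{\Delta},\overline{\Gamma}$ form a solvable pair) to Appendix~\ref{symalg-filtre}, establish (1)--(3) by checking $\Delta$ and $\Gamma$ on $A_1$ and extending via the derivation property and additivity, and then obtain (4)--(8) as formal transcriptions of Lemma~\ref{filtdg} and Proposition~\ref{prop-linkRA}. Your remark that only the inequality survives in (1) is exactly the right caveat and matches the weakened statement.
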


\begin{proof} The proof is very close from the one of Lemma~\ref{filtdg} and of Proposition~\ref{prop-linkRA} but since the construction of the filtration is more complicated the arguments should be given in a somewhat different order.
We start with assertion~(3). Proposition~\ref{prop-graduation-induite} shows that $\gr^{\wteps}(A)$ is a domain.
This shows~(3).
To prove~(1) and~(2), we start with $x \in A_1$.
By definition of $\wteps$ we have $\wteps(\Delta(x)) \leqslant \wteps(x) -1$.
Moreover, Lemma~\ref{lem-crochet} shows that $\ker \Delta^i \cap A_1$ is stable by $\Gamma$, hence by $P(\Gamma)$ showing that $\wteps(P(\Gamma)(x))\leqslant \wteps(x)$.
We write now $f \in A$ as a sum of products of elements of $A_1$ and~(1) and (2) follow from~(3) and the fact that $\Delta$ and $\Gamma$ are derivations. 
Assertion (4) follows from~\eqref{pbdg} and then assertions (1) and (2) and~(3).
Assertion (5) and (6) follow from~\eqref{ast} and then we get assertions (1) and (2) and~(3).

(7). The fact that $\{-,-\}'$ is a Poisson bracket on $\gr(R)$ follows from the filtered version of the semi-classical limit construction, see \cite[Section 2.4]{Goo}.
The fact that $\{-,-\}''$ is a well-defined biderivation satisfying the Jacobi identity on $\gr(A)$ follows by tedious but straightforward computation from the fact that $\{-,-\}$ is a filtered Poisson bracket on $A$.
Finally $\{-,-\}'''$ is a Poisson bracket since $(\overline{\Delta},\overline{\Gamma})$ is a solvable pair of derivations of $\gr^{\wteps}(A)$.

(8). The Poisson brackets $\{-,-\}'$ and $\{-,-\}''$ are the same since both $f\ast g-g\ast f$ and $\{f,g\}$ belong to $A^{\leqslant i+j-1,\wteps}$ combined with $f\ast g-g\ast f-\{f,g\}\in A^{\leqslant i+j-2,\wteps}$ thanks to Assertion (1), (2), (3) and (6).
Finally for homogeneous elements $\overline{f}$ and $\overline{g}$ of $\wteps$-degree $i$ and $j$ we have 
\begin{align*}
 \{\overline{f},\overline{g}\}'''
 & = \overline{\Delta}(\overline{f})\overline{\Gamma}(\overline{g}) - \overline{\Gamma}(\overline{f})\overline{\Delta}(\overline{g}) \\
 & = (\Delta(f)+A^{\leqslant i-2,\wteps})(\Gamma(g)+A^{\leqslant j-1,\wteps}) -  (\Delta(g)+A^{\leqslant j-2,\wteps})(\Gamma(f)+A^{\leqslant i-1,\wteps}) \\
 & = (\Delta(f)\Gamma(g)+A^{\leqslant i+j-2,\wteps}) -  (\Delta(g)\Gamma(f)+A^{\leqslant i+j-2,\wteps}) \\
 & = (\Delta(f)\Gamma(g) - \Delta(g)\Gamma(f))+A^{\leqslant i+j-2,\wteps}  = \{f,g\} + A^{\leqslant i+j-2,\wteps} = \{\overline{f},\overline{g}\}''.
 \end{align*}
Hence $\{-,-\}''$ and $\{-,-\}'''$ are the same. 
\end{proof}

\begin{remark}[Matrix version]\label{rem-matrixversion} 
Assertion~(8) says that the Poisson structure on $\gr^{\wteps}(A)$ is associated to the solvable pair $(\overline{\Delta},\overline{\Gamma})$. 
On $A_1$, the filtrations~$\wteps$ and $\varepsilon$ coincide, hence by an appropriate choice of basis of $A_1$ and by using the Jordan reduction theorem adapted to this filtration, we obtain the following block decompositions
$$\Delta=\begin{pmatrix}0& I_{m_0,m_1} && \\ &\ddots&\ddots& \\ &&\ddots& I_{m_{r-2},m_{r-1}} \\ &&&0 \end{pmatrix}
\qquad \textrm{and} \qquad \Gamma = \begin{pmatrix}\Gamma_1 & \Gamma_{1,2}&\cdots & \Gamma_{1,r}  \\ &\ddots&\ddots& \vdots \\ &&\ddots&\Gamma_{r-1,r} \\ 
&&&\Gamma_r \end{pmatrix}$$
where, for $p\geqslant r$, we set
$$I_{p,r}= \begin{pmatrix} 1 && \\[-2ex] &\ddots & \\[-2ex]   && 1 \\[-1ex] 0 & \cdots& 0 \\[-1ex] \vdots && \vdots \\[-1ex] 
0&\cdots& 0 \end{pmatrix} 
\in M_{p,r}(\kk)\,.$$
The corresponding $\overline{\Delta}$ and $\overline{\Gamma}$ are then given by 
$$\overline{\Delta}=\begin{pmatrix}0& I_{m_0,m_1} && \\ &\ddots&\ddots& \\ &&\ddots& I_{m_{r-2},m_{r-1}} \\ &&&0 \end{pmatrix}
\qquad \textrm{and} \qquad \overline{\Gamma} = \begin{pmatrix}\Gamma_1 & &   \\ &\ddots& \\ &&\Gamma_r \end{pmatrix}\,.$$
\end{remark}

\begin{example}
Let $(\Delta,\Gamma)$ be a linear solvable pair on $A=\kk[X_0,\ldots,X_n]$ and assume that  $\Delta$ is of rank $1$. 
Then $R$ (resp. $A$) is isomorphic to an Ore extension (resp. a Poisson-Ore extension) over a commutative (resp. Poisson commutative) polynomial ring in $n$ variables. 

Indeed, after a suitable change of basis of $A_1$, by Remark~\ref{rem-matrixversion} and the relation $[\Delta,\Gamma]=\Delta$ we have $\Delta=  X_0 \partial_{X_n}$ and 
$\Gamma=\alpha X_0 \partial_{X_0} + \sum_{i=1}^{n-1}P_i\partial_{X_i} + \big((\alpha+1)X_n+ P_n\big)\partial_{X_n}$ where $P_i \in \kk[X_0,\ldots,X_{n-1}]$ for $1 \leqslant i \leqslant n$.
In particular, $\Gamma$ induces a derivation of $\ker \Delta= \kk[X_0,\ldots,X_{n-1}]$.
For any $Q \in \ker \Delta$ we have $\{X_n,Q\}=X_0 \Gamma(Q)$.
Therefore $A_{(\Delta,\Gamma)}$ is isomorphic to the Poisson-Ore extension $(\ker\Delta)[X_n;X_0\Gamma]_P$.
Also, for any $Q \in \ker \Delta$, we have 
\[X_n \ast Q - Q \ast X_n= QX_n + X_0\Gamma(Q)- QX_n= X_0 \Gamma(Q)=X_0\ast\Gamma(Q).\]
For any $i \in \N$ we denote by ${X_n}^{\ast i}$ the $i^{\rm{th}}$ power of $X_n$ for the product $\ast$.
By an easy induction we obtain that for any $i \in \N$ there exist $P_{ij}\in \kk[X_0,\ldots,X_{n-1}]$ such that $X_n^{\ast i}={X_n}^i + \sum_{j=0}^{i-1} P_{ij} {X_n}^j$. Hence $({X_n}^{\ast i})_{i \in \N}$ is a basis of
of $A$ over $\ker\Delta=\kk[X_0,\ldots, X_{n-1}]$ and $R_{(\Delta,\Gamma)}$ is isomorphic to the Ore extension $(\ker\Delta)[X_n;X_0\Gamma]$.
\end{example}

\section{Trigonalizable linear solvable pair} \label{sec-trigo}

We continue with the notations of Section~\ref{sec-linear}: $A=\kk[X_0,\ldots,X_n]$ is a polynomial algebra with standard grading and, for all $k\geqslant 0$, we denote by $A_k$ the vector space of degree $k$ homogeneous polynomials. 

\begin{definition} A linear solvable pair $(\Delta,\Gamma)$ is said to be trigonalizable if the linear map induced by $\Gamma$ on $A_1$ is.
\end{definition}

\begin{proposition}\label{prop-trigo} Let $(\Delta,\Gamma)$ be a linear solvable pair. Then it is trigonalizable if and only if the linear map $\Gamma_1$ induced by $\Gamma$ on $A_1\cap \ker \Delta$ is. 
\end{proposition}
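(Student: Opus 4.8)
The plan is to prove the two implications separately, the forward one being formal and the converse resting on a short eigenvalue computation. Throughout I identify each linear derivation with the endomorphism it induces on the finite-dimensional space $A_1$, and I use the standard fact that an endomorphism of a finite-dimensional $\kk$-vector space is trigonalizable over $\kk$ exactly when its characteristic polynomial splits over $\kk$, equivalently when all of its eigenvalues, computed over an algebraic closure $\ol{\kk}$, lie in $\kk$. First I would record that $\Gamma_1$ is well defined: since $[\Delta,\Gamma]=\Delta$, the subspace $A_1\cap\ker\Delta$ is $\Gamma$-stable because $\Delta(x)=0$ forces $\Delta\Gamma(x)=\Gamma\Delta(x)+\Delta(x)=0$.

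For the forward implication, assume $\Gamma$ is trigonalizable on $A_1$. As $A_1\cap\ker\Delta$ is a $\Gamma$-invariant subspace, the characteristic polynomial of $\Gamma_1$ divides that of $\Gamma$; the latter splits over $\kk$, hence so does the former, and $\Gamma_1$ is trigonalizable. This direction is purely formal.

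For the converse, the plan is to pass to $\ol{\kk}$ (both $\ker\Delta$ and trigonalizability are unaffected by the flat base change $A_1\rightsquigarrow A_1\otimes_\kk\ol{\kk}$, and $\Gamma_1$ keeps the same eigenvalues) and to prove the key step: every eigenvalue of $\Gamma$ differs from an eigenvalue of $\Gamma_1$ by a nonnegative integer. Concretely, let $v\neq 0$ be an eigenvector of $\Gamma$ over $\ol{\kk}$ with eigenvalue $\lambda$, and let $k\geqslant 0$ be maximal with $w:=\Delta^k(v)\neq 0$; since $\Delta$ is nilpotent such $k$ exists, and $\Delta(w)=\Delta^{k+1}(v)=0$ gives $w\in A_1\cap\ker\Delta$. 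Using the relation $\Gamma\Delta^k=\Delta^k\Gamma-k\Delta^k$ from Lemma~\ref{lem-crochet}, I compute
\[
\Gamma(w)=\Gamma\Delta^k(v)=\Delta^k\Gamma(v)-k\Delta^k(v)=(\lambda-k)\,w,
\]
so $\lambda-k$ is an eigenvalue of $\Gamma_1$.

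To finish, assume $\Gamma_1$ is trigonalizable, so all its eigenvalues lie in $\kk$. By the previous step any eigenvalue $\lambda$ of $\Gamma$ is $(\lambda-k)+k$ with $\lambda-k$ an eigenvalue of $\Gamma_1$ and $k\in\N$; since $\lambda-k\in\kk$ and $k\in\Z\subseteq\kk$ (because $\car\kk=0$), we conclude $\lambda\in\kk$. Thus every eigenvalue of $\Gamma$ lies in $\kk$, so $\Gamma$ is trigonalizable on $A_1$ and $(\Delta,\Gamma)$ is trigonalizable. There is no serious obstacle here; the only point I would flag for care is the bookkeeping over $\ol{\kk}$ — one must ensure the integer shift keeps the eigenvalue inside $\kk$, which is precisely where the hypothesis $\car\kk=0$ (hence $\Z\subseteq\kk$) is used.
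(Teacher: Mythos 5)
Your proof is correct. The forward implication is the same as in the paper (restriction to a $\Gamma$-invariant subspace), but your converse takes a genuinely different, and somewhat lighter, route. The paper works with the full block decomposition of $\Delta$ and $\Gamma$ adapted to the filtration $(\ker\Delta^i\cap A_1)_i$ (Remark~\ref{rem-matrixversion}) and extracts from $\Delta\Gamma-\Gamma\Delta=\Delta$ the divisibility $\chi_{\Gamma_{i+1}}(X+1)\mid\chi_{\Gamma_i}(X)$, so that splitting of $\chi_{\Gamma_1}$ propagates to all diagonal blocks and hence to the block-triangular matrix $\Gamma$. You instead pass to $\ol{\kk}$, take a single eigenvector $v$ of $\Gamma$ with eigenvalue $\lambda$, and push it down by the maximal power $\Delta^k$ with $\Delta^k(v)\neq 0$; the identity $\Gamma\Delta^k=\Delta^k\Gamma-k\Delta^k$ then produces a $\Gamma_1$-eigenvector of eigenvalue $\lambda-k$, forcing $\lambda\in\kk$ since $\car\kk=0$. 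Both arguments exploit exactly the same mechanism — eigenvalues of $\Gamma$ shift by integers along the $\Delta$-filtration — but yours avoids setting up the block machinery and is self-contained; the paper's version records the finer statement $\chi_{\Gamma_{i+1}}(X+1)\mid\chi_{\Gamma_i}(X)$, which it reuses later (e.g.\ in the proof of Proposition~\ref{prop-normal-element}). Your bookkeeping over $\ol{\kk}$ is sound: $\ker(\Delta\otimes 1)\cap(A_1\otimes\ol{\kk})=(A_1\cap\ker\Delta)\otimes\ol{\kk}$ by flatness, and the eigenvalues of $\Gamma_1\otimes 1$ are the roots of $\chi_{\Gamma_1}$, all in $\kk$ by hypothesis.
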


\begin{proof} Using notations of~Remark~\ref{rem-matrixversion}, the linear map induced by $\Gamma$ on $A_1\cap \ker \Delta$ is $\Gamma_1$.
Hence if $\Gamma$ is trigonalizable then $\Gamma_1$ is. 
Reciprocally, by computing the superdiagonal blocks of the relation $\Delta\Gamma -\Gamma \Delta = \Delta$, we have for every $1 \leqslant i\leqslant r-1$ the following triangular block decomposition
$$\Gamma_i = \begin{pmatrix} \Gamma_{i+1} - I& C_i \\ & B_i\end{pmatrix}$$
where $I$ denote the identity matrix of the appropriate size.
Hence $\chi_{\Gamma_{i+1}}(X+1)$ divides $ \chi_{\Gamma_i}(X)$, where $\chi_M$ denote the characteristic polynomial of the square matrix $M$.
We then deduce that when $\chi_{\Gamma_1}$ is trigonalizable, every $\chi_{\Gamma_i}$ is.
Hence $\Gamma$ acts on $A_1$ as a trigonalizable linear map.
\end{proof}

\subsection{The Artin-Schelter regular property}\label{ssec-AS-regul}

In this section we prove that the algebra $R=R_{(\Delta,\Gamma)}$ is Artin-Schelter regular when $(\Delta,\Gamma)$ is a linear solvable pair.
Artin-Schelter regular algebras \cite{AS} are thought to be noncommutative analogue of commutative polynomial rings in the following sense.

\begin{definition}
A connected $\N$-graded $\kk$-algebra $R$ is called {\em Artin-Schelter regular} or {\em AS-regular} if:
\begin{enumerate}
\item $ \mbox{gldim } R < \infty$;
\item $R$ has finite Gelfand-Kirillov dimension;
\item $\mbox{Ext}^i_R(\kk_R, R_R) \cong \begin{cases}  0 & \text{ if $i \neq \mbox{gldim }R$} \\
{}_R \kk[\ell] & \text{ if $i = \mbox{gldim } R$.}
\end{cases} $
\end{enumerate}
where $\kk[\ell]$ means that the module is degree-shifted by some amount $\ell \in \Z$.
\end{definition}
%Condition (3) above is called the {\em AS-Gorenstein} condition.

%Recall that $A=\kk[X_0,\dots,X_n]$ is a commutative polynomial ring in $n+1$ indeterminates.
%In particular $A$ is graded by the classical $d$-grading defined by $d(X_k)=1$ for all $0\leqslant k\leqslant n$.
%The corresponding homogeneous components are denoted by $A_k$ for any $k\geqslant 0$.
Since both $\Delta$ and $\Gamma$ are linear derivations we have $\Delta(A_k)\subseteq A_k$ and $\Gamma(A_k)\subseteq A_k$ 
for all integer $k\geqslant 0$.
Therefore $A_k\ast A_\ell\subseteq A_{k+\ell}$ for any $k,\ell\geqslant 0$ thanks to equation (\ref{ast}). 
Thus the algebra $R=(A,\ast)$ is $\N$-graded, generated in degree $1$ and with Hilbert series given by
\[\mbox{hilb}(R)=\mbox{hilb}(A)=\frac{1}{(1-t)^{n+1}}.\] 

The following result is the key argument for our inductive proof of Theorem \ref{theo-ASregul}.
Since this result relates algebras $R_{(\Delta,\Gamma)}$ (resp. $A_{(\Delta,\Gamma)}$) of different dimensions we will specify the (projective) dimension of the underlying polynomial ring by using the notation $R_{(\Delta,\Gamma)}^n$ (resp. $A_{(\Delta,\Gamma)}^n$). 

\begin{proposition} Let $(\Delta,\Gamma)$ be a linear solvable pair on $A=\kk[X_0,\ldots,X_n]$ such that $\Delta(X_0)=0$ and there exists $\alpha \in \kk$ verifying $\Gamma(X_0)=\alpha X_0$.
\label{quotient}
Set $A=A_{(\Delta,\Gamma)}^n$ and $R=R_{(\Delta,\Gamma)}^n$ and denote by $\Delta$ and $\Gamma$ the matrices of $\Delta$ and $\Gamma$ acting on $A_1$ with respect to a basis starting by $X_0$.
%We denote by $\Delta'$ and $\Gamma'$ the derivations of $\kk[X_0,\dots,X_{n-1}]$ obtained from $\Delta$ and $\Gamma$ by deleting from their matrix forms the first row and first column. 
%We denote by $\overline{\Delta}$ and $\overline{\Gamma}$ are the derivations of the quotient algebra $A/\langle X_0\rangle$ induced by $\Delta$ and $\Gamma$.
%Then
\begin{enumerate}
%\item[(1)] We have $X_0\ast R=R\ast X_0=X_0A$, $X_0$ is normal in $R$ and $X_0$ is Poisson normal in $A$.
\item[(1)] The quotient algebra $R/\langle X_0\rangle$ is isomorphic to $R_{(\Delta',\Gamma')}^{n-1}$ for the solvable pair $(\Delta',\Gamma')$ of $A^{n-1}=\kk[Y_0,\dots,Y_{n-1}]$ obtained from the solvable pair $(\Delta,\Gamma)$ by deleting the first rows and first columns.
\item[(2)] $X_0$ is Poisson normal in $A$ and the Poisson algebra $A/\langle X_0\rangle$ is isomorphic to $A_{(\Delta',\Gamma')}^{n-1}$ for the same solvable pair $(\Delta',\Gamma')$ of $A^{n-1}$.
\end{enumerate}
In particular, $R/\langle X_0\rangle$ is a deformation of the Poisson algebra $A/\langle X_0\rangle$.
%From the matrix point of view, the derivation $\Delta'$ (resp. $\Gamma'$) is obtained by deleting the first row and first column of $\Delta$ (resp. $\Gamma$).
\end{proposition}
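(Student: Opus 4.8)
The plan is to realize both quotients through a single surjective algebra map $\pi\colon A\to A^{n-1}=\kk[Y_0,\dots,Y_{n-1}]$ sending $X_0$ to $0$ and $X_i$ to $Y_{i-1}$ for $i\geqslant 1$, and to verify that $\pi$ is simultaneously a morphism $R\to R^{n-1}_{(\Delta',\Gamma')}$ and a Poisson morphism $A\to A^{n-1}_{(\Delta',\Gamma')}$ with kernel exactly $\langle X_0\rangle$. First I would observe that the hypotheses $\Delta(X_0)=0$ and $\Gamma(X_0)=\alpha X_0$ say precisely that $X_0$ is strongly normal in the sense of Definition~\ref{dfn-strongly-ne}. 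Lemma~\ref{lem-strongly} then gives that $X_0$ is normal in $R$ and Poisson normal in $A$, so $\langle X_0\rangle$ is a two-sided ideal of $R$ and a Poisson ideal of $A$ and both quotients are well defined. Using $X_0\ast g=X_0 g$ together with $g\ast X_0=X_0\phi_\alpha(g)$ one sees that the two-sided ideal $\langle X_0\rangle$ of $R$ coincides with the set $X_0 A=\ker\pi$. This already yields the Poisson normality claimed in~(2) and pins down the kernel in both settings.

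Next I would identify $(\Delta',\Gamma')$ and check it is a linear solvable pair. With respect to a basis of $A_1$ beginning with $X_0$, the condition $\Delta(X_0)=0$ makes the first column of $\Delta$ vanish and $\Gamma(X_0)=\alpha X_0$ makes the first column of $\Gamma$ equal to $\alpha$ times the first basis vector; hence both matrices are block upper triangular with $(\Delta',\Gamma')$ occupying the lower-right block. Comparing lower-right blocks in $[\Delta,\Gamma]=\Delta$ gives $[\Delta',\Gamma']=\Delta'$, and $\Delta'$ is nilpotent as a diagonal block of the nilpotent matrix $\Delta$, so $(\Delta',\Gamma')$ is a linear solvable pair on $A^{n-1}$. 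Because $\Delta$ and $\Gamma$ stabilize $\langle X_0\rangle$, they descend to the quotient, and the maps they induce on $A_1/\kk X_0\cong(A^{n-1})_1$ are exactly $\Delta'$ and $\Gamma'$. Since $\pi\Delta$ and $\Delta'\pi$ are both derivations of $A$ into the $A$-module $A^{n-1}$ (viewed via $\pi$) that agree on the generators $X_0,\dots,X_n$, they coincide; the same argument applies to $\Gamma$. This gives the intertwining relations $\pi\Delta=\Delta'\pi$ and $\pi\Gamma=\Gamma'\pi$, whence $\pi\Delta^i=(\Delta')^i\pi$ and $\pi\binom{\Gamma}{i}=\binom{\Gamma'}{i}\pi$ for all $i$, the latter because $\binom{\Gamma}{i}$ is a polynomial in $\Gamma$.

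Applying these relations termwise to~\eqref{ast} yields $\pi(f\ast g)=\pi(f)\ast'\pi(g)$, so $\pi$ is a surjective algebra morphism $R\to R^{n-1}_{(\Delta',\Gamma')}$ with kernel $\langle X_0\rangle$, proving~(1); applying them to~\eqref{pbdg} yields $\pi(\{f,g\})=\{\pi(f),\pi(g)\}'$, proving the Poisson isomorphism in~(2). The final ``in particular'' is then immediate: the construction of Section~\ref{sec-present} exhibits $R^{n-1}_{(\Delta',\Gamma')}$ as a deformation of the Poisson algebra $A^{n-1}_{(\Delta',\Gamma')}$ through the family $\ast_t$, and transporting this structure along the two isomorphisms just obtained shows that $R/\langle X_0\rangle$ is a deformation of $A/\langle X_0\rangle$.

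The only genuinely delicate point is the kernel identification: one must use the normality of $X_0$, not merely that $X_0\in\ker\Delta$, to guarantee that the two-sided ideal generated by $X_0$ in the noncommutative ring $R$ is no larger than the commutative ideal $X_0 A$, so that the underlying set maps of the two quotients agree. Everything else is block-matrix bookkeeping together with the principle that a derivation (or algebra morphism) is determined by its values on generators.
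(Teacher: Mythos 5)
Your proof is correct and follows essentially the same route as the paper's: both rest on the observation that $X_0$ is strongly normal (so $\langle X_0\rangle = X_0A$ in $R$ and is a Poisson ideal in $A$), that $\Delta$ and $\Gamma$ descend to the quotient as the solvable pair $(\Delta',\Gamma')$ obtained by deleting the first row and column, and that the formulas \eqref{ast} and \eqref{pbdg} pass to the quotient termwise. You merely spell out in more detail the steps the paper leaves as "a simple verification," in particular the intertwining relations $\pi\Delta=\Delta'\pi$ and $\pi\binom{\Gamma}{i}=\binom{\Gamma'}{i}\pi$ and the identification of the two-sided ideal of $R$ generated by $X_0$ with $X_0A$.
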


\begin{proof}
%Moreover $X_0$ is Poisson normal in $A$ thanks to equation (\ref{pbdg}) since $\Delta(X_0)=0$ and $X_0$ is an eigenvector of $\Gamma$.

(1)  First note that $X_0$ is strongly normal in $R$ and Poisson normal in $A$. The algebras $R/\langle X_0\rangle$, $A/X_0A$ and $R^{n-1}_{(\Delta',\Gamma')}$ 
can all be identified as graded vector spaces.
Let $\pi:R\rightarrow R/\langle X_0\rangle$ be the quotient map and set $\overline{X}_i=\pi(X_{i})$ for all $0\leqslant i\leqslant n$.
Then the product in $R/\langle X_0\rangle$ is given by
\begin{align*}
\overline{X}_i\ast \overline{X}_j
%=\pi(X_{i+1}\ast X_{j+1})=\sum_{\ell\geqslant0}\pi\Delta^\ell(X_{i+1}) \pi\binom{\Gamma}{\ell}(X_{j+1})
=\sum_{\ell\geqslant0}\overline{\Delta}^\ell(\overline{X}_i) \binom{\overline{\Gamma}}{\ell}(\overline{X}_j)
\end{align*}
where $\overline{\Delta}$ and $\overline{\Gamma}$ denote the derivations induced by$\overline{\Delta}$ and $\overline{\Gamma}$ on the quotient $A/\langle X_0\rangle\cong\kk[Y_0,\dots,Y_{n-1}]$.
It is now a simple verification to check that $(\overline{\Delta},\overline{\Gamma})$ is a solvable pair on $A/\langle X_0\rangle$ that agrees with the solvable pair $(\Delta',\Gamma')$ of $\kk[Y_0,\dots,Y_{n-1}]$ under the identification $\overline{X}_{i+1}\mapsto Y_i$ for all $0\leqslant i<n$.

(2) $X_0$ is Poisson normal since it is in the kernel of $\Delta$ and is an eigenvector of $\Gamma$.
It is direct that the Poisson bracket on $A/X_0A$ is given by the solvable pair $(\overline{\Delta},\overline{\Gamma})$ and we conclude as in (1).
\end{proof}

We now prove that $R_{(\Delta,\Gamma)}$ is Artin-Schelter regular for any solvable pair $(\Delta,\Gamma)$.
The result follows from the trigonalizable case. 

\begin{theorem}\label{theo-ASregul} Let $(\Delta,\Gamma)$ be a linear solvable trigonalizable pair.
The algebra $R=R^n_{(\Delta,\Gamma)}$ is Artin-Schelter regular of global dimension $n+1$.
\end{theorem}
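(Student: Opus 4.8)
The plan is to prove the statement by induction on $n$, peeling off one well-chosen variable at each step by means of Proposition~\ref{quotient}. Condition (2) of the definition is in fact free of any induction: we have already recorded that $\operatorname{hilb}(R)=1/(1-t)^{n+1}$, so $R$ has polynomial growth and $\operatorname{GKdim}R=n+1<\infty$. Thus the real work is to establish condition (1) together with condition (3) and to pin down $\operatorname{gldim}R=n+1$. For the base case $n=0$ the space $A_1=\kk X_0$ is one-dimensional, so the nilpotent map $\Delta$ vanishes and $R=\kk[X_0]$ is the commutative polynomial ring in one variable, which is AS regular of global dimension $1$.

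For the inductive step the key point is that trigonalizability furnishes a normal element of degree one to quotient by. Since $\Delta$ is nilpotent on $A_1\neq 0$, the subspace $A_1\cap\ker\Delta$ is nonzero; by Proposition~\ref{prop-trigo} the map $\Gamma_1$ induced by $\Gamma$ on it is trigonalizable, hence has an eigenvector $X_0\in A_1\cap\ker\Delta$, so that $\Delta(X_0)=0$ and $\Gamma(X_0)=\alpha X_0$ for some $\alpha\in\kk$. By Lemma~\ref{lem-strongly} the element $X_0$ is (strongly) normal in $R$, and since $R$ is a domain by Corollary~\ref{cor-domain}, it is a normal nonzerodivisor of degree $1$. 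Choosing a basis of $A_1$ that starts with $X_0$ and in which $\Gamma$ is upper triangular, Proposition~\ref{quotient}\,(1) identifies $R/\langle X_0\rangle$ with $R^{n-1}_{(\Delta',\Gamma')}$, where $(\Delta',\Gamma')$ is obtained by deleting the first row and column. As a principal block of an upper triangular matrix, $\Gamma'$ remains trigonalizable, so $(\Delta',\Gamma')$ is a trigonalizable linear solvable pair on a polynomial ring in $n$ variables, and the inductive hypothesis applies: $R/\langle X_0\rangle$ is AS regular of global dimension $n$.

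To conclude I would invoke the standard homological principle on adjunction of a normal regular element: if $z$ is a homogeneous normal nonzerodivisor of positive degree in a connected graded algebra $B$ and $B/\langle z\rangle$ is AS regular of global dimension $d$, then $B$ is AS regular of global dimension $d+1$. Applied with $B=R$, $z=X_0$ and $d=n$ this gives exactly that $R$ is AS regular of global dimension $n+1$, completing the induction. The main obstacle is precisely this last principle, so let me indicate how it runs. Everything is driven by the short exact sequence of graded left $R$-modules
\[
0\longrightarrow R[-1]\xrightarrow{\ \cdot X_0\ } R\longrightarrow R/\langle X_0\rangle\longrightarrow 0,
\]
where the first map is right multiplication by $X_0$; this map is left $R$-linear, and its image is the two-sided ideal $\langle X_0\rangle=RX_0=X_0R$ thanks to the normality of $X_0$, so that the cokernel is the algebra $R/\langle X_0\rangle$. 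Finiteness of $\operatorname{gldim}R$ and the equality $\operatorname{gldim}R=\operatorname{gldim}(R/\langle X_0\rangle)+1$ follow from this sequence together with the fact that $X_0$ is a nonzerodivisor. The delicate part is condition (3): one runs the long exact sequence in $\operatorname{Ext}^{\bullet}_R(\kk,R)$ attached to the displayed sequence and uses the normality of $X_0$ to keep track of the automorphism twist, thereby showing that the unique nonvanishing $\operatorname{Ext}$ group of $R/\langle X_0\rangle$, concentrated in homological degree $n$, yields a unique nonvanishing $\operatorname{Ext}$ group for $R$ in homological degree $n+1$ with the prescribed degree shift.
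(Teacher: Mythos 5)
Your proposal is correct and follows essentially the same route as the paper: the paper's proof is also an induction on $n$ that exploits the normalizing sequence $(X_0,\dots,X_n)$ of degree-one strongly normal elements supplied by trigonalizability (via Remark~\ref{rem-matrixversion} and Proposition~\ref{prop-trigo}), deferring the homological core to \cite[Theorem 3.8]{LS}, which is exactly the iterated form of the ``adjunction of a normal regular element'' principle you invoke and sketch.
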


\begin{proof}
The proof follows by induction on $n$, exactly as in the proof of \cite[Theorem 3.8]{LS} since the only hypotheses used by the authors are that $R$ is connected graded generated in degree $1$ with polynomial growth and admits a normal sequence $\Omega$ of homogeneous regular elements with $R/\Omega R\cong\kk$.
Here we can also use the sequence $\Omega=(X_0,X_1,\dots,X_n)$, where $(X_0,\ldots, X_n)$ is a basis of $A_1$ as in Remark~\ref{rem-matrixversion} and $\Gamma$ is triangular (Proposition~\ref{prop-trigo}). 
%The initialisation follows from Example \ref{exn=1} and the induction step is based on Proposition \ref{quotient}.
\end{proof}

%The preceding theorem extends to the general case where the linear pair $(\Delta,\Gamma)$ is no more assumed to be trigonalizable.
%Artin-Schelter regularity is a general property of the algebra $R_{(\Delta,\Gamma)}$.

\begin{corollary}\label{Cor-ASregul}
Let $(\Delta,\Gamma)$ be a linear solvable pair.
The algebra $R=R^n_{(\Delta,\Gamma)}$ is Artin-Schelter regular of global dimension $n+1$.
\end{corollary}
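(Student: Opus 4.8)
The plan is to reduce to the trigonalizable case of Theorem~\ref{theo-ASregul} by a faithfully flat base change. The endomorphism induced by $\Gamma$ on the finite dimensional space $A_1$ has all its eigenvalues in some finite extension $K$ of $\kk$ (one may also simply take $K=\overline{\kk}$), and since $\car\kk=0$ this extension is separable and, in particular, faithfully flat. Over $K$ the endomorphism of $A_1\otimes_\kk K$ induced by $\Gamma$ is trigonalizable by construction.

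First I would verify that the whole construction commutes with this base change. Set $A^K=A\otimes_\kk K=K[X_0,\dots,X_n]$ and extend $\Delta$ and $\Gamma$ $K$-linearly; the relation $[\Delta,\Gamma]=\Delta$ and the nilpotency of $\Delta$ on $A_1$ are preserved, so $(\Delta,\Gamma)$ is a linear solvable pair on $A^K$, now trigonalizable. As the coefficients appearing in the product~\eqref{ast} lie in $\Q\subseteq\kk$, base change is compatible with $\ast$ and yields a graded $K$-algebra isomorphism $R^K:=R^n_{(\Delta,\Gamma)}\otimes_\kk K\cong(A^K,\ast)$. Theorem~\ref{theo-ASregul} then shows that $R^K$ is AS regular of global dimension $n+1$; in particular $R^K$ is Noetherian, whence $R$ is Noetherian by faithfully flat descent.

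It remains to descend the three defining properties of AS regularity from $R^K$ to $R$. The Gelfand-Kirillov dimension is immediate because $R$ and $R^K$ share the Hilbert series $1/(1-t)^{n+1}$. For the homological conditions I would invoke the base-change isomorphism $\Ext^i_R(M,N)\otimes_\kk K\cong\Ext^i_{R^K}(M\otimes_\kk K,N\otimes_\kk K)$, valid for finitely generated modules over the Noetherian algebra $R$ since $K$ is flat over $\kk$. Taking $M=N=\kk$ and using that the global dimension of a connected graded algebra equals the projective dimension of the trivial module, faithful flatness lets one descend both the vanishing of $\Ext^i_R(\kk,\kk)$ for $i>n+1$ and its nonvanishing for $i=n+1$, so $\mathrm{gldim}\,R=n+1$. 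Taking $M=\kk_R$ and $N=R_R$, the Gorenstein condition for $R^K$ forces $\Ext^i_R(\kk_R,R_R)=0$ for $i\neq n+1$, while a graded dimension count in degree $i=n+1$ gives $\Ext^{n+1}_R(\kk_R,R_R)\cong{}_R\kk[\ell]$.

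There is no genuine obstacle beyond bookkeeping: the only points to check carefully are the compatibility of $\ast$ with extension of scalars and the legitimacy of the Ext base-change formula, the latter holding because $R$ is Noetherian and $\kk=R/R_{\geqslant1}$ is finitely presented. No new idea beyond Theorem~\ref{theo-ASregul} is required.
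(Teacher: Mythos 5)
Your proposal is correct and follows essentially the same route as the paper: extend scalars to a finite extension (the paper calls it $\widetilde{\kk}$) splitting the characteristic polynomial of $\Gamma$ on $A_1$, apply Theorem~\ref{theo-ASregul} to the resulting trigonalizable pair, and descend the homological conditions via the $\Ext$ base-change isomorphism and exactness of $\widetilde{\kk}\otimes\rule[0.3ex]{1ex}{0.5ex}$ (the paper additionally cites Eilenberg to equate the graded global dimensions, which your $\Ext^i_R(\kk,\kk)$ argument replaces). The only caution is to stick with your primary choice of a \emph{finite} extension rather than $\overline{\kk}$, since the paper's identification $\Ext^{\ast}_{\widetilde{R}}(\widetilde{\kk},\widetilde{R})\cong\Ext^{\ast}_{R}(\kk,R)^{[\widetilde{\kk}:\kk]}$ and the final dimension count use finiteness of the extension.
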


\begin{proof} Consider $\widetilde{\kk}$ a finite field extension of $\kk$ such that the characteristic polynomial of $\Gamma$ acting on $A_1$ is split over $\widetilde{\kk}$.
Thanks to Theorem~\ref{theo-ASregul} the algebra 
$\widetilde{R}=\widetilde{\kk} \otimes R$ is Artin-Schelter regular since the unique extension of $(\Delta,\Gamma)$ to $\widetilde{R}$ is a trigonalizable solvable pair.

Since $\widetilde{\kk} \otimes \rule[0.3ex]{1ex}{0.5ex}$ is an exact functor which sends projective modules over $R$ to projective modules over $\widetilde{R}$, for any $R$-modules $M,N$, we have 
\[\textrm{Ext}^{\ast}_{\widetilde{R}}(\widetilde{\kk} \otimes M, \widetilde{\kk} \otimes N) \cong \textrm{Ext}^{\ast}_{R}(M, \widetilde{\kk} \otimes N) \cong 
\textrm{Ext}^{\ast}_{R}(M, N)^{[\widetilde{\kk}:\kk]}
\]
Therefore we have $\mbox{gldim } R \leqslant \mbox{gldim } \widetilde{R}< \infty$.
Since $R$ and $\widetilde{R}$ are $\N$-graded connected algebras, the case $M=N=k$ and~\cite[Theorem 11, Theorem 13 and Proposition 15]{eilenberg} show that $\mbox{gldim } R =\mbox{gldim } \widetilde{R}$.
By considering the case $M=k$ and $N=R$, we obtain that 
$\textrm{Ext}^{\ast}_{\widetilde{R}}(\widetilde{\kk}, \widetilde{R})  \cong \textrm{Ext}^{\ast}_{R}(k, R)^{[\widetilde{\kk}:\kk]}$.
In particular, if $\textrm{Ext}^{\ast}_{\widetilde{R}}(\widetilde{\kk}, \widetilde{R}) \cong \widetilde{k}$ then $\textrm{Ext}^{\ast}_{R}(k, R) \cong k$ since the field extension is finite.
It follows that the algebra $R$ is Artin-Schelter regular. 
\end{proof}

\subsection{Normal elements}

The next proposition completely describes normal elements of $R$ and Poisson normal elements in $A$ in the case of a linear solvable pair: they are the strongly normal element (see Definition~\ref{dfn-strongly-ne}).
This generalizes the results of Lemma~\ref{lem-Pcenter}, Proposition~\ref{prop-Pcenter-lin} and is a converse of Lemma~\ref{lem-strongly} in the linear case.

\begin{proposition}\label{prop-normal-element} 
Let $(\Delta,\Gamma)$ be linear solvable pair.
Assume that $A$ is not Poisson commutative (see Corollary~\ref{cor-poissoncom}).
Then% 
%Assume that $\Delta$ has either 
%\begin{itemize}
%\item[(a)] at least one block of size at least $3$, or
%\item[(b)] no block of size more than $2$ and at least two blocks of size $2$, or
%\item[(c)] no block of size more than $2$, a single block of size $2$ and that $a_1\neq0$.
%\end{itemize}
\begin{enumerate}
\item $N$ is normal in $R$ iff $N$ is strongly normal iff $N$ is Poisson normal in $A$. 
\item $N$ is central in $R$ iff $N\in\ker\Delta\cap\ker\Gamma$ iff $N$ is Poisson central in $A$.
\end{enumerate}
\end{proposition}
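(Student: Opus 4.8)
The plan is to establish the two converse implications in~(1)---that normality in $R$ and Poisson normality in $A$ each force strong normality---since the implications ``strongly normal $\Rightarrow$ normal in $R$'' and ``strongly normal $\Rightarrow$ Poisson normal in $A$'' are already contained in Lemma~\ref{lem-strongly}. Granting~(1), assertion~(2) is short: a central element $N$ of $R$ is in particular normal, hence strongly normal, and by Lemma~\ref{lem-strongly} its associated automorphism is $\phi_\alpha$ with $\Gamma(N)=\alpha N$; since $A$ is not Poisson commutative we have $\Delta\neq 0$, so Lemma~\ref{compo} forces $\alpha=0$ and thus $N\in\ker\Delta\cap\ker\Gamma$, the converse inclusion being Lemma~\ref{lem-Pcenter}(2). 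The Poisson-central case is identical, using that the Poisson derivation attached to a strongly normal $N$ is $\alpha\Delta$.

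For the converses I would first reduce to an irreducible homogeneous element. Since $\Delta$ and $\Gamma$ are linear, both $\ast$ and $\{-,-\}$ are graded, so each homogeneous component of a (Poisson) normal element is again (Poisson) normal and we may take $N$ homogeneous; its irreducible factors are then homogeneous. Writing $N=\prod_k p_k^{e_k}$ in the unique factorization domain $A$ and setting $p=p_k$, $M=N/p^{e}$, the Leibniz rule gives $\Delta(N)\Gamma(a)-\Gamma(N)\Delta(a)=e\,p^{e-1}M\{p,a\}+p^{e}\{M,a\}$; comparing with $\{N,a\}=N\mu(a)$ and using $p\nmid M$ and $e\neq0$ yields $p\mid\{p,a\}$ for all $a$, so each prime factor is Poisson normal. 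As strong normality is stable under products and powers, it suffices to prove the Poisson statement for irreducible $p$; normality in $R$ will be reduced to the Poisson statement at the end.

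The core is therefore: an irreducible homogeneous $p$ with $\{p,a\}\in(p)$ for all $a$ is strongly normal. Restricting the derivation $\{p,-\}=p\mu$ to $A_1$ shows $\mu$ is linear, and $\{p,p\}=0$ gives $\mu(p)=0$. To obtain $\Delta(p)=0$, note first from $\varepsilon(\Delta(p))=\varepsilon(p)-1$ (Lemma~\ref{filtdg}(1)) that $p\mid\Delta(p)$ is impossible unless $\Delta(p)=0$. If instead $p\nmid\Delta(p)$, reducing $\Delta(p)\Gamma(a)-\Gamma(p)\Delta(a)=p\mu(a)$ modulo $p$ gives $\overline{\Delta(p)}\,\overline{\Gamma(a)}=\overline{\Gamma(p)}\,\overline{\Delta(a)}$ in the domain $A/(p)$, i.e.\ $\Delta$ and $\Gamma$ become proportional along the hypersurface $\{p=0\}$; excluding this configuration is the main obstacle. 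I would resolve it by induction on $n$: choosing in the trigonalizable normal form (Remark~\ref{rem-matrixversion}) the strongly normal variable $X_0$ with $\Delta(X_0)=0$ and $\Gamma(X_0)=\alpha X_0$, Proposition~\ref{quotient} identifies $A/\langle X_0\rangle$ with $A^{n-1}_{(\Delta',\Gamma')}$, so that the image $\overline p$ is Poisson normal in lower dimension. The inductive hypothesis (the reduced pair, when Poisson commutative, being handled directly via Corollary~\ref{cor-poissoncom}) then shows $\overline p$ is strongly normal, giving the congruences $\Delta(p)\in\langle X_0\rangle$ and $\Gamma(p)-\beta p\in\langle X_0\rangle$; combining these with the divisibility $p\mid\Delta(p)\Gamma(a)-\Gamma(p)\Delta(a)$ for all $a$ upgrades them to the equalities $\Delta(p)=0$ and $\Gamma(p)=\alpha p$, so $p$ is strongly normal.

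Finally, for normality in $R$ I would pass to the associated graded algebra: by the $\wteps$-version of Lemma~\ref{lem-autom-normalR} (valid by Proposition~\ref{prop-epsilontilde}) the leading symbol $\overline N$ is Poisson normal in $\gr^{\wteps}(R)=\gr^{\wteps}(A)=A_{(\overline\Delta,\overline\Gamma)}$, a linear solvable pair to which the Poisson statement just proved applies, giving $\overline\Delta(\overline N)=0$ and $\overline\Gamma(\overline N)=\alpha\overline N$; lifting this control of the leading term back to $N$ yields $\Delta(N)=0$ and $\Gamma(N)=\alpha N$. The two steps I expect to demand the most care are the exclusion of the proportional configuration in the irreducible Poisson case and this last passage from the leading symbol to $N$.
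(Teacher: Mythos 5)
Your reduction to irreducible homogeneous factors and your handling of assertion~(2) granting~(1) are fine, but the two steps you yourself flag as delicate are precisely where the argument breaks, and at least one of them fails as stated. The induction on $n$ via $A/\langle X_0\rangle$ gives nothing exactly in the case that needs the most work: when $\Delta$ has rank $1$ (say $\Delta=X_0\partial_{X_n}$), the quotient pair has $\Delta'=0$, so $A/\langle X_0\rangle$ is Poisson commutative and \emph{every} element there is Poisson normal --- the inductive hypothesis yields no constraint on $\Gamma(p)$ at all, and Corollary~\ref{cor-poissoncom} cannot substitute for it since it only characterizes commutativity. Even when the quotient is not commutative, the final ``upgrade'' from the congruences $\Delta(p)\in\langle X_0\rangle$, $\Gamma(p)-\beta p\in\langle X_0\rangle$ to the equalities $\Delta(p)=0$, $\Gamma(p)=\alpha p$ is asserted but not performed: writing $\Delta(p)=X_0u$, $\Gamma(p)=\beta p+X_0v$, Poisson normality only gives $p\mid u\Gamma(a)-v\Delta(a)$ for all $a\in A_1$, which does not force $u=v=0$ without further argument. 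Likewise, for $R$-normality the passage from the leading symbol back to $N$ is not a formality: $\overline{\Delta}(\overline{N})=0$ only says $\wteps(\Delta(N))\leqslant\wteps(N)-2$, which is far from $\Delta(N)=0$.

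For comparison, the paper's proof splits on the rank of $\Delta$ acting on $A_1$. When the rank is at least $2$, it invokes the proof of \cite[Proposition 3.21]{LS}, whose only input is an $\wteps$-homogeneous, irreducible, strongly normal element $G\notin\ker\Gamma$; the bulk of the work is an explicit construction of such a $G$ in each configuration of Jordan blocks (a block of size $\geqslant 3$ gives $G=2X_2X_0-X_1^2$, two blocks of size $2$ give a $2\times 2$ ``determinant'', etc.), after reducing to the trigonalizable case by scalar extension. When the rank is $1$ --- exactly where your induction collapses --- the paper runs a direct computation: writing $Q=\sum P_i X_n^i$, comparing $X_n$-degrees, and using that $\Gamma$ is locally nilpotent on $\kk[X_0,\dots,X_{n-1}]$ to kill the eigenvalue. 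For $R$-normality it likewise argues directly on the relation $X_{j_0}\ast Q=Q\ast f$ by applying $\varepsilon$ and composing with powers of $\Delta$, rather than trying to lift information from the associated graded algebra. You would need to supply a genuinely separate argument for the rank-$1$ case and a real proof of the lifting step before your outline becomes a proof.
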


\begin{proof}
The proof splits in two cases according to the value of the rank of $\Delta$ acting on $A_1$.
When this rank is equal to $1$, the proof relies on an explicit computation.
When this rank is strictly greater than one, the proof can be mimicked from the one of \cite[Proposition 3.21]{LS} by noticing that this proof only relies on the existence of an $\wteps$-homogeneous, irreducible, strongly normal element $G$ which is not in $\ker\Gamma$. %and such that $G\ast R=R\ast G$ is a completely prime ideal of $R$. 
% of an irreducible element $G\in A$ that is homogeneous for $\wteps$ (see Section~\ref{ssec-AS-regul} for the definition of $\wteps$) and such that $\Delta(G)=0$, $\Gamma(G)=\lambda G$ for a scalar $\lambda\in\kk^\times$ and that $G\ast R=R\ast G$ is a completely prime ideal of $R$.
If this is the case, it can verify that the ideal $G\ast R=R\ast G$ of $R$ is completely prime since $G$ is an irreducible element of the unique factorization domain $A$.
%Note that thanks to appendix \ref{cpideal}, the ideal $G\ast R=R\ast G$ is always a completely prime ideal of $R$ since $G$ is an irreducible element of the unique factorization domain $A$.
Note that the existence of such a $G$ implies that assertion $(2)$ can also be seen as a consequence of Lemma~\ref{lem-Pcenter}.

First assume that $(\Delta,\Gamma)$ is a trigonalizable solvable pair.
In the following, we only consider actions of $(\Delta,\Gamma)$ on $A_1$ and its subspaces. 
Using the notations of Remark~\ref{rem-matrixversion} and Proposition~\ref{prop-trigo}, we see that $\Gamma_1$ is trigonalizable.
If $\Gamma_1$ has a nonzero eigenvalue $\lambda$, then an eigenvector $G$ of $\Gamma_1$ with respect to $\lambda$ satisfies the desired properties.
Hence we can assume that $\Gamma_1$ is nilpotent.
As in the proof of Proposition~\ref{prop-trigo}, the relation $\Delta\Gamma -\Gamma \Delta = \Delta$ shows that for every $1 \leqslant i\leqslant r-1$ we have the following triangular block decomposition
$$\Gamma_i = \begin{pmatrix} \Gamma_{i+1} - I& C_i \\ & B_i\end{pmatrix}$$
where $I$ denote the identity matrix of the appropriate size.
Hence $i-1$ is the only eigenvalue of $\Gamma_i$ and thus $\Gamma_i - (i-1)\id$ is nilpotent.
The triangular shape of the block decomposition of $\Gamma$ then shows that $\ker \Delta^{i} = \bigoplus_{j=0}^{i-1} N_j$ where $N_j$ is the generalized eigenspace of $\Gamma$ 
with respect to $j$.
The relation $\Delta \Gamma -\Gamma \Delta=\Delta$ can be rewritten $\Delta (\Gamma-j\id) = (\Gamma-(j - 1)\id)\Delta$ and then
$\Delta (\Gamma-j\id)^k = (\Gamma-(j - 1)\id)^k\Delta$ for all integer $k\geqslant0$.
Hence $\Delta$ maps $N_j$ into $N_{j-1}$.
Moreover the relation $\ker \Delta^{i} = \bigoplus_{j=0}^{i-1} N_j$ shows that for all integer $j \geqslant 1$, $\Delta$ maps injectively $N_j$ into $N_{j-1}$.

Assume now that $\Delta$ admits a Jordan block of size at least $3$.
Then $2$ is an eigenvalue of $\Gamma$.
If we consider $X_2$ such that $\Gamma(X_2)=2X_2$, then $X_1=\Delta(X_2) \in 
\ker(\Gamma-\id)$, $X_0=\Delta(X_1) \in \ker(\Gamma)$, the family $(X_0,X_1,X_2)$ is linearly independent and then $G=2X_2X_0-{X_1}^2 \neq 0$ verifies $\Delta(G)=0$ and $\Gamma(G)=2G$ as desired. 

Hence we can now assume that $\Delta$ admits only Jordan blocks of size $2$.
If $\Delta$ admits at least two such blocks then $\dim(\ker \Delta^2/\ker \Delta)\geqslant 2$.
Hence $\dim N_1 \geqslant 2$. 
If $\dim \ker(\Gamma-\id) \geqslant 2$ then choose $(X_1,Y_1) \in \ker(\Gamma-\id)$ linearly independent then $G=X_1\Delta(Y_1)-Y_1\Delta(X_1) \neq 0$ verifies $\Delta(G)=0$ and $\Gamma(G)=G$ as wanted. 
For $\Delta(Y_1),\Delta(X_1) \in \ker \Delta\cap \ker \Gamma$.
If $\dim \ker(\Gamma-\id) = 1$, since $\dim N_1\geqslant 2$, there exists $(X_1,Y_1)\in {N_1}^2$ 
such that $\Gamma(X_1)=X_1$ and $\Gamma(Y_1)=Y_1+X_1$ (apply canonical Jordan form to $\Gamma$ acting on $N_1$).
Set then $X_0=\Delta(X_1)$ and $Y_0=\Delta(Y_1)$.
We have $X_0,Y_0 \in \ker\Delta$ and $\Gamma(X_0)= 0$ and $\Gamma(Y_0)=X_0$.
Hence $G=X_1Y_0 - X_0Y_1 \neq 0$ verifies $\Gamma(G)=G$ and $\Delta(G)=0$.

It remains to consider the case where $\Delta$ admits only one Jordan block which is of size $2$ that is to say the rank of $\Delta$ is $1$.
In this case, we have
$$\Delta=\begin{pmatrix} 0 & 0& 1 \\ & 0&0 \\ && 0\end{pmatrix} \qquad \textrm{and} \qquad \Gamma=\begin{pmatrix} 0 & L_1 & 0 \\ &\widetilde{\Gamma_1}& 0 \\ && 1 \end{pmatrix} $$
where the second column of the matrices $\Delta$ and $\Gamma$ represents matrices with $(n-1)$ columns.
By choosing the corresponding basis $(X_0,\ldots,X_n)$ for $A_1$, we have $\Delta= X_0 \partial_{X_n}$ and $\Gamma(X_0)=0$, $\Gamma(X_j)\in \kk[X_0,\ldots,X_{n-1}]$ for $j \in \{1,\ldots,n-1\}$ and $\Gamma(X_n)=X_n$.
We will show that every Poisson normal element $Q$ lies in $\ker\Delta \cap \ker \Gamma$ and that every normal element $Q$ in $R$ also lies in $\ker\Delta \cap \ker \Gamma$.
For, write $Q=\sum_{i=0}^{s}P_i {X_n}^i$ with $P_i \in \kk[X_0,\ldots,X_{n-1}]$ and an integer $s\geqslant0$.
We have 
$$\Delta(Q)= \sum_{i=0}^{s}iP_iX_0 {X_n}^{i-1} \qquad \textrm{and} \qquad\Gamma(Q)= \sum_{i=0}^{s}(\Gamma(P_i) + iP_i) {X_n}^{i}\,.$$
Moreover there exists $j_0\in \{1,\ldots,n-1\}$ such that $\Gamma(X_{j_0})\neq 0$, otherwise, from Corollary~\ref{cor-poissoncom}, we have that $A$ is Poisson commutative. 
Assume that $Q$ is Poisson normal and compute $\{Q,X_{j_0}\}=\Delta(Q)\Gamma(X_{j_0})$.
Since $Q$ is Poisson normal there exists $F \in A$ such that $\Delta(Q)\Gamma(X_{j_0})=FQ$.
If $F \neq 0$ then by comparing the degree in $X_n$ of the two sides of the equality leads to something impossible.
Hence $F=0$ and $\Delta(Q)=0$ since $\Gamma(X_{j_0}) \neq 0$.
So $Q=P_0\in k[X_0,\ldots,X_{n-1}]$.
By computing $\{Q,X_n\}=-\Gamma(Q)X_0=-\Gamma(P_0)X_0$, from the Poisson normality of $Q$, we obtain that there exists $F \in A$ such that $\Gamma(P_0)X_0=FP_0$.
We then write $P_0 = {X_0}^\ell R$ with $X_0 \nmid R$. Since $X_0 \in \ker \Gamma$, we obtain the relation $\Gamma(R)X_0 = FR$.
Hence $X_0 \mid F$, so $\Gamma(R)=F_1R$ for some $F_1 \in A$.
By comparing the total degree we obtain that $F_1 \in \kk$, and by multiplying by ${X_0}^\ell$ we obtain that
$Q=P_0$ verifies $\Gamma(P_0)=F_1 P_0$.
But $\Gamma$ is locally nilpotent on $\kk[X_0,\ldots, X_{n-1}]$ since it acts as a nilpotent linear map on the span of $X_0,\ldots,X_{n-1}$, hence $F_1=0$ and $Q=P_0 \in \ker \Gamma \cap \ker \Delta$ is strongly normal. 

Let us now consider that $Q \neq 0$ is normal in $R$ and compute $X_{j_0}\ast Q = X_{j_0}Q$.
Since $Q$ is normal, there exists $f \in R$ such that $X_{j_0}\ast Q=Q\ast f$. 
By applying the $\varepsilon$-degree, we deduce that $\varepsilon(f)=\varepsilon(X_{j_0})=0$, that is to say $f \in \ker \Delta$.
By sending the preceding equality in $\gr(R)$ which is a domain thanks to Corollary~\ref{cor-domain}, 
we obtain that $\ol{X_{j_0}}=\ol{f} \in \ker(A^{\leqslant 0}/\{0\})$ and then $X_{j_0}=f$. 
The relation $X_{j_0}\ast Q=Q\ast f$ can then be rewritten as 
\[\sum_{i\geqslant 1} \Delta^i(Q) \binom{\Gamma}{i}(X_{j_0})=0 \qquad \textrm{with} \qquad \binom{\Gamma}{i}(X_{j_0})\in \ker \Delta\]
By considering the smallest integer $N\geqslant 1$ such that $\Delta^N(Q)=0$ and by composing the preceding relation with $\Delta^{N-2}$, we obtain that $\Delta^{N-1}(Q)\Gamma(X_{j_0})=0$.
But $\Gamma(X_{j_0})\neq 0$, hence $N=1$ and $Q \in \ker \Delta=k[X_0,\ldots,X_{n-1}]$.
We now compute $X_n \ast Q = X_nQ + X_0 \Gamma(Q)$. Since $Q$ is normal, there exists $f$ such that $X_n\ast Q = Q\ast f=Qf$.
Hence $Qf=X_nQ + X_0 \Gamma(Q)$ and we deduce that the total degree of $f$ is not greater than $1$ and that $\deg_{X_n}(f)=1$.
By writing $f=a X_n + f_1$ with $f_1 \in \kk[X_0,\ldots,X_{n-1}]$ and $a \in \kk$, we obtain $a=1$ and then $Qf_1= X_0 \Gamma(Q)$.
By applying the same argument that the one in the Poisson normal case, we obtain that $\Gamma(Q)=0$.
Therefore $Q \in \ker \Gamma \cap \ker \Delta$ is strongly normal.

To conclude, it remains to consider the case where the pair $(\Delta,\Gamma)$ is not trigonalizable.
Consider an element $Q$ that is either Poisson normal in $A$ or normal in $R$. 
Let $\ol{\kk}$ be the algebraic closure of $\kk$ and consider the scalar extensions $\ol{A}=\ol{\kk}\otimes A$ and $\ol{R}=\ol{\kk}\otimes R$ of $A$ and $R$ respectively.
The pair $(\Delta,\Gamma)$ is a trigonalizable linear pair on $\ol{A}$ and $1 \otimes Q$ is Poisson normal in $\ol{A}$ or normal in $\ol{R}$.
Hence $1 \otimes Q$ is strongly normal.
Hence $1 \otimes \Delta(Q)=0$ and $1 \otimes \Gamma(Q)=\lambda \otimes Q$ for some $\lambda \in \ol{\kk}$.
But $1 \otimes \Gamma(Q)\in 1 \otimes A$ so $\lambda \in \kk$ and $\Gamma(Q)=\lambda Q$.
So $Q$ is strongly normal.
%First if $\Delta$ has at least one block of size $3$ (in particular the first block) then we take $G=X_0$ when $a_1\neq0$ or $G=X_0X_2-\frac{1}{2}X_1^2$ when $a_1=0$.
%Second if $\Delta$ has at least two blocks of size $2$ (in particular the first two blocks), we denote by $X_i$ the variables associated to the first block and $Y_i$ the variables associated to the second block, and we take $G=X_0$ when $a_1\neq0$, $G=Y_0$ when $a_2\neq0$ or $G=X_0Y_1-X_1Y_0$ when $a_1=a_2=0$ (this last element has $\Gamma$-weight equals to $a_1+a_2+1=1$).
%\textcolor{red}{Finally in case of $\mbox{Rk}(\Delta)=1$, there exists $j \neq 1$ such that $\Delta(X_j)=0$ and $\Gamma(X_j)=a_jX_j$ with $a_j\neq0$ (otherwise $A$ would be Poisson commutative, Corollary~\ref{cor-poissoncom}) and choose $G=X_j$.}
\end{proof}

\begin{remark}\label{rem-trigo-strongne}
Let $(\Delta,\Gamma)$ be a linear trigonalizable solvable pair.
According to the previous proof we have the following disjoint three cases.
\begin{itemize}
\item The rank of $\Delta$ is $1$ and the restriction of $\Gamma$ to $\ker \Delta$ is zero. In this case, $A$ is Poisson commutative and $R$ is commutative.
\item The rank of $\Delta$ is $1$ and the restriction of $\Gamma$ to $\ker \Delta$ is nonzero and nilpotent.
In this case, every normal element in $R$ is central and every Poisson normal element in $A$ is Poisson central.
Moreover $A$ is not Poisson commutative and $R$ is not commutative. 
\item Otherwise, there exists a strongly normal element with a nonzero $\Gamma$-eigenvalue, that is to say, there exists an element which is Poisson normal but not Poisson central in $A$, as well as normal but not central in $R$.
\end{itemize}

Moreover, in the third case, we can assume that this normal and non central element belongs to the image $\im \Delta$ of $\Delta$.
Indeed, by the relation $\Gamma \Delta = \Delta (\Gamma - \id)$ we obtain that $\im \Delta$ is stable by $\Gamma$.
Hence $\ker \Delta \cap \im\Delta$ is stable by $\Gamma$.
Since $\Gamma$ is trigonalizable its restriction to $\ker \Delta \cap \im\Delta$ is too. 
In particular, this restriction has an eigenvector.
Hence there exists $x \in \ker \Delta \cap \im\Delta$ and $\lambda \in \kk$ such that $\Gamma(x)=\lambda x$.
If $\lambda \neq 0$, we are done. 
If $\lambda = 0$, consider $y$ such that $\Delta(y)=x$.
Let $z$ be a strongly normal element whose eigenvalue for $\Gamma$, denoted by $\mu$, is nonzero ($z$ may not be in $A_1$). 
Then $xz \in \ker \Delta$ and $xz$ is an eigenvector for $\Gamma$ with eigenvalue $\mu+\lambda= \mu \neq 0$.
Finally we have $\Delta(yz)=z\Delta(y)=zx \in \im \Delta$ as desired. 
\end{remark}

\subsection{On the isomorphism problem}

In this section we show that for a linear solvable pair $(\Delta,\Gamma)$, the Jordan type of $\Delta$ is determined by the isomorphism class of $R$ or the Poisson isomorphism class of $A$.
The Jordan type of a linear derivation of a polynomial algebra $A$ means the Jordan type of its restriction to $A_1$.

\begin{theorem}\label{thm-isom} Let $(\Delta,\Gamma)$ and $(\Delta',\Gamma')$ be linear solvable pairs on $\kk[X_0,\ldots,X_n]$ with $\Delta \neq 0$ and $\Delta' \neq 0$. 
We set $A=A_{(\Delta,\Gamma)}$, $A'=A_{(\Delta',\Gamma')}$, $R=R_{(\Delta,\Gamma)}$ and $R'=R_{(\Delta',\Gamma')}$.

\begin{itemize}
\item If $A$ and $A'$ are Poisson isomorphic, then $\Delta$ and $\Delta'$ have the same Jordan type.
\item If $R$ and $R'$ are isomorphic, then $\Delta$ and $\Delta'$ have the same Jordan type. 
\end{itemize}
\end{theorem}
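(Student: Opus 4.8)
My first move is to make the isomorphism linear. In the Poisson case, Proposition~\ref{isoda} guarantees that a Poisson isomorphism between homogeneous polynomial Poisson algebras may be taken linear, so we get $\varphi\colon A\to A'$ linear; in the associative case, $R$ and $R'$ are connected $\N$-graded and generated in degree $1$, so any isomorphism is graded and is determined by a linear map $\varphi\colon R_1=A_1\to R'_1=A'_1$. Setting $\widetilde\Delta=\varphi\Delta\varphi^{-1}$ and $\widetilde\Gamma=\varphi\Gamma\varphi^{-1}$, Lemma~\ref{iso} shows that $(\widetilde\Delta,\widetilde\Gamma)$ is again a linear solvable pair and that $\varphi$ identifies $A_{(\Delta,\Gamma)}$ with $A_{(\widetilde\Delta,\widetilde\Gamma)}$ (resp. $R_{(\Delta,\Gamma)}$ with $R_{(\widetilde\Delta,\widetilde\Gamma)}$). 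Since conjugation preserves the Jordan type, $\widetilde\Delta$ and $\Delta$ have the same type, and the theorem reduces to the assertion: \emph{if two linear solvable pairs $(\widetilde\Delta,\widetilde\Gamma)$ and $(\Delta',\Gamma')$ define the same Poisson bracket on $A'$ (resp. the same product $\ast$), then $\widetilde\Delta$ and $\Delta'$ have the same Jordan type.}

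\textbf{Recovering $\kk\Delta$ from the bracket (Poisson case).} The Poisson bracket is the rank-$2$ bivector $\pi=\Delta\wedge\Gamma$ of Section~\ref{sec-pol-ring}; at a generic point its image is the $2$-plane spanned by the values of the two \emph{linear} vector fields $\Delta$ and $\Gamma$, so the characteristic distribution of $\pi$ is the $\mathcal O$-module they generate. The key idea is that the \emph{$\kk$-subspace} $\mathfrak g=\kk\Delta\oplus\kk\Gamma$ of vector fields is recovered from $\pi$ and is the non-abelian two-dimensional Lie algebra, because $[\Delta,\Gamma]=\Delta$ (Lemma~\ref{lem-pder} already singles out $\Delta$ as the genuine Poisson derivation). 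Its derived subalgebra $[\mathfrak g,\mathfrak g]=\kk\Delta$ is therefore canonical, so the equality $\widetilde\Delta\wedge\widetilde\Gamma=\Delta'\wedge\Gamma'$ forces $\kk\widetilde\Delta=\kk\Delta'$; thus $\Delta'=c\,\widetilde\Delta$ for some $c\in\kk^\ast$ and the two share the same Jordan type.

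\textbf{The main obstacle.} The delicate point is precisely that $\mathfrak g$ depends only on $\pi$, i.e. that the two decompositions of the same bivector differ by a \emph{constant} matrix of $\mathrm{SL}_2$. A priori the equality of bivectors yields only a pointwise $\mathrm{SL}_2(\kk(x))$-valued $g$ with $(\Delta',\Gamma')=(\widetilde\Delta,\widetilde\Gamma)\,g$ whose entries are homogeneous degree-$0$ rational functions. I would pin $g$ down by feeding the relations $[\widetilde\Delta,\widetilde\Gamma]=\widetilde\Delta$ and $[\Delta',\Gamma']=\Delta'$ into this identity: expanding the bracket of combinations $a\widetilde\Delta+b\widetilde\Gamma$ (with $a,b$ functions) and using that $\widetilde\Delta,\widetilde\Gamma$ are derivations gives differential relations among the entries which, combined with the constraint that $\Delta'$ be an honest constant-coefficient vector field, force the entries to be scalars. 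A cleaner route is to first treat the generic (diagonalizable $\Gamma$) case, where the strongly normal elements of Proposition~\ref{prop-normal-element} are an invariant of $\pi$ and span $\ker\Delta\cap A_1$, thereby matching $\ker\widetilde\Delta\cap A_1=\ker\Delta'\cap A_1$ together with all their iterated $\Delta$-images, which already determine the Jordan type (when $A$ is Poisson commutative the statement is vacuous by Corollary~\ref{cor-poissoncom}); the general case then follows by extending scalars to $\ol\kk$ exactly as in the proof of Proposition~\ref{prop-normal-element}.

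\textbf{The associative case.} Here I would transfer the argument to the finer filtration $\wteps$ of Proposition~\ref{prop-epsilontilde}. For $f,g\in A_1$ the commutator $f\ast g-g\ast f$ lies in $A_2$, and by Proposition~\ref{prop-epsilontilde}(6) its top $\wteps$-component is exactly the Poisson bracket $\{f,g\}$ (the terms with $k\geqslant 2$ drop the $\wteps$-degree by at least two). Hence the associated graded $\gr^{\wteps}(R)=\gr^{\wteps}(A)$ is the graded Poisson algebra of the induced pair $(\ol\Delta,\ol\Gamma)$, whose $\ol\Delta$ carries the Jordan type of $\Delta$ by Remark~\ref{rem-matrixversion}. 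A graded isomorphism $R\cong R'$ thus descends, once one checks it is filtered for $\wteps$, to a graded Poisson isomorphism $\gr^{\wteps}(A)\cong\gr^{\wteps}(A')$, and applying the Poisson case to these associated graded algebras equates the Jordan types of $\ol\Delta$ and $\ol{\Delta'}$, hence of $\Delta$ and $\Delta'$. Verifying this $\wteps$-compatibility on generators is the technical counterpart, in the associative setting, of the constancy obstacle above.
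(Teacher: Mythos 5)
Your overall architecture is the paper's: linearize the isomorphism, recover $\Delta$ up to a nonzero scalar from structure intrinsic to $A$ (resp.\ $R$), and in the associative case descend to $\gr^{\wteps}$ and invoke the Poisson case. But the two load-bearing steps are precisely the ones you leave as ``obstacles,'' and the fallback arguments you sketch do not close them.

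In the Poisson case, the claim that $\kk\Delta$ is determined by the bivector $\pi=\Delta\wedge\Gamma$ is the whole content of the theorem, and neither of your two routes establishes it. The $\mathrm{SL}_2$-of-functions ambiguity you identify is real, and ``feeding the bracket relations in forces constancy'' is not carried out (nor obviously true). Your ``cleaner route'' recovers only $\ker\Delta\cap A_1$ from the strongly normal elements, and equality of kernels does not determine the Jordan type: types $(3,1)$ and $(2,2)$ have kernels of the same dimension. What actually works — and is the paper's Lemma~\ref{relDEltaDelta'} — is to use not the strongly normal element $x$ itself but its \emph{associated Poisson derivation}: if $\Gamma(x)=\lambda x$ with $\lambda\neq0$, that derivation is $\lambda\Delta$ by Lemma~\ref{lem-strongly}; since every Poisson normal element is strongly normal (Proposition~\ref{prop-normal-element}), $\phi(x)$ has associated derivation $\mu\Delta'$, which also equals $\phi(\lambda\Delta)\phi^{-1}$, giving $\phi\Delta\phi^{-1}=\alpha\Delta'$ with $\alpha\neq0$ and hence equality of Jordan types. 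You also miss a degenerate case: when $\mathrm{rk}\,\Delta=1$ and $\Gamma|_{\ker\Delta}$ is nonzero nilpotent, $A$ is not Poisson commutative yet there is \emph{no} strongly normal element with nonzero eigenvalue, and extending scalars to $\ol{\kk}$ only yields trigonalizability, never your ``generic diagonalizable'' hypothesis. The paper disposes of this case separately (Remark~\ref{rem-trigo-strongne}): there every (Poisson) normal element is (Poisson) central, a property preserved by isomorphism, which forces both ranks to be $1$.

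In the associative case you correctly reduce to showing that the graded isomorphism $\phi:R\to R'$ preserves the filtration $\wteps$, but you explicitly defer this verification, and it is again the crux. The paper's proof: $\phi$ sends the strongly normal $x$ to a strongly normal element, conjugates the associated automorphism $\phi_\lambda$ to some $\phi_\mu$ with $\mu\neq0$, and then $\phi(\ker\Delta^n)=\ker(\phi(\phi_\lambda-\id)^n\phi^{-1})=\ker(\phi_\mu-\id)^n=\ker\Delta'^n$ by Lemma~\ref{compo}. Without this (or an equivalent), the passage to $\gr^{\wteps}$ is not justified.
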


\begin{lemma}
\label{relDEltaDelta'}
Assume that there exists a strongly normal element in $A$ with nonzero $\Gamma$-eigenvalue and that $A$ is not Poisson commutative.
\begin{enumerate}
 \item If $\phi : A \rightarrow A'$ is a Poisson isomorphism, then there exists $\al \in \kk^\ast$ such that $\phi \Delta \phi^{-1} = \alpha \Delta'$.
 \item If $\phi : R \rightarrow R'$ is an isomorphism, then $\phi(\ker \Delta^n) = \ker(\phi \Delta^n \phi^{-1})= \ker \Delta'^n$ for every $n \in \N$.
\end{enumerate}
\end{lemma}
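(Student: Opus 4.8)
The plan is to transport the strongly normal element guaranteed by the hypothesis through $\phi$, and to exploit the fact that an isomorphism conjugates the Poisson derivation (resp.\ the automorphism) attached to a Poisson normal (resp.\ normal) element. Fix once and for all a strongly normal element $f$ of $A$ with $\Delta(f)=0$ and $\Gamma(f)=\lambda f$ for some $\lambda\in\kk^\ast$; such an $f$ exists by hypothesis and is nonzero. The underlying observation, valid for both parts, is a uniqueness-and-conjugation principle. In a domain, the Poisson derivation $\delta_N$ associated to a nonzero Poisson normal element $N$ (defined by $\{g,N\}=\delta_N(g)N$) and the automorphism $\Phi_N$ associated to a nonzero normal element $N$ (defined by $uN=N\Phi_N(u)$) are each uniquely determined by $N$; moreover, if $\phi$ is a Poisson isomorphism (resp.\ an algebra isomorphism), a short computation using $\phi(\{g,N\})=\{\phi(g),\phi(N)\}$ (resp.\ $\phi(uN)=\phi(u)\phi(N)$) shows that $\phi(N)$ is again Poisson normal (resp.\ normal), with $\delta_{\phi(N)}=\phi\,\delta_N\,\phi^{-1}$ (resp.\ $\Phi_{\phi(N)}=\phi\,\Phi_N\,\phi^{-1}$).

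For assertion (1), Lemma~\ref{lem-strongly} gives that $f$ is Poisson normal in $A$ with $\delta_f=\lambda\Delta$. By the principle above, $\phi(f)$ is Poisson normal in $A'$ with $\delta_{\phi(f)}=\lambda\,\phi\Delta\phi^{-1}$. Since $A\cong A'$ is not Poisson commutative, Proposition~\ref{prop-normal-element} applies to $A'$ and shows that $\phi(f)$ is strongly normal there, say $\Gamma'(\phi(f))=\mu\,\phi(f)$; Lemma~\ref{lem-strongly} for the pair $(\Delta',\Gamma')$ then gives $\delta_{\phi(f)}=\mu\Delta'$. By uniqueness of the associated Poisson derivation (here $A'$ is a domain and $\phi(f)\neq0$) we obtain $\lambda\,\phi\Delta\phi^{-1}=\mu\Delta'$. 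As $\Delta\neq0$ (because $A$ is not Poisson commutative) the left-hand side is nonzero, so $\mu\neq0$, and setting $\alpha=\mu/\lambda\in\kk^\ast$ yields $\phi\Delta\phi^{-1}=\alpha\Delta'$.

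For assertion (2), the first equality $\phi(\ker\Delta^n)=\ker(\phi\Delta^n\phi^{-1})$ is the standard fact that conjugation by the linear isomorphism $\phi$ carries kernels to kernels. For the second, Lemma~\ref{lem-strongly} gives that $f$ is normal in $R$ with associated automorphism $\phi_\lambda=(\id+\Delta)^\lambda$. As above, $\phi(f)$ is normal in $R'$ with associated automorphism $\phi\,\phi_\lambda\,\phi^{-1}$, and Proposition~\ref{prop-normal-element} makes $\phi(f)$ strongly normal in $R'$, say $\Gamma'(\phi(f))=\mu\,\phi(f)$, with associated automorphism $\phi'_\mu=(\id+\Delta')^\mu$ by Lemma~\ref{lem-strongly} for $(\Delta',\Gamma')$. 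Uniqueness of the associated automorphism in the domain $R'$ gives $\phi\,\phi_\lambda\,\phi^{-1}=\phi'_\mu$; since $\lambda\neq0$ and $\Delta\neq0$, Lemma~\ref{compo} shows $\phi_\lambda\neq\id$, hence $\phi'_\mu\neq\id$ and $\mu\neq0$. Conjugating, $(\phi'_\mu-\id)^n=\phi\,(\phi_\lambda-\id)^n\,\phi^{-1}$, so $\ker(\phi'_\mu-\id)^n=\phi\big(\ker(\phi_\lambda-\id)^n\big)$. Finally Lemma~\ref{compo} (applicable since $\lambda,\mu\neq0$) identifies $\ker(\phi_\lambda-\id)^n=\ker\Delta^n$ and $\ker(\phi'_\mu-\id)^n=\ker\Delta'^n$, whence $\phi(\ker\Delta^n)=\ker\Delta'^n$.

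The routine content is the transport-by-conjugation principle; the only genuinely delicate points are the uniqueness of the associated Poisson derivation and automorphism (which needs the target to be a domain and $\phi(f)\neq0$) and the verification that the transported eigenvalue $\mu$ stays nonzero, so that Lemma~\ref{compo} can be invoked to convert the kernels of $(\phi_\lambda-\id)^n$ and $(\phi'_\mu-\id)^n$ into kernels of the powers of $\Delta$ and $\Delta'$.
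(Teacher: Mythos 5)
Your proof is correct and follows essentially the same route as the paper's: identify the associated Poisson derivation $\lambda\Delta$ (resp. automorphism $\phi_\lambda$) of the strongly normal element via Lemma~\ref{lem-strongly}, transport it by conjugation through $\phi$, invoke Proposition~\ref{prop-normal-element} to see the image is again strongly normal, and then use uniqueness of the associated derivation/automorphism together with Lemma~\ref{compo} to conclude. The only difference is cosmetic: you spell out the uniqueness-and-conjugation principle and the nonvanishing of $\mu$ more explicitly than the paper does.
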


\begin{proof}
We denote by $x$ the strongly normal element of $A$ whose $\Gamma$-eigenvalue $\lambda$ is nonzero.

(1) By Lemma~\ref{lem-strongly} the element $x$ is Poisson normal in $A$ and the associated Poisson derivation is $\lambda \Delta$. 
Since $\phi$ is a Poisson isomorphism, $\phi(x)$ is Poisson normal in $A'$, hence strongly normal by Proposition~\ref{prop-normal-element}.
Therefore there exists $\mu \in \kk$ such that $\Gamma'(\phi(x))=\mu \phi(x)$ and the Poisson derivation of $A'$ associated to $\phi(x)$ is $\mu \Delta'$. 
Because this derivation is also equal to $\phi (\lambda \Delta) \phi^{-1}$, we obtain the result since $\mu \neq 0$.

(2) Thanks to Lemma~\ref{lem-strongly}, since $x$ is strongly normal it is also normal in $R$ and the automorphism of $R$ associated to $x$ is $\phi_\lambda$.
We then deduce that $\phi(x)$ is normal in $R'$, and from Proposition~\ref{prop-normal-element}, that it is strongly normal in $R'$. 
Hence there exists $\mu \in \kk$ such that $\Gamma'(\phi(x))=\mu \phi(x)$.
Since the automorphism of $R'$ associated to $\phi(x)$ is $\phi \phi_\lambda \phi^{-1}$, we deduce that $\phi \phi_\lambda \phi^{-1}=\phi_\mu$ and hence $\mu \neq 0$.
By the last part of Lemma~\ref{compo} we have $\ker \Delta'^n = \ker (\phi_\mu - \id)^n = \ker (\phi (\phi_\lambda -\id)^n \phi^{-1})=\phi(\ker (\Delta^n))$ for every $n \in \N$.
\end{proof}

\begin{proof}[Proof of the theorem.]
If $R$ is commutative (resp. $A$ is Poisson commutative), then so is $R'$ (resp. $A'$).
By Corollary~\ref{cor-poissoncom} both $\Delta$ and $\Delta'$ are of rank $1$, hence are of the same Jordan type.
Assume now that $R$ is not commutative (resp. $A$ is not Poisson commutative).

If every normal element in $R$ (resp. Poisson normal element in $A$) is central (resp. Poisson central), then the same is true for $R'$ (resp $A'$).
Thanks to Remark~\ref{rem-trigo-strongne} the rank of both $\Delta$ and $\Delta'$ is $1$.
Hence $\Delta$ and $\Delta'$ have the same Jordan type.

Otherwise there exists a non central normal element in $R$ (resp. non Poisson central Poisson normal element in $A$).
Let $\phi : R \rightarrow R'$ be a $\kk$-algebra isomorphism (resp. $\phi : A \rightarrow A'$ be a Poisson isomorphism).
By \cite[Theorem 1]{BeZ} (resp. \cite[Proposition 8.8]{LGPV}) we may assume that $\phi$ is graded. 
By extending the scalars to an algebraic closure of $\kk$, we can furthermore assume that $(\Delta,\Gamma)$ and $(\Delta',\Gamma')$ are trigonalizable solvable pairs. 

We start by the Poisson case.
By assertion (1) of Lemma \ref{relDEltaDelta'} we have $\phi \Delta \phi^{-1} = \alpha \Delta'$ with $\alpha \in \kk^\ast$.
The derivations $\alpha \Delta'$ and $\Delta'$ have the same Jordan type.
and since $\phi$ is graded, the derivation $\phi \Delta \phi^{-1}$ and $\Delta$ have the same Jordan type.
The result follows.
 
In the case of $R$, thanks to assertion (2) of Lemma \ref{relDEltaDelta'}, the isomorphism $\phi$ preserves the filtration $\varepsilon$.
Since moreover $\phi$ is graded it must also preserve the filtration~$\wteps$.
Hence $\phi$ induces a graded Poisson isomorphism between~$\gr^{\wteps}(R) = \gr^{\wteps}(A)$ and $\gr^{\wteps}(R')= \gr^{\wteps}(A')$.
From~Remark~\ref{rem-matrixversion}, the Jordan type of $\Delta$ (resp. $\Delta'$) is the same that the one of $\overline{\Delta}$ (resp. $\overline{\Delta'}$).
%From~Remark~\ref{rem-matrixversion}, the Jordan type of $\Delta$ (resp. $\Delta'$) acting on $A_1$ (resp. $A'_1$) is the same that the one of $\overline{\Delta}$ (resp. $\overline{\Delta'}$) acting on $\gr^{\wteps}(A_1)$ (resp. $\gr^{\wteps}(A'_1)$).
Then the Poisson isomorphism case allows us to conclude.
\end{proof}

\section{Diagonalizable linear solvable pair: Poisson derivations} \label{sec-diago}

In this section we show that under a generic condition on $\Gamma$ the derivation $\Delta$ may be recovered from the Poisson algebra $A$ as, up to a nonzero scalar, the only locally nilpotent derivation in the center of ${\rm P.Der}_{\rm gr}(A)$.   

\begin{definition} A linear solvable pair $(\Delta,\Gamma)$ is said to be diagonalizable if there exists a basis of $A$ consisting of eigenvectors for $\Gamma$.
\end{definition}

\begin{lemma}
\label{diagA1daigA}
The existence of a basis of $A$ consisting of eigenvectors for $\Gamma$ is equivalent to the fact that the restriction of $\Gamma$ to $A_1$ is diagonalizable. 
\end{lemma}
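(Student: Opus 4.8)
The plan is to exploit that $\Gamma$ is a linear derivation, hence homogeneous of degree $0$: it stabilises each graded piece $A_k$ and is completely determined by its restriction to $A_1$. Both implications then reduce to elementary bookkeeping with the grading and the Leibniz rule, so I do not expect a genuine obstacle; the only point requiring a little care is that eigenvectors of a graded operator may always be taken homogeneous.

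For the implication that a basis of eigenvectors of $A$ forces $\Gamma|_{A_1}$ to be diagonalizable, I would first record the homogeneity observation. If $v$ is an eigenvector of $\Gamma$ with eigenvalue $\mu$, decompose it as $v=\sum_k v_k$ with $v_k\in A_k$. Since $\Gamma(v_k)\in A_k$ for each $k$, comparing homogeneous components in $\Gamma(v)=\mu v$ yields $\Gamma(v_k)=\mu v_k$; thus every nonzero homogeneous component of an eigenvector is again an eigenvector with the same eigenvalue. Consequently, starting from a basis of $A$ made of eigenvectors and replacing each basis vector by its homogeneous components, one obtains a spanning family of homogeneous eigenvectors. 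Its degree-$1$ members span $A_1$, so $\Gamma|_{A_1}$ has a basis of eigenvectors and is therefore diagonalizable.

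For the converse, I would pick a basis $X_0,\dots,X_n$ of $A_1$ consisting of eigenvectors, say $\Gamma(X_i)=\lambda_i X_i$ with $\lambda_i\in\kk$. The monomials $X_0^{a_0}\cdots X_n^{a_n}$ form a $\kk$-basis of $A$, and since $\Gamma$ is a derivation the Leibniz rule gives
\[
\Gamma\bigl(X_0^{a_0}\cdots X_n^{a_n}\bigr)=\Bigl(\sum_{i=0}^{n}a_i\lambda_i\Bigr)X_0^{a_0}\cdots X_n^{a_n}.
\]
Hence each such monomial is an eigenvector of $\Gamma$, and these eigenvectors form a basis of $A$. This exhibits the desired eigenbasis and completes the equivalence. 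The whole argument is purely formal; the main (very mild) subtlety is the homogeneity reduction used in the forward direction, which guarantees that diagonalizability on the infinite-dimensional space $A$ can be detected on the finite-dimensional piece $A_1$.
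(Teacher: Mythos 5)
Your proof is correct, and the easy implication (eigenbasis of $A_1$ gives an eigenbasis of monomials for $A$ via the Leibniz rule) is exactly the paper's. For the reverse implication, however, you take a genuinely different route. You exploit that $\Gamma$ is homogeneous of degree $0$: the homogeneous components of any eigenvector are again eigenvectors with the same eigenvalue, so projecting a basis of eigenvectors of $A$ onto degree $1$ yields a spanning set of eigenvectors of $A_1$. The paper instead fixes $v\in A_1$, writes it as a sum $v=w_1+\cdots+w_r$ of eigenvectors of $\Gamma$ in $A$ with pairwise distinct eigenvalues $\lambda_1,\dots,\lambda_r$, and uses the $\Gamma$-stability of $A_1$ together with the invertibility of the Vandermonde matrix of $(\lambda_1,\dots,\lambda_r)$ applied to $(v,\Gamma(v),\dots,\Gamma^{r-1}(v))$ to conclude that each $w_i$ already lies in $A_1$. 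Your argument is arguably shorter and uses the grading in an essential way; the paper's argument is the standard ``eigencomponents of a vector in an invariant subspace stay in that subspace'' device, which would apply verbatim to any finite-dimensional $\Gamma$-stable subspace, not just the graded piece $A_1$. Both are complete; there is no gap in your version.
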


\begin{proof}
 Indeed, if we consider a basis of eigenvectors of $\Gamma$ acting on $A_1$ then the set of monomials formed with elements of this basis of $A_1$ is a basis of $A$ consisting of eigenvectors for $\Gamma$. 

Reciprocally, assume that $\mathcal{B}$ is a basis  of $A$ consisting of eigenvectors for $\Gamma$. 
Let us show that the eigenvectors of $\Gamma$ contained in $A_1$ generates $A_1$. 
Consider $v \in A_1$ and express it as a linear combination of elements of $\mathcal{B}$.
Gather the eigenvectors associated to the same eigenvalue to write $v=w_1+ \cdots + w_r$ where $w_i$ is an eigenvector for $\Gamma$ associated to the eigenvalue $\lambda_i\in\kk$ and where $\lambda_i \neq \lambda_j$ when $i \neq j$.
By considering the family $(v,\Gamma(v),\ldots, \Gamma^{r-1}(v))\in {A_1}^r$ and 
inverting the Vandermonde matrix associated to $(\lambda_1,\ldots,\lambda_r)$, we get that $w_i \in A_1$ for all $i$. 
This conclude the proof.  
\end{proof}

For a diagonalizable linear solvable pair $(\Delta,\Gamma)$ we denote by $n_1 \geqslant n_2 \geqslant \cdots  \geqslant n_r$ the size of the Jordan blocks of $\Delta$ acting on $A_1$. 
Such a $\Delta$ is said to be of Jordan type $\overline{n}=(n_1,\ldots, n_r)$ where we set $n+1=n_1+ \cdots + n_r$.

\begin{proposition}\label{prop-diag-pair} Let $(\Delta,\Gamma)$ be a diagonalizable linear solvable pair. Assume that $\Delta$ is of Jordan type $\overline{n}=(n_1,\ldots, n_r)$.
Then there exists a basis of $A_1$ and $\overline{a}=(a_1,\ldots,a_r) \in \kk^r$ such that $\Delta$ is in canonical Jordan form and $\Gamma$ is diagonal of the form
$\Gamma =\diag(a_1,\ldots, a_1+n_1 -1, a_2,\ldots, a_2+ n_2 -1,\ldots, a_r, \ldots, a_r+n_r-1)$.
\end{proposition}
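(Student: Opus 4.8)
The plan is to work entirely on the finite-dimensional space $A_1$, where $\Delta$ is nilpotent of Jordan type $\overline{n}$ and, by hypothesis together with Lemma~\ref{diagA1daigA}, the restriction of $\Gamma$ is diagonalizable. The statement amounts precisely to producing a Jordan basis of $\Delta$ on $A_1$ consisting of $\Gamma$-eigenvectors. The engine of the argument is the following \emph{shift property}: writing $V_\lambda$ for the $\Gamma$-eigenspace attached to $\lambda\in\kk$, the relation $[\Delta,\Gamma]=\Delta$ rewrites as $\Gamma\Delta=\Delta(\Gamma-\id)$, so that for $v\in V_\lambda$ one computes $\Gamma(\Delta(v))=\Delta(\lambda v-v)=(\lambda-1)\Delta(v)$. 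Hence $\Delta(V_\lambda)\subseteq V_{\lambda-1}$ for every $\lambda$; that is, $\Delta$ lowers the $\Gamma$-eigenvalue by one.

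First I would record that this shift property says exactly that $\Delta$ is a homogeneous endomorphism of degree $-1$ for the grading $A_1=\bigoplus_{\lambda\in\kk}V_\lambda$ by $\Gamma$-eigenvalues (if one prefers a $\Z$-grading, group the eigenvalues into their cosets modulo $\Z$, each coset spanning a $\Delta$-stable block). The proposition then reduces to the purely linear statement: a nilpotent endomorphism $N$ of a finite-dimensional graded vector space $U=\bigoplus_\lambda U_\lambda$ that is homogeneous of degree $-1$ (i.e. $N(U_\lambda)\subseteq U_{\lambda-1}$) admits a Jordan basis made of homogeneous vectors. Granting this and applying it to $N=\Delta$ on $U=A_1$, each Jordan chain takes the form $e_0,\dots,e_{m-1}$ with $\Delta(e_j)=e_{j-1}$ and $e_0\in\ker\Delta$, all $e_j$ homogeneous. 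If $e_0\in V_{a}$, the shift property forces $e_j\in V_{a+j}$, so along the chain $\Gamma$ takes the consecutive eigenvalues $a,a+1,\dots,a+m-1$. Since a Jordan basis realizes the Jordan type of $\Delta$, the block sizes are exactly $\overline{n}$; reordering the chains by decreasing size and letting $a_i$ be the eigenvalue of the bottom vector of the $i$-th chain yields a basis in which $\Delta$ is in canonical Jordan form and $\Gamma=\diag(a_1,\dots,a_1+n_1-1,\dots,a_r,\dots,a_r+n_r-1)$, as claimed.

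The hard part will be the graded Jordan basis lemma, although it is standard. The key observation is that, since $N$ is homogeneous, all the subspaces $\ker N^{k}$ and $\im N^{k}$ are graded subspaces of $U$, and graded subspaces of a graded vector space always admit graded complements (extend a homogeneous basis of the subspace by homogeneous vectors). One can therefore run the usual Jordan-chain construction while keeping every choice homogeneous: starting from the graded filtration $0=\ker N^{0}\subseteq\ker N^{1}\subseteq\cdots\subseteq\ker N^{d}=U$, with $N^{d}=0$ and $N^{d-1}\neq0$, pick a graded complement $S_d$ of $\ker N^{d-1}$ in $\ker N^{d}$. Then $N$ is injective on $S_d$ and $N(S_d)$ is a graded subspace of $\ker N^{d-1}$ meeting $\ker N^{d-2}$ trivially (if $x\in S_d$ and $Nx\in\ker N^{d-2}$ then $N^{d-1}x=0$, forcing $x\in S_d\cap\ker N^{d-1}=0$), so it extends to a graded complement $S_{d-1}\supseteq N(S_d)$ of $\ker N^{d-2}$ in $\ker N^{d-1}$; iterating and choosing homogeneous bases of the $S_k$ produces homogeneous Jordan chains spanning $U$. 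Applying this to $\Delta$ and reading off the $\Gamma$-eigenvalues as above completes the proof.
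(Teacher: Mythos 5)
Your proof is correct and takes essentially the same route as the paper: both adapt the standard Jordan-chain construction for $\Delta$ on $A_1$ so that every choice of complement respects $\Gamma$ (the paper picks $\Gamma$-stable complements of $\ker\Delta^{i-1}$ in $\ker\Delta^i$ using diagonalizability, which is exactly your graded complements for the eigenspace decomposition), and both then read off the eigenvalues along a chain via the shift $\Gamma\Delta=\Delta(\Gamma-\id)$. Your reformulation as a "graded Jordan basis" lemma for a degree $-1$ nilpotent map is a clean packaging of the same argument.
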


\begin{proof} This is an adaptation of the proof of the existence of the canonical Jordan form. 
For any integer $i\geqslant0$ we get that $F_i:=\ker \Delta^i$ is stable by $\Gamma$ by using the relation $[\Delta^i,\Gamma]=i\Delta^{i}$.
There exists $s\geqslant0$ such that $F_s=A_1$ since $\Delta$ is nilpotent on $A_1$.
We construct by a decreasing induction subspaces $(G_i)_{1\leqslant i \leqslant s}$ such that 
\begin{enumerate}
\item $F_{i-1}\oplus G_{i}=F_i$;
\item $\Delta$ maps injectively $G_{i+1}$ into $G_{i}$; 
\item $G_i$ is stable by $\Gamma$.
\end{enumerate}

Since $F_{s-1}$ is stable by $\Gamma$ and $\Gamma$ is diagonalizable, there exists $G_{s}$ which is stable by $\Gamma$
such that $F_s=A_1 = F_{s-1} \oplus G_{s}$. Assume that $G_{s},\ldots, G_{s+1}$ have been constructed satisfying~(1), (2) and (3). 
Then clearly $\Delta$ maps $G_{i+1}$ into $F_{i}$. Moreover $\Delta(G_{i+1}) \cap F_{i-1} = \{0\}$ (since $G_{i+1}\cap F_i = \{0\}$)
and $G_{i+1} \cap \ker \Delta =\{0\}$. Moreover $\Delta(G_{i+1})$ is stable by $\Gamma$. Hence $F_{i-1} \oplus \Delta(G_{i+1})$
too and there exists a subspace $W_i$ of $F_i$ stable by $\Gamma$ (since $\Gamma$ is diagonalizable) 
such that $F_{i-1} \oplus \Delta(G_{i+1}) \oplus W_i =F_i$. It suffices now to consider $G_i=\Delta(G_{i+1}) \oplus W_i$ 
to get the result. 

Remark that if $x$ is an eigenvector for $\Gamma$ (associated to $\lambda$) 
and $\Delta(x) \neq 0$ then $\Delta(x)$ is an eigenvector for $\Gamma$ (associated to $\lambda -1$).
Thus if we consider a basis of $G_s$ composed of eigenvectors of $\Gamma$, then we consider the image of this basis by $\Delta$ and complete into a basis of $G_{s-1}$ (adding eigenvector of $\Gamma$ belonging to $W_{s-1}$). 
We proceed by induction to produce the desired basis.  
\end{proof}

\begin{definition}
\label{Annaa}
Thanks to Lemma \ref{iso} and Proposition \ref{prop-diag-pair} one can assume without loss of generalities that for any diagonalizable solvable pair $(\Delta,\Gamma)$ the Poisson algebra $A_{(\Delta,\Gamma)}$ is presented in the basis, denoted by $(X_0,\ldots,X_n)$, given by Proposition \ref{prop-diag-pair}. 
We denoted by $A(\overline{n},\overline{a})$ this Poisson algebra $A_{(\Delta,\Gamma)}$.
\end{definition}

\begin{remark}\label{rem-pair-diago}
It will sometimes be convenient to denote by $\lambda_{\ell}$ the eigenvalue for $\Gamma$ of $X_{\ell}$ in $A(\overline{n},\overline{a})$.
The Poisson bracket of $A(\overline{n},\overline{a})$ is then given by
\[\{X_i,X_j\}=\lambda_j\Delta(X_i)X_j-\lambda_i\Delta(X_j)X_i\]
for all $0\leqslant i,j\leqslant n$.
%Moreover Lemma~\ref{dmax} shows that when $(\Delta,\Gamma)$ is a linear solvable pair with $\Delta$ maximal, then $\Gamma$ should be diagonalizable and $(\Delta,\Gamma)$ is a diagonalizable linear solvable pair.
\end{remark}

\begin{remark}\label{rem-pair-diago-bis}
Note that when $(\Delta,\Gamma)$ is a linear solvable pair with $\Delta$ maximal, then $(\Delta,\Gamma)$ is automatically  a diagonalizable linear solvable pair by Theorem~\ref{dmax}.
\end{remark}

\subsection{Poisson derivations for diagonalizable linear solvable pairs}

In this section we study Poisson derivations for diagonalizable linear solvable pairs. 
We have seen in Lemma~\ref{lem-pder} that $\Delta$ is always a Poisson derivation.
In the linear case, the so-called \textit{Eulerian derivation} $\mathcal{E}$ sending $X_i$ to itself for all $0\leqslant i\leqslant n$ is also a Poisson derivation.
Our aim is to recover $\Delta$, or more precisely its conjugacy class, from the algebraic structure of $A$.
We first study the case of the maximal Jordan block since the proof of the general will rely on it.

\subsubsection{Poisson derivations for maximal Jordan block}

In this section we may assume that $A=A(n,a)$ (see Example \ref{LSnot}) thanks to Section \ref{caseDeltamax}.
We show that there are no other linear Poisson derivations apart from those in the $\kk$-linear span of $\Delta$ and $\mathcal{E}$. 

\begin{lemma}
\label{xo}
Assume that $n \geqslant2$ or that $n=1$ and $a\neq0$.
For any $\delta\in{\rm P.Der}_{\rm gr}(A)$ there exists a scalar $\lambda$ such that $\delta(X_0)=\lambda X_0\in\langle X_0\rangle$.
\end{lemma}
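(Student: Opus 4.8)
The plan is to exploit the finer $\Z$-grading on $A=A(n,a)=\kk[X_0,\dots,X_n]$ in which $X_i$ has weight $i$ (the multiplicative extension to $A$ of the $\wteps$-degree on $A_1$ from Section~\ref{finerfiltation}). Writing $A=\bigoplus_p A^{(p)}$ for this weight grading, the formula $\{X_i,X_j\}=(a+j)X_{i-1}X_j-(a+i)X_{j-1}X_i$ (Remark~\ref{rem-pair-diago}) shows that the bracket is homogeneous of weight $-1$, i.e. $\{A^{(p)},A^{(q)}\}\subseteq A^{(p+q-1)}$. First I would decompose an arbitrary $\delta\in{\rm P.Der}_{\rm gr}(A)$ according to this grading: on the generators $A_1$ write $\delta=\sum_m\delta_m$, where $\delta_m$ raises the weight by $m$, and extend each $\delta_m$ to a derivation of $A$. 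Matching weights in the identity $\delta(\{f,g\})=\{\delta(f),g\}+\{f,\delta(g)\}$ shows that each $\delta_m$ is again a graded Poisson derivation. Since $\delta(X_0)=\sum_m\delta_m(X_0)$ and each $\delta_m(X_0)\in\kk X_m$, it suffices to prove $\delta_m(X_0)=0$ for $m\neq 0$; the weight-$0$ part then gives $\delta(X_0)=\lambda X_0$. Thus I may assume $\delta$ is weight-homogeneous of degree $m\neq 0$, say $\delta(X_i)=c_iX_{i+m}$ (with $X_k:=0$ for $k\notin\{0,\dots,n\}$), and must show $c_0=0$; I may also assume $1\leqslant m\leqslant n$, as otherwise $\delta(X_0)=c_0X_m=0$ already.

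The core computation applies the Poisson-derivation identity to $\{X_0,X_j\}=-aX_{j-1}X_0$ for $j\geqslant 1$. Expanding both sides using $\delta(X_i)=c_iX_{i+m}$ and the bracket formula yields, after collecting terms, the relation
\[
a(c_j-c_{j-1})\,X_{j+m-1}X_0+c_0\,m\,X_{j-1}X_m-c_0(a+j)\,X_{m-1}X_j=0 .
\]
The three monomials here have index multisets $\{j+m-1,0\}$, $\{j-1,m\}$ and $\{m-1,j\}$; a quick check shows $X_{j-1}X_m$ coincides with the first exactly when $j=1$ and with the third exactly when $j=m$. Hence for any $j$ with $2\leqslant j\leqslant n$ and $j\neq m$ the monomial $X_{j-1}X_m$ is nonzero and distinct from the other two, so its coefficient $c_0 m$ must vanish; since $m\neq 0$ this forces $c_0=0$. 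Such a $j$ exists whenever $\{2,\dots,n\}\setminus\{m\}\neq\varnothing$, i.e. in all cases except $n=1$ and $(n,m)=(2,2)$.

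The remaining work — and the only place where the hypothesis ``$n\geqslant 2$ or ($n=1$ and $a\neq 0$)'' is genuinely used — is these two boundary cases, which I would settle by a direct check. For $(n,m)=(2,2)$ one takes $j=1$ in the displayed relation (now $c_1=0$ since $X_3=0$); the coefficients of the two distinct monomials $X_0X_2$ and $X_1^2$ give $c_0(2-a)=0$ and $c_0(a+1)=0$, and since $2-a$ and $a+1$ cannot both vanish, $c_0=0$. For $n=1$ the only relevant degree is $m=1$, and evaluating the identity on $\{X_0,X_1\}=-aX_0^2$ gives $-2ac_0X_0X_1=0$, whence $c_0=0$ precisely because $a\neq 0$ (consistently, for $n=1$, $a=0$ the algebra is Poisson commutative and the statement fails). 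The main obstacle is therefore not the generic computation but organizing these degenerate small-rank cases and justifying the reduction to weight-homogeneous Poisson derivations; everything else is bookkeeping with the weight grading.
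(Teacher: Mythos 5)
Your proof is correct, but it follows a genuinely different route from the paper's. The paper splits on the value of $a$: for $a=0$ (forcing $n\geqslant 2$) it observes that $X_0$ spans $Z_P(A)\cap A_1$ by \cite[Proposition 3.1(2)]{LS} and that the Poisson center is stable under Poisson derivations (Remark~\ref{rem-der-center}), so $\delta(X_0)\in\kk X_0$ immediately; for $a\neq 0$ it makes a single coefficient comparison, extracting the coefficient of $X_iX_{n-1}$ ($0<i\leqslant n$) from $\delta(\{X_0,X_n\})=\{\delta(X_0),X_n\}+\{X_0,\delta(X_n)\}$ to get $i\lambda_i=0$. You instead decompose $\delta$ into weight-homogeneous components $\delta_m$ for the grading $\on{wt}(X_i)=i$, reduce to a homogeneous component of nonzero weight, and run a coefficient comparison on $\{X_0,X_j\}$ for a suitably chosen $j$; I checked your displayed relation and the multiset analysis of the three monomials, and both are right, as are the two boundary cases $(n,m)=(2,2)$ and $n=1$. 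Your argument has the virtue of being self-contained (no appeal to \cite{LS}) and of treating $a=0$ and $a\neq 0$ uniformly in the generic case, and the weight-decomposition technique is the kind of tool that recurs later (e.g.\ in the analysis of ${\rm P.Der}_{\rm gr}$ in Section~\ref{sec-diago}); the price is the setup needed to justify that each $\delta_m$ is again a Poisson derivation (which is fine, since the defect $(f,g)\mapsto\delta(\{f,g\})-\{\delta(f),g\}-\{f,\delta(g)\}$ is a biderivation and so need only be checked on generators, where it splits by weight) and the bookkeeping of the degenerate small cases. The paper's proof is shorter but leans on an external result for $a=0$.
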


\begin{proof}
First if $a=0$ (so $n \geqslant 2$) then $X_0$ is, up to scalar multiple, the unique Poisson central element in $A_1$ thanks to \cite[Proposition 3.1(2)]{LS}. 
The result follows since $\delta(X_0)$ is also a Poisson central element in $A_1$, see Remark~\ref{rem-der-center}.

Assume now $a\neq0$.
Using the fact that $\{X_0,X_n\}=-aX_0X_{n-1}$ and comparing for $0<i\leqslant n$ the coefficients of $X_iX_{n-1}$ in the relation $\delta(\{X_0,X_n\}) = \{\delta(X_0),X_n\}+\{X_0,\delta(X_n)\}$, we obtain that $\delta(X_0)=\lambda X_0$ for some $\lambda \in \kk$.
\end{proof}

\begin{theorem}
\label{poisdermax} Assume that $n \geqslant2$ or that $n=1$ and $a\neq0$.
Then ${\rm P.Der}_{\rm gr}(A)=\kk \mathcal{E}\oplus \kk\Delta$.
%In particular $\Delta$ is, up to nonzero scalar multiples, the only nonzero linear locally nilpotent Poisson derivation of $A$.
In particular the set of locally nilpotent derivations of ${\rm P.Der}_{\rm gr}(A)$ is the one dimensional subspace spanned by $\Delta$.
\end{theorem}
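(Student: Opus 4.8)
The plan is to prove both inclusions, the easy one being $\kk\mathcal{E}\oplus\kk\Delta\subseteq{\rm P.Der}_{\rm gr}(A)$. The Poisson bracket of $A=A(n,a)$ is homogeneous of total degree $0$ (one checks $\{A_p,A_q\}\subseteq A_{p+q}$ directly from~\eqref{pbdg}), so the Eulerian derivation $\mathcal{E}$, acting on $A_k$ as multiplication by $k$, satisfies $\mathcal{E}(\{f,g\})=\{\mathcal{E}(f),g\}+\{f,\mathcal{E}(g)\}$ and is a Poisson derivation; that $\Delta$ is one is Lemma~\ref{lem-pder}. Since $\mathcal{E}(X_0)=X_0\neq 0=\Delta(X_0)$, these two are linearly independent. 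For the reverse inclusion I would start from an arbitrary $\delta\in{\rm P.Der}_{\rm gr}(A)$. By Lemma~\ref{xo} we have $\delta(X_0)=\lambda X_0$ for some $\lambda\in\kk$, and since ${\rm P.Der}_{\rm gr}(A)$ is a vector space we may replace $\delta$ by $\delta-\lambda\mathcal{E}$; thus it suffices to show that a linear Poisson derivation $\delta$ with $\delta(X_0)=0$ is a scalar multiple of $\Delta$.

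To organize the remaining computation I would grade by the operator $W=\Gamma-a\mathcal{E}$, which acts on $A_1$ by $W(X_i)=iX_i$ and endows $A$ with a $\Z$-grading by $W$-weight. Formula~\eqref{pbdg} shows the bracket is $W$-homogeneous of degree $-1$, so the Poisson-derivation identity splits according to $W$-weight: writing $\delta=\sum_d\delta_d$, where $\delta_d$ raises $W$-weight by $d$, each $\delta_d$ is again a Poisson derivation with $\delta_d(X_0)=0$. Hence it is enough to treat a single $W$-homogeneous $\delta$, say $\delta(X_i)=c_iX_{i+d}$ (with the convention $X_k=0$ for $k\notin\{0,\dots,n\}$). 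Imposing $\delta(\{X_i,X_j\})=\{\delta(X_i),X_j\}+\{X_i,\delta(X_j)\}$ and expanding with~\eqref{pbdg} produces, for every pair $(i,j)$, an identity among the four degree-two monomials $X_{i+d-1}X_j$, $X_{i-1}X_{j+d}$, $X_iX_{j+d-1}$ and $X_{i+d}X_{j-1}$, whose coefficients are, up to sign, $(a+j)(c_{i-1}-c_i)$, $-d\,c_j$, $(a+i)(c_j-c_{j-1})$ and $d\,c_i$.

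Reading off these coefficients for suitably chosen indices then gives the result weight by weight. For $d\neq 0,-1$ the monomials $X_{i-1}X_{j+d}$ and $X_{i+d}X_{j-1}$ can be isolated, so $d\,c_j=0$ forces every $c_j=0$ and $\delta_d=0$. For $d=0$ the surviving relation is $(a+j)(c_{i-1}-c_i)=0$; choosing $j$ with $a+j\neq 0$ yields $c_{i-1}=c_i$ for all $i$, whence $c_i=c_0=0$. For $d=-1$ the two middle monomials coincide and the relations force all $c_i$ equal, that is $\delta=\nu\Delta$. When $a\neq 0$ this last weight admits a cleaner inductive form: the relation $\{X_0,X_i\}=-aX_0X_{i-1}$ gives $\{X_0,\delta(X_i)\}=-aX_0\,\delta(X_{i-1})$, from which one reads $\delta|_{A_1}=\sum_{m\geqslant 1}b_m\Delta^m$, and comparing the $X_0^2$-coefficient of $\{X_1,X_i\}$ shows $m\,b_m=0$, so $b_m=0$ for $m\geqslant 2$.

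I expect the main difficulty to be the bookkeeping in the degenerate situations rather than the conceptual structure: for integer values of $a$ one of the factors $a+j$ may vanish, and for small $n$ (notably $n=2$) several of the four monomials above collide, so that no single defining relation isolates the desired coefficient. The remedy is to feed in more than one relation, for instance both $\{X_0,X_i\}$ and $\{X_1,X_i\}$, or a careful choice of the pair $(i,j)$ in the displayed identity; this is exactly where the hypotheses $n\geqslant 2$ or ($n=1$ and $a\neq 0$) enter. Finally, for the last assertion, any $\mu\mathcal{E}+\nu\Delta$ acts on $A_1$ as $\mu\,\id+\nu\Delta$ with $\Delta$ nilpotent, and this endomorphism is nilpotent, equivalently the derivation is locally nilpotent on $A$, if and only if $\mu=0$. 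Hence the locally nilpotent derivations in ${\rm P.Der}_{\rm gr}(A)$ form precisely $\kk\Delta$.
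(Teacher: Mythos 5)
Your proposal is correct in outline but follows a genuinely different route from the paper. The paper argues by induction on $n$: after Lemma~\ref{xo} gives $\delta(X_0)=\lambda X_0$, the derivation $\delta$ descends to the quotient $A/\langle X_0\rangle\cong A(n-1,a+1)$, the inductive hypothesis pins down $\delta$ modulo $X_0$, and a handful of bracket identities kill the remaining coefficients $\nu_i$; since the inductive hypothesis for $A(n-1,a+1)$ is invoked for $a+1\neq-(n-2)$, the value $a=-(n-1)$ gets a separate induction with base case $A(2,-1)$. You instead decompose $\delta$ into weight components for $W=\Gamma-a\mathcal{E}$ (legitimate, since the bracket is $W$-homogeneous of weight $-1$ and the Poisson defect is a biderivation, so each component is again a Poisson derivation) and analyse each weight $d$ via the identity among the four monomials $X_{i+d-1}X_j$, $X_{i-1}X_{j+d}$, $X_iX_{j+d-1}$, $X_{i+d}X_{j-1}$, whose coefficients I checked agree with yours. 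Your approach is non-inductive and uniform in $a$, which is attractive; the price is that the headline claims of your case analysis are not literally true as stated: for instance with $n=2$, $d=1$, $j=1$ the monomial $X_{i-1}X_{j+d}$ collides with another of the four for \emph{every} admissible $i$, so no single relation isolates $d\,c_j$, and for $d=-1$, $n=2$ the single-pair relations only yield $(a+2)(c_2-c_1)=0$ and $a(c_2-c_1)=0$, which must be combined to conclude. You flag exactly this and propose the correct remedy (feed in the relations from several pairs, e.g.\ both $\{X_0,X_i\}$ and $\{X_1,X_i\}$); I verified that the degenerate cases ($n=2$, small $|d|$, and the integer values of $a$ where a factor $a+j$ vanishes) do close up this way, so the route is viable, but that multi-relation patch is the substantive remaining work rather than mere bookkeeping, and it is precisely where the hypotheses $n\geqslant 2$ or ($n=1$, $a\neq0$) are consumed. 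Your side remark for $a\neq0$ — that $\delta$ commutes with $\Delta$ on $A_1$, hence lies in $\kk[\Delta]$ because $\Delta$ is regular nilpotent — is a clean shortcut the paper does not use. The easy inclusion and the final statement on locally nilpotent derivations are fine.
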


\begin{proof} We proceed by induction on $n \geqslant1$. 
First assume that $a\neq-(n-1)$.
Let $n=1$ (so that $a\neq0$) and $\delta\in{\rm P.Der}_{\rm gr}(A)$.
By Lemma \ref{xo} we know that $\delta(X_0)=\lambda X_0$ for a scalar $\lambda$. 
If $\delta(X_1)=\mu X_0+\nu X_1$ for some scalars $\mu,\nu\in\kk$, we have 
$\delta(\{X_0,X_1\})=-2a\lambda X_0^2$ and $\{\delta(X_0),X_1\}+\{X_0,\delta(X_1)\}=-2a(\lambda+\nu)X_0^2$ so that $\lambda=\nu$. 
The result is proved since we have $\delta=\lambda\mathcal{E}+\mu\Delta$.
%&\delta(\{X_0,X_1\})=\delta(-aX_0^2)=-2auX_0^2-2avX_0X_1,\\
%&\{\delta(X_0),X_1\}+\{X_0,\delta(X_1)\}=-a(u+t)X_0^2.
%\end{align*}
%Therefore $u=t$ and $v=0$ and the result is proved since we have $\delta=u\mathcal{E}+w\Delta$.

Let $n>1$ and assume true the statement of the theorem for all algebras $A(n-1,b)$ with $b\neq-(n-2)$.
Thanks to Lemma \ref{xo} there exists $\lambda\in\kk$ such that $\delta(X_0)=\lambda X_0\in\langle X_0\rangle$.
Therefore $\delta$ induces a graded Poisson derivation of the Poisson algebra $A/\langle X_0\rangle\cong A(n-1,a+1)$, see \cite[Proposition 3.7(1)]{LS} for the isomorphism.
Since $a+1\neq-(n-2)$ we can apply the induction hypothesis to get that
%the induction hypothesis 
there exist $\mu,\mu',\nu_i\in\kk$ such that %$\overline{\delta}(\overline{X_i})=\mu \overline{X_i}+\mu' \overline{X_{i-1}}$ for all $i\in\{2,\dots,n\}$ and $\overline{\delta}(\overline{X_1})=\mu \overline{X_1}$.
%Hence there exists $\nu_i\in\kk$ such that
\begin{align*}
\delta(X_i)=\left\{
\begin{array}{ll}
\mu X_i+\mu' X_{i-1}+\nu_i X_0  &\qquad 2\leqslant i\leqslant n,      \\
\mu X_1+\nu_1 X_0  &\qquad i=1.
\end{array}
\right.
\end{align*}
To conclude the proof it remains to show that $\mu=\lambda$, that $\nu_1=\mu'$ and that $\nu_i=0$ for all $2\leqslant i\leqslant n$ since then $\delta=\mu\mathcal{E}+\mu'\Delta$.
If $a\neq0$ then by applying $\delta$ to the Poisson brackets $\{X_0,X_1\}$ (resp. $\{X_0,X_2\}$) a straightforward computation yields $\mu=\lambda$ (resp. $\nu_1=\mu'$).
If $a=0$ then by applying $\delta$ to the Poisson brackets $\{X_1,X_2\}$ a straightforward computation yields $\mu=\lambda$ and $\nu_1=\mu'$.
%For $i>2$ we have
%By applying $\delta$ to the Poisson bracket $\{X_0,X_1\}$ (resp. $\{X_0,X_2\}$) a straightforward computation yields $\mu=\lambda$ (resp. $\nu_1=\mu'$).
Let $2\leq i\leqslant n$.
By comparing the coefficient of $X_0^2$ in the relation $\delta(\{X_1,X_i\})= \{\delta(X_1),X_i\}+\{X_1,\delta(X_i)\}$, we obtain that $\nu_i=0$.

It remains to deal with the case $a = -(n-1)$.
We proceed again by induction on $n \geqslant 2$. 
It is only necessary to prove the initialization (for $A(2,-1)$) because the induction step proceeds exactly as in the case $a\neq-(n-1)$.
Direct computations from the Poisson brackets $\{X_0,X_1\}=X_0^2$, $\{X_0,X_2\}=X_0X_1$ and $\{X_1,X_2\}=X_0X_2$ yield $\delta=\mu\mathcal{E}+\mu'\Delta$.
\end{proof}

%\begin{remark}
%We note that this is consistent with the Poisson automorphisms group given in \cite[Proposition 4.10]{LS}. 
%%See \cite[Theorem 4.9]{LS} for the noncommutative case. 
%\end{remark}

\subsubsection{When $\Delta$ is not necessarily maximal}
\label{deltablock}

We now assume that $(\Delta,\Gamma)$ is a diagonalizable linear solvable pair but we do not assume that $\Delta$ is a maximal Jordan block.
This situation is more complicated as the following example illustrates.

\begin{example}
\label{smallex}
Let $\Delta$ be of Jordan type $(2,2)$ and $\Gamma={\rm diag}(a,a+1,b,b+1)$. 
We obtain the Poisson brackets
\begin{align*}
&\{X_0,X_1\}=-aX_0^2,\quad \{X_0,X_2\}=0,\quad \{X_0,X_3\}=-aX_0X_2\\
&\{X_1,X_2\}=bX_0X_2,\quad \{X_1,X_3\}=(b+1)X_0X_3-(a+1)X_1X_2, \quad \{X_2,X_3\}=-bX_2^2.
\end{align*}
It is easy to verify that ${\rm P.Der}_{\rm gr}(A_{(\Delta,\Gamma)})=\kk\Delta\oplus\kk\mathcal{E}_{01}\oplus\kk\mathcal{E}_{23}$, where $\mathcal{E}_{01}$ is the derivation sending $X_0$ to $X_0$ and $X_1$ to $X_1$ and both $X_2,X_3$ to $0$ (a similar definition applies to $\mathcal{E}_{23}$).
Therefore $\dim{\rm P.Der}_{\rm gr}(A_{(\Delta,\Gamma)})=3$ which is $1$ more the number of blocks
of $\Delta$.
%These computations may be used to generalize a bit~Proposition~\ref{isotensor}: 
%$A_{(\Delta,\Gamma)}$ may not be isomorphic to $A(1,c)\otimes A(1,d)$, $A(2,c)\otimes A(0,d)$ or $A(0,c)\otimes A(2,d)$
%nor to $A(3,c)$, $A(1,c)\otimes A(0,d)\otimes A(0,e)$ and $A(0,c)\otimes A(0,d)\otimes A(0,e)\otimes A(0,f)$.
\end{example}

\begin{definition}\label{dfn-generic} 
Let $(\Delta,\Gamma)$ be a linear solvable pair.
We say that $(\Delta,\Gamma)$ is generic or that $\Gamma$ is generic if all the eigenvalues of $\Gamma$ acting on $A_1$ are distinct and nonzero.
In particular a generic linear solvable pair is a diagonalizable linear solvable pair. 
\end{definition}

Recall from Definition \ref{Annaa} that without loss of generality we have $A_{(\Delta,\Gamma)}=A(\overline{n},\overline{a})$ where the derivation $\Delta$ can be chosen to be in canonical Jordan from (with $n_1 \geqslant n_2 \geqslant \cdots  \geqslant n_r$ denoting the size of the blocks of $\Delta$) and that $\Gamma$ can be chosen diagonal (with the notation of~Remark~\ref{rem-pair-diago} for its eigenvalues).

\begin{theorem}
\label{derp} Assume that $(\Delta,\Gamma)$ is a generic linear solvable pair with $r \geqslant 2$.
\begin{enumerate}[{\rm(1)}]
\item If $n_1=1$ (i.e $\Delta=0$), then ${\rm P.Der}_{\rm gr}A(\overline{n},\overline{a})=\mathfrak{gl}_{n+1}(\kk)$.

\item If $\overline{n}=(2,1,\ldots,1)$, then ${\rm P.Der}_{\rm gr}A(\overline{n},\overline{a})$ is the $2n$-dimensional solvable non nilpotent Lie algebra $\mathscr{L}$ given by 
\[\mathscr{L}= \left\{\delta_{u,v,c_2,\ldots,c_n,d_2,\ldots,d_n}:=\left(\begin{array}{cc@{\ }c@{}c@{}c}  u  & v &&& \\ 0 & u & &&  \\ 
0&c_2&d_2&& \\[-1.1ex] \vdots& \vdots&&\ddots& \\[-1.1ex]0 &c_n&&& d_n \end{array}\right),
\quad u,v,c_2,\ldots, c_n, u_2,\ldots,u_n \in \kk \right\}\]
Moreover, we have $Z(\mathscr{L})=\kk {\rm id} \oplus \kk \Delta$ and 
$[\mathscr{L},\mathscr{L}]=\{\delta_{0,0,c_2,\ldots,c_n,0,\ldots,0}, c_2,\ldots,c_n \in \kk\}$. 
The ascending and descending central series of $\mathscr{L}$ are stationary from rank $1$. 

\item Otherwise ${\rm P.Der}_{\rm gr}A(\overline{n},\overline{a})$ is the $(r+1)$-dimensional abelian Lie algebra given by
\[\kk \Delta \oplus \bigoplus_{i=1}^{r} \kk \mathcal{E}_i\] 
where $\mathcal{E}_i$ is the block diagonal matrix where the diagonal block are $0$ except the $i^\textrm{th}$ which is $\id_{n_i}$.
\end{enumerate}

In particular, up to a scalar factor, $\Delta$ is the only locally nilpotent derivation in the center of 
${\rm P.Der}_{\rm gr}A$, except when $\Delta=0$. 
\end{theorem}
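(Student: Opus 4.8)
The plan is to compute ${\rm P.Der}_{\rm gr}A$ explicitly and then read off the center. By Lemma~\ref{iso} and Proposition~\ref{prop-diag-pair} I may assume $\Delta$ is in canonical Jordan form and $\Gamma=\diag(\lambda_0,\dots,\lambda_n)$ has the block pattern of Remark~\ref{rem-pair-diago}; genericity means the weights $\lambda_\ell$ are pairwise distinct and nonzero. A linear derivation is a matrix on $A_1$, and since $A$ is generated in degree $1$, such a $\delta$ is a Poisson derivation as soon as $\delta\{X_i,X_j\}=\{\delta X_i,X_j\}+\{X_i,\delta X_j\}$ holds on generators. Case~(1) is immediate: if $\Delta=0$ the bracket \eqref{pbdg} vanishes identically, $A$ is Poisson commutative, every matrix qualifies, and ${\rm P.Der}_{\rm gr}A=\mathfrak{gl}_{n+1}(\kk)$.

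The engine for $\Delta\neq 0$ is a weight argument. From Remark~\ref{rem-pair-diago} the bracket $\{X_i,X_j\}$ is $\Gamma$-homogeneous of weight $\lambda_i+\lambda_j-1$, since $\Delta$ drops $\Gamma$-weight by one. Writing $\delta=\sum_w\delta_w$ for the decomposition into $\Gamma$-weight-homogeneous pieces, where $\delta_w$ raises weight by $w$, both sides of the Poisson-derivation identity applied to $\delta_w$ lie in the single weight $\lambda_i+\lambda_j-1+w$. The identity therefore splits over $w$, and $\delta$ is a Poisson derivation if and only if each $\delta_w$ is. Because the $\lambda_\ell$ are distinct, $\delta_w$ must send each $X_\ell$ to a scalar multiple of the unique basis vector of weight $\lambda_\ell+w$ (or to $0$): each homogeneous piece is a partial matching of indices, and the whole problem reduces to a weight-by-weight comparison of coefficients of quadratic monomials.

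I would then run through the weights. For $w=0$, $\delta_0=\diag(c_\ell)$, and the identity forces $c_\ell$ to be constant along each Jordan block of $\Delta$, giving $\bigoplus_{i=1}^r\kk\mathcal{E}_i$. The derivation $\Delta$ is the weight $-1$ piece; comparing coefficients shows that when two blocks have size $\geqslant 2$ the within-block scalings are forced equal, and the same comparison rules out cross-block weight $-1$ maps, so the weight $-1$ Poisson derivations reduce to $\kk\Delta$. The crucial dichotomy occurs for the remaining weights. When $\ol{n}=(2,1,\dots,1)$, writing block~$1$ as $X_0,X_1$ with $\Delta X_1=X_0$ and the size-one blocks as $X_2,\dots,X_n$, each rank-one map $X_1\mapsto X_j$ ($2\leqslant j\leqslant n$) is a Poisson derivation, precisely because every generator outside block~$1$ is annihilated by $\Delta$ so that the potentially obstructing monomials of the form $\Delta(Y)X_1$ never occur; these together with $\kk\mathcal{E}\oplus\kk\Delta$ and the block idempotents assemble into the $2n$-dimensional matrix algebra $\mathscr{L}$, whose bracket, center $Z(\mathscr{L})=\kk\id\oplus\kk\Delta$ and derived algebra are a direct check. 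In every other configuration (some $n_k\geqslant 3$, or at least two blocks of size $\geqslant 2$) the same coefficient comparison shows these off-diagonal pieces are obstructed, leaving the $(r+1)$-dimensional abelian algebra $\kk\Delta\oplus\bigoplus_i\kk\mathcal{E}_i$.

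The final ``in particular'' follows by inspecting the center, using that a graded derivation is locally nilpotent exactly when it is nilpotent as an endomorphism of $A_1$. In case~(2) a central element $\alpha\id+\beta\Delta$ acts on $A_1$ as $\alpha I+\beta N$ with $N$ nilpotent, hence is nilpotent iff $\alpha=0$; in case~(3) the algebra is abelian and equal to its center, and $\beta\Delta+\sum_i\gamma_i\mathcal{E}_i$ has eigenvalues the $\gamma_i$, hence is nilpotent iff all $\gamma_i=0$. Either way the locally nilpotent central derivations are exactly $\kk\Delta$, as claimed. I expect the main obstacle to be the off-diagonal analysis for $w\neq 0$: organizing the quadratic coefficient comparison so as to pin down exactly the exceptional Jordan type $(2,1,\dots,1)$, where keeping track of which monomials survive — a matter controlled entirely by the genericity of the weights — is the delicate part.
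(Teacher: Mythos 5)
Your proposal is correct in outline but organizes the computation quite differently from the paper. The paper's proof first establishes a ``block stabilization'' claim (if $n_i\geqslant 2$ then $\delta$ preserves every block except possibly the $i$-th, by comparing coefficients of $X_0X_\ell$ in $\delta(\{X_1,X_j\})=\{\delta(X_1),X_j\}+\{X_1,\delta(X_j)\}$), then splits into the cases ``at least two blocks of size $\geqslant 2$'' and ``exactly one block of size $\geqslant 2$'', and in each case reduces to the single-maximal-block result (Theorem~\ref{poisdermax}, itself proved by induction on $n$ via the quotient by $\langle X_0\rangle$); the scalars $v_i$ multiplying $\Delta|_{B_i}$ are then glued together by one more coefficient comparison. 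Your $\Gamma$-weight decomposition is a genuinely different engine: it is legitimate (the bracket is indeed $\Gamma$-homogeneous of degree $-1$ and, since the eigenvalues are distinct, each weight space of $A_1$ is a line, so the Poisson-derivation identity splits weight by weight), it bypasses Theorem~\ref{poisdermax} and the induction entirely, and it makes the exceptional role of $(2,1,\ldots,1)$ visible as the unique Jordan type for which some off-diagonal rank-one maps survive. What it costs is a larger, flatter case analysis: the conditions decouple only by weight, not entry by entry, and each candidate map must be tested against \emph{all} brackets. For instance, in case (2) the map $X_0\mapsto X_j$ passes the test on $\{X_0,X_1\}$ whenever $\lambda_j=2\lambda_0$ (which genericity does not forbid) and is killed only by the brackets $\{X_1,X_k\}=\lambda_kX_0X_k$ together with $\lambda_k\neq 0$; so the ``obstruction'' computations you defer are where the real work lies, exactly as you anticipate. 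Your treatment of case (1), the structure constants of $\mathscr{L}$, and the final ``in particular'' (locally nilpotent $=$ nilpotent on $A_1$ for linear derivations) are all sound.
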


\begin{proof} (1). The case where $\Delta=0$ is trivial. 

(2) and (3). Let $\delta \in {\rm P.Der}_{\rm gr}A(\overline{n},\overline{a})$.
We first prove the following claim: if $n_i  \geqslant 2$ for some $i$, then $\delta$ stabilizes all the blocks except possibly the $i^{\rm th}$. 

It is enough to consider the case $i=1$, i.e. $n_1 \geqslant 2$.
It clearly suffices to consider the case of the second block.
We proceed by induction on $j \in \{n_1,\ldots, n_1+n_2-1\}$ to prove that $\delta(X_j) \in \bigoplus_{j=n_1}^{n_1+n_2-1} \kk X_i$. 
It suffices, for $n_1 \leqslant j < n_1+n_2$, to compare the coefficients of $X_0X_\ell$ for $\ell \neq n_1$ if $j=n_1$ and $\ell \not \in \{n_1,\ldots, n_1+n_2-1\}$ otherwise in the relation $\delta(\{X_1,X_j\})=\{\delta(X_1),X_{n_1}\} + \{X_1,\delta(X_{n_1})\}$.

We now split the argument in two cases:
\begin{enumerate}[{\rm (i)}]
\item there are at least two blocks of size larger or equal to $2$,
\item $n_i=1$ for all $i>1$ and $n_1 \geqslant2$.
%\item $n_i=1$ for $i>1$ and $n_1=2$.
\end{enumerate}
%If there are at least two blocks of sizes larger or equal to $2$, then
In case (i) by applying the claim to two different blocks of size larger or equal than $2$ we see that $\delta$ must stabilize every blocks.
In particular $\delta$ induces a Poisson derivation on each Poisson subalgebra $B_i$ generated by the variables $X_j$ corresponding to a block.
It is clear that $B_i\cong A(n_i-1,a_i)$ for each $1\leqslant i\leqslant r$, so that the induced derivation has a form $\delta|_{B_i}=u{\mathcal{E}}_{n_i}+v\Delta|_{B_i}$ thanks to Theorem \ref{poisdermax}.
Hence $\delta=\sum_{i=1}^r u_i{\mathcal{E}}_{n_i}+v_i\Delta|_{B_i}$ for some $u_i,v_i\in\kk$.
We will show that the $v_i$ are all equal, thus proving assertion (3) in case (i).
First note that there are nothing to check if the corresponding block is of size $1$ since in that case $\Delta_{|B_i}$ is the zero map.
Now if $n_i \geqslant2$ then we compare the coefficients of $X_0X_{n_i}$ in the relation 
$\delta(\{X_1, X_{n_i+1}\})= \{\delta(X_1),X_{n_i+1}\}+\{X_1,\delta(X_{n_i+1})\}$ to obtain that $v_i=v_1$, as desired.

In case (ii) we first show that $\delta(X_i)\in\bigoplus_{j=0}^{n_1-1} \kk X_j$ for all $0\leqslant i\leqslant n_1-2$.
For let $0<i< n_1\leqslant \ell\leqslant n$.
We obtain the result by identifying the coefficients of $X_{k}X_\ell$ for all $k \geqslant n_1-1$ in the relation $\delta(\{X_i,X_\ell\}) = \{\delta(X_i),X_\ell\}+\{X_i,\delta(X_\ell)\}$, using that $\Delta(X_\ell)=0$ and $\delta(X_\ell)=u_\ell X_\ell$ for some $u_\ell \in \kk$.

When $n_1=2$ we conclude by inspection of the Poisson bracket $\{X_0,X_1\}$ to obtain the form for $\delta$ given in (2).
The final assertions of (2) follow from simple computations.

When $n_1 \geqslant3$ we can moreover prove that $\delta(X_{n_1-1})\in\bigoplus_{j=0}^{n_1-1} \kk X_j$ 
by comparing the coefficients of $X_0X_k$ for $k \geqslant n_1$ in the relation $\delta(\{X_1,X_{n_1-1}\})= \{\delta(X_1),X_{n_1-1}\}+\{X_1,\delta(X_{n_1-1})\}$
and using the fact that $\delta$ sends the Poisson subalgebra generated by $X_0,\dots,X_{n_1-2}$ into the Poisson subalgebra generated by $X_0,\dots,X_{n_1-1}$. 
In conclusion, $\delta$ stabilizes the subalgebra generated by $X_0,\dots,X_{n_1-1}$ which is isomorphic to $A(n_1-1,a_1)$ and the result follows from Theorem \ref{poisdermax}. 
This concludes the proof of (3).
%%% cas à étudier : bloc (3,1) non générique, bloc 3,2 non générique. 
\end{proof}

Recall from Theorem~\ref{thm-isom} that the derivation $\Delta$ can be recovered from the Poisson structure of $A$.
The Theorem \ref{derp} is a mean to do so in practice.
More precisely we have the following corollary.

\begin{corollary} Let $(\Delta,\Gamma)$ is a diagonalizable linear solvable pair with $\Gamma$ generic and $\Delta \neq 0$. 
Then $\Delta$ may be recovered from the algebraic structure of $A$ since it is, up to a scalar, the only locally nilpotent derivation contained in the center of ${\rm P.Der}_{\rm gr}A$.
\end{corollary}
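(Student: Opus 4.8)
The plan is to deduce the statement directly from Theorem~\ref{derp}, supplemented by Theorem~\ref{poisdermax} for the maximal block, after recording the elementary observation that for a linear (degree-zero) derivation $\delta$ of $A$, local nilpotency on $A$ is equivalent to nilpotency of the matrix of $\delta$ acting on $A_1$. Indeed $A_1$ is $\delta$-stable and finite dimensional, so if $\delta$ is locally nilpotent on $A$ then its restriction to $A_1$ is nilpotent; conversely, if $\delta|_{A_1}$ is nilpotent then the induced derivation on each symmetric power $A_k$ is nilpotent, whence $\delta$ is locally nilpotent on $A=\bigoplus_k A_k$. Thus identifying the locally nilpotent derivations inside a given subspace of ${\rm P.Der}_{\rm gr}A$ amounts to identifying the nilpotent matrices in it.

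First I would dispose of the case $r=1$, that is $\Delta$ a maximal Jordan block, which is not covered by Theorem~\ref{derp}. Since $\Gamma$ is generic its eigenvalues are nonzero, so in the normalisation $A=A(n,a)$ we have $a\neq 0$, and Theorem~\ref{poisdermax} applies to give ${\rm P.Der}_{\rm gr}A=\kk\mathcal{E}\oplus\kk\Delta$. This Lie algebra is abelian, the Euler derivation $\mathcal{E}$ being a scalar matrix, hence it equals its own center. An element $u\mathcal{E}+v\Delta$ has matrix with constant diagonal $u$ and strictly upper triangular part $v\Delta$, so it is nilpotent if and only if $u=0$; the locally nilpotent elements are therefore exactly $\kk\Delta$.

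For $r\geqslant 2$ and $\Delta\neq 0$ (so $n_1\geqslant 2$, excluding case~(1) of Theorem~\ref{derp}) I would treat the remaining two cases in turn. In case~(3) the Lie algebra $\kk\Delta\oplus\bigoplus_{i=1}^r\kk\mathcal{E}_i$ is abelian, hence equal to its center; a general element $v\Delta+\sum_{i=1}^r u_i\mathcal{E}_i$ is upper triangular with diagonal equal to $u_i$ repeated $n_i$ times and strictly upper triangular part $v\Delta$, so it is nilpotent precisely when all $u_i=0$, giving again $\kk\Delta$. In case~(2), Theorem~\ref{derp}(2) already identifies the center as $Z(\mathscr{L})=\kk\,{\rm id}\oplus\kk\Delta$, and the same diagonal argument shows $u\,{\rm id}+v\Delta$ is nilpotent if and only if $u=0$, so its nilpotent elements form $\kk\Delta$. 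In every case the locally nilpotent derivations lying in the center of ${\rm P.Der}_{\rm gr}A$ are exactly the scalar multiples of $\Delta$.

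There is no serious obstacle here: the corollary is a bookkeeping consequence of the classification in Theorem~\ref{derp}. The only points requiring care are, first, remembering to handle $r=1$ separately via Theorem~\ref{poisdermax}, since Theorem~\ref{derp} assumes $r\geqslant 2$; and second, checking in each case that intersecting the center with the nilpotent locus annihilates exactly the semisimple directions $\mathcal{E}$, $\mathcal{E}_i$, ${\rm id}$ while retaining $\Delta$ — which is immediate from the upper triangular shape of the matrices, the semisimple part being the diagonal and $v\Delta$ contributing nothing to it.
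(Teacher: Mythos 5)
Your proposal is correct and follows essentially the same route as the paper, which derives the corollary directly from the final assertion of Theorem~\ref{derp} (with nilpotency of the matrix on $A_1$ standing in for local nilpotency, as the paper notes at the start of Section~\ref{sec-linear}). Your explicit treatment of the case $r=1$ via Theorem~\ref{poisdermax} is a welcome piece of care, since Theorem~\ref{derp} assumes $r\geqslant 2$ while the corollary does not; the paper leaves this reduction implicit, and your observation that genericity forces $a\neq 0$ so that Theorem~\ref{poisdermax} applies even when $n=1$ closes that gap cleanly.
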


%%% faire référence à l'exemple 2\times 2 dans le cas général pour le cas dégénéré sur la dimension des algèbres de Lie. 

\subsubsection{A remark for non necessarily diagonal $\Gamma$}
\label{nondiaggam}
In general the commutant of $\Delta$ is not necessarily made of blocks corresponding to commutant of the Jordan blocks of $\Delta$. 
We illustrate this fact in the case where $\Delta$ has Jordan blocks of size $(2,1)$.
Then the general form for $\Gamma$ is
\[\begin{pmatrix}
a & b & c \\
0 & a+1 & 0 \\
0 & d & e
\end{pmatrix}
\]
In $A_{(\Delta,\Gamma)}$ and $R_{(\Delta,\Gamma)}$ we have
\begin{alignat*}{2}
\{X_0,X_1\}&=-aX_0^2,       &    [X_0,X_1]&=-aX_0\ast X_0, \\
\{X_0,X_2\}&=0,             &    [X_0,X_2]&=0, \\
\{X_1,X_2\}&=eX_0X_2+cX_0^2, &\qquad\qquad [X_1,X_2]&=eX_0\ast X_2+cX_0\ast X_0.
\end{alignat*}
Since both $X_0$ and $X_2$ are in the kernel of $\Delta$ we remark that $\Gamma(X_1)$ does not appear in the formulae of the Poisson bracket and of the product $\ast$.
Therefore, without loss of generality we can assume that $b=d=0$.
In particular $\Gamma$ is triangular and its eigenvalues are $a,a+1$ and $e$.
\begin{itemize}
\item[Case 1.] If $e\neq a$ and $e\neq a+1$ then $\Gamma$ is diagonalizable and one can assume $c=0$ by Proposition \ref{prop-diag-pair}.
\item[Case 2.] Assume that $e=a+1$.
The solvable pair $(\Delta,\Gamma)$ is conjugate to the diagonal solvable pair $(\Delta,\mbox{diag}(a,a+1,a+1))$ via the automorphism of $A$ fixing $X_0$ and $X_1$ and sending $X_2$ to $X_2+cX_0$.
Again we can assume $\Gamma$ diagonalizable.
%By setting $X_2'=X_2+cX_0$ we have
%\begin{alignat*}{2}
%\{X_0,X_1\}&=-aX_0^2,       &    [X_0,X_1]&=-aX_0\ast X_0, \\
%\{X_0,X_2'\}&=0,            &    [X_0,X_2']&=0, \\
%\{X_1,X_2'\}&=(a+1)X_0X_2', &\qquad\qquad [X_1,X_2']&=(a+1)X_0\ast X_2'.
%\end{alignat*}
%Therefore the pair $(\Delta,\Gamma)$ is conjugate to the diagonal solvable pair $(\Delta,\mbox{diag}(a,a+1,a+1))$, i.e. we can assume $\Gamma$ diagonalizable.
\item[Case 3.] Assume then that $e=a$.
If $c\neq0$ then $\Gamma$ is not diagonalizable so that the solvable pair $(\Delta,\Gamma)$ is not diagonalizable (Remark \ref{diagA1daigA}).
Note that the case $c\neq0$ is isomorphic to the case $c=1$ via the change of variable $X_1'=\frac{1}{c}X_1$.
%Then we have
%\[\{X_0,X_1'\}=-a'X_0^2, \quad \{X_0,X_2\}=0,\quad \{X_1',X_2\}=a'X_0X_2+X_0^{2}\]
%and
%\[[X_0,X_1']_\ast=-a'X_0\ast X_0, \quad [X_0,X_2]_\ast=0,\quad [X_1',X_2]_\ast=a'X_0\ast X_2+X_0\ast X_0\]
%where $a'=a/c\in\kk$ so that we can realize this bracket and product via the solvable pair $(\Delta,\Gamma)$ with $c=1$.
\end{itemize}

As a corollary of the following lemma we obtain that Case 3 above with $c\neq0$ cannot be realized by a solvable diagonalizable pair $(\Delta',\Gamma')$.
Recall that the Lie algebra of linear Poisson derivations and its  subset of locally nilpotent derivations is an invariant of homogeneous Poisson polynomial algebras by Proposition \ref{isoda}.

\begin{lemma}
Consider the Poisson algebra $A=A_{(\Delta,\Gamma)}$ from Case 3.
If $a\neq0$ then the space of linear Poisson derivations is of dimension $4$ 
\[
\Pder_{\rm{gr}}(A)=\left\{
\begin{pmatrix}
u_{00} & u_{01} & u_{02} \\
 0     & u_{00} & 0      \\
 0     & u_{21} & u_{00} 
\end{pmatrix} 
\right\}
%={\textrm{Vect}}(\mathcal{E},\Delta,D_{02},D_{21})
\]
In particular $\Pder_{\rm{gr}}(A)$ contains a $3$-dimensional subspace of locally nilpotent derivations. 
If $a=0$ then the space of linear Poisson derivations is of dimension $6$
\[
\Pder_{\rm{gr}}(A)=\left\{
\begin{pmatrix}
u_{00} & u_{01} & u_{02} \\
 0     & u_{11} & u_{12}      \\
 0     & u_{21} & 2u_{00}-u_{11} 
\end{pmatrix} 
\right\}
%=\rm{Vect}(E_{02},E_{11},\Delta,D_{02},D_{21},D_{12})
\]
%Moreover 
%\[Z(\Pder_{\rm{gr}}(A))=
%\left\{\begin{array}{ll}
%\rm{Vect}(\mathcal{E},\Delta) \qquad & a\neq 0 \\
%\rm{Vect}(\mathcal{E})               & a=0
%\end{array}\right.
%\]
\end{lemma}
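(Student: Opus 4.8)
The plan is to exploit the fact that a linear Poisson derivation is completely encoded by its matrix on $A_1$ and that the Poisson-derivation identity only needs to be tested on generators. Write $\delta\in\Pder_{\rm gr}(A)$ as $\delta(X_j)=\sum_{i=0}^{2}u_{ij}X_i$, so that $\delta$ corresponds to the matrix $U=(u_{ij})_{0\leqslant i,j\leqslant 2}$, and set $B(f,g):=\delta(\{f,g\})-\{\delta(f),g\}-\{f,\delta(g)\}$. Since $\delta$ is a derivation of the commutative algebra $A$ and $\{-,-\}$ is a biderivation satisfying the Leibniz rule, a direct computation shows that $f\mapsto B(f,g)$ and $g\mapsto B(f,g)$ are derivations; hence $B$ is a biderivation and vanishes identically as soon as it vanishes on the pairs $(X_0,X_1)$, $(X_0,X_2)$, $(X_1,X_2)$. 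Thus $\delta$ is a Poisson derivation if and only if the three identities $\delta(\{X_i,X_j\})=\{\delta(X_i),X_j\}+\{X_i,\delta(X_j)\}$ hold for these three pairs.

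First I would expand each of these three identities using the explicit brackets of Case~3 (with $e=a$ and $b=d=0$), namely $\{X_0,X_1\}=-aX_0^2$, $\{X_0,X_2\}=0$ and $\{X_1,X_2\}=aX_0X_2+cX_0^2$, together with the Leibniz rule $\delta(X_iX_j)=\delta(X_i)X_j+X_i\delta(X_j)$ and the bilinearity of the bracket. Each identity is then an equality between homogeneous polynomials of degree $2$, and comparing the coefficients of the monomials $X_0^2,X_0X_1,X_0X_2,X_1X_2,X_2^2$ turns it into a handful of linear equations in the nine unknowns $u_{ij}$. Every such equation carries a factor $a$ or a factor $c$; for instance the coefficient of $X_0X_1$ in the first identity reads $a\,u_{10}=0$ and that of $X_0X_2$ reads $a\,u_{20}=0$, while the coefficient of $X_0^2$ in the third identity reads $c(2u_{00}-u_{11}-u_{22})=0$.

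The decisive point is then the case distinction on $a$, recalling that $c\neq 0$ throughout. When $a\neq 0$ the equations carrying an $a$ factor force $u_{10}=u_{20}=u_{12}=0$ and $u_{11}=u_{22}=u_{00}$, leaving $u_{00},u_{01},u_{02},u_{21}$ free and yielding the announced $4$-dimensional space. When $a=0$ all these $a$-equations become vacuous and only the $c$-equations survive, imposing $u_{10}=u_{20}=0$ and $u_{22}=2u_{00}-u_{11}$, which leaves the six free parameters $u_{00},u_{01},u_{02},u_{11},u_{12},u_{21}$ and gives dimension $6$. Finally, in the case $a\neq 0$, every matrix of the displayed form has all three diagonal entries equal to $u_{00}$, so its only eigenvalue is $u_{00}$ and it is nilpotent precisely when $u_{00}=0$; a direct check (the cube vanishes) confirms that the subspace $u_{00}=0$ consists of locally nilpotent derivations and is $3$-dimensional. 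The computation is elementary, and the only place demanding care is the bookkeeping of the coefficient comparison and keeping track of which equations degenerate when $a=0$, so that the two solution spaces are not conflated.
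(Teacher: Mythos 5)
Your proposal is correct: reducing the Poisson-derivation condition to the three generator pairs via the biderivation $B$, expanding with the Case~3 brackets $\{X_0,X_1\}=-aX_0^2$, $\{X_0,X_2\}=0$, $\{X_1,X_2\}=aX_0X_2+cX_0^2$ ($c\neq0$), and comparing coefficients does yield exactly the systems $au_{10}=au_{20}=0$, $cu_{10}-au_{12}=0$, $a(u_{11}-u_{00})+cu_{20}=0$, $c(2u_{00}-u_{11}-u_{22})=0$, whose solution spaces match the two displayed matrices, and the nilpotency discussion for $u_{00}=0$ is also right. The paper explicitly omits this "computational proof", and your coefficient-comparison argument is precisely the routine verification the authors had in mind.
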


%In this Lie algebra $E$ and $\Delta$ are central and we have $[D_{02},D_{21}]=\Delta$, thus $Z(\mbox{P.der}_{\rm{gr}}(A))=\mbox{Vect}(E,\Delta)$ and we recover $\Delta$ up to a nonzero scalar as the only nonzero locally nilpotent derivation of $Z(\mbox{P.der}_{\mbox{gr}}(A))$. 
%However $\Delta$ is not anymore central (for instance $[E_{02},\Delta]=\Delta$).
\begin{proof}
The computational proof is omitted. 
\end{proof}

Assume now that $A_{(\Delta,\Gamma)}$ of case $(3)$ is realized by the solvable diagonalizable pair $(\Delta',\Gamma')$.
Then by Theorem~\ref{poisdermax}, the Jordan type of $\Delta'$ is $(2,1)$, since otherwise $\mbox{P.der}_{\rm{gr}}(A)$ is of dimension $2$ (maximal block) or $9$ (Poisson commutative). 
Set $\Gamma'=\diag(\alpha,\alpha+1,\varepsilon)$, with $\alpha \neq 0$ or $\varepsilon \neq0$ since otherwise $A$ would be Poisson commutative. 
If $\alpha \neq \varepsilon$ we compute that
\[
\Pder_{\rm{gr}}(A)=\left\{
\begin{pmatrix}
u_{00} & u_{01} & 0 \\
 0     & u_{00} & 0 \\
 0     & u_{21} & u_{22} 
\end{pmatrix} 
\right\}
\]
is $4$-dimensional, but the subspace of locally nilpotent Poisson derivation is $2$-dimensional.
This cannot happen by the lemma.
If $\alpha = \varepsilon$ then we compute that
\[
\Pder_{\rm{gr}}(A)=\left\{
\begin{pmatrix}
u_{00} & u_{01} & u_{02} \\
 0     & u_{00} & 0 \\
 0     & u_{21} & u_{22} 
\end{pmatrix} 
\right\}
\]
is of dimension $5$.
This cannot happen by the lemma.
%So $A_{(\Delta,\Gamma)}$ cannot be realized by a solvable solvable diagonalizable pair $(\Delta',\Gamma')$.

\begin{rem}
The fact that the case $\Gamma=\begin{pmatrix}
a & 0 & 0 \\
0 & a+1 & 0 \\
0 & d & a+1
\end{pmatrix}$
with $d\neq0$ can be realized by a diagonal solvable pair is a consequence of the fact that $\Gamma(X_1)$ never appears in the formulae. 
This is because all the other variables are in the kernel of $\Delta$.
Therefore we should expect more non diagonalizable cases when the rank of $\Delta$ is a least $2$.
\end{rem}

\section{Diagonalizable linear solvable pair: the algebra $R_{(\Delta,\Gamma)}$}\label{sec-diago-R}

The aim of this section is to give a presentation for the algebra $R$ in the case of a diagonalizable linear solvable pair $(\Delta,\Gamma)$.
According to Proposition~\ref{prop-diag-pair} we can choose a basis $(X_0,\ldots,X_n)$ of $A_1$ such that $\Delta$ is in canonical Jordan form (we denote its Jordan type by $\overline{n}=(n_1,\dots,n_r)$  with $n_1 \geqslant n_2 \geqslant\cdots \geqslant n_r$ and $n+1=\sum_{i=1}^r n_i$) and $\Gamma$ is diagonal of the form $\Gamma =\diag(a_1,\ldots, a_1 + n_1 - 1, a_2,\ldots, a_2 + n_2 -1,\ldots, a_r, \ldots, a_r+n_r-1)$.

%and moreover that $\Gamma$ is diagonal\footnote{We note that triangular is still OK to define the $d$-grading 
%and the $\varepsilon$-filtration we will use latter on.}.
%More precisely $\Delta$ can be taken in Jordan normal form.
%Lets denote its type by $\overline{n}=(n_1,\dots,n_r)$ with $n_1 \geqslant n_2 \geqslant\cdots \geqslant n_r$ and $n+1=\sum_{i=1}^r n_i$.
%
%We assume that $\Gamma$ has a block decomposition adapted to the one of $\Delta$.
%Then, without loss of generalities, $\Gamma$ is a diagonal matrix of the form: 
%\[\Gamma =\diag(a_1,\ldots, a_1+n_1 -1, a_2,\ldots, a_2+ n_2 -1,\ldots, a_r, \ldots, a_r+n_r-1).\]
%%Let $\overline{a}=(a_1,\ldots,a_r)$, we denote by $R(\overline{n},\overline{a})$ the corresponding $R_{(\Delta,\Gamma)}$.
%%We set $n+1=n_1+ \cdots + n_r$.
For any $k \in \{1,\ldots,r\}$ and $0 \leqslant j< n_k$ we denote by $Y_{j,k}=X_{n_1+\cdots+n_{k-1}+j}$ the $j^\textrm{th}$ variable of the $k^{\textrm{th}}$ block.
Fix $k \in \{1,\ldots,r\}$ and $0 \leqslant j< n_k$.
Thanks to equation (\ref{ast}) we have for any $f\in R$ that 
\[f\ast Y_{j,k}= \sum_{\ell \geqslant 0} \Delta^\ell(f) \binom{\Gamma}{\ell}(Y_{j,k})= \sum_{\ell  \geqslant 0} \Delta^\ell(f) \binom{a_k+j}{\ell}Y_{j,k} =\phi_{a_k+j}(f)Y_{j,k}\,.\]
%\quad\mbox{and}\quad X_j\ast f=X_jf+\sum_{\ell \geqslant1}\Delta^\ell(X_j)\binom{\Gamma}{\ell}(f)\,.
In particular, for any $k' \in \{1,\ldots,r\}$ and $0 \leqslant j'< n_{k'}$ we obtain the relations
\[\phi_{-a_k-j}(Y_{j',k'}) \ast Y_{j,k}=Y_{j',k'}Y_{j,k} = \phi_{-a_{k'}-j'}(Y_{j,k}) \ast Y_{j',k'}\,.\]
These relations may be rewritten as
\begin{equation}
\label{relations}
\sum_{\ell=0}^{j'} \binom{-a_{k}-j}{\ell} Y_{j'-\ell,k'} \ast Y_{j,k}= \sum_{\ell=0}^{j} \binom{-a_{k'}-j'}{\ell} Y_{j-\ell,k} \ast Y_{j',k'}
\end{equation}
and we have the following proposition.

\begin{proposition}\label{prop-presentation} 
The algebra $R$ is given by generators $Y_{j,k}$ for $1 \leqslant k \leqslant r$ and $1 \leqslant j \leqslant n_k$, and the homogeneous relations~(\ref{relations}).
\end{proposition}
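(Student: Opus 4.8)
The plan is to identify $R$ with the algebra presented by the given generators and relations through a Hilbert series comparison, using the weight coming from $\wteps$ to control the associated graded. First I would introduce the algebra $B$ defined by generators $Y_{j,k}$ ($1\leqslant k\leqslant r$, $0\leqslant j<n_k$) subject to the relations~(\ref{relations}). These relations are homogeneous of degree $2$ for the standard grading (each side is a sum of products of two degree-one generators), so $B$ is a connected $\N$-graded algebra. Since the relations~(\ref{relations}) were checked to hold in $R$ and $R$ is generated in degree one by the $Y_{j,k}$, there is a surjective homomorphism of graded algebras $\bar\pi\colon B\to R$. As both are connected graded with $R$ finite-dimensional in each degree, it suffices to prove $\dim_\kk B_d\leqslant\dim_\kk R_d=\binom{n+d}{n}$ for every $d\geqslant0$; combined with the surjectivity of $\bar\pi$ this forces $\dim_\kk B_d=\dim_\kk R_d$, hence that $\bar\pi$ is an isomorphism. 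Recall that $\mathrm{hilb}(R)=1/(1-t)^{n+1}$ from Section~\ref{ssec-AS-regul}.

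Second, I would bound $\dim_\kk B_d$ by means of a weight filtration on $B$. Assign to the generator $Y_{j,k}$ the weight $w(Y_{j,k})=j$ (that is, its $\wteps$-value) and filter $B$ by the total weight of monomials. The crucial computation is that for each relation~(\ref{relations}) the part of highest weight is exactly a commutator: in both sums the term of maximal weight $j+j'$ is the one with $\ell=0$, every other term having weight at most $j+j'-1$, so the symbol of~(\ref{relations}) in the associated graded free algebra is $Y_{j',k'}Y_{j,k}-Y_{j,k}Y_{j',k'}$. By the standard comparison $\gr^{w}(B)\cong\gr^{w}(F)/\gr^{w}(I)$, where $F$ is the free algebra and $I$ the ideal of relations, and since $\gr^{w}(I)$ contains the ideal generated by the symbols, $\gr^{w}(B)$ is a quotient of the free algebra modulo all commutators of the generators, i.e. a quotient of the commutative polynomial ring $P=\kk[Y_{j,k}]$ in $n+1$ variables. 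As taking the associated graded of an exhaustive bounded filtration preserves dimensions in each standard degree, we get $\dim_\kk B_d=\dim_\kk(\gr^{w}B)_d\leqslant\dim_\kk P_d=\binom{n+d}{n}$.

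Finally, combining the two bounds gives $\binom{n+d}{n}=\dim_\kk R_d\leqslant\dim_\kk B_d\leqslant\binom{n+d}{n}$, so equality holds throughout and $\bar\pi$ is an isomorphism, which proves the presentation. The main obstacle is the weight computation identifying the top part of each relation with the commutator: one must verify that the $\ell=0$ contributions strictly dominate all correction terms, which is exactly why the $\wteps$-grading (rather than the standard degree) is the right tool, since this is what guarantees that $\gr^{w}(B)$ collapses to a commutative quotient and hence has at most the Hilbert series of a polynomial ring. An equivalent route, avoiding filtrations, would be a Bergman diamond-lemma argument showing that the $\prec$-ordered monomials span $B$, using~(\ref{relations}) as rewriting rules whose correction terms have strictly smaller $\wteps$-weight so that the reduction terminates.
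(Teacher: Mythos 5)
Your proposal is correct and follows essentially the same route as the paper: a graded surjection from the presented algebra onto $R$ (using that $R$ is generated in degree one and that the relations hold in $R$), followed by the degreewise bound $\dim_\kk B_d\leqslant\binom{n+d}{n}=\dim_\kk R_d$ coming from the fact that each relation reads \emph{commutator equals lower-order terms}. The only difference is cosmetic: you obtain the bound by passing to the associated graded of the weight filtration $w(Y_{j,k})=j$, where the relations degenerate to commutativity, whereas the paper directly observes that the ordered monomials span the presented algebra $T$; your packaging has the small advantage of making explicit why the rewriting terminates.
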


\begin{proof} We first show that the set $\mathcal{G}=\{Y_{j,k}\ |\ {1 \leqslant k \leqslant r, 1 \leqslant j \leqslant n_k}\}$ is a generating set for $R$.
From Proposition~\ref{prop-graduation-induite}, $\gr^{\wteps}(A)$ is generated by the image of the $Y_{j,k}$ in $\gr^{\wteps}(A)$.
But from Proposition~\ref{prop-epsilontilde}, $\gr^{\wteps}(A)=\gr^{\wteps}(R)$. 
Hence $\mathcal{G}$ is a set of generators for $R$ as desired.

Denote by $T$ the algebra given by generators $Y_{j,k}$ for $1 \leqslant k \leqslant r$ and $1 \leqslant j \leqslant n_k$, and relations~(\ref{relations}).
From the preceding argument $R$ is a quotient of $T$.
Moreover note that relations ~(\ref{relations}) are homogeneous with respect to the degree $\deg(Y_{j,k})=1$ for all $j,k$.
Hence the natural map from $T$ onto $R$ is graded.
We endow the set $\mathcal{G}$ with the order given by $Y_{j,k}\leqslant Y_{j',k'}$ if $k < k'$, or if $k=k'$ and $j \leqslant j'$. 
This induces an order on the monomials in the $Y_{j,k}$'s. 
The relations~(\ref{relations}) may be rewritten as 
\[Y_{j,k}\ast Y_{j',k'} = Y_{j',k'} \ast Y_{j,k} + \textrm{lower monomials}\]
for 
all $(j,k),(j',k')$.
Hence every element of $T$ is a linear combination of monomials $Y_{j_1,k_1}\cdots Y_{j_s,k_s}$ in the $Y_{j,k}$'s where $Y_{j_\ell,k_\ell} \leqslant Y_{j_{\ell+1},k_{\ell+1}}$ for all $\ell\in\{1,\dots,s-1\}$. 
In particular, the dimension of the homogeneous component of degree $d$ of $T$ is smaller than the one of $R$.
But since $R$ is a graded quotient of $T$ the canonical map from $T$ onto $R$ must be one-to-one.
\end{proof}

\begin{example}
Let $\Delta$ be of Jordan type $(2,2)$ and let $\Gamma={\rm diag}(a,a+1,b,b+1)$ be as in Example \ref{smallex}.
Thanks to Proposition~\ref{prop-presentation} we obtain the following complete set of relations between the generators of $R=R_{(\Delta,\Gamma)}$ :
\begin{align*}
X_0\ast X_1-X_1\ast X_0 &= -a X_0\ast X_0, \qquad X_2\ast X_3-X_3\ast X_2 = -b X_2\ast X_2,\\
X_0\ast X_2-X_2\ast X_0 &= 0, \\
X_0\ast X_3-X_3\ast X_0 &= -aX_0\ast X_2, \qquad X_1\ast X_2-X_2\ast X_1 = bX_0\ast X_2\\
X_1\ast X_3-X_3\ast X_1 &= (b+1)X_0\ast X_3 -(a+1)X_2\ast X_1\\
&=(b+1)X_0\ast X_3 -(a+1)X_1\ast X_2+(a+1)b X_0\ast X_2
\end{align*}
%Moreover, we can use~Proposition~\ref{prop-normal-element} to compute the normal elements of $R$ contained in $A_1$. We have $\ker(\Delta|_{A_1})=\mbox{Vect}(X_0,X_2)$ so that $X_0$ and $X_2$ are normal in $R$. Moreover if $a \neq b$ then the set of normal elements in $A_1$ is $\{\alpha X_0, \alpha X_2\ |\ \alpha \in \kk\}$.
%If $a=b$ there exists more normal elements: every linear combination of $X_0$ and $X_2$ is normal.
%\[X_2 \quad\mbox{and}\quad X_0+\lambda X_2 \quad \lambda\in\kk.\]    
%Note that $X_0X_3-X_2X_1=X_0 \ast X_3-X_2 \ast X_1$ is also a normal element in $R$ which is central if and only if $a+b+1=0$. 
\end{example}

\section{Unimodarity and Calabi-Yau property}\label{sec-CYunimod}

In this section we study the unimodularity of the Poisson algebra $A=A_{(\Delta,\Gamma)}$ in the polynomial case and determine when the algebra $R=R_{(\Delta,\Gamma)}$ is Calabi-Yau.
Whereas the unimodularity of $A$ is completely characterized by Proposition~\ref{prop-modular-derivation}, we only compute the Nakayama automorphism of $R$ in the case of a generic diagonalizable linear solvable pair (Theorem~\ref{th-nakayama}).
This result generalizes \cite[Theorem 4.16]{LS}. 

\subsection{Unimodularity of $A_{(\Delta,\Gamma)}$ in the polynomial case}
\label{unimod}

Unimodular Poisson algebras are thought to be Poisson analogous of Calabi-Yau algebras, see \cite{Do}.
Recall that a polynomial Poisson algebra $A=\kk[X_0,\dots,X_n]$ is called unimodular if its modular derivation is zero, where the modular derivation $m$ of $A$ is given in the polynomial case by
\[m(f)=\sum_{k=0}^n\frac{\partial \{X_k,f\}}{\partial X_k}\] 
for all $f\in A$, see~\cite[Lemma 2.4]{LWW}.

\begin{proposition}\label{prop-modular-derivation}
Let $A=\kk[X_0,\dots,X_n]$ and $(\Delta,\Gamma)$ be a solvable pair on $A$.
The modular derivation $m$ of the Poisson algebra $A_{(\Delta,\Gamma)}$ is equal to $m=(1-\diver(\Gamma))\Delta$.
In particular, as long as $\Delta\neq0$, the Poisson algebra $A_{(\Delta,\Gamma)}$ is unimodular if and only if $\diver(\Gamma)=1$.
When $\Gamma$ is linear this is equivalent to say that $\mathrm{tr}(\Gamma)=1$.
\end{proposition}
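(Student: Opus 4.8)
The plan is to compute the modular derivation directly from its definition and the explicit formula \eqref{pbdg} for the Poisson bracket, and then to combine the defining relation $[\Delta,\Gamma]=\Delta$ with the vanishing of the divergence of the locally nilpotent derivation $\Delta$. First I would write $d_k=\Delta(X_k)$ and $g_k=\Gamma(X_k)$, so that $\Delta=\sum_k d_k\partial_{X_k}$, $\Gamma=\sum_k g_k\partial_{X_k}$ and $\{X_k,f\}=d_k\Gamma(f)-g_k\Delta(f)$ for every $f\in A$. Applying $\partial_{X_k}$, summing over $k$ and expanding by the Leibniz rule gives
\[m(f)=\sum_k\partial_{X_k}(d_k)\,\Gamma(f)+\sum_k d_k\,\partial_{X_k}(\Gamma(f))-\sum_k\partial_{X_k}(g_k)\,\Delta(f)-\sum_k g_k\,\partial_{X_k}(\Delta(f)).\]

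Next I would identify the four sums: $\sum_k\partial_{X_k}(d_k)=\diver(\Delta)$, $\sum_k d_k\partial_{X_k}(\Gamma(f))=\Delta\Gamma(f)$, $\sum_k\partial_{X_k}(g_k)=\diver(\Gamma)$ and $\sum_k g_k\partial_{X_k}(\Delta(f))=\Gamma\Delta(f)$, which yields $m(f)=\diver(\Delta)\Gamma(f)+\Delta\Gamma(f)-\diver(\Gamma)\Delta(f)-\Gamma\Delta(f)$. The relation $[\Delta,\Gamma]=\Delta$ means exactly $\Delta\Gamma(f)-\Gamma\Delta(f)=\Delta(f)$, so that
\[m(f)=\diver(\Delta)\,\Gamma(f)+\bigl(1-\diver(\Gamma)\bigr)\Delta(f).\]

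The hard part, and the only nonroutine step, will be to discard the first term by proving $\diver(\Delta)=0$. This is the classical fact that a locally nilpotent derivation of a polynomial algebra has vanishing divergence, and I would argue as follows. Since $\Delta$ is locally nilpotent, $\exp(t\Delta)=\sum_{k\geqslant0}\tfrac{t^k}{k!}\Delta^k$ is a $\kk[t]$-algebra automorphism of $A[t]=\kk[t,X_0,\dots,X_n]$, with inverse $\exp(-t\Delta)$. Its Jacobian determinant $\det\bigl(\partial_{X_j}\exp(t\Delta)(X_i)\bigr)$ is therefore a unit of the polynomial algebra $\kk[t,X_0,\dots,X_n]$, hence a nonzero scalar; it equals $1$ at $t=0$, so it is identically $1$. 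Expanding $\exp(t\Delta)(X_i)=X_i+t\,\Delta(X_i)+O(t^2)$ shows that this determinant equals $1+t\,\diver(\Delta)+O(t^2)$, and comparing the coefficient of $t$ forces $\diver(\Delta)=0$. Hence $m=(1-\diver(\Gamma))\Delta$.

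Finally I would deduce the remaining assertions. Because $A$ is a domain and $\Delta\neq0$, the equality $m=(1-\diver(\Gamma))\Delta=0$ holds precisely when $1-\diver(\Gamma)=0$, that is $\diver(\Gamma)=1$, which is the definition of unimodularity. When $\Gamma$ is linear, writing $\Gamma(X_i)=\sum_j M_{ij}X_j$ gives $\diver(\Gamma)=\sum_i M_{ii}=\mathrm{tr}(\Gamma)$, so unimodularity becomes $\mathrm{tr}(\Gamma)=1$.
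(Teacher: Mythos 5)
Your proof is correct. It reaches the same key identity as the paper, namely $m=\diver(\Delta)\,\Gamma+(1-\diver(\Gamma))\Delta$, but by a more self-contained route: the paper obtains this in one line by quoting the general formula $\diver(\Delta\wedge\Gamma)=\diver(\Gamma)\Delta-\diver(\Delta)\Gamma-[\Delta,\Gamma]$ from Laurent-Gengoux--Pichereau--Vanhaecke together with the fact that $m$ is minus the divergence of the Poisson bivector, whereas you rederive it by expanding $\sum_k\partial_{X_k}\{X_k,f\}$ with the Leibniz rule, which amounts to reproving that formula in this special case. Likewise, where the paper cites van den Essen for the vanishing of $\diver(\Delta)$ for a locally nilpotent derivation, you give the standard proof via the Jacobian determinant of $\exp(t\Delta)$; this argument is sound (the determinant is a unit of $\kk[t,X_0,\dots,X_n]$, hence a nonzero scalar, equal to $1$ at $t=0$, and its $t$-coefficient is $\diver(\Delta)$). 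The trade-off is purely one of length versus self-containment: the paper's proof is three lines of citations, yours is longer but requires no external input beyond the definition of the modular derivation. The concluding deductions (domain argument for the unimodularity criterion, and $\diver(\Gamma)=\mathrm{tr}(\Gamma)$ in the linear case) match the paper exactly.
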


\begin{proof}
Thanks to \cite[Equality 4.22 p.108]{LGPV} the modular derivation $m$ of $A_{(\Delta,\Gamma)}$ is equal to minus the divergence of the Poisson bivector field $\pi=\Delta\wedge\Gamma$ of $A_{(\Delta,\Gamma)}$.
Thanks to \cite[Proposition 4.16]{LGPV} and the fact that the divergence of a locally nilpotent derivation is zero (\cite[Proposition 1.3.51]{VdE2}) we have
\begin{align*}
 m
 & = -\diver(\pi) \\
 & = -\diver(\Delta\wedge\Gamma) \\
 & = -\big(\diver(\Gamma)\Delta - \diver(\Delta)\Gamma - [\Delta,\Gamma]\big)\\
 & = -\diver(\Gamma)\Delta+\Delta \\
 & = (1-\diver(\Gamma))\Delta
\end{align*}
as desired.
For the assertion in the linear case, recall that $\diver(\Gamma)=\sum_{i=0}^n\dfrac{\partial \Gamma(X_i)}{\partial X_i}$.
\end{proof}

We illustrate this result with several examples in the linear case.

\begin{example}
If $(\Delta,\Gamma)$ is a diagonalizable linear solvable pair $(\Delta,\Gamma)$, recall that
\begin{enumerate}
\item its Jordan type is $\overline{n}=(n_1,\ldots, n_r)$ where $n_1  \geqslant n_2  \geqslant \cdots  \geqslant n_r$ are the size of the Jordan blocks of $\Delta$ acting on $A_1$ ;
\item there exists a basis of $A_1$ and 
$\overline{a}=(a_1,\ldots,a_r) \in \kk^r$ such that $\Delta$ is in canonical Jordan form and $\Gamma$ is diagonal of the form
$\Gamma =\diag(a_1,\ldots, a_1+n_1 -1, a_2,\ldots, a_2+ n_2 -1,\ldots, a_r, \ldots, a_r+n_r-1)$
\item we denote by $A(\overline{n},\overline{a})$ the corresponding $A_{(\Delta,\Gamma)}$ ;
\item we set $n+1=n_1+ \cdots + n_r$.
\end{enumerate}
The parameter space of unimodular Poisson algebras $A(\overline{n},\overline{a})$ is a $(r-1)$-dimensional affine subspace of $\kk^{r}$ since the Poisson algebra $A(\overline{n},\overline{a})$ is unimodular if and only if $\mbox{tr}(\Gamma)=1$ if and only if
\[\sum_{t=0}^{r-1}n_{t+1}a_{t+1}+\binom{n_{t+1}}{2}=1.\]
\end{example}

\begin{example}
If $r=1$ (a single Jordan block) then $A$ is unimodular if and only if
\[(n+1)a+\binom{n+1}{2}-1=0 \iff a=\frac{-(n+2)(n-1)}{2(n+1)} \]
as expected from \cite{LS}, since $n+1=n_1+\cdots+n_r=n_1$. 
\end{example}

%{\color{red}J'ai mis en commentaire le cas bloc max en dim 3 qui est d\'ej\`a connu, et j'ai bien r\'eduit l'example suivant.}
\begin{comment}
\begin{example}
Let $\Delta$ be of Jordan type $(3)$ and set $\Gamma=\mbox{diag}(a,a+1,a+2)$ for some $a\in\kk$.
Then $A=A_{(\Delta,\Gamma)}=\kk[X_0,X_1,X_2]$ has Poisson brackets given by
\[\{X_0,X_1\}=-aX_0^2,\qquad \{X_1,X_2\}=(a+2)X_0X_2-(a+1)X_1^2,\qquad \{X_2,X_0\}=aX_0X_1\]
and $R=R_{(\Delta,\Gamma)}$ has product
\begin{align*}
& X_0 \ast X_1-X_1 \ast X_0 = - a X_0 \ast X_0 \\
& X_1 \ast X_2-X_2 \ast X_1 = (a+2) X_0 \ast X_2-(a+1) X_1 \ast X_1 +\binom{a+2}{2}X_0 \ast X_1 \\
&X_2 \ast X_0 - X_0 \ast X_2 = a X_0\ast X_1 +\binom{a}{2}X_0\ast X_0.
\end{align*}
The Poisson algebra $A$ is unimodular if and only if $a=-2/3$.
Moreover this condition is equivalent to the fact that the Poisson structure on $A$ is Jacobian. 
In that case the potential is 
\[\phi=\frac{2}{3}X_0^2X_2-\frac{1}{3}X_0X_1^2=\frac{2}{3}X_0\left(X_0X_2-\frac{1}{2}X_1^2\right).\]

When $a=-2/3$, the algebra $R$ derives from the potential 
\[\Phi=X_0\ast X_1\ast X_2 - X_2 \ast X_1\ast X_0 +2 X_2 \ast X_0 \ast X_1 - 2 X_0 \ast X_2 \ast X_1   -\frac{2}{3} {X_0}^{\ast 2}\ast X_2+\frac{1}{3}{X_1}^{\ast 2} \ast X_0-\frac{1}{9}{X_0}^{\ast 2}\ast X_1\]
This means that the quotient of the free algebra $\kk\langle X_0,X_1,X_2\rangle$ by the ideal $\left(\frac{\partial \Phi}{\partial X_2},\frac{\partial \Phi}{\partial X_1},\frac{\partial \Phi}{\partial X_0}\right)$ is isomorphic to $R$.
\end{example}
\end{comment}

\begin{example} Let $\Delta$ be of Jordan type $(2,1)$ and $\Gamma$ non necessarily diagonalizable.
The general form for $\Gamma$ is $\begin{pmatrix}
a & b & c \\
0 & a+1 & 0 \\
0 & d & e
\end{pmatrix}$, hence the Poisson algebra $A_{(\Delta,\Gamma)}$ is unimodular if and only if $2a+e=0$.
\end{example}

\subsection{Calabi-Yau property for $R_{(\Delta,\Gamma)}$ in the generic diagonalizable case}

The aim of this section is to compute the Nakayama automorphism of $R$ in order to determine when $R$ is a Calabi-Yau algebra.
We denote by $R^e=R\otimes_\kk R^{\rm{op}}$ the enveloping algebra algebra of $R$.

\begin{defn}\label{def-CY}
We say that $R$ is {\em skew Calabi-Yau} (or {\em skew CY}) if
\begin{itemize}
\item[(i)] $R$ is {\em homologically smooth}:  $R$ has a finite projective resolution as a left $R^e$-module such that each term is finitely generated;
\item[(ii)]  There are an algebra automorphism $\mu$ of $R$ and an integer $d$ such that
\[ 
\Ext^i_{R^e}(R, R^e) \cong \begin{cases} 0 & \text{if $i \neq 0$} \\
{}^1 R^\mu & \text{ if $i=d$}
\end{cases}
\]
where ${}^1 R^\mu$ is the $R$-bimodule which is isomorphic to $R$ as a $\kk$-vector space and such that $r\cdot s \cdot t = r s \mu(t)$.
\end{itemize} 
If $R$ is skew CY, the automorphism $\mu$ is called the {\em Nakayama automorphism} of $R$.  If $\mu$ is inner, then $R$ is {\em Calabi-Yau} or {\em CY}.
\end{defn}
 
By \cite[Lemma~1.2]{RRZ}, any AS-regular connected graded algebra is skew CY. In particular, the algebras $R_{(\Delta,\Gamma)}$ are skew CY in the linear case thanks to Theorem \ref{theo-ASregul}.

\begin{proposition}\label{prop-commutation} Let $(\Delta,\Gamma)$ be a linear solvable pair such that $A$ is not Poisson commutative (see Corollary~\ref{cor-poissoncom}). 
Let $\Phi$ be an automorphism of $R$ compatible with $\widetilde{\varepsilon}$ such that $\ol{\Phi}=\id$ (see Lemma~\ref{lem-autom-deriv}). Assume that 
%every normal element of $R$ is strongly normal and that 
there exists a strongly normal element $N$ of $R$ whose eigenvalue with respect to $\Gamma$ is nonzero. 
Then $\Phi$ commute with $\Delta$ and $\phi_c$ for every $c \in \kk$. 
\end{proposition}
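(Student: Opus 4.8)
The plan is to use the strongly normal element $N$ as a bridge between $\Phi$ and the one-parameter family $(\phi_c)_{c\in\kk}$, exploiting that $\Phi(N)$ is again strongly normal and that the automorphism of $R$ attached to a strongly normal element is governed by its $\Gamma$-eigenvalue. First I would record the data attached to $N$: writing $\Gamma(N)=\alpha N$ with $\alpha\neq0$ and $\Delta(N)=0$, Lemma~\ref{lem-strongly} says that $N$ is normal in $R$ with associated automorphism $\Phi_N=\phi_\alpha$ (here $A$, being a polynomial ring, is a domain, and $R$ is a domain by Corollary~\ref{cor-domain}). Since $\Phi$ is an automorphism, $\Phi(N)$ is again normal; and because $A$ is not Poisson commutative, Proposition~\ref{prop-normal-element} forces $\Phi(N)$ to be strongly normal, say $\Gamma(\Phi(N))=\beta\Phi(N)$ and $\Delta(\Phi(N))=0$, so that its associated automorphism is $\phi_\beta$ by Lemma~\ref{lem-strongly}. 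On the other hand, conjugating the defining relation $u\ast N=N\ast\phi_\alpha(u)$ by $\Phi$ and using uniqueness of the associated automorphism shows that the automorphism attached to $\Phi(N)$ is $\Phi\phi_\alpha\Phi^{-1}$; comparing the two descriptions gives
\[\Phi\,\phi_\alpha\,\Phi^{-1}=\phi_\beta.\]

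The next, and conceptually most important, step is to prove $\beta=\alpha$; this is exactly where the hypotheses that $\Phi$ is compatible with $\wteps$ and that $\ol{\Phi}=\id$ enter. Passing to $\gr^{\wteps}$, compatibility gives $\wteps(\Phi(N))=\wteps(N)$ and $\ol{\Phi}=\id$ gives $\ol{\Phi(N)}=\ol{N}\neq0$. Since $\Gamma$ preserves $\wteps$ (Proposition~\ref{prop-epsilontilde}), the two eigenvalue relations descend to $\ol{\Gamma}(\ol{N})=\alpha\ol{N}$ and $\ol{\Gamma}(\ol{\Phi(N)})=\beta\ol{\Phi(N)}$; as $\ol{\Phi(N)}=\ol{N}\neq0$, this yields $(\alpha-\beta)\ol{N}=0$, hence $\alpha=\beta$. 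Consequently $\Phi$ commutes with $\phi_\alpha$.

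Finally I would pass from commutation with the single automorphism $\phi_\alpha$ (with $\alpha\neq0$) to commutation with $\delta$, with $\Delta$, and with all the $\phi_c$ by a logarithm argument. By Lemma~\ref{compo} the operator $\phi_\alpha-\id$ is locally nilpotent ($\ker(\phi_\alpha-\id)^n=\ker\Delta^n$), so $\log\phi_\alpha=\sum_{k\geqslant1}\frac{(-1)^{k-1}}{k}(\phi_\alpha-\id)^k$ is a well-defined, locally finite operator; the formal identity $\log\bigl((1+T)^\alpha\bigr)=\alpha\log(1+T)$ in $\kk[\![T]\!]$, specialized via $T\mapsto\Delta$, identifies it with $\alpha\delta$, where $\delta=\log(\id+\Delta)$ is the derivation of Theorem~\ref{deltaR}. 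Since $\Phi$ commutes with $\phi_\alpha$, it commutes with each power $(\phi_\alpha-\id)^k$, hence with their sum $\alpha\delta$, and therefore with $\delta$ because $\alpha\neq0$. Commutation with $\delta$ propagates at once to $\phi_c=\exp(c\delta)$ for every $c\in\kk$ and to $\Delta=\phi_1-\id=\exp(\delta)-\id$, which is the desired conclusion. The main obstacle is the equality $\beta=\alpha$: without the control of leading terms provided by $\ol{\Phi}=\id$ one could only conclude $\beta\neq0$, and a relation $\Phi\delta\Phi^{-1}=(\beta/\alpha)\delta$ with $\beta\neq\alpha$ would scale $\delta$ rather than fix it, defeating the argument; the remaining steps are bookkeeping with normal elements and a formal manipulation in the locally nilpotent operator $\Delta$.
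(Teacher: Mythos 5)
Your proof is correct, and while it follows the same three-part skeleton as the paper (use $N$ and $\Phi(N)$ to get $\Phi\phi_\alpha\Phi^{-1}=\phi_\beta$, use $\ol{\Phi}=\id$ to force $\beta=\alpha$, then bootstrap to $\Delta$ and all $\phi_c$), both of your key steps are executed differently. For $\beta=\alpha$, the paper passes to the Poisson derivations of $\gr^{\wteps}(R)$ associated to the normal elements $N$ and $\Phi(N)$ (which requires invoking the $\wteps$-analogues of Lemma~\ref{lem-autom-normalR} and Lemma~\ref{lem-strongly}), whereas you simply compare the $\ol{\Gamma}$-eigenvalues of $\ol{N}$ and $\ol{\Phi(N)}=\ol{N}\neq 0$; this is more elementary and needs nothing beyond the fact that $\ol{\Gamma}$ is well defined on $\gr^{\wteps}(A)$ (Propositions~\ref{prop-epsilontilde} and~\ref{prop-graduation-delta-gamma}). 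For the bootstrap, the paper expresses $\Delta(f)$ as a linear combination of the $\phi_{k\lambda}(f)$ by inverting the matrix of generalized binomial coefficients (Lemma~\ref{lem-determinant}), whereas you take the formal logarithm $\log\phi_\alpha=\alpha\delta$ in $\kk[\![\Delta]\!]$ and divide by $\alpha$, recovering $\delta$, hence $\Delta=\exp(\delta)-\id$ and every $\phi_c=\exp(c\delta)$, in one stroke; this avoids the determinant computation of the appendix entirely, at the cost of the (routine, since $\Delta$ is locally nilpotent) justification that an automorphism commuting with each term of a locally finite power series in $\Delta$ commutes with its sum. Both routes are sound; yours is arguably leaner.
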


\begin{proof}The proof decomposes into three steps.
The first step consists to show that $\Phi$ commutes with one $\phi_\lambda$ by considering the normal element $N \in R$.
In the second step, we deduce from the first that $\Phi$ commutes with $\Delta$ by considering that $\Phi$ commutes with $\phi_{k\lambda}$ for $k \in \N$.
The third step is easy: since $\Phi$ commute with $\Delta$, it commutes with every formal power series in $\Delta$, hence with $\phi_c$ for every $c \in \kk$. 

First step. Consider $N \in \ker \Delta \cap \ker (\Gamma-\lambda\id)$ with $\lambda \neq 0$.
Then $N$ is a normal element, hence $\Phi(N)$ is too.
Since every normal element in $R$ is strongly normal thanks to Proposition~\ref{prop-normal-element}, there exists $\mu \in \kk$ such that $\Gamma(\Phi(N))=\mu \Phi(N)$.
Since Lemma~\ref{lem-autom-normalR} and Lemma~\ref{lem-strongly} extend with the same proof to the filtration~$\widetilde{\epsilon}$ and since the automorphism associated to the normal element $\Phi(N)$ is $\Phi \circ \phi_\lambda \circ \Phi^{-1}=\phi_\mu$,
we deduce that the Poisson derivation of $\gr^{\wteps}(R)$ associated to $N$ is $\mu \ol{\Delta}= \ol{\Phi}\circ (\lambda \ol{\Delta})\circ \ol{\Phi}^{-1}$, where the notation $\ol{\,\cdot\,}$ represent the induced map in $\gr^{\wteps}(R)$.
However we have $\ol{\Phi}=\id$, hence $\mu \ol{\Delta}=\lambda \ol{\Delta}$.
Since $\Delta \neq 0$, we have $\ol{\Delta} \neq 0$.
Thus we obtain $\mu=\lambda$ and therefore that $\Phi$ and $\phi_\lambda$ commute. 

Second step. For $f \in R$, choose an integer $n$ such that $\Delta^{n+1}(f)=0$ and $\Delta^{n+1}(\Phi(f))=0$. The aim is to express $\Delta(f)$ as a linear combination of $(\phi_{k\al}(f))_{0 \leqslant k \leqslant n}$.
Since $\lambda \neq 0$, the matrix of generalized binomial coefficients
$$A=\left(\binom{\lambda k}{\ell}\right)_{0 \leqslant k,\ell \leqslant n}$$
is invertible (see Lemma~\ref{lem-determinant}).
Hence there exists $(\alpha_0,\ldots, \alpha_{n})\in\kk^{n+1}$ such that 
$$A \begin{pmatrix} \alpha_0 \\ \vdots \\ \alpha_n\end{pmatrix} = \begin{pmatrix} 0 \\ 1 \\ 0 \\ \vdots \\0 \end{pmatrix}\,.$$
We then deduce that $\Delta(f)=\sum_{k =0}^n \alpha_k \phi_{k\lambda}(f)$ and $\Delta(\Phi(f))=\sum_{k =0}^n \alpha_k \phi_{k\lambda}(\Phi(f))$.
But $\Phi$ commutes with $\phi_\lambda$, therefore it commutes with $\phi_{k\lambda}=({\phi_{\lambda}})^k$ for any $k \in \{0,\ldots,n\}$ (Lemma \ref{compo}).
From the preceding equalities we obtain that $\Phi(\Delta(f))=\Delta(\Phi(f))$.
\end{proof}

\begin{remark} When the solvable pair $(\Delta,\Gamma)$ is not necessarily linear, the preceding proposition can be adapted with the following statement.
Let $\Phi$ be an automorphism of $R$ compatible with $\varepsilon$ and such that $\ol{\Phi}=\id$.
Assume that every normal element of $R$ is strongly normal and that there exists a strongly normal element $N$ of $R$ whose eigenvalue with respect to $\Gamma$ is nonzero. 
Then $\Phi$ commute with $\Delta$ and $\phi_c$ for every $c \in \kk$. 
\end{remark}

The preceding proposition applies in particular to the Nakayama automorphism of $R$ as detailed in next example. 

\begin{example}\label{ex-nakayama} Let $(\Delta,\Gamma)$ be a linear solvable pair. From Proposition~\ref{prop-graduation-induite}, $\gr^{\widetilde{\epsilon}}(A)$ is a polynomial algebra, 
hence we can apply~\cite[Theorem 5.7]{WZ} to get that the Nakayama automorphism $\mu$ of $R$ is compatible with $\widetilde{\epsilon}$ and verifies $\ol{\mu}=\id$ 
and $\ol{(\mu-\id)}=(1-\textrm{tr}(\ol{\Gamma}))\ol{\Delta}= (1-\textrm{tr}(\Gamma))\ol{\Delta}$.
\end{example}

\begin{theorem}\label{th-nakayama} Assume that $(\Delta,\Gamma)$ is a generic diagonalizable linear solvable pair on $A=k[X_0,\ldots, X_n]$ (see Definition~\ref{dfn-generic}).
Then the Nakayama automorphism of $R$ is $\phi_{1-\mathrm{tr}(\Gamma)}$.
\end{theorem}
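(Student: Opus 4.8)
The plan is to pin down the Nakayama automorphism $\mu$ of $R$ from the two facts already available and show they force $\mu=\phi_{1-\mathrm{tr}(\Gamma)}$. Write $t=1-\mathrm{tr}(\Gamma)$. If $\Delta=0$ then $R$ is a polynomial algebra, $\mu=\id$ and $\phi_t=\id$, so I would assume $\Delta\neq 0$. By Corollary~\ref{Cor-ASregul} the algebra $R$ is connected graded AS-regular, hence skew Calabi-Yau with a \emph{graded} Nakayama automorphism $\mu$ (\cite{RRZ}). Example~\ref{ex-nakayama} records that $\mu$ is compatible with $\widetilde\varepsilon$, that $\ol\mu=\id$, and that $\ol{(\mu-\id)}=t\,\ol\Delta$. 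Since the pair is generic with $\Delta\neq 0$, each kernel generator $Y_{0,k}$ is strongly normal with nonzero $\Gamma$-eigenvalue $a_k=\lambda_{0,k}$, so Proposition~\ref{prop-commutation} applies and $\mu$ commutes with $\Delta$ and with every $\phi_c$.

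First I would set $\nu:=\mu\circ\phi_{-t}$. By Theorem~\ref{deltaR} the map $\phi_{-t}$ is a graded automorphism of $R$, and being a power series in $\Delta$ it commutes with $\Delta$; hence $\nu$ is a graded automorphism commuting with $\Delta$, and $\nu=\id$ is equivalent to $\mu=\phi_{-t}^{-1}=\phi_t$ by Lemma~\ref{compo}. Next I would verify that $\ol{(\nu-\id)}=0$: for $f$ with $\widetilde\varepsilon(f)=i$ one has $\phi_{-t}(f)=f-t\Delta(f)+w_1$ and $\mu(g)=g+t\Delta(g)+w_2$ with $\widetilde\varepsilon(w_1),\widetilde\varepsilon(w_2)\leqslant i-2$, so composing and using that $\mu$ is $\widetilde\varepsilon$-compatible, the first-order terms cancel and $\nu(f)-f$ has $\widetilde\varepsilon$-degree $\leqslant i-2$. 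Since $\nu$ is graded and $R$ is generated in degree one, it now suffices to prove $\nu(Y_{j,k})=Y_{j,k}$ for every generator.

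I would prove this by induction on the block position $j$. The cases $j=0,1$ are immediate from $\ol{(\nu-\id)}=0$, since the correction $\nu(Y_{j,k})-Y_{j,k}$ must lie in $A_1^{\leqslant j-2,\widetilde\varepsilon}=\{0\}$. For the inductive step $j\geqslant 2$, commutation with $\Delta$ together with $\nu(Y_{j-1,k})=Y_{j-1,k}$ forces $w:=\nu(Y_{j,k})-Y_{j,k}$ to satisfy $\Delta(w)=0$, hence $w\in\ker\Delta\cap A_1=\mathrm{span}(Y_{0,p})_p$, say $w=\sum_p c_p Y_{0,p}$. To determine the $c_p$ I would apply $\nu$ to the defining relation \eqref{relations} pairing $Y_{j,k}$ with $Y_{1,k}$, subtract the original relation, and simplify the resulting degree-two products using $f*Y_{0,p}=\phi_{a_p}(f)\,Y_{0,p}$. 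This collapses to the single identity $\sum_p c_p\,(a_p-\lambda_{j,k})\,Y_{0,k}Y_{0,p}=0$ in $A_2$; as the monomials $Y_{0,k}Y_{0,p}$ are linearly independent and $a_p=\lambda_{0,p}\neq\lambda_{j,k}=a_k+j$ by genericity (all eigenvalues of $\Gamma$ on $A_1$ are distinct), every $c_p$ vanishes. Thus $\nu(Y_{j,k})=Y_{j,k}$, the induction closes, $\nu=\id$, and $\mu=\phi_t=\phi_{1-\mathrm{tr}(\Gamma)}$.

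The main obstacle is precisely this final step. The top-order datum $\ol{(\mu-\id)}=t\,\ol\Delta$ and commutation with $\Delta$ do \emph{not} by themselves determine the lower-order corrections — for instance $\id+c\Delta^2$ on $A_1$ commutes with $\Delta$ and lowers $\widetilde\varepsilon$ by two — so one genuinely needs the multiplicative constraint coming from $\nu$ being an algebra automorphism, and it is only through the relations \eqref{relations} that the distinctness of the eigenvalues $\lambda_{j,k}$ enters. The delicate point is the choice of test relation: pairing $Y_{j,k}$ with a kernel generator $Y_{0,m}$ yields a trivial identity, because $w\in\ker\Delta$ already $*$-commutes with kernel elements, whereas pairing with $Y_{1,k}$ (or any $Y_{1,m}$) produces the factor $(a_p-\lambda_{j,k})$ that genericity makes nonzero. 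I would also take care to justify that $\mu$ may be taken graded and that Lemmas~\ref{lem-autom-normalR} and~\ref{lem-strongly} transfer verbatim to the finer filtration $\widetilde\varepsilon$, as is used implicitly above and already invoked in the proof of Proposition~\ref{prop-commutation}.
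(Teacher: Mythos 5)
Your proof is correct and follows essentially the same route as the paper's: both rely on Proposition~\ref{prop-commutation} and Example~\ref{ex-nakayama} to get commutation with $\Delta$ and the $\phi_c$'s, reduce the discrepancy on each generator $Y_{j,k}$ to an element of $\ker\Delta\cap A_1$, and kill it by applying the automorphism to the $\ast$-commutation relation between $Y_{j,k}$ and $Y_{1,k}$, where genericity produces the nonzero factors $a_p-\lambda_{j,k}$. Your packaging via $\nu=\mu\circ\phi_{-t}$ is only a cosmetic reorganization of the paper's direct comparison of $\mu(Y_{i+1,k})$ with $\phi_{1-\mathrm{tr}(\Gamma)}(Y_{i+1,k})$.
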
 

\begin{proof}%We work on $A_1$ the set of homogeneous polynomial of degree $1$.
Let $\mu$ be the Nakayama automorphism of $R$.
It is a graded automorphism for the standard degree%(\textcolor{red}{ADD REF})
.
Since $\ker \Delta$ is stable by $\Gamma$ and $\Gamma$ is diagonalizable, it induces a diagonalizable endomorphism of $\ker \Delta$. Hence there exists a strongly normal element $N$ 
whose eigenvalue with respect to $\Gamma$ is nonzero since $0$ is not an eigenvalue of $\Gamma$. 
From Proposition~\ref{prop-commutation} and Example~\ref{ex-nakayama}, $\mu$ commute with $\Delta$ and with $\phi_c$ for all $c \in \kk$.
In particular, Proposition~\ref{prop-normal-element} shows that every strongly normal element of $A_1$ is sent by $\mu$ to a strongly element of $A_1$ with the same eigenvalue with respect to $\Gamma$. 

Consider a basis $(X_0,\ldots, X_n)$ of $A_1$ such that $\Delta$ is in canonical Jordan form of type $\ol{n}=(n_1,\ldots,n_r)$ and $\Gamma$ is diagonal (see Proposition~\ref{prop-diag-pair}).
For $1 \leqslant k\leqslant r$ and $0 \leqslant i \leqslant n_r-1$ we denote by $Y_{i,k}=X_{i+ n_1+ \cdots + n_{k-1}}$ the $i^{\textrm{th}}$ element of the $k^{\textrm{th}}$ block. 
In particular %$Y_{0,k}$ is a strongly normal element of $A_1$ and 
$\ker \Delta$ is the linear span of the $Y_{0,k}$ for $1 \leqslant k\leqslant r$. 

Since the family $\mathcal{G}=\{Y_{j,k}\ |\ {1 \leqslant k \leqslant r, 1 \leqslant j \leqslant n_k}\}$ is a generating set of $R$, it is enough to prove that $\mu$ and $\phi_{1-\mathrm{tr}(\Gamma)}$ coincide on $\mathcal{G}$.
%Since the eigenvalue of $\Gamma$ acting on $A_1$ are distinct, we deduce that there exists $\al_k \in \kk$, $\mu(Y_{0,k})= \al_k Y_{0,k}$. 
Since $\ol{\mu}=\id$ (Example~\ref{ex-nakayama}) and $\widetilde{\varepsilon}(Y_{0,k})=0$, we obtain $\mu(Y_{0,k})=Y_{0,k}=\phi_{1-\mathrm{tr}(\Gamma)}(Y_{0,k})$.
Since $\varepsilon(Y_{1,k})=1$, \cite[Theorem 5.7]{WZ} and Proposition~\ref{prop-modular-derivation} show that $\mu(Y_{1,k})=Y_{1,k}+(1-\mathrm{tr}(\Gamma))\Delta(Y_{1,k})=\phi_{1-\mathrm{tr}(\Gamma)}(Y_{1,k})$.

Therefore we have the desired equalities for every $k$ such that $n_k \leqslant 2$.
Assume now that $n_k  \geqslant 3$.
We prove by induction on $i \geqslant 2$ that $\mu(Y_{i,k})=\phi_{1-\mathrm{tr}(\Gamma)}(Y_{i,k})$. 
Assume that $i \geqslant 1$ and $\mu(Y_{j,k})=\phi_{1-\mathrm{tr}(\Gamma)}(Y_{j,k})$ for $j \leqslant i$.
Since both $\mu$ and $\phi_{1-\mathrm{tr}(\Gamma)}$ commutes with $\Delta$, we have
\[\Delta \big((\mu-\phi_{1-\mathrm{tr}(\Gamma)})(Y_{i+1,k})\big)=(\mu-\phi_{1-\mathrm{tr}(\Gamma)})(\Delta(Y_{i+1,k}))=(\mu-\phi_{1-\mathrm{tr}(\Gamma)})(Y_{i,k})=0\,.\]
Hence there exist scalars $\alpha_0,\dots,\alpha_r$ such that
\begin{equation}
\label{eqmuY}
\mu(Y_{i+1,k}) = \phi_{1-\mathrm{tr}(\Gamma)}(Y_{i+1,k}) + \sum_{\ell =0}^r \alpha_{\ell} Y_{0,\ell}
\end{equation}
If $f \in R$ is an eigenvector for $\Gamma$ associated to the eigenvalue $\lambda$, then for every $g \in R$ we have $g \ast f= \phi_\lambda(g)f$. 
Set $\Gamma(Y_{0,k})=\lambda Y_{0,k}$ where $\lambda \in \kk$.
Then $\Gamma(Y_{1,k})=(\lambda +1)Y_{1,k}$ and we obtain $\Gamma(Y_{i+1,k})=(\lambda +i+1)Y_{i+1,k}$ by an easy induction on $i\in\{0,\dots,n_k-1\}$. 
Hence we obtain that 
\begin{equation}
\label{eqphiY}
\phi_{-\lambda-1}(Y_{i+1,k})\ast Y_{1,k}=Y_{i+1,k}Y_{1,k}=\phi_{-\lambda-i-1}(Y_{1,k})\ast Y_{i+1,k}
\end{equation}
For the sake of simplicity we set $c=1-\mathrm{tr}(\Gamma)$.
By applying $\mu$ and $\phi_c$ to the relation \eqref{eqphiY} and by using the commutation of $\mu$ with $\phi_\alpha$, of $\phi_\beta$ with $\phi_\alpha$ for every $\alpha,\beta \in \kk$, and the relation \eqref{eqmuY}, we obtain 
\begin{align*}
0=&\phi_{-\lambda-1}(\mu(Y_{i+1,k}))\ast \mu(Y_{1,k})-\phi_{-\lambda-i-1}(\mu(Y_{1,k}))\ast \mu(Y_{i+1,k})\\
=&\phi_{-\lambda-1}\left(\phi_c(Y_{i+1,k}) + \sum_{\ell =0}^r \alpha_{\ell} Y_{0,\ell}\right)\ast \phi_c(Y_{1,k})-\phi_{-\lambda-i-1}(\phi_c(Y_{1,k}))\ast \left(\phi_c(Y_{i+1,k})+ \sum_{\ell =0}^r \alpha_{\ell} Y_{0,\ell}\right)\\
=&\phi_{-\lambda-1}\left(\sum_{\ell =0}^r \alpha_{\ell} Y_{0,\ell}\right) \ast \phi_c(Y_{1,k}) - \phi_{-\lambda-i-1}(\phi_c(Y_{1,k}))\ast \sum_{\ell =0}^r \alpha_{\ell} Y_{0,\ell}.
\end{align*}
By applying $\phi_{-c}$ to the last equality, we obtain the relation 
$$\left(\sum_{\ell =0}^r \alpha_{\ell} Y_{0,\ell}\right) \ast Y_{1,k}= \phi_{-\lambda-i-1}(Y_{1,k})\ast \sum_{\ell =0}^r \alpha_{\ell} Y_{0,\ell}$$
since $f=\sum_{\ell =0}^r \alpha_{\ell} Y_{0,\ell} \in \ker \Delta$ and so $\phi_\alpha(f)=f$ for all $\alpha \in \kk$.
By using the expression of the product $\ast$, this last relation can be rewritten as $Y_{0,k}\sum_{\ell=0}^r \alpha_\ell (\lambda_\ell-(\lambda+i+1)) Y_{0,\ell}=0$ where $\lambda_\ell$ is the eigenvalue for $\Gamma$ associated to the eigenvector $Y_{0,\ell}$. 
Since $\Gamma$ has distinct eigenvalues, we obtain that $\alpha_\ell=0$ for all~$\ell$ and $\mu(Y_{i+1,k}) = \phi_{1-\mathrm{tr}(\Gamma)}(Y_{i+1,k})$.
This concludes the proof.
\end{proof}

\begin{corollary}\label{cor-calabiyau} 
Let $(\Delta,\Gamma)$ be a generic diagonalizable linear solvable pair on $A=k[X_0,\ldots,X_n]$. Then $A_{(\Delta,\Gamma)}$ is unimodular if and only if $R_{(\Delta,\Gamma)}$ is Calabi-Yau if and only if ${\rm{tr}}(\Gamma)=1$. 
\end{corollary}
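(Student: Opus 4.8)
The plan is to establish the two separate biconditionals, namely that unimodularity of $A_{(\Delta,\Gamma)}$ is equivalent to $\mathrm{tr}(\Gamma)=1$, and that the Calabi-Yau property of $R_{(\Delta,\Gamma)}$ is equivalent to $\mathrm{tr}(\Gamma)=1$; chaining the two through the common condition $\mathrm{tr}(\Gamma)=1$ then yields the full three-fold equivalence. Throughout I would assume $\Delta\neq 0$, the case $\Delta=0$ being degenerate since then $A$ is Poisson commutative and $R$ is a commutative polynomial ring.

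The first biconditional is immediate from Proposition~\ref{prop-modular-derivation}. Since $(\Delta,\Gamma)$ is linear with $\Delta\neq 0$, the modular derivation of $A_{(\Delta,\Gamma)}$ equals $(1-\mathrm{tr}(\Gamma))\Delta$, which vanishes precisely when $\mathrm{tr}(\Gamma)=1$. Hence $A_{(\Delta,\Gamma)}$ is unimodular if and only if $\mathrm{tr}(\Gamma)=1$.

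For the second biconditional I would invoke Theorem~\ref{th-nakayama}, which identifies the Nakayama automorphism of $R$ with $\mu=\phi_{1-\mathrm{tr}(\Gamma)}$. By Definition~\ref{def-CY}, $R$ is Calabi-Yau if and only if $\mu$ is inner. The key reduction is to observe that the only inner automorphism of $R$ is the identity. Indeed, $R$ is a connected $\N$-graded domain: it is a domain by Corollary~\ref{cor-domain}, and it is graded with $R_0=\kk$, so a standard top-degree argument shows that its units are exactly the nonzero scalars $\kk^\ast$; as these are central, conjugation by a unit is trivial. Therefore $R$ is Calabi-Yau if and only if $\mu=\id$, that is, if and only if $\phi_{1-\mathrm{tr}(\Gamma)}=\id$. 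Finally, Lemma~\ref{compo} gives that, for $\Delta\neq 0$, one has $\phi_c=\id$ if and only if $c=0$; applying this with $c=1-\mathrm{tr}(\Gamma)$ shows that $R$ is Calabi-Yau if and only if $\mathrm{tr}(\Gamma)=1$.

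Combining the two biconditionals closes the argument. I do not anticipate a serious obstacle, since the substantive work has already been done in Theorem~\ref{th-nakayama} (computation of the Nakayama automorphism) and in Proposition~\ref{prop-modular-derivation} (computation of the modular derivation). The one point requiring genuine care is the reduction \emph{inner $\Rightarrow$ identity}: it hinges on the connected graded domain structure forcing every unit into $R_0=\kk$, so that inner automorphisms collapse to the identity. This is where I would be most explicit about the hypotheses, citing Corollary~\ref{cor-domain} for the domain property and the standard grading of $R$ for connectedness.
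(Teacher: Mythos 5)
Your proposal is correct and follows essentially the same route as the paper: combine Proposition~\ref{prop-modular-derivation} with Theorem~\ref{th-nakayama}, note that inner automorphisms of the connected graded algebra $R$ are trivial, and conclude via $\phi_c=\id\iff c=0$ from Lemma~\ref{compo}. Your extra remark that the domain property (Corollary~\ref{cor-domain}) forces the units of $R$ into $R_0=\kk$ is a slightly more explicit justification of the step the paper states in one line, but it is the same argument.
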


\begin{proof} Since $R$ is a connected graded algebra, every inner automorphism is trivial. Hence $R$ is Calabi-Yau if and only if its Nakayama automorphism is trivial. 
The result follows from~Proposition~\ref{prop-modular-derivation} and Theorem~\ref{th-nakayama} since $\Phi_c=\id$ if and only if $c=0$.
\end{proof}

\appendix
\section{Combinatorial relations}
\label{combinatorial}

Let $B$ be an associative $\kk$-algebra and $b \in B$.
Recall that that for any $k \in \N$ the generalized binomial coefficient $\binom{b}{k}$ is defined by $\binom{b}{k}=\frac{b(b-1)\cdots (b-k+1)}{k!}$.
For $k \in \Z$ with $k <0$, we set $\binom{b}{k}=0$.

\begin{lemma} Let $a,b \in B$ and assume that $ab=ba$.
For any $n \geqslant 0$ we have
\begin{align}
&\binom{a}{n} + \binom{a}{n-1} = \binom{a+1}{n}  \label{pascal}\\
&\sum_{\ell=0}^n\binom{a}{\ell}\binom{b}{n-\ell}=\binom{a+b}{n} \qquad\mbox{(Chu-Vandermonde identity)} \label{vandermonde}\\
&\binom{a}{n}=(-1)^n\binom{n-a-1}{n} \label{binom1}
\end{align}
\end{lemma}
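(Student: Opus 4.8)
The plan is to treat all three equalities as identities between polynomials with rational coefficients, reducing to the classical case of nonnegative integers where convenient. The starting observation is that for each $k\in\N$ the assignment $x\mapsto\binom{x}{k}=\frac{x(x-1)\cdots(x-k+1)}{k!}$ is a polynomial of degree $k$ in $x$ with coefficients in $\Q\subseteq\kk$; hence, after the substitutions $X\mapsto a$ and $Y\mapsto b$, both sides of \eqref{pascal} and \eqref{binom1} lie in $\Q[X]$ and both sides of \eqref{vandermonde} lie in $\Q[X,Y]$. Because $a$ and $b$ commute, there is a well-defined $\kk$-algebra homomorphism $\Q[X,Y]\to B$ sending $X\mapsto a$, $Y\mapsto b$ and carrying $\binom{X}{k}$ to $\binom{a}{k}$; thus it suffices to establish each equality inside $\Q[X]$ or $\Q[X,Y]$ and then apply this homomorphism. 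This is exactly where the hypothesis $ab=ba$ is used.

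For \eqref{pascal} and \eqref{binom1} I would argue by a direct factorization valid in $\Q[X]$. For Pascal's rule, factor the common product $\frac{X(X-1)\cdots(X-n+2)}{(n-1)!}$ out of $\binom{X}{n}+\binom{X}{n-1}$; the remaining bracket is $\frac{X-n+1}{n}+1=\frac{X+1}{n}$, which reassembles into $\binom{X+1}{n}$. For \eqref{binom1}, expand $(-1)^n\binom{n-X-1}{n}=\frac{(-1)^n}{n!}\prod_{j=0}^{n-1}(n-X-1-j)$; as $j$ runs over $0,\dots,n-1$ the factors $n-X-1-j$ range exactly over the values $j-X$, so the product equals $\prod_{j=0}^{n-1}(j-X)$, and pulling the sign inside gives $\prod_{j=0}^{n-1}(X-j)=X(X-1)\cdots(X-n+1)=n!\binom{X}{n}$. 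Both steps are routine and I would only record the key regrouping.

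The genuine content is the Chu--Vandermonde identity \eqref{vandermonde}, where the factorization trick does not apply directly; the cleanest route is the reduction-to-integers principle. I would first recall the classical fact that for all $p,q\in\N$ one has $\sum_{\ell=0}^n\binom{p}{\ell}\binom{q}{n-\ell}=\binom{p+q}{n}$ (for instance, by comparing the coefficient of $t^n$ in $(1+t)^p(1+t)^q=(1+t)^{p+q}$, or by a direct counting argument). Setting $P(X,Y)=\sum_{\ell=0}^n\binom{X}{\ell}\binom{Y}{n-\ell}-\binom{X+Y}{n}\in\Q[X,Y]$, this says precisely that $P(p,q)=0$ for every $(p,q)\in\N^2$.

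The main obstacle is then the purely formal step of passing from vanishing on $\N^2$ to vanishing as a polynomial. I would write $P(X,Y)=\sum_i c_i(Y)X^i$ with $c_i\in\Q[Y]$; for each fixed $q\in\N$ the one-variable polynomial $X\mapsto P(X,q)$ has infinitely many roots, hence is zero, so $c_i(q)=0$ for all $i$; since this holds for every $q\in\N$, each $c_i$ has infinitely many roots and therefore $c_i=0$. Thus $P\equiv 0$ in $\Q[X,Y]$, and applying the substitution homomorphism $X\mapsto a$, $Y\mapsto b$ yields \eqref{vandermonde} in $B$. As a sanity check, \eqref{pascal} also drops out of \eqref{vandermonde} on putting $b=1$, since $\binom{1}{n-\ell}$ is nonzero only for $\ell\in\{n-1,n\}$. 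The only delicate points to get right are that the coefficient ring contains $\Q$, so that the $\frac{1}{k!}$ are meaningful and the density argument is available, and that the substitution map is a genuine $\kk$-algebra homomorphism.
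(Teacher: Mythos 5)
Your proof is correct. The paper itself states this lemma in Appendix~\ref{combinatorial} without any proof, treating the three identities as classical facts, so there is no argument of the authors' to compare yours against. Your reduction to polynomial identities over $\Q$ via the substitution homomorphism $\Q[X,Y]\to B$ (which is exactly where $ab=ba$ and $\car\kk=0$ enter) is the standard and fully rigorous way to justify these generalized binomial identities: the direct factorizations for Pascal's rule and the negation formula check out, and the passage from the integer Chu--Vandermonde identity to the polynomial identity by vanishing on $\N^2$ is sound. If anything, your write-up is more careful than the paper requires; for the purposes of the article one could simply cite these as well-known, but nothing in your argument is superfluous or incorrect.
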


%{\color{red} Est-ce utile de garder cela ? Vincent 14/02 : je pense que oui : il faut garder au moins les formules en disant voici les relations utilisées dans l'article. Je laisse tout dans la
%V1}

We need a better understanding of commutation relations in the enveloping algebra of the two dimensional solvable Lie algebra spanned by $\Delta$ and $\Gamma$.

\begin{lemma} \label{comDG} Let $\Gamma,\Delta \in B$ and assume that $[\Delta,\Gamma]=\Delta\Gamma-\Gamma\Delta=\Delta$.
For any $i \geqslant 0$ and $k \geqslant0$ we have
\begin{align*}
&\left[\Delta,\binom{\Gamma}{k}\right]=\binom{\Gamma}{k-1}\Delta,
&\left[\Delta^i,\binom{\Gamma}{k}\right]=\sum_{\ell=1}^{i}\binom{i}{\ell}\binom{\Gamma}{k-\ell}\Delta^i
\end{align*}
\end{lemma}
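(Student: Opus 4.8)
The plan is to reduce both identities to a single intertwining relation, namely $\Delta^i P(\Gamma) = P(\Gamma + i)\Delta^i$ valid for every polynomial $P \in \kk[T]$, and then to evaluate it at $P = \binom{T}{k}$ using the combinatorial identities \eqref{pascal} and \eqref{vandermonde} recalled above. Since $\binom{\Gamma}{k}$ is by definition a polynomial in the single element $\Gamma$, once this intertwining relation is available both commutators become purely formal identities between generalized binomial coefficients in one commuting variable.

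First I would establish the intertwining relation. Lemma~\ref{lem-crochet} gives $\Delta^i \Gamma = \Gamma \Delta^i + i \Delta^i = (\Gamma + i)\Delta^i$. An immediate induction on $m \geqslant 0$ then yields $\Delta^i \Gamma^m = (\Gamma + i)^m \Delta^i$: the base case $m=0$ is trivial, and the inductive step reads $\Delta^i \Gamma^{m+1} = (\Gamma+i)^m \Delta^i \Gamma = (\Gamma + i)^{m+1}\Delta^i$. By linearity this extends to $\Delta^i P(\Gamma) = P(\Gamma + i)\Delta^i$ for every $P \in \kk[T]$, so in particular $\Delta^i \binom{\Gamma}{k} = \binom{\Gamma + i}{k}\Delta^i$. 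Consequently
\[
\left[\Delta^i, \binom{\Gamma}{k}\right] = \left(\binom{\Gamma + i}{k} - \binom{\Gamma}{k}\right)\Delta^i,
\]
and everything now reduces to computing the difference $\binom{\Gamma + i}{k} - \binom{\Gamma}{k}$.

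To finish the second identity I would apply the Chu--Vandermonde identity \eqref{vandermonde} with $a = i\cdot\id$ and $b = \Gamma$ (these commute, so \eqref{vandermonde} applies), giving $\binom{\Gamma + i}{k} = \sum_{\ell = 0}^{k}\binom{i}{\ell}\binom{\Gamma}{k - \ell}$. The $\ell = 0$ term is exactly $\binom{\Gamma}{k}$, so after subtraction there remains $\sum_{\ell = 1}^{k}\binom{i}{\ell}\binom{\Gamma}{k-\ell}$; the upper limit can be replaced by $i$ since $\binom{i}{\ell} = 0$ for $\ell > i$, while the terms with $\ell > k$ vanish through the convention $\binom{\Gamma}{k-\ell} = 0$ for $k - \ell < 0$, which yields precisely the claimed sum. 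The first identity is then the case $i = 1$, where the sum collapses to the single term $\binom{\Gamma}{k-1}\Delta$; equivalently it follows at once from Pascal's rule \eqref{pascal}, since $\binom{\Gamma+1}{k} - \binom{\Gamma}{k} = \binom{\Gamma}{k-1}$. There is no genuine obstacle here: the only points demanding care are the bookkeeping of the summation range together with the vanishing conventions for binomial coefficients of negative lower index, and the (legitimate) application of \eqref{vandermonde} to the scalar element $i\cdot\id$ and $\Gamma$.
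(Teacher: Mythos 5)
Your proof is correct, but it takes a genuinely different route from the paper's. The paper proves the case $i=1$ by a direct telescoping computation on the falling factorial $\Gamma(\Gamma-1)\cdots(\Gamma-(k-1))$, pushing $\Delta$ through one factor at a time via $\Delta(\Gamma-u)=(\Gamma-(u-1))\Delta$, and then obtains the general case by induction on $i$, using only Pascal's rule \eqref{pascal} in the inductive step. You instead package the whole mechanism into the single intertwining relation $\Delta^i P(\Gamma)=P(\Gamma+i)\Delta^i$ (which is essentially Lemma~\ref{lem-crochet} in exponential rather than infinitesimal form, and is proved independently of the statement at hand, so there is no circularity), reduce both commutators to the scalar finite-difference computation $\binom{\Gamma+i}{k}-\binom{\Gamma}{k}$, and evaluate it by one application of Chu--Vandermonde \eqref{vandermonde} with $a=i\cdot\id$. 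Your approach buys a more conceptual and induction-free argument -- both identities drop out simultaneously, the first being the case $i=1$, and the range manipulation you perform is exactly the content of Remark~\ref{rem-reecriture} -- at the modest cost of invoking the full Chu--Vandermonde identity where the paper only needs Pascal's rule; since \eqref{vandermonde} is already stated and used elsewhere in the paper, this costs nothing. Your bookkeeping of the summation bounds via the conventions $\binom{i}{\ell}=0$ for $\ell>i$ and $\binom{\Gamma}{k-\ell}=0$ for $\ell>k$ is handled correctly.
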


\begin{proof}
The first assertion is clear for $k=0$.
Let $k \geqslant1$.
Since $\Delta(\Gamma-u)=(\Gamma-(u-1))\Delta$ for any $u \in \kk$ we have 
\begin{align*}
k!\left[\Delta,\binom{\Gamma}{k}\right]&=\big(\Delta\Gamma(\Gamma-1)\cdots(\Gamma-(k-1))-\Gamma(\Gamma-1)\cdots(\Gamma-(k-1))\Delta\big)\\
&=\big((\Gamma+1)\Gamma\cdots(\Gamma-(k-2))\Delta-\Gamma(\Gamma-1)\cdots(\Gamma-(k-1))\Delta\big)\\
&=\Gamma\cdots(\Gamma-(k-2))(\Gamma+1-(\Gamma-(k-1)))\Delta\\
&=k\Gamma\cdots(\Gamma-(k-2))\Delta\\
&=k(k-1)!\binom{\Gamma}{k-1}\Delta\\
&=k!\binom{\Gamma}{k-1}\Delta
\end{align*}
and the result is proved.
We prove the second assertion by induction on $i \geqslant0$. The case $i=0$ is clear. The initialization $i=1$ has just been proved.
Assume that the result is true for some $i \geqslant1$ and all $k \geqslant0$.
Then
\begin{align*}
\left[\Delta^{i+1},\binom{\Gamma}{k}\right]&=\Delta^i\Delta\binom{\Gamma}{k}-\binom{\Gamma}{k}\Delta^{i+1}\\
&=\Delta^i\left(\binom{\Gamma}{k}\Delta+\binom{\Gamma}{k-1}\Delta\right)-\binom{\Gamma}{k}\Delta^{i+1}\\
&=\left[\Delta^i,\binom{\Gamma}{k}\right] \Delta +\Delta^i\binom{\Gamma}{k-1}\Delta\\
&=\left(\sum_{\ell=1}^i\binom{i}{\ell}\binom{\Gamma}{k-\ell}+\sum_{\ell=0}^i\binom{i}{\ell}\binom{\Gamma}{k-\ell-1}\right)\Delta^{i+1}\\
&=\sum_{\ell=1}^{i+1}\binom{i+1}{\ell}\binom{\Gamma}{k-\ell}\Delta^{i+1}.
\end{align*}
\end{proof}

\begin{remark}\label{rem-reecriture} 
The second relation of~Lemma~\ref{comDG} can be rewritten 
\[\Delta^i\binom{\Gamma}{k}=\sum_{\ell=0}^{i}\binom{i}{\ell}\binom{\Gamma}{k-\ell}\Delta^i =
\sum_{\ell=0}^{\min(i,k)}\binom{i}{\ell}\binom{\Gamma}{k-\ell}\Delta^i= 
\sum_{\ell=0}^{k}\binom{i}{\ell}\binom{\Gamma}{k-\ell}\Delta^i= 
\sum_{\ell=0}^{k}\binom{i}{k-\ell}\binom{\Gamma}{\ell}\Delta^i\]
Indeed, for $\ell > k$, $\binom{\Gamma}{k-\ell}=0$ and for $\ell > i$, $\binom{i}{\ell}=0$. 
\end{remark}

We finish this combinatorial appendix by the computation of the following determinant which will be useful for the determination of the Nakayama automorphism of $R$. 

\begin{lemma}\label{lem-determinant}
Let $\kk$ be a field of characteristic $0$ and $A$ be a $\kk$-algebra. For $a \in A$ and $n \in \N$ set 
$$M(a) = \left(\binom{ka}{\ell}\right)_{0\leqslant k,\ell \leqslant n} \in M_{n+1}(A)$$
and $m(a)=\det(M(a))$. Then $$m(a) = a^{n(n+1)/2}\,.$$
\end{lemma}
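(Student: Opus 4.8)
The plan is to reduce the computation to two classical determinants by factoring $M(a)$ as a product of a Vandermonde matrix with a triangular matrix. First I would observe that all entries of $M(a)$ lie in the commutative subalgebra $\kk[a]\subseteq A$: each $\binom{ka}{\ell}=\frac{(ka)(ka-1)\cdots(ka-\ell+1)}{\ell!}$ is a $\kk$-polynomial in the single element $a$. Hence the noncommutativity of $A$ plays no role, the determinant $m(a)$ is computed in this commutative ring, and one may equivalently replace $a$ by a formal indeterminate $x$, prove the polynomial identity $m(x)=x^{n(n+1)/2}$ in $\kk[x]$, and then specialize $x=a$.

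The key observation is that, for each fixed column index $\ell$, the entry $\binom{kx}{\ell}$ is a polynomial in the row index $k$ of degree exactly $\ell$, whose leading coefficient is $x^{\ell}/\ell!$ (coming from the top term $(kx)^\ell/\ell! = (x^\ell/\ell!)\,k^\ell$). Writing $\binom{kx}{\ell}=\sum_{m=0}^{\ell}c_{\ell,m}(x)\,k^m$ with $c_{\ell,m}(x)\in\kk[x]$ and $c_{\ell,\ell}(x)=x^\ell/\ell!$, one reads off the matrix factorization $M(x)=VW$, where $V=(k^m)_{0\leqslant k,m\leqslant n}$ is the Vandermonde matrix on the nodes $0,1,\dots,n$ and $W=(c_{\ell,m}(x))_{0\leqslant m,\ell\leqslant n}$ is upper triangular (since $c_{\ell,m}=0$ for $m>\ell$).

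The determinant then splits as $m(x)=\det(V)\det(W)$. The Vandermonde determinant on the nodes $0,1,\dots,n$ equals $\prod_{0\leqslant i<j\leqslant n}(j-i)=\prod_{j=0}^{n}j!$, while $W$ being triangular gives $\det(W)=\prod_{\ell=0}^{n}c_{\ell,\ell}(x)=\prod_{\ell=0}^{n}\frac{x^\ell}{\ell!}=\frac{x^{n(n+1)/2}}{\prod_{\ell=0}^{n}\ell!}$. Multiplying, the two products of factorials cancel and leave $m(x)=x^{n(n+1)/2}$, as desired. The computation is essentially routine once the factorization is set up; the only point deserving care is verifying that $\binom{kx}{\ell}$ really is a degree-$\ell$ polynomial in $k$ with leading coefficient $x^\ell/\ell!$, so that $W$ is genuinely triangular with the stated diagonal. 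I do not expect any serious obstacle.
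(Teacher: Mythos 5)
Your proof is correct and follows essentially the same route as the paper: both factor $M$ as a Vandermonde matrix times an upper triangular change-of-basis matrix and multiply the two classical determinants. The only (immaterial) difference is that the paper uses the Vandermonde matrix on the nodes $0,a,\dots,na$ so that the triangular factor is a constant rational matrix, whereas you use the integer Vandermonde matrix and absorb the powers of $x$ into the triangular factor's diagonal; the two factorizations differ only by shuffling $\diag(1,x,\dots,x^n)$ between the factors.
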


\begin{proof}
We can write $\binom{ka}{\ell}= \sum_{i=0}^{\ell} c_{i\ell} (ka)^i$ with $c_{i\ell} \in \Q$.
Set $c_{i\ell}=0$ for $\ell< i \leqslant n$.
We have $M(a)= V(a)C$ where $C=(c_{k\ell})_{0\leqslant k,\ell \leqslant n}\in M_{n+1}(\Q)$ and $V(a)=((ka)^\ell)_{0\leqslant k,\ell \leqslant n} \in M_{n+1}(A)$.
Since $C$ is an upper triangular matrix with diagonal coefficients $c_{\ell\ell}=1/\ell!$ and $V(a)$ is a Vandermonde matrix, we obtain the desired equality.
\end{proof}

%{\color{red} Est-ce utile de garder l'appendice B ? Je pense que non.}

\section{Tensor product of Poisson algebras}
\label{tensor}

Let $A$ and $B$ be Poisson algebras. 
The tensor product $A\otimes B$ is a Poisson algebra for the Poisson bracket given by
\begin{equation*}
\{a\otimes b,a'\otimes b'\}_{A\otimes B}=\{a,a'\}_A\otimes bb'+aa'\otimes\{b,b'\}_B
\end{equation*}
for $a,a'\in A$ and $b,b'\in B$.
A derivation $\delta$ of $A$ extends to a derivation $\widehat{\delta}$ of $A\otimes B$ by setting
\[\widehat{\delta}(a\otimes b)=\delta(a)\otimes b\]
for $a\in A$ and $b\in B$.
Moreover, if $\delta$ is a Poisson derivation of $A$ then $\widehat{\delta}$ is a Poisson derivation of $A\otimes B$.
The same is true for a (Poisson) derivation of $B$ with the obvious modification.

\section{Symmetric Algebra of a filtered vector space}
\label{symalg-filtre}

Let us consider $V= \cup_{i \in \N} V_i$ a filtration of the vector space $V$, that is to say, for every $i \in \N$,
$V_i$ is a subspace of $V$ and $V_i \subseteq V_{i+1}$. 
This filtration on $V$ induces algebra filtrations on the tensor algebra $T(V)$ of $V$ and on the symmetric algebra $S(V)$
of $V$, that are given by
$$T_\al^n= \!\!\!\!\!\!\!\!\!
\sum_{\begin{array}{c} \scriptstyle{(\al_1,\ldots,\al_n) \in \N^n}\\ \scriptstyle{\al_1 + \cdots + \al_n=\al} \end{array}} 
\!\!\!\!\!\!\!\! V_{\al_1} \otimes \cdots \otimes V_{\al_n}
= \!\!\!\!\!\!\!\!\!
\sum_{\begin{array}{c} \scriptstyle{(\al_1,\ldots,\al_n) \in \N^n}\\ \scriptstyle{\al_1 + \cdots + \al_n \leqslant \al} \end{array}} 
\!\!\!\!\!\!\!\! V_{\al_1} \otimes \cdots \otimes V_{\al_n} \subseteq V^{\otimes n}\quad \textrm{and}\quad
S_\al^n= \sigma(T_\al^n)$$
\noindent where $\sigma\colon T(V) \rightarrow S(V)$ is the canonical map.

Set $M_0= V_0$ and for $i \geqslant 1$ consider a supplementary subspace $M_i$ of $V_{i-1}$ inside $V_i$.
We thus get that $V_i= \bigoplus_{j \leqslant i} M_j$ and $V= \bigoplus_{i \in \N} M_i$ can be seen as a graded vector space which we denote by $\gr(V)$. 
Using this graduation, we get a bigraduation on $T(V)= \bigoplus_{(n,\al) \in \N^2} T^{n,\al}$ and 
$S(V)=\bigoplus_{(n,\al) \in \N^2} S^{n,\al}$ given by 
$$T^{n,\al}= \!\!\!\!\!\!\!\!\!
\bigoplus_{\begin{array}{c} \scriptstyle{(\al_1,\ldots,\al_n) \in \N^n}\\[-1ex] \scriptstyle{\al_1 + \cdots + \al_n=\al} \end{array}} 
\!\!\!\!\!\!\!\! M_{\al_1} \otimes \cdots \otimes M_{\al_n}\quad \textrm{and}\quad
S^{n,\al}= \!\!\!\!\!\!\!\!\! \bigoplus_{\begin{array}{c} \scriptstyle{r \in \N,\ (n_1,\ldots,n_r) \in \N^r} \\ [-1ex]
\scriptstyle{n_1 + \cdots + n_r=n}\\[-1ex] \scriptstyle{n_1\al_1 + \cdots + n_r \al_r =\al}\end{array}} \!\!\!\!\!\!\!\!\! 
S^{n_1}(M_{\al_1}) \otimes \cdots \otimes S^{n_r}(M_{\al_r})$$
See~\cite[Alg\`ebre chap III, \S 5.5 Proposition 7, \S 6.6 Proposition 10]{bourbaki}. 
We set $T_\al= \bigoplus_{n \in \N} T_\al^n$ and $S_\al = \bigoplus_{n \in \N} S_\al^n$.

\begin{proposition}\label{prop-graduation-induite} The associated graded rings to the filtration $T(V)= \cup_{\al \in \N} T_\al$ and $S(V)= \cup_{\al \in \N} S_\al$
are isomorphic to $T(\gr(V))$ and $S(\gr(V))$, respectively. 
\end{proposition}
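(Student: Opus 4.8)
The plan is to realise both filtrations as the standard filtration attached to a grading, and then to invoke the elementary fact that the associated graded of such a filtration reproduces the grading. First I would fix once and for all the supplementary subspaces $M_i$, so that $\gr(V)=\bigoplus_{i} M_i$ is a genuine graded vector space. The canonical identifications $T(V)=\bigoplus_n V^{\otimes n}$ and $V^{\otimes n}=\bigoplus_{(\al_1,\ldots,\al_n)} M_{\al_1}\otimes\cdots\otimes M_{\al_n}$ (and their symmetric analogues from the cited Bourbaki propositions) identify $T(V)$ with $T(\gr(V))$ and $S(V)$ with $S(\gr(V))$ as bigraded vector spaces, the bigrading being the pair $(n,\al)$ recording the tensor length and the total $M$-degree. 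Writing $T^{\al}:=\bigoplus_n T^{n,\al}$ and $S^{\al}:=\bigoplus_n S^{n,\al}$ for the homogeneous components of total $M$-degree $\al$, these are exactly the degree-$\al$ pieces of $T(\gr(V))$ and $S(\gr(V))$. It therefore suffices to prove that $T_\al=\bigoplus_{\beta\leqslant\al} T^\beta$ and $S_\al=\bigoplus_{\beta\leqslant\al} S^\beta$.

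The key step is the computation of $T_\al^n$ in terms of the $M$-grading. Expanding each factor as $V_i=\bigoplus_{j\leqslant i} M_j$ gives $V_{\al_1}\otimes\cdots\otimes V_{\al_n}=\bigoplus_{j_1\leqslant\al_1,\ldots,j_n\leqslant\al_n} M_{j_1}\otimes\cdots\otimes M_{j_n}$. A summand $M_{j_1}\otimes\cdots\otimes M_{j_n}$ occurs in $T_\al^n=\sum_{\al_1+\cdots+\al_n\leqslant\al} V_{\al_1}\otimes\cdots\otimes V_{\al_n}$ if and only if there is a tuple $(\al_1,\ldots,\al_n)$ with $\al_k\geqslant j_k$ and $\sum_k\al_k\leqslant\al$, which happens exactly when $\sum_k j_k\leqslant\al$ (take $\al_k=j_k$). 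Hence $T_\al^n=\bigoplus_{\beta\leqslant\al} T^{n,\beta}$, and summing over $n$ yields $T_\al=\bigoplus_{\beta\leqslant\al} T^\beta$. Applying the canonical surjection $\sigma\colon T(V)\to S(V)$, which maps $T^{n,\beta}$ onto $S^{n,\beta}$ by the Bourbaki description, and using $S_\al^n=\sigma(T_\al^n)$ together with the directness of the decomposition $S(V)=\bigoplus_{n,\al} S^{n,\al}$, gives $S_\al=\bigoplus_{\beta\leqslant\al} S^\beta$ in the same way.

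With these two identities the result is immediate: for each $\al$ the projection with kernel $T_{\al-1}$ realises $T_\al/T_{\al-1}\cong T^\al$, the degree-$\al$ component of $T(\gr(V))$, and likewise $S_\al/S_{\al-1}\cong S^\al$. It only remains to check that the resulting graded vector space isomorphisms are ring isomorphisms. This is clear, because the product on $\gr(T(V))$ is induced by concatenation of tensors in $T(V)$, which sends $T^{n,\al}\otimes T^{n',\al'}$ into $T^{n+n',\al+\al'}$ and thus agrees, under the identification above, with the concatenation product of $T(\gr(V))$; the symmetric case is identical with the commutative product. I expect the only delicate point to be the combinatorial identity $T_\al^n=\bigoplus_{\beta\leqslant\al} T^{n,\beta}$ — more precisely, the verification that passing from the filtration-defining sum to a direct sum along the $M$-grading loses no information and introduces no overlap — together with the bookkeeping ensuring that the choice of the $M_i$, although needed to exhibit the isomorphism concretely, does not affect the canonical graded object $\gr(V)$.
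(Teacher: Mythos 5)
Your proof is correct, and it rests on the same foundation as the paper's — the choice of complements $M_i$ and the Bourbaki bigraded decompositions of $T(V)$ and $S(V)$ — but the way you exploit that foundation is genuinely different. The paper constructs the explicit projections $\pi_{\al_1,\ldots,\al_n}^{\be_1,\ldots,\be_n}$, checks that they assemble into a well-defined surjection $\pi\colon T_\al^n\to T^{n,\al}$, identifies its kernel with $T_{\al-1}^n$, and then handles the symmetric algebra by a diagram chase through the maps $\sigma_\al^n$ and $\sigma^{n,\al}$. You instead prove the stronger intermediate statement that the filtration is literally the standard filtration attached to the bigrading, $T_\al^n=\bigoplus_{\beta\leqslant\al}T^{n,\beta}$ — the union-of-index-sets argument for a sum of canonical summands of one fixed direct sum decomposition is valid and is the crux — and then the quotients $T_\al/T_{\al-1}\cong T^\al$, the analogous statement for $S$ (obtained by pushing the direct sum through $\sigma$ and invoking directness of $\bigoplus_{n,\al}S^{n,\al}$), and the compatibility of products all follow formally. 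What your route buys is the elimination of both the well-definedness check for $\pi$ and the exact-sequence diagram chase; what the paper's route buys is an explicit description of the isomorphism as induced by the projections $\pi_{\al_1,\ldots,\al_n}$, which is the form in which it is reused later (e.g.\ in Proposition~\ref{prop-graduation-delta-gamma}). Your closing caveat about the choice of the $M_i$ is harmless here, since the proposition only asserts the existence of an isomorphism and the graded object $\gr(V)=\bigoplus_i V_i/V_{i-1}$ is canonical.
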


\begin{proof} For $(\alpha_1,\ldots,\alpha_n) \in \N^n$, let us define 
$\pi_{\alpha_1,\ldots,\alpha_n}^{\beta_1,\ldots,\beta_n}\colon V_{\alpha_1} \otimes \cdots \otimes V_{\alpha_n} 
\rightarrow M_{\beta_1} \otimes \cdots \otimes M_{\beta_n}$ the map associated to the decomposition of $V_{\alpha_1} \otimes \cdots \otimes V_{\alpha_n}$ as a direct sum
$$V_{\alpha_1} \otimes \cdots \otimes V_{\alpha_n}= \bigoplus_{\begin{array}{c} 
\scriptstyle{(\beta_1,\ldots,\beta_n) \in \N^n}\\[-1ex] \scriptstyle{\forall\,i, \ 1 \leqslant \beta_i \leqslant \alpha_i} \end{array}} 
M_{\be_1} \otimes \cdots \otimes M_{\be_n}$$
\noindent Note that if there exists $i$ such that $\be_i > \al_i$ then $\pi_{\alpha_1,\ldots,\alpha_n}^{\beta_1,\ldots,\beta_n}=0$.
When $(\alpha_1,\ldots,\alpha_n)=(\beta_1\ldots,\beta_n)$, we simply denote $\pi_{\alpha_1,\ldots,\alpha_n}$
instead of $\pi_{\alpha_1,\ldots,\alpha_n}^{\alpha_1,\ldots,\alpha_n}$

Let us now define 
$$\pi =\oplus \pi_{\alpha_1,\ldots,\alpha_n}\colon 
\!\!\!\!\!\!\!\!\!
\bigoplus_{\begin{array}{c} \scriptstyle{(\al_1,\ldots,\al_n) \in \N^n}\\[-1ex] \scriptstyle{\al_1 + \cdots + \al_n=\al} \end{array}} 
\!\!\!\!\!\!\!\! V_{\alpha_1} \otimes \cdots \otimes V_{\alpha_n} \longrightarrow 
\bigoplus_{\begin{array}{c} \scriptstyle{(\al_1,\ldots,\al_n) \in \N^n}\\[-1ex] \scriptstyle{\al_1 + \cdots + \al_n=\al} 
\end{array}} M_{\alpha_1} \otimes \cdots \otimes M_{\alpha_n}$$
\noindent We want to show that $\pi$ induces a map from $T_\al^n$ to $T^{n,\al}$. It suffices to show that 
if 
$$w=(w_{\al_1,\ldots,\al_n})_{(\al_1,\ldots,\al_n)} \in \bigoplus_{\begin{array}{c} \scriptstyle{(\al_1,\ldots,\al_n) \in \N^n}\\[-1ex] \scriptstyle{\al_1 + \cdots + \al_n=\al} \end{array}} \!\!\!\!\!\!\!\! V_{\alpha_1} \otimes \cdots \otimes V_{\alpha_n}$$
verifies $\sum_{(\al_1,\ldots,\al_n)} w_{\al_1,\ldots,\al_n}=0 \in T^n(V)$ then $\pi(w)=0$. But using the direct sum decomposition
of the $V_i$ we get that 
$$T_\al^n= \bigoplus_{\begin{array}{c} \scriptstyle{(\be_1,\ldots,\be_n) \in \N^n}\\[-1ex] \scriptstyle{\be_1 + \cdots + \be_n\leqslant \al} 
\end{array}} M_{\beta_1} \otimes \cdots \otimes M_{\beta_n}$$
\noindent Hence the hypothesis $\sum_{(\al_1,\ldots,\al_n)} w_{\al_1,\ldots,\al_n}=0 \in T^n(V)$ may be written 
for all $(\be_1,\ldots,\be_n)$ such that $\be_1 + \cdots +\be_n \leqslant \al$, we get 
$\sum_{(\al_1\ldots,\al_n)} \pi_{\alpha_1,\ldots,\alpha_n}^{\beta_1,\ldots,\beta_n}(w_{\al_1,\ldots,\al_n})=0$. 

In particular, when $\be_1+ \cdots + \be_n=\al$ and $(\al_1,\ldots,\al_n) \neq (\be_1,\ldots,\be_n)$ there exists 
$i$ such that $\be_i > \al_i$. Hence
$0=\sum_{(\al_1\ldots,\al_n)} \pi_{\alpha_1,\ldots,\alpha_n}^{\beta_1,\ldots,\beta_n}(w_{\al_1,\ldots,\al_n})=
\pi_{\alpha_1,\ldots,\alpha_n}(w_{\al_1,\ldots,\al_n})$ as wanted. 
We still denote by $\pi\colon T_\al^n \rightarrow T^{n,\al}$ the induced map. We clearly have $\ker \pi = T_{\al-1}^n$
showing the first result since the kernel of $\pi_{\al_1,\ldots,\al_n}$ is included into $T_{\al-1}^n$
(see~\cite[Alg\`ebre, chap.2, \S 3, Proposition~6]{bourbaki}).

Let us now consider the case of the symmetric algebra. We denote by $\sigma_{\al}^n\colon T_\al^n \rightarrow S_\al^n$
and by $\sigma^{n,\al}\colon T^{n,\al} \rightarrow S^{n,\al}$ the maps induced by $\sigma\colon T(V) \rightarrow S(V)$.   

The surjective map $\pi: T_\al^n \rightarrow T^{n,\al}$ sends $\ker \sigma_\al^n$ onto $\ker \sigma^{n,\al}$. 
Hence $\pi$ induces a surjective map also denoted by $\pi\colon S_\al^n \rightarrow S^{n,\al}$. To conclude, it remains to show that 
$\ker \pi=S_{\al-1}^n$. This is clear that $S_{\al-1}^n \subseteq \ker \pi$. The reverse inclusion follows from diagram chasing in
the following diagram (where the column are exact and the second row too)
$$\xymatrix{ &\ker \sigma_\al^n \ar@{->>}[r]^-{\pi} \ar@{^{(}->}[d] & \ker \sigma^{n,\al} \ar@{^{(}->}[d] \\ 
T_{\al-1}^n \ar@{^{(}->}[r]\ar@{->>}[d]^{\sigma_{\al-1}^n} & T_{\al}^n \ar@{->>}[r]^{\pi} \ar@{->>}[d]^{\sigma_{\al}^n} & 
T^{n, \al} \ar@{->>}[d]^{\sigma^{n,\al}}\\
S_{\al-1}^n \ar@{^{(}->}[r] & S_{\al}^n \ar@{->>}[r]^{\pi} & S^{n,\al} }$$
\end{proof}

\begin{proposition}\label{prop-graduation-delta-gamma} Assume that there exists linear maps $\Delta: V \rightarrow V$ and $\Gamma: V \rightarrow V$ such that $\Delta(V_i)\subseteq V_{i-1}$ and $\Gamma(V_i) \subseteq V_i$ for every $i \in \N$, and such that $[\Delta,\Gamma]=\Delta$.
The extension of $\Delta$ as a derivation of $T(V)$, resp. $S(V)$, verifies $\Delta(T_{\al}^n) \subseteq \Delta(T_{\al-1}^n)$, resp. $\Delta(S_{\al}^n) \subseteq \Delta(S_{\al-1}^n)$. 
The extension of $\Gamma$ as a derivation of $T(V)$, resp. $S(V)$, stabilize the preceding filtration on $T(V)$, resp. $S(V)$. 
Hence $\Delta$ and $\Gamma$ induce graded derivations $\overline{\Delta}$ of degree $-1$ and $\overline{\Gamma}$ of degree $0$ on $T(\gr(V))$ and $S(\gr(V))$.
Moreover we have $[\overline{\Delta},\overline{\Gamma}]=\overline{\Delta}$.

In addition, $\Delta$ induces a degree $-1$ map on $\gr(V)$ and $\Gamma$ induces a degree $0$ map on $\gr(V)$ and their extensions as derivations on $T(\gr(V))$ and $S(\gr(V))$ coincide with 
$\overline{\Delta}$ and $\overline{\Gamma}$.
\end{proposition}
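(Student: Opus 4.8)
The plan is to handle the tensor algebra first, transport everything to the symmetric algebra through the canonical surjection $\sigma\colon T(V)\to S(V)$, and only at the end identify the induced derivations on the associated graded via Proposition~\ref{prop-graduation-induite}. The organizing observation is that $\Delta$ and $\Gamma$ preserve the tensor degree $n$ (they send $V=T^1(V)$ into itself), so all the filtration bookkeeping happens in the single index $\al$.

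First I would extend $\Delta$ and $\Gamma$ to derivations of $T(V)$ by the Leibniz rule and compute directly on decomposable tensors. For $v_k\in V_{\al_k}$ one has $\Delta(v_1\otimes\cdots\otimes v_n)=\sum_{k}v_1\otimes\cdots\otimes\Delta(v_k)\otimes\cdots\otimes v_n$, and since $\Delta(v_k)\in V_{\al_k-1}$ each summand sits in a space of total filtration degree $\al_1+\cdots+\al_n-1$; hence $\Delta(T_\al^n)\subseteq T_{\al-1}^n$. The identical computation with $\Gamma(v_k)\in V_{\al_k}$ gives $\Gamma(T_\al^n)\subseteq T_\al^n$. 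For the symmetric algebra, both derivations preserve $\ker\sigma$ (the ideal generated by the $v\otimes w-w\otimes v$, stable because $\Delta,\Gamma$ are derivations mapping $V$ into $V$), so they descend to derivations of $S(V)$ and $\sigma$ intertwines them; applying $\sigma$ to the inclusions above and using $S_\al^n=\sigma(T_\al^n)$ yields $\Delta(S_\al^n)\subseteq S_{\al-1}^n$ and $\Gamma(S_\al^n)\subseteq S_\al^n$.

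Next, because $\Delta$ is filtered of degree $-1$ and $\Gamma$ of degree $0$, each descends to the associated graded, giving derivations $\overline{\Delta}$ of degree $-1$ and $\overline{\Gamma}$ of degree $0$ on $\gr T(V)\cong T(\gr(V))$ and on $\gr S(V)\cong S(\gr(V))$ (Proposition~\ref{prop-graduation-induite}); the Leibniz rule passes to the quotients, so these are genuine graded derivations. For the bracket I would argue levelwise: for $x$ of filtration degree $\al$ one has $\overline{\Delta}\,\overline{\Gamma}(\overline{x})=\Delta\Gamma(x)+T_{\al-2}$ and $\overline{\Gamma}\,\overline{\Delta}(\overline{x})=\Gamma\Delta(x)+T_{\al-2}$, so $[\overline{\Delta},\overline{\Gamma}](\overline{x})=[\Delta,\Gamma](x)+T_{\al-2}=\Delta(x)+T_{\al-2}=\overline{\Delta}(\overline{x})$, i.e. $[\overline{\Delta},\overline{\Gamma}]=\overline{\Delta}$. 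The same display works verbatim in the symmetric case.

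Finally, $\Delta$ and $\Gamma$ restrict to $V=T^1(V)$ as filtered maps of degrees $-1$ and $0$, hence induce maps $\overline{\Delta},\overline{\Gamma}$ on $\gr(V)$; under the identification $\gr(V)=T^1(\gr(V))=S^1(\gr(V))$ these are exactly the degree-one components of the derivations constructed in the previous paragraph. Since $T(\gr(V))$ and $S(\gr(V))$ are generated in degree one, a derivation is determined by its restriction to $\gr(V)$, so $\overline{\Delta}$ and the derivation-extension of the induced map on $\gr(V)$ agree (and likewise for $\Gamma$), which is the last assertion. The main obstacle is not any single computation but keeping the two gradings and the isomorphism of Proposition~\ref{prop-graduation-induite} consistent: in particular checking that the induced map of a composition equals the composition of induced maps when the filtration shifts are compatible (used for the bracket), and that the Proposition~\ref{prop-graduation-induite} isomorphism intertwines the $\gr T(V)$-derivation $\overline{\Delta}$ with the derivation extension coming from $\gr(V)$. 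I expect the uniqueness-of-derivation-extension argument to be the cleanest way to avoid tracing that isomorphism term by term.
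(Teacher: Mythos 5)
Your proposal is correct and follows essentially the same route as the paper: a direct Leibniz computation on decomposable tensors to get the filtration degrees $-1$ and $0$, descent through $\sigma$ to $S(V)$, a levelwise computation for $[\overline{\Delta},\overline{\Gamma}]=\overline{\Delta}$, and the observation that the final identification need only be checked on $\gr(V)$ because derivations of $T(\gr(V))$ and $S(\gr(V))$ are determined by their restriction to the degree-one part. The only cosmetic difference is that you spell out the bracket computation and the stability of $\ker\sigma$, which the paper leaves implicit.
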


\begin{proof} The extension of a linear map $f: V \rightarrow V$ as a derivation of $T(V)$ acts on 
$V_{\al_1} \otimes \cdots \otimes V_{\al_n}$ as $f\otimes \id \otimes \cdots  \otimes \id +  \cdots +  \id \otimes \cdots\otimes \id 
\otimes f$.
Hence if $f=\Delta$ then $T_\al^n$ is mapped into $T_{\al-1}^n$ and if $f=\Gamma$ then $T_{\al}^n$ is mapped
into $T_{\al}^n$ showing that they induce graded endomorphisms $\overline{\Delta}$ of degree $-1$ and $\overline{\Gamma}$ of degree $0$ of $T(\gr(V))$ and $S(\gr(V))$.
Moreover these endomorphisms are derivations of the associated graded rings and verify $[\overline{\Delta},\overline{\Gamma}]=\overline{\Delta}$.

For the last part of the proof, since the maps considered are derivations of $T(\gr(V))$ and $S(\gr(V))$ it suffices to show that they coincide on $\gr(V)$.
This follows readily from the commutative diagrams

$$\xymatrix{V_\alpha \ar^{\Delta}[r] \ar[d] & V_{\alpha-1} \ar[d] \\ V_\alpha/V_{\alpha-1} \cong M_\alpha \ar^-{\overline{\Delta}}[r] & V_{\alpha-1}/V_{\alpha-2}\cong M_{\alpha-1}} \textrm{\quad and \quad} 
\xymatrix{V_\alpha \ar^{\Gamma}[r] \ar[d] & V_{\alpha} \ar[d] \\ V_\alpha/V_{\alpha-1} \cong M_\alpha \ar^-{\overline{\Gamma}}[r] & V_{\alpha}/V_{\alpha-1}\cong M_{\alpha}} $$
\end{proof}

\begin{example}\label{example-epsilon-tilde} Consider $V$ a finite dimensional vector space and $\Delta$ and $\Gamma$ two linear endomorphisms of $V$ verifying $[\Delta,\Gamma]=\Delta$.
Since $\Delta$ is nilpotent and $\ker \Delta^i$ is stable by $\Gamma$ (see the proof of Theorem~\ref{dmax}), the family of subspaces $V_i = \ker \Delta^i$ is a filtration of $V$ satisfying the hypothesis of Proposition~\ref{prop-graduation-delta-gamma}. 
\end{example}

{\bf Acknowledgment.} The authors thank Patrick Le Meur for fruitful discussions on Artin-Schelter regular algebras.

%%%%%%%%%%%%%%%%  Bibilography %%%%%%%%%%%%%%%%%%%%%%%%%%

\bibliographystyle{amsalpha}

%\bibliography{biblio}

\providecommand{\bysame}{\leavevmode\hbox to3em{\hrulefill}\thinspace}
\providecommand{\MR}{\relax\ifhmode\unskip\space\fi MR }
% \MRhref is called by the amsart/book/proc definition of \MR.
\providecommand{\MRhref}[2]{%
  \href{http://www.ams.org/mathscinet-getitem?mr=#1}{#2}
}
\providecommand{\href}[2]{#2}

\end{document}